\newcommand{\T}{{\mathcal T}}
\newcommand{\mcg}{\mathsf {MCG}}
\newcommand{\col}{\mathsf{Col}}
\newcommand{\Cob}{\mathcal{C}ob}
\newcommand{\GrVect}{\mathcal{G}r\mathcal{V}ect}
\newcommand{\card}{\operatorname{card}}
\newcommand{\B}{\mathcal{B}}
\newcommand{\lk}{\operatorname{lk}}
\newcommand{\ev}{\operatorname{ev}}
\newcommand{\SL}{\mathsf{SL}}
\newcommand{\tcoev}{\wt{\operatorname{coev}}}
\newcommand{\brk}[1]{{\left\langle{#1}\right\rangle}}
\newcommand{\brkk}[1]{{\left\langle\langle{#1}\right\rangle\rangle}}
\newcommand{\ve}{\varepsilon}
\newcommand{\ro}{r}
\newcommand{\Uqg}{\ensuremath{\mathcal U}}
\newcommand{\La}{{\mathscr L}}
\newcommand{\Nr}{{\mathsf N}}
\newcommand{\Zr}{{\mathsf Z}}
\newcommand{\PP}{{\mathsf P}}
\newcommand{\Pac}{\Pa^\sqcup}
\newcommand{\Paca}{\Pa^\sqcap}
\newcommand{\Hr}{H_r}
\newcommand{\qr}{{q}}
\newcommand{\coh}{\omega}
\newcommand{\vp}{\varphi}
\newcommand{\cV}{\mathscr{V}}
\newcommand{\cF}{\mathscr{F}}
\newcommand{\V}{\mathsf{V}}
\newcommand{\VV}{\mathbb{V}}
\newcommand{\VVV}{\mathbb{SV}}
\newcommand{\hS}{\wh{S}}
\newcommand{\Su}{\Sigma}
\newcommand{\Skein}{\mathsf{S}}
\newcommand{\Pa}{\mathsf{P}}
\newcommand{\e}{{\operatorname{e}}}
\newcommand{\slt}{{\mathfrak{sl}(2)}}
\newcommand{\Uq}{{U_q\slt}}
\newcommand{\UsltH}{{U_q^{H}\slt}}
\newcommand{\Ubar}{{\wb U_q^{H}\slt}}
\newcommand{\unit}{\ensuremath{\mathbb{I}}}
\newcommand{\cat}{\mathscr{C}}
\newcommand{\Rib}{{{\operatorname{Rib}}_\mathscr{C}}}
\newcommand{\Id}{\operatorname{Id}}
\newcommand{\bp}[1]{{\left(#1\right)}}
\newcommand{\bc}[1]{{\left[#1\right]}}
\newcommand{\qn}[1]{{\left\{#1\right\}}}
\newcommand{\ds}[1]{{\mathcal{#1}}}
\newcommand{\degr}{{\mathbf k}}
\newcommand{\de}{{k}}
\newcommand{\qd}{{\mathsf d}}
\newcommand{\End}{\operatorname{End}}
\newcommand{\Hom}{\operatorname{Hom}}
\newcommand{\C}{\ensuremath{\mathbb{C}} }
\newcommand{\Z}{\ensuremath{\mathbb{Z}} }
\newcommand{\R}{\ensuremath{\mathbb{R}} }
\newcommand{\N}{\ensuremath{\mathbb{N}} }
\newcommand{\Q}{\ensuremath{\mathbb{Q}} }
\newcommand{\wt}{\widetilde}
\newcommand{\wh}{\widehat}
\newcommand{\wb}{\overline}
\newcommand{\wbU}{U}
\newcommand{\ms}[1]{\mbox{\tiny$#1$}}
\newcommand{\oo}{\infty}
\newcommand{\Cp}{{\ddot\C}}
\newcommand{\symf}{\rho}
\newcommand{\dep}{\delta} 
\newcommand{\et}{{\quad\text{and}\quad}}
\newcommand{\lw}{\operatorname{lw}}
\newcommand{\hw}{\operatorname{hw}}
\newcommand{\qx}[2]{\frac{\qn{#1}}{\qn{#2}}}
\newcommand{\DT}{{\tau}}
\newcommand{\qt}{\mathsf t} 
\newcommand{\Ma}{f_k} 
\newcommand{\sjtop}[6]{\left|\begin{array}{ccc}#1 & #2 & #3 \\#4 & #5 &
      #6\end{array}\right|} 
\newcommand{\epsh}[2]
         {\begin{array}{c} \hspace{-1.3mm}
        \raisebox{-4pt}{\epsfig{figure=#1,height=#2}}
        \hspace{-1.9mm}\end{array}}
\newtheorem{theo}{Theorem}[section]
\newtheorem{lemma}[theo]{Lemma}
\newtheorem{prop}[theo]{Proposition}
\newtheorem{cor}[theo]{Corollary}
\newtheorem{conj}[theo]{Conjecture}
\newtheorem{question}[theo]{Question}
\theoremstyle{definition}
\newtheorem{defi}[theo]{Definition}
\newtheorem{rem}[theo]{Remark}
\newtheorem{example}[theo]{Example}
\newtheorem{exo}[theo]{Exercise}
\theoremstyle{remark}
\newcounter{exo} \newcounter{numexercice}
\renewcommand{\theexo}{\arabic{exo}}
\begin{document}
\title[Non semi-simple TQFTs]{Non semi-simple TQFTs, Reidemeister torsion and Kashaev's invariants}
\author[C. Blanchet]{Christian Blanchet}
\address{Univ Paris Diderot, Sorbonne Paris Cit\'e, IMJ-PRG, UMR 7586 CNRS, Sorbonne Universit\'e, UPMC Univ Paris 06, F-75013, Paris, France} 
\email{blanchet@imj-prg.fr}

\author[F. Costantino]{Francesco Costantino}
\address{Institut de Recherche Math\'ematique Avanc\'ee\\
  Rue Ren\'e Descartes 7\\
 67084 Strasbourg, France}
\email{costanti@math.unistra.fr}

\author[N. Geer]{Nathan Geer}
\address{Mathematics \& Statistics\\
  Utah State University \\
  Logan, Utah 84322, USA}
\thanks{The first author's research was supported by 
French ANR project ModGroup ANR-11-BS01-0020. The second author's research was supported by 
French ANR project ANR-08-JCJC-0114-01. Research of the third author was  partially supported by 
NSF grants  DMS-1007197 and DMS-1308196.  The  third author would like to thank 
Institut Math\'ematique de Jussieu, Paris, France for their generous hospitality during April of 2013.
  All the authors would like to thank  the Erwin Schr\"odinger Institute for Mathematical Physics in Vienna for support during a stay in the Spring of 2014, where part of this work was done.
 }\
\email{nathan.geer@gmail.com}

\author[B. Patureau-Mirand]{Bertrand Patureau-Mirand}
\address{UMR 6205, LMBA, universit\'e de Bretagne-Sud, universit\'e
  europ\'eenne de Bretagne, BP 573, 56017 Vannes, France }
\email{bertrand.patureau@univ-ubs.fr}

\begin{abstract}
  We construct and study a new family of TQFTs based on nilpotent
  highest weight representations of quantum $\slt$ at a root of unity
  indexed by generic complex numbers. This extends to cobordisms the
  non-semi-simple invariants defined in \cite{CGP} including the
  Kashaev invariant of links.  Here the modular category framework
  does not apply and we use the ``universal construction''.  Our TQFT
  provides a monoidal functor from a category of surfaces and their
  cobordisms into the category of graded finite dimensional vector
  spaces and their degree $0$-morphisms and depends on the choice of a
  root of unity of order $2r$. The functor is always symmetric
  monoidal but for even values of $r$ the braiding on GrVect has to be
  the super-symmetric one, thus our TQFT may be considered as a
  super-TQFT.  In the special case $r=2$ our construction yields a
  TQFT for a canonical normalization of Reidemeister torsion and we
  re-prove the classification of Lens spaces via the non-semi-simple
  quantum invariants defined in \cite{CGP}.  We prove that the
  representations of mapping class groups and Torelli groups resulting
  from our constructions are potentially more sensitive than those
  obtained from the standard Reshetikhin-Turaev functors; in
  particular we prove that the action of the bounding pairs generators
  of the Torelli group has always infinite order.
\end{abstract}

\maketitle
\setcounter{tocdepth}{3}

\section{Introduction}

In 1988 E. Witten \cite{Wi} introduced the notion of  Topological Quantum Field
Theory, with  path integrals of the Chern-Simons action as motivating example. This was the starting point of a fascinating interaction
between mathematics and theoretical physics,
deeply relating many domains such as knot theory, Von Neumann algebras,
 Hopf algebras, Lie algebras and quantum groups,
Chern-Simons theory, conformal field theory.

 A Topological Quantum Field Theory includes
topological invariants of $3$-dimensional manifolds.
 Following Witten's ideas, a rigorous construction of
such invariants was first obtained
from knot theory and quantum groups by Reshetikhin and Turaev  \cite{RT}. Then
  Blanchet, Habegger, Masbaum and Vogel  \cite{BHMV}  constructed the whole TQFT using Kauffman bracket skein model and the so called universal construction.
 It was further shown by Turaev  \cite{Tu} that a relevant algebraic structure
was that of a {\em modular category}. From a modular category one can derive
a Topological Quantum Field Theory and in particular,
invariants of links and $3$-manifolds.
 A modular category has to be semi-simple with finite set of isomorphism classes of simple objects. In more recent work \cite{TuHQFT}, Turaev has extended the theory and defined modular $G$-categories, which are $G$-graded, to fit with cobordism categories of surfaces and $3$-manifolds equipped with flat $G$ connection. A modular $G$-category still has to be semi-simple and has to satisfy the finiteness condition in each degree. 
Significative examples in the context of quantum groups are still to be found.
 
 Other approaches and  examples of TQFTs have been produced where the 
 source  category of surfaces and their cobordisms differs from the standard
 one by extra decoration or structure:
 see for instance \cite{AK},\cite{BB},\cite{BM},\cite{GP2} and
 \cite{TuHQFT}. 
 A purely Hopf algebra approach for non semi-simple TQFTs was given by Kerler and Lyubashenko in \cite{KL}. It would be interesting to extend this picture to include the family of TQFTs constructed here.

In this paper we build a new series of TQFTs based on nilpotent
representations of quantum $\slt$ at a root of unity of order $2r$ where
$r\geq 2$, is not
divisible by $4$. 
The corresponding category is $\C/2\Z$-graded with
semi-simplicity only in generic grading and the finiteness only up to tensor
action with non trivial $1$-dimensional modules.  The invariants of closed
$3$-manifolds based on these categories have been constructed in
\cite{CGP}. The whole theory deals with cobordisms equipped with colored graph
and compatible cohomology class over $\C/2\Z$, or equivalently $\C^*$ flat
connection, satisfying certain generic admissibility condition. The flat
connection gives a local system. The Reidemeister torsion, defined with
certain indeterminacy from the corresponding complex, was normalized by Turaev
using complex spin structures. For $r=2$ it is know \cite{jM2,Vi} that the
nilpotent quantum $\slt$ invariant of links is a framed version of the
multivariable Alexander polynomial. We show that the closed $3$-manifold
invariant for $r=2$ is a canonical normalization of Reidemeister torsion for
which we get a whole TQFT.  
For general $r$, we compute  the Verlinde formulas which give in the generic case the graded dimensions of the TQFT vector spaces. We may expect an interpretation of those  in conformal field theory.
This paper also contains several examples and applications, including the construction of new mapping class group representations which have interesting properties: the action of a Dehn twist on separating curve has infinite order.  Such behavior is in sharp contrast with the usual quantum representations where all the Dehn twists have finite order.  We were not able to find any elements in the kernel of these mapping class group representations (modulo the center).

 We now give an overview of the construction and results.

\subsection{The difficulties of applying the universal construction}
Suppose one is given an invariant $Z$ of closed oriented $3$-manifolds with the property that $Z(M\sqcup N)=Z(M)Z(N)$. 
Then one may try to build a TQFT out of $Z$ as follows. For each oriented surface $\Su$ define its (infinite dimensional) vector space $\cV(\Su)$ as the $\C$-span of all the oriented $3$-manifolds bounding $\Su$, and also $\cV'(\Su)$ as the span of all the manifolds bounding $\overline{\Su}$ (the surface with opposite orientation). One can define a pairing $\langle\cdot,\cdot,\rangle_Z:\cV'(\Su)\otimes\cV(\Su)\to \C$ by $\langle N,M\rangle_Z=Z(N\circ M)\in \C$.
Finally define $\V(\Su)=\cV(\Su)/\ker_R\langle\cdot,\cdot,\rangle_Z$ (where $\ker_R$ is the annihilator in $\cV(\Su)$ of the whole $\cV'(\Su)$, i.e. the right kernel of the pairing).  Similarly define $\V'(\Su)=\cV'(\Su)/\ker_L\langle\cdot,\cdot,\rangle_Z$.
 The assignment:
$$\{\text{surface}\} \to \{\text{vector spaces}\},  \; \; \Su
\mapsto \V(\Su),$$
extends to a functor $\V$ where cobordisms are sent to linear maps.  
 This functor is called the \emph{universal construction}.  In \cite{BHMV}, it is shown that if the invariant $Z$ behaves well under 1- and 2-surgeries then the  functor $\V$ gives finite dimensional vector spaces. If moreover one can show that generators for the vector space associated with a disjoint surface can be represented by split bordisms, then the functor is monoidal and leads to a finite dimensional TQFT. 
The main example of such a TQFT arises from the universal construction applied to the usual quantum invariant $Z$ coming from a modular category.

The goal of this paper is to use the universal construction to define new TQFTs from the re-normalized quantum invariants  $\Zr$
constructed in \cite{CGP}.  This is not a straightforward application of the universal construction because the following difficulties arise:  
\begin{enumerate}[  \; (D1)]
\item \label{I:Diff1}  Since $\Zr$ is not just an invariant of a 3-manifold but instead an invariant of an \emph{admissible} triple $(M,T,\coh)$, the associated TQFT must be define on suitably decorated surfaces and cobordisms.  
In particular, the surfaces have framed colored points and a 1-cohomology class.  The cobordisms are triples (a $3$-manifold $M$, a colored ribbon graph in $M$, a cohomology class $\coh\in H^1(M\setminus T;\C/2\Z)$).  
 \item \label{I:Diff2} The category of modules used to define  $\Zr$ is not semi-simple.
 \item \label{I:Diff3} The invariant  $\Zr$ does not behave well under 1-surgeries (i.e. connected sums).
\end{enumerate}
Overcoming these difficulties resulted in TQFTs that have several useful applications and novel qualities which we will discuss in the next subsection.  Then in Subsection \ref{SS:OvercomingDiff} we will discuss how we get over Difficulties (D\ref{I:Diff1})--(D\ref{I:Diff3}). 

\subsection{Main Results}
Difficulty (D\ref{I:Diff3}) implies that functor $\V$ obtained from applying the universal construction to $\Zr$ is not monoidal.  However, inspired by the construction of spin TQFT in \cite{BM}, we show that we can use $\V$ to obtain the following 
theorem which is a restatement of Corollary \ref{C:FunctorExists}. 
\begin{theo}\label{T:IntroMainTheorem}
For each integer $r>1$, the functor $\V$ can be upgraded to a monoidal functor $\VV:\Cob\to \GrVect$ from the category of decorated surfaces and their decorated cobordisms (see Subsection \ref{S:cobcat}) to the category of finite-dimensional $\Z$-graded $\C$-vector spaces and their degree $0$-morphisms.
\end{theo}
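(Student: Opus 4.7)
The plan is to start from the functor $\V$ produced by the universal construction applied to $\Zr$ on decorated cobordisms, and to refine it into a monoidal $\Z$-graded functor $\VV$ by tracking an additive topological quantity that compensates for the failure of $\Zr$ to be multiplicative under $1$-surgery (D\ref{I:Diff3}), following in spirit the spin TQFT construction of \cite{BM}.

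First, I would verify that $\V(\Sigma)$ is finite dimensional in each relevant sector: using a surgery presentation of any admissible decorated bordism and the known behavior of $\Zr$ under $2$-surgery from \cite{CGP}, every class in $\V(\Sigma)$ is shown to lie in the span of a finite family of standard generators (for instance, decorated handlebodies whose ribbon graph restricts to the boundary data and whose cohomology class extends the one on $\Sigma$). The admissibility condition together with the specification of the $\C/2\Z$-cohomology class on $\Sigma$ make this spanning set finite.

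The core of the proof is the monoidal upgrade. Because $\Zr$ is not multiplicative under $1$-surgery, not every element of $\V(\Sigma_1 \sqcup \Sigma_2)$ can be written as a combination of split bordisms; the defect is concentrated in representatives connecting the two boundary components by $1$-handles. I would define a $\Z$-grading on $\V(\Sigma)$ by an integer extracted from a surgery presentation of a representative bordism (designed to be additive under disjoint union and invariant under the surgery/skein relations that define $\V$), and set $\VV(\Sigma)=\bigoplus_n \VV_n(\Sigma)$. Standard gluing properties of $\Zr$ then imply that each decorated cobordism induces a degree-preserving map, giving a functor $\VV:\Cob\to\GrVect$ refining $\V$.

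The main obstacle will be to prove the monoidal isomorphism $\VV(\Sigma_1\sqcup\Sigma_2)\cong \VV(\Sigma_1)\otimes \VV(\Sigma_2)$ as $\Z$-graded vector spaces. The strategy is that, once one isolates the contributions by their grading, each graded summand admits a representative by split bordisms: a representative with $k$ connecting $1$-handles is, in the degree-$n$ piece, equivalent to a disjoint union of two bordisms up to a controlled normalization depending on $k$. The hardest step will be checking this normalization is consistent, meaning simultaneously coherent with composition of cobordisms and compatible with $\Zr$, so that the induced multiplication map is a genuine isomorphism in each degree rather than only a surjection or injection. Granted this, $\VV$ is a well-defined monoidal functor from $\Cob$ to $\GrVect$ with degree-$0$ morphisms, exactly as asserted.
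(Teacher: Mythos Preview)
Your proposal has a genuine gap: you treat $\VV(\Sigma)$ as $\V(\Sigma)$ equipped with an additional $\Z$-grading, and then try to prove $\V(\Sigma_1\sqcup\Sigma_2)\cong\V(\Sigma_1)\otimes\V(\Sigma_2)$ as graded spaces. This cannot work. The paper exhibits explicit decorated spheres $\hS_k$ (for $k\in\Z$) with $\V(\hS_k)=\{0\}$ for $k\neq 0$ but $\V(\hS_k\sqcup\hS_{-k})\cong\C$; no grading on the zero vector space will recover the missing one-dimensional piece. More generally, the image of the disjoint-union map $\V(\Sigma_1)\otimes\V(\Sigma_2)\to\V(\Sigma_1\sqcup\Sigma_2)$ is only the degree-zero part of the target, so splitting bordisms along essential spheres and hoping to land in $\V(\Sigma_1)\otimes\V(\Sigma_2)$ fails precisely in the nonzero degrees.

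The paper's remedy is structural rather than a normalization trick: one \emph{enlarges} the vector space by setting $\VV^m(\Sigma)=\V(\Sigma\sqcup\hS_{-m})$ and $\VV(\Sigma)=\bigoplus_m\VV^m(\Sigma)$. The spheres $\hS_k$ are exactly the obstruction classes that arise when one cuts a connected bordism along an essential sphere without being able to fill it back (the failure of $1$-surgery). The monoidal structure is then built from explicit ``pants'' cobordisms $\Pa_{k,\ell}^{k+\ell}:\hS_k\sqcup\hS_\ell\to\hS_{k+\ell}$, which are shown to be invertible up to scalar; these merge the auxiliary spheres and yield the natural isomorphism $\VV(\Sigma_1)\otimes\VV(\Sigma_2)\to\VV(\Sigma_1\sqcup\Sigma_2)$. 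Your proposed ``integer extracted from a surgery presentation'' does not capture this: the grading in the paper comes from the spectral decomposition of the action of $H^0(\Sigma,\C/2\Z)$ via degree cylinders, and its nontrivial pieces live in the enlarged spaces $\V(\Sigma\sqcup\hS_{-m})$, not in $\V(\Sigma)$ itself.
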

When $r$ is even the symmetric braiding on $\GrVect$ is non-trivial and coincides with that of super-vector spaces.  So in these cases we obtain a ``super-TQFT''. 
Furthermore, we can explicitly compute the graded dimension of $\VV(\Su)$ via a ``Verlinde formula'' (Theorem \ref{P:verlinde}).

The functors in Theorem \ref{T:IntroMainTheorem} are the TQFTs we study in this paper.  They have several interesting features.  First, they depend on a cohomology class $\coh$.  In some cases we can vary values of this cohomology class continuously.  This feature is very useful.  In particular,  it can be used to study the mapping class group representations associated to the TQFTs of Theorem \ref{T:IntroMainTheorem}.  At this point we do not know if these representations are faithful.    We will now state several results which support this possibility.  Let $\Su$ be a decorated surface of genus $g$ with 1-cohomology class $\coh$.
\begin{theo}
If $r\geq 3$, $g\geq 3$ and the value of $\coh$ on any non-zero cycle is irrational then the action of Johnson's generators of the Torelli groups has infinite order on $\VV(\Su)$. 
\end{theo}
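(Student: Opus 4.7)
The plan is to show, for each bounding pair generator $\phi = T_\alpha T_\beta^{-1}$ of the Torelli group (which generate it by Johnson's theorem since $g \geq 3$), that the induced endomorphism of $\VV(\Su, \coh)$ admits an eigenvalue that is not a root of unity; this gives infinite order.

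First I would apply the gluing/splitting property of $\VV$ along the separating multicurve $\alpha \cup \beta$, which cuts $\Su$ into two subsurfaces $\Su_1, \Su_2$ each of positive genus. This produces a decomposition of $\VV(\Su, \coh)$ indexed by admissible colorings $(V_{c_\alpha}, V_{c_\beta})$ on the cut curves, compatible with the cohomology $\coh$. On each summand the Dehn twists $T_\alpha, T_\beta^{-1}$ act diagonally by the ribbon twist scalars $\theta_{c_\alpha}, \theta_{c_\beta}^{-1}$, and hence $\phi$ acts by $\theta_{c_\alpha}/\theta_{c_\beta}$. Naively, $[\alpha] = [\beta]$ in $H_1(\Su)$ gives $\coh([\alpha]) = \coh([\beta])$ in $\C/2\Z$, suggesting $c_\alpha = c_\beta$ and a trivial ratio. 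The key point specific to the non-semi-simple framework of \cite{CGP} is that the splitting formula is sensitive to the full $\C$-valued lift of the cohomology: integer shifts on the two sides of the cut produce independent summands, which are precisely what is indexed by the $\Z$-grading of $\VV(\Su, \coh)$. Writing $\theta_c = q^{P(c)}$ for $q = \e^{i\pi/r}$ and $P$ the polynomial from the quantum Casimir on nilpotent modules, the ratio $\theta_{c_\alpha}/\theta_{c_\beta}$ between two distinct lifts of $\coh([\alpha])$ evaluates to $q^{L(\coh([\alpha]))}$ for an affine form $L$ with non-zero linear part. The hypothesis that $\coh$ takes irrational values on the non-zero cycle $[\alpha]$ then renders this exponent irrational, so the scalar is not a root of unity, yielding the desired eigenvalue.

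The main obstacle is the first step: rigorously justifying that the splitting formula decomposes $\VV(\Su, \coh)$ into summands indexed by distinct $\C$-valued lifts rather than a single color determined modulo $2\Z$. In the semi-simple setting only the diagonal color survives and $\phi$ acts trivially, so the whole novelty here is that the universal construction of Theorem \ref{T:IntroMainTheorem}, built from the non-semi-simple invariant $\Zr$, incorporates this additional lifting data through the $\Z$-grading. Carrying this out rigorously would use the framework developed in Subsection \ref{SS:OvercomingDiff} together with the Verlinde-type decomposition of Theorem \ref{P:verlinde}.
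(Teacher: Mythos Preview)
Your numerical punchline is exactly right: the eigenvalue of a bounding pair map on a suitable vector is $q^{(h-k)(\alpha+\frac{h+k}2)}$ for two different lifts $\alpha+h,\alpha+k$ of the class $\bar\alpha=\coh([c])=\coh([c'])$, and irrationality of $\alpha$ makes this not a root of unity. But the mechanism you propose for producing such a vector is not the one the paper uses, and as you yourself flag, it has a real gap.

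You want to cut $\Su$ along the bounding pair and invoke a gluing/splitting formula for $\VV$ that decomposes $\VV(\Su)$ over colorings of the cut curves, with distinct integer lifts of $\bar\alpha$ appearing as independent summands. No such surface-cutting formula is established in the paper; $\VV$ is only defined on closed decorated surfaces, the Verlinde formula (Theorem~\ref{P:verlinde}) gives dimensions rather than a block decomposition, and the $\Z$-grading on $\VV$ tracks connected components, not internal curves. So the step ``apply the gluing/splitting property of $\VV$ along $\alpha\cup\beta$'' is not available, and your proposal does not supply an alternative route to it.

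The paper's argument avoids cutting the surface entirely. It chooses a handlebody $H$ bounding $\Su$ in which both $c$ and $c'$ bound meridian disks, i.e.\ are meridians of two distinct edges $e,e'$ of an $\coh$-regular trivalent spine $\Gamma\subset H$. Proposition~\ref{prop:genericbasis} then gives an explicit basis $\{v_b\}$ of $\VV(\Su)$ indexed by colorings of $\Gamma$; in such a coloring the edges $e,e'$ carry colors $\alpha+h$ and (up to orientation) $\alpha+k$ with $h,k\in\Hr$ chosen independently, and since $r>2$ one can pick $h\neq k$. The bounding pair map changes only the framings of $e$ and $e'$ by $\pm1$, so it sends $v_b$ to the twist ratio $q^{\frac{(\alpha+h)^2-(\alpha+k)^2}2}$ times $v_b$. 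Thus each $v_b$ is already an eigenvector, and no splitting formula is needed. The conceptual content you identified (``different integer lifts of the same class $\bar\alpha$ can occur simultaneously'') is realized concretely as ``different edges of the spine with the same meridian class can carry different colors in $\alpha+\Hr$''.
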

The above theorem is a restatement of Theorem \ref{T:torelli}.  The following two theorems are restatements of Theorems \ref{T:Dehnnon-separating} and  \ref{T:mcgr=2g=1}, respectively.

\begin{theo}
 If  $\coh=0$
 then the action of a Dehn twist along a separating curve of $\Su$ has infinite order on $\VV(\Su)$. 
\end{theo}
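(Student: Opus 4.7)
The strategy is to exhibit a vector in $\VV(\Su)$ on which $T_\gamma$ acts with a non-trivial Jordan block, by exploiting the non-semi-simple structure of the category of nilpotent modules at trivial grading.

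Since $\gamma$ separates $\Su$ into two compact subsurfaces $\Su_1, \Su_2$ with common boundary $\gamma$, I would use the monoidal structure of $\VV$ from Theorem~\ref{T:IntroMainTheorem} and the gluing property inherent in the universal construction to build candidate vectors $v_c \in \VV(\Su)$ as follows: take handlebodies $H_i$ bounding $\Su_i$, each carrying a colored ribbon graph with one strand running parallel to $\gamma$ and labelled by a module $c$, and then glue them along $\gamma$. Because $\coh = 0$, the holonomy around $\gamma$ is forced to be trivial, so the color $c$ must lie in the degree-$0$ subcategory of $\cat$. Crucially, that subcategory is \emph{not} semi-simple: it contains indecomposable, non-simple modules $P$ (for instance the projective cover of a simple module), which are precisely the source of the non-semi-simplicity of the theory.

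The Dehn twist $T_\gamma$ is represented by the cylinder cobordism twisted along $\gamma$, and its action on $v_c$ amounts to inserting the ribbon automorphism $\theta_c\colon c\to c$ on the color-$c$ strand. For $c=P$ non-simple indecomposable, one has $\theta_P=\lambda\,\mathrm{id}_P+N$ with $\lambda\in\C^*$ and $N\neq 0$ nilpotent, since the Casimir takes a common eigenvalue on two constituents of $P$ linked by the linkage principle and therefore acquires a Jordan block on $P$. Consequently $\theta_P^n = \lambda^n\mathrm{id}_P + n\lambda^{n-1}N + \binom{n}{2}\lambda^{n-2}N^2+\cdots$ is never a scalar multiple of $\mathrm{id}_P$ for $n\neq 0$, so the action of $T_\gamma^n$ on the sector generated by $v_P$ cannot equal $\pm\mathrm{id}$, which would give infinite order (even accounting for the super-sign in the even-$r$ case).

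The main obstacle is of course to check that this non-semi-simple sector survives the universal-construction quotient: one has to guarantee that $v_P\neq 0$ in $\VV(\Su)$ and that the nilpotent part $N$ is not annihilated when descending to $\VV$. I would address this by pairing $v_P$ with a dual vector $w$ built from an opposite bordism, and identifying $\langle w,(T_\gamma^n-\lambda^n\mathrm{id})v_P\rangle_{\Zr}$ with the invariant $\Zr$ of a closed decorated $3$-manifold containing a color-$P$ loop through which the operator (polynomially depending on $n$ and linear in $N$) is inserted. Combining the explicit form of the Jordan block on $P$ with the Verlinde-type computation of Theorem~\ref{P:verlinde}, this pairing should be seen to be non-zero for infinitely many $n$, which would conclude the proof.
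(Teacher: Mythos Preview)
Your overall strategy --- exploit the nilpotent part of the ribbon twist $\theta_P$ on an indecomposable projective $P\in\cat_{\bar 0}$ to produce a unipotent Jordan block --- is precisely the mechanism the paper uses. But there is a real geometric gap in your setup, and you should also be aware that the statement as printed in the introduction is a typo: the theorem it cites (Theorem~\ref{T:Dehnnon-separating}) is about a \emph{non}-separating curve $c$, and the proof genuinely uses that hypothesis.

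The gap is the claim that $\DT_\gamma$ acts on a strand ``parallel to $\gamma$'' by inserting $\theta_c$. It does not: a framed curve parallel to $\gamma$ is fixed (as a framed isotopy class) by $\DT_\gamma$. The way $\theta$ actually enters is through a strand that meets $\gamma$ \emph{transversally once}; when $\gamma$ bounds a disk $D$ in the handlebody, $\DT_\gamma$ extends to a disk twist along $D$ and puts one full twist (hence $\theta$) on each strand passing through $D$. A single transverse intersection forces $\gamma$ non-separating, which is exactly what the paper uses: take $H$ with $c$ a meridian, let $c'$ be a dual curve ($c'\cdot c=1$) pushed into $H$ and coloured by $P_0$; then $\DT_c$ acts on $[H,c'_{P_0}]$ as $\theta_{P_0}=\Id-(r-1)\{1\}\,x_0$, giving the unipotent block. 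For the non-vanishing of $[H,x_0c'_{P_0}]$ the paper does something much simpler than your proposed Verlinde argument (which would not apply anyway, since Theorem~\ref{P:verlinde} requires $\Su$ admissible and $(\Su,\emptyset,0)$ is not): pair with the complementary \emph{empty} handlebody $H'$ so that $H'\circ H=S^3$ contains an unknot coloured by $P_0$ with a coupon $x_0$, and read off the modified trace $\qt_{P_0}(x_0)=-1\neq 0$.
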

\begin{theo}
 If $g=1$ and $r=2$ then
 the mapping class group representation corresponding to $\VV(\Su)$
is faithful modulo its
  center.
\end{theo}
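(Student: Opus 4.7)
The plan is to exploit the low dimensions in the case $g=1$, $r=2$ to carry out an explicit matrix computation and identify the resulting representation with the standard embedding $\SL(2,\Z)\hookrightarrow \mathsf{GL}(2,\C)$.

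First, by the Verlinde formula of Theorem~\ref{P:verlinde} applied with $g=1$, $r=2$, the space $\VV(T^2,\coh)$ is two-dimensional for generic $\coh\in H^1(T^2;\C/2\Z)$. An explicit basis can be chosen as two solid tori bounding $T^2$ whose cores are colored by the two simple nilpotent $\UsltH$-modules whose weights are compatible with the restriction of $\coh$ to the meridian. In this basis every vector has an interpretation as an admissibly-colored solid torus, which makes downstream computations tractable.

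Second, the mapping class group $\mcg(T^2)\cong\SL(2,\Z)$ is generated by the Dehn twist $T$ and the modular flip $S$, both realized as mapping cylinder cobordisms decorated by a compatible cohomology class. In the basis above, $\VV(T)$ is diagonal, its eigenvalues being the ribbon twists of the two simples, while $\VV(S)$ has entries given by Hopf pairings of the nilpotent simples, computed using the modified trace. This yields explicit $2\times 2$ matrices whose entries depend transcendentally on $\coh$, and one verifies directly that they satisfy the $\SL(2,\Z)$ relations up to a central character.

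Finally, at $r=2$ the nilpotent theory has a quasi-classical flavor related to the Alexander polynomial and Reidemeister torsion, which should allow one to identify the projectivization of the pair $(\VV(S),\VV(T))$ with (a scalar twist of) the standard inclusion $\mathsf{PSL}(2,\Z)\hookrightarrow \mathsf{PGL}(2,\C)$. Since the latter is faithful, the kernel of the $\SL(2,\Z)$-representation consists precisely of the center $\{\pm I\}$, giving the claim. The main obstacle is this final identification: one must verify that the explicit matrices $\VV(S)$ and $\VV(T)$ generate no relations beyond those forced by $\SL(2,\Z)$ together with scalar multiples. This should follow from the transcendental dependence of the matrix entries on $\coh$: any would-be extra word in $S,T$ reducing to a scalar would impose an algebraic identity among these entries which can be ruled out by specializing $\coh$ to enough explicit values.
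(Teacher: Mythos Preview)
Your approach has a genuine gap at the very first step: you work with a \emph{generic} cohomology class $\coh$, but the theorem concerns the full mapping class group, which only acts on $\VV(\Su)$ when $\coh$ is fixed by $\mcg(\Su)\cong\SL(2,\Z)$. For the torus this forces $\coh$ to be integral, and the paper's proof in fact takes $\coh=0$. For generic (non-integral) $\coh$ the Verlinde formula does \emph{not} give dimension~$2$: since $r=2$ is even, $r'=1$ and Proposition~\ref{P:torusccase} (or Example~\ref{Ex:dimTore}) shows $\dim\VV(\Su)=r'=1$. A one-dimensional space cannot support a faithful projective $\SL(2,\Z)$-representation, and in any case only the stabilizer of $\coh$ (containing the Torelli group) would act.

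The correct setting, $\coh=0$, is genuinely non-semisimple and your description of the basis and of $\VV(T)$ breaks down there. The relevant $2$-dimensional space is $\V(\Su,0)$ with basis $[T_{P_0}],[T_{xP_0}]$ (Proposition~\ref{prop:torus0}), where $P_0$ is the indecomposable projective and $x$ its nilpotent endomorphism. The Dehn twist acts via the ribbon twist $\theta_{P_0}=\Id-2ix$ (Lemma~\ref{lem:twistPj}), so $\VV(T)$ is \emph{unipotent}, not diagonal; there are no ``two simples'' giving two twist eigenvalues. The paper then computes $\VV(S)$ directly from the pairings in Proposition~\ref{prop:torus0}, obtains explicit $2\times 2$ matrices, and after a scalar correction and a diagonal conjugation identifies the representation with the standard inclusion $\SL(2,\Z)\hookrightarrow\mathsf{GL}(2,\C)$, which is faithful. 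Your ``transcendental dependence on $\coh$'' strategy cannot apply, since $\coh=0$ is a single fixed point with no parameter to vary.
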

The above results are to be compared with the known fact that in the standard WRT-case one gets quantum representations of the mapping class groups in which the order of the Dehn twists are finite and so which are not faithful (but they are asymptotically faithful \cite{An}, \cite{FWW}).

Another interesting feature of the TQFTs of Theorem \ref{T:IntroMainTheorem}  
is that they place some well known invariants in the larger setting of re-normalized invariant based on non-semi-simple categories.  This allows on one side to study some well-known invariants through a new lens, and on the other side to exploit known results about these invariants to investigate our TQFTs.  We will now discuss three such examples.

First, in Theorem \ref{teo:reidemeister} we prove that when $r=2$ then $\Zr(M,\emptyset,\coh)$ 
is equal to a
canonical normalization of the Reidemeister torsion of $(M,\phi_\coh)$. 
Here
$\phi_\coh\in \Hom(H_1(M,\Z),\mathbb{C}^*)\approx H^1(M,\mathbb{C}^*)\approx
H^1(M,\mathbb{C}/2\Z) $ corresponds to $\omega$. Note that previously Turaev
used a complex spin structure to fix the normalization. Our formula extracts
the contribution of the complex spin structure in Reidemeister-Turaev
torsion.  An interesting consequence is that non-semi-simple $\slt$ quantum
invariants for $r=2$ classify lens spaces (see Proposition~\ref{prop:distinguishlens}), where the usual semi-simple quantum invariants did not.
Thus, 
for $r=2$ our construction gives a TQFT for Reidemeister torsion.

A second example is a relation of the invariants $\Zr$ with the
Kashaev invariants of knots in the sphere: indeed when $M=S^3$ and $T$
is a framed knot colored by the ``Kashaev-module'' over $\UsltH$ and
$\coh=0$ (for $r$ odd) then $\Zr(M,T,0)$  equals Kashaev's invariant which is at the base of the
formulation of the well known Volume Conjecture.  (For generic $\coh$, $\Zr(M,T,\coh)$ equals the
Akutsu-Deguchi-Ohtsuki invariant \cite{ADO} of $T$, for a suitable
coloring of $T$).  As already discussed in \cite{CGP}, our invariants
allow an extension of Kashev invariants to knots and links in any
manifold and thus a  generalized Volume
Conjecture for arbitrary manifolds.
So if we apply the functor $\VV$ to a surface equipped with the zero-cohomology class 
 then we obtain a TQFT for these Kashev invariants.  

The third example is the Kauffman skein algebra of $\Su$. We show that
a suitable finite sum of modules $\V(\Su)$ is acted on by this algebra
(see Subsection~\ref{sub:kauffman}).  The existence of the Kauffman
skein algebra in our TQFTs makes it possible to compute a large class
of examples and to make connections with a great deal of previous
works.

\subsection{Overcoming Difficulties (D\ref{I:Diff1})--(D\ref{I:Diff3})}\label{SS:OvercomingDiff}  Difficulty (D\ref{I:Diff1}):
The invariant $\Zr$ is not defined for \emph{any}  triple $(M,T,\coh)$ but instead only for admissible triples (see Definition \ref{D:Admissiblethreeuple}). Basically, a triple $(M,T,\coh)$ is \emph{admissible} if $T$ contains an edge colored by a projective module of $\UsltH$ or $\coh$ is not integral. 
With this requirement on $\Zr$ we need to ensure that the composition cobordisms is always admissible. 
We solve this problem by requiring that each component of a cobordism from $\Su_-\to \Su_+$ \emph{which does not touch}  $\Su_-$ is admissible (Definition \ref{D:Cobordisms}). 
This automatically ensures that when pairing $N\in \cV(\Su)$ with $M\in  \cV'(\Su)$ via $\langle\cdot,\cdot,\rangle_{\Zr}$ we obtain a closed cobordism which is admissible.  However, the admissibility requirement  breaks the symmetry between $\Su_-$ and $\Su_+$ : a manifold representing an admissible cobordism  $\Su_-\to \Su_+$ need not
represent an admissible cobordism  $\overline{\Su}_+\to \overline{\Su}_-$.

Difficulty (D\ref{I:Diff2}):  Since the category of modules underlying the construction of the invariants $\Zr$
is not semi-simple, to determine a basis for the TQFT, we completely classify all the projective modules of the category and compute their ``modified traces''; we decided to provide all the purely algebraic aspects of our analysis in a forthcoming paper \cite{CGP3}, while here we cite in the Appendix all the facts we will need.  
Also, it should be noted that the non-semi-simplicity makes the definition of $\Zr$ much more complicated and requires new techniques, see \cite{CGP}.

Difficulty (D\ref{I:Diff3}): 
Since the invariant $\Zr$ does not behave well under $1$-surgeries, then a manifold bounding a surface contains an essential sphere may not be  equivalent in $\V(\Su)$ to the disjoint union of the two manifolds obtained by cutting along the sphere and filling back by two $3$-balls.  This is the reason why $\V$ is not monoidal, see Proposition \ref{prop:vspheres}.  We solve this problem by understanding exactly in which cases this cutting and refilling operation is possible (Proposition \ref{P:1-surg}) and in the other cases we replace it by simply cutting but not refilling.  This automatically leads to the definition of a larger, graded vector space $\VV(\Su)$ for any surface $\Su$ (Definition \ref{def:VV}).

\subsection{Further directions of investigation}
As described above, plenty of challenging questions are open by the present
    work. Let us formulate some of them here:
\begin{question}[Torelli representations]
Let $\Su$ be a closed surface with an irrational cohomology class $\coh$.
For a fixed $r\geq 2$ is the representation of the Torelli group on $\VV(\Su)$ faithful?  If not what is the kernel?
\end{question}
As stated above, when $r=2$ the invariants $\Zr$ coincide (up to some factors)
with the Reidemeister torsion of $3$-manifolds equipped with the abelian
representation of their fundamental group associated to $\coh$. So our
   construction provides a new TQFT for the torsion:
\begin{question}
When $r=2$ is there a relation between $\VV(\Su)$ and the vector spaces associated to $\Su$ coming from the TQFTs defined by Kerler in \cite{Ke} and by Frohman and Nicas in \cite{FN}.  
\end{question}
As stated above, when $\coh=0$ one gets representations of the whole mapping class group and the invariants (at least for odd $r$) are related to Kashaev invariants and so, via the Volume Conjecture to hyperbolic geometry:
\begin{question}[Kashaev's TQFTs]
Suppose that $r$ is odd and that $\coh=0$. \begin{enumerate}
\item Is the action of mapping class group $\mcg$ on $\VV(\Su)$ faithful?  If not what is its kernel?  
\item If $\phi\in \mcg(\Su)$, what is the asymptotic growth rate as $r\to \infty$ of the spectral radius of the operator induced by $\phi$ on $\VV(\Su)$?  In particular, if $\phi$ is pseudo-Anosov is this behavior related to the dilation factor or to the volume of the associated mapping torus?
\end{enumerate}
\end{question}
Related to the above question is the problem of giving an explicit base of $\VV(\Su)$ when $\coh$ is integral (we solved this problem in the case $g=1$ and when $\coh$ is non-integral in Subsection \ref{sub:genusgcase}).
The results of \cite{CGP2} should be a guide to answering the following question.

\begin{question}[Relation with the standard TQFTs]
What is the relation between the standard Reshetikhin-Turaev TQFTs at level $r$ for $\Uq$ and $\VV(\Su)$ when $\coh$ is integral? 
\end{question}

Finally the category of $\UsltH$-modules  we consider seems similar to the
category of modules over the ``triplet vertex algebra $W(p)$'' appearing in
the context of logarithmic conformal field theory \cite{FGST}. In the
standard Reshetikhin-Turaev case at level $r$ the spaces associated to $\Su$
can be identified with the conformal blocks of the Chern-Simons conformal
   field theory at the same level, so it is natural to ask the following:
\begin{question}
Is there a (logarithmic) conformal field theory whose conformal blocks correspond to our $\VV(\Su)$?
\end{question}
 
\subsection{Structure of the paper}
Each section is preceded by a summary highlighting the main points of
the section.  We strongly advise the reader to read these summaries
before getting to the details of the section.
Section~\ref{sec:invariants} recalls the definition of the invariants
$\Zr$. Section~\ref{sec:universal} defines the category of
cobordisms $\Cob$ and applies the universal construction to $\Cob$.
This section also contains properties of $\V$ which can be deduced
easily from this construction.  Section~\ref{S:PropTQFT} studies in
depth the properties of $\V$ and shows, in particular, that $\V(\Su)$
is finite dimensional, non-monoidal and carries a natural grading.  In
Section~\ref{sec:VV}, we use $\V$ to define the monoidal functor
$\VV$; then we compute the graded dimension of $\VV(\Su)$.
Section~\ref{sec:applications} is devoted to providing examples of our
construction and contains the first topological applications of it: in
particular, we show that the generators of the Torelli groups act with
infinite order when $\coh$ is irrational and relate our construction
to the Reidemeister torsion and to the Kauffman skein algebra.  The
Appendix contains some algebraic facts we need in the paper.

\section{Invariants of links and closed 3-manifolds}\label{sec:invariants}


In this section we recall the construction of the invariants $\Zr(M,T,\coh)$ of $3$-manifolds equipped with the datum of a (possibly empty) ribbon graph $T$ and a   cohomology class $\coh\in H^1(M\setminus T;\C/2\Z)$ defined in \cite{CGP}. 
(Actually the invariants we will define differ from the original ones by a re-normalization factor which suits the purposes of the universal construction.)
In Subsection \ref{SS:Quantsl2} we define the category $\cat$ of modules which we will use to color the edges of the ribbon graphs in the present paper. In particular we provide the complete list of all the simple modules of $\cat$: particular attention should be given to the modules $V_\alpha$ and $\sigma^k$. 
In Subsection \ref{ss:F'} we recall the construction of invariants of $\cat$-colored ribbon graphs in $S^3$ using the modified trace technique.  Finally in Subsection \ref{sub:invariants} we recall the definition of the invariants $\Zr$. Special attention here should be payed to the fact that the invariants are defined only for the set of \emph{admissible} $3$-uples (see Definition \ref{D:Admissiblethreeuple}): this apparently technical point has strong consequences on the structure of our TQFT.
\subsection{A quantization of $\slt$}\label{SS:Quantsl2}  Here we give a slightly
generalized version of quantum $\slt$.  Fix an integer $r\geq 2$ which is either odd or 2 times an odd.  Let $q=e^\frac{\pi\sqrt{-1}}{r}$ and $r'=r$ if $r$ is odd and $r'=r/2$ if $r$ is even.  We shall use the notation
$q^x=e^{\frac{\pi\sqrt{-1} x}{r}}$ and $\{x\}:=q^x-q^{-x}$.

Let $\UsltH$ be the
$\C(q)$-algebra given by generators $E, F, K, K^{-1}, H$ and
relations:
\begin{align*}
  HK&=KH, & HK^{-1}&=K^{-1}H, & [H,E]&=2E, & [H,F]&=-2F,\\
  KK^{-1}&=K^{-1}K=1, & KEK^{-1}&=q^2E, & KFK^{-1}&=q^{-2}F, &
  [E,F]&=\frac{K-K^{-1}}{q-q^{-1}}.
\end{align*}
In \cite{CGP}, $\UsltH$ is given an explicit  Hopf algebra structure.  Define $\Ubar$ to be the Hopf algebra $\UsltH$ modulo the relations $E^\ro=F^\ro=0$.

Let $V$ be a finite dimensional $\Ubar$-module.  An eigenvalue
$\lambda\in \C$ of the operator $H:V\to V$ is called a \emph{weight}
of $V$ and the associated eigenspace is called a \emph{weight space}.
We call $V$ a \emph{weight module} if $V$ splits as a direct sum of
weight spaces and $\qr^H=K$ as operators on $V$.  Let $\cat$ be the
tensor category of finite dimensional weight $\Ubar$-modules.  The category $\cat=\oplus_{\wb{\alpha}\in \C/2\Z} \cat_{\wb{\alpha}}$ is $\C/2\Z$-graded where $ \cat_{\wb{\alpha}}$ is the category of modules whose weights are all in the class  $\wb{\alpha}$ (mod $2\Z$).  
If $V\in \cat_{\wb{\alpha}}$ then we say $V$ is \emph{homogeneous} and has \emph{degree} $\wb{\alpha}$.  
Moreover, the category $\cat$ is a ribbon category (see \cite{GPT}).  

Every simple module of $\cat$ is isomorphic  to one of the modules in the following list:
\begin{itemize}
\item The simple modules $S_i$, for $i=0,\cdots, r-1$, with highest weight $i$ and 
  dimension $i+1$.   These modules are the ``standard'' simple modules of $\Ubar$ used to define the usual R-T 3-manifold invariant.  The degree of $S_i$ is $i\ {\rm mod} \ 2$. 
\item The one dimensional modules $\C^H_{kr}$, for $k\in
  \Z$,  where both $E$ and $F$ act by zero and $H$ acts by $kr$.  The degree of $\C^H_{kr}$ is $kr\ {\rm mod} \ 2$.
\item The simple modules $S_i\otimes  \C^H_{kr}$, for $i=0,\cdots, r-1$ and $k\in \Z\setminus \{0\}$.  
The degree of $S_i\otimes  \C^H_{kr}$ is $i+kr \ {\rm mod} \ 2$.
\item The simple highest weight module $V_\alpha$ of highest weight $\alpha+r-1$ for  $\alpha\in(\C\setminus \Z)\cup
  r\Z$.   All of these modules are projective and have dimension $r$.  The degree of $V_\alpha$ is  $\alpha+r-1 \ {\rm mod} \ 2$.  If $\alpha+\beta \notin \Z$ and $\alpha,\beta \in (\C\setminus \Z)\cup r\Z$ then $V_\alpha\otimes V_\beta\simeq\oplus_{k\in \Hr} V_{\alpha+\beta+k}$ where $\Hr=\{-(r-1),-(r-3),\ldots, r-1\}$.

\end{itemize}
Remark that there is a slight redundancy in the above notation as
$S_0=\C^H_0$, $S_{r-1}=V_{0}$ and $S_{r-1}\otimes\C^H_{kr}=V_{kr}$.
In particular, we will denote by $\sigma$ the module $\C^H_{2r'}$ and
by $\sigma^k$ the module $\C^H_{2kr'}$.  
Then $V_{\alpha+2kr'}=V_\alpha\otimes \sigma^k$.
In the appendix we describe the projective indecomposable
modules of $\cat$.
\subsection{Ribbon graphs, the ribbon functor $F$, and $F'$}\label{ss:F'}
Let $M$ be an oriented 3--manifold.  A \emph{ribbon graph} in $M$ is
an oriented compact surface in $M$ decomposed into elementary
pieces: bands, annuli, and coupons (see \cite{Tu}).  A coupon is a box with oriented bands representing a morphism in $\cat$.
A \emph{$\cat$-colored ribbon graph} in $M$ is a ribbon graph whose bands and
annuli are colored by objects of $\cat$ and whose coupons are colored
with morphisms of $\cat$ (for the most part of this paper the reader can
assume that ribbon graphs do not have coupons).  A ribbon graph can and will
be identified with a framed oriented link or graph embedded in $M$ (here the
framing can be considered a vector field on the graph which is nowhere tangent
to it).  

We now recall the category of $\cat$-colored ribbon graphs $\Rib$
(for more details see \cite{Tu} Chapter~I).  The objects of $\Rib$
are sequences of pairs $(V,\epsilon)$, where $V\in Ob(\cat)$ and
$\epsilon=\pm$ determines the orientation of the corresponding
edge. The morphisms of $\Rib$ are isotopy classes of
$\cat$-colored ribbon graphs in $\R^2 \times [0,1]$ and their formal
linear combinations with coefficients in $\C$. 

Let  $\Cp=(\C\setminus \Z)\cup r\Z$ and $\qd:\Cp\to\C^*$ be the map given by
\begin{equation}\label{E:Def_qd}
 \qd(\beta)=(-1)^{r-1}\prod_{j=1}^{r-1}
\frac{\qn{j}}{\qn{\beta+r-j}}=(-1)^{r-1}\frac{r\,\qn{\beta}}{\qn{r\beta}}.
\end{equation}
Let $F$ be the usual ribbon functor from $\Rib$ to $\cat$ (see
\cite{Tu}).  Let $L$ be a $\cat$-colored ribbon graph in $\R^3$ with
at least one edge colored by a simple module $V=V_\alpha$ for some
$\alpha \in \Cp$.  Cutting this edge, we obtain a $\cat$-colored
(1,1)-ribbon graph $T_V$ whose closure is $L$.  Since $V$ is simple
$F(T_{V})\in
\End_{\cat}(V)= \C \Id_V$.  Let $<T_{V}> \, \in \C$ be such that
$F(T_{V})= \, <T_{V}> \, \Id_V$.   Then as shown in \cite{CGP} 
the assignment 
\begin{equation}\label{E:Def_of_F'}
  F'(L) = \qd(\alpha)<T_{V}> \in \C
\end{equation} 
is independent of the choice of the edge to be cut and yields a well
defined invariant of $L$.
In the appendix, we describe the extension of $F'$ to $\cat$-colored
ribbon graph in $\R^3$ with at least one edge colored by a projective
module.

A trivalent, oriented framed graph $\Gamma$ whose edges are colored by $\Cp$ can be seen as a $\cat$-colored ribbon graph
as follows (for more details see \cite{CGP}).  To each edge of $\Gamma$ labeled by $\alpha\in \Cp$ associate the simple module $V_\alpha$ described above.   Each $\Hom(V_\alpha, V_\beta\otimes V_\gamma)$ is either $1$ or $0$-dimensional.    Choose a basis for all these homomorphism spaces as discussed in \cite{CGP}.   To each trivalent vertex of $\Gamma$ associate a coupon equipped with the appropriate chosen morphism.  This procedure leads to a well defined invariant of trivalent graphs considered up to isotopy.    So when it is convenient, instead of considering $\cat$-colored ribbon graphs we will restrict to $\Cp$-colored oriented framed trivalent graphs. 

\subsection{The invariants of closed $3$-manifolds}\label{sub:invariants}
In \cite{CGP}, the link invariant $F'$ is used to define an invariant $\Nr(M,T,\coh)$ where $M$ is a connected, closed, oriented $3$-manifold, $T\subset M$ is a (possibly empty)  suitably decorated ribbon graph and $\coh\in H^1(M\setminus T;\C/2\Z)$ is a cohomology class (where $\C/2\Z$ is considered as an abelian group), satisfying a compatibility condition. 
The first purpose of this subsection is to introduce the reader to these objects.   
Then, we define a new invariant $\Zr$ which extends $\Nr$ to disconnected manifolds and differs by a normalization factor.  
This renormalization insures that the TQFT defined in Subsection \ref{SS:TQFTfunctor} has a list of desired properties.  

 We start by
recalling some definitions given in \cite{CGP}.  Let $M$ be a compact
oriented 3--manifold, $T$ a $\cat$-colored ribbon graph in $M$ and
$\coh\in H^{1}(M\setminus T,\C/2\Z)$.  Let $L$ be an oriented framed link in
$S^3$ which represents a surgery presentation of $M$.  One defines a $\C/2\Z$-valued map $g_\coh$
on the set of edges of $L\cup T$ by
$g_\coh(e_i)=\coh(m_i)$, where $m_i$ is a meridian of $e_i$ oriented so that it bounds a disc whose algebraic intersection with $e_i$ is $1$. We shall call $g_\coh$ the
\emph{$\C/2\Z$-coloring} of $L\cup T$ induced by $\coh$.

\begin{defi}\label{def:adm} 
  Let $M$, $T$ and $\coh$ be as above.
  \begin{enumerate}
  \item We say that $(M,T,\coh)$ is {\em a compatible triple} if for each edge
    $e$ of $T$ its coloring is in $\cat_{g_\coh(m_e)}$ where $m_e$ is a
    meridian of $e$.
  \item A link $L\subset S^3$ which is a surgery presentation for a
    compatible triple $(M,T,\coh)$
    is \emph{computable} if one of the two following conditions holds:
  \begin{enumerate}
  \item $L\neq \emptyset$ and $g_\coh(L_i) \in ({\C/2\Z})\setminus
    (\Z/2\Z)$ for all components $L_i$ of $L$, or
  \item $L=\emptyset$ and there exists an edge of $T$ colored by
    a projective module of $\cat$.
  \end{enumerate}
\end{enumerate}
\end{defi}

Let $\Hr=\{1-r,3-r,\ldots,r-1\}$.
For $\alpha\in \C\setminus\Z$ we define the Kirby color
$\Omega_{{\alpha}}$ as the formal linear combination
\begin{equation}
  \label{eq:Om}
  \Omega_{{\alpha}}=\sum_{k\in \Hr}\qd(\alpha+k)V_{\alpha+k}.
\end{equation}
If $\wb{\alpha}$ is the image of $\alpha$ in $\C/2\Z$ we say
that $\Omega_{{\alpha}}$ has degree $\wb{\alpha}$.  We can ``color'' a
knot $K$ with a Kirby color $\Omega_{{\alpha}}$: let
$K({\Omega_{{\alpha}}})$ be the formal linear combination of knots
$\sum_{k\in \Hr} \qd(\alpha+k) K_{\alpha+k}$ where $K_{\alpha+k}$ is the
knot $K$ colored with $V_{\alpha+k}$.  If
$\wb{\alpha}\in\C/2\Z\setminus \Z/2\Z$, by $\Omega_{\wb{\alpha}}$, we
mean any Kirby color of degree $\wb\alpha$.
Next we recall the main theorems of \cite{CGP}.  

\begin{theo}[\cite{CGP}]\label{T:sl2CompSurgInv}
 Let  $(M,T,\coh)$ be a compatible triple where $M$ is connected.  If $L$ is a link which gives rise to a computable surgery
  presentation of $(M,T,\coh)$ then
  $$\Nr(M,T,\coh)=\dfrac{F'(L\cup T)}{\Delta_+^{p}\ \Delta_-^{s}}$$
  is a well defined topological invariant (i.e. depends only of the
orientation preserving diffeomorphism class of the triple $(M,T,\coh)$), 
  where $\Delta_+,\ \Delta_-$ are constants given in \cite{CGP} (see
  Equation \eqref{eq:delta}), $(p,s)$ is the signature of the linking
  matrix of the surgery link $L$ and for each $i$ the component $L_i$
  is colored by a Kirby color of degree $g_{\coh}(L_i)$.
\end{theo}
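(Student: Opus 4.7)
The plan is to establish invariance by Kirby calculus: any two surgery presentations of the same 3-manifold $(M,T,\coh)$ are connected by a sequence of Kirby moves (handle slides and stabilizations by $\pm 1$-framed unknots), so it suffices to check that $F'(L\cup T)/(\Delta_+^p\Delta_-^s)$ is unchanged under these local moves, and that its value does not depend on auxiliary choices (the edge cut in the definition of $F'$, the representative of the Kirby color of non-integral degree, the chosen $\alpha$ lifting $\wb\alpha$). The independence of the cut edge is already built into the definition of $F'$ from Subsection \ref{ss:F'}, provided at each step we can always find an edge colored by a $V_\alpha$ with $\alpha\in\Cp$ (or, in the appendix-extended version, a projective edge). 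This is exactly where computability enters: condition (a) guarantees every $L_i$ is colored by a Kirby color $\Omega_{g_\coh(L_i)}$ of non-integral degree, each summand being a generic $V_{\alpha+k}$; condition (b) provides a projective edge in $T$ when $L=\emptyset$.

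Next I would verify the handle slide invariance. Sliding $L_i$ over $L_j$ amounts to replacing the color $\Omega_{\wb\alpha}$ on $L_j$ by a color that ``absorbs'' a parallel copy of $L_i$. By the fusion rule $V_\alpha\otimes V_\beta\simeq \oplus_{k\in \Hr} V_{\alpha+\beta+k}$ valid whenever $\alpha+\beta\notin\Z$, together with the definition \eqref{eq:Om} of $\Omega_{\wb\alpha}$ and the explicit weights $\qd(\alpha+k)$, one shows the sliding identity $\Omega_{\wb\alpha}\otimes V_\beta = \Omega_{\wb{\alpha+\beta}}$ (as colored cablings, up to a scalar that is absorbed in $\qd$). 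The genericity condition in computability ensures we never fall into the degenerate case $\alpha+\beta\in\Z$ during a slide, which is precisely why the hypothesis $g_\coh(L_i)\in(\C/2\Z)\setminus(\Z/2\Z)$ is imposed. The $\C/2\Z$-coloring $g_\coh$ transforms as required under the slide because a meridian of the slid component is homologous to the appropriate combination of the original meridians in $M\setminus T$, so compatibility with $\coh$ is preserved.

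Then I would handle stabilization: adjoining an unknot $U^{\pm}$ of framing $\pm 1$ to $L$ changes the signature of the linking matrix by $(p,s)\mapsto(p+1,s)$ or $(p,s+1)$. Computing $F'$ of $U^{\pm}$ colored by a generic $\Omega_{\wb\alpha}$ (using the Hopf link/twist computation for $V_{\alpha+k}$) yields precisely $\Delta_\pm$ as defined in \cite{CGP}; this is the content of formula \eqref{eq:delta} referenced in the theorem. Hence the denominator $\Delta_+^p\Delta_-^s$ exactly cancels the contribution of the added component, proving stabilization invariance. One last subtlety is that Kirby moves may temporarily destroy computability (e.g., stabilizing when $T=\emptyset$ with $\coh=0$); here one uses that for a \emph{compatible} $(M,T,\coh)$ with $M$ connected, any two computable presentations can be joined by a sequence of moves that can be arranged to stay within the computable class, possibly by inserting extra stabilizations to provide a non-integral $g_\coh$-valued component.

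The main obstacle will be the handle slide invariance in the presence of the mixed setup where $T$ has projective edges but some surgery components are colored by generic $\Omega_{\wb\alpha}$: one must verify the sliding identity in full generality, including when the slid component is colored by a projective module. This requires the modified trace properties of the extension of $F'$ recalled in the appendix and an analysis of how projective modules behave under tensor product with generic $V_\alpha$; once this is in place, the rest of the argument assembles as above.
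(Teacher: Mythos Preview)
The paper does not prove this theorem; it is quoted from \cite{CGP} and the present paper uses it as a black box. So there is no ``paper's own proof'' to compare against, only the original proof in \cite{CGP}. Your outline follows the same overall strategy as \cite{CGP}: check invariance of $F'(L\cup T)/(\Delta_+^p\Delta_-^s)$ under Kirby moves, using the Kirby-color property for handle slides and the definition of $\Delta_\pm$ for stabilization.

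There is, however, a genuine gap in your treatment of the ``last subtlety''. Your suggestion to restore computability by ``inserting extra stabilizations to provide a non-integral $g_\coh$-valued component'' cannot work: the meridian of a $\pm1$-framed unknot $U$ split from $L\cup T$ is null-homologous in $M\setminus T$, hence $g_\coh(U)=0\in\Z/2\Z$, which is integral. Stabilization therefore \emph{destroys} computability rather than restoring it. This is not a minor bookkeeping issue: it is the central difficulty of the proof in \cite{CGP}, and a substantial part of that paper is devoted to a refined Kirby-type theorem showing that any two computable presentations of the same compatible triple are connected by a sequence of moves (handle slides and a restricted form of stabilization/destabilization) each of which stays within the computable class. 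Without such a theorem, invariance under the individual moves does not yield independence of the presentation. Your sketch of the handle-slide step is also a bit loose (the relevant identity is that sliding any $\cat$-colored strand over an $\Omega_{\wb\alpha}$-colored component leaves $F'$ unchanged, which uses that the coefficients $\qd(\alpha+k)$ are exactly the modified dimensions), but that part is routine once set up correctly; the computability-preservation step is where the real work lies.
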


In this paper we will use without further mention the universal
coefficients theorem to identify $H^1(M\setminus
T;\C/2\Z)$ and $\Hom(H_1(M\setminus \Z;\Z);\C/2\Z)$.  In particular, we will say that a class $\coh\in
H^1(M\setminus T;\C/2\Z)$ is \emph{integral} if belongs to
$\Hom(H_1(M\setminus T;\Z);\Z/2\Z)$.
A cohomology class $\coh\in H^1(M\setminus T,*;\C/2\Z)$, relative
to a finite set of base points $*$,  is
\emph{integral} if its restriction to $M\setminus T$ is integral in
$H^1(M\setminus T;\C/2\Z)$.
Hence a non-integral cohomology class takes a value in
$\C/2\Z\setminus\Z/2\Z$ on the homology class of a {\em closed} curve
in $M\setminus T$.

\begin{defi}[Admissible triple]\label{D:Admissiblethreeuple}
  Let $(M,T,\coh)$ be a compatible triple where $M$ is connected. 
  Then $(M,T,\coh)$ is said to be \emph{admissible} if either $\coh$
  is non-integral or $T$ contains an edge whose color is a
  projective module in $\cat$.  In general, an admissible triple is a
  compatible triple such that each connected components is admissible.
\end{defi} 
We will now show that the invariant $\Nr$ is defined for an admissible triple $(M,T,\coh)$ where $M$ connected.  
We have  the following two cases:
\begin{description}
\item[Case 1] If $\coh$ is non-integral then Proposition 1.5 of \cite{CGP} states that $(M,T,\coh)$ has a
computable surgery presentation and we can apply Theorem \ref{T:sl2CompSurgInv}.
\item[Case 2] If $\coh$ is integral then  $(M,T,\coh)$
does not have a computable surgery presentation (unless $M=S^3$).  However, 
since $(M,T,\coh)$ is admissible then $T$ has an edge
colored by a projective module $P$.  In this case, we define a new triple $(M,T',\coh')$ where   $(T',\coh')$ is a local
modification of $(T,\coh)$ as follows.  
 For any $\alpha\in\C\setminus \Z$, then $V_\alpha$
generates the ideal of projective modules of $\cat$ and so for any projective module $P\in\cat$, there exists a module $W\in\cat$ and morphisms $g:V_\alpha\otimes W\to P$ and $f:P\to V_\alpha\otimes W$ such
that $gf=\Id_P$.  Let $\Gamma_\alpha:(P,+)\to(P,+)$ be the (1-1)-ribbon graph given by stacking the coupon colored with $g$ on top of the coupon colored with $f$.  Then $\Gamma_\alpha$ has a component colored by $V_\alpha$ and $F(\Gamma_\alpha)=\Id_P$.  
The modification of $T$ consists in
replacing a neighborhood of a part of the edge colored by $P$ with
$\Gamma_\alpha$.  There is a unique way to modify locally the
cohomology class $\coh$ to a cohomology class $\coh'$ compatible with
$T'$.  This cohomology class takes the value
$\wb\alpha\in\C/2\Z\setminus\Z/2\Z$ on the meridian of the edge of
$\Gamma_\alpha$ colored by $V_\alpha$.  Hence, $(M,T',\coh')$ has a
computable surgery presentation.  It can be shown that
$\Nr(M,T,\coh)=\Nr(M,T',\coh')$ is independent of $\Gamma_\alpha$, see
Remark 3.4 of \cite{CGP}.  Note that $(M,T,\coh)$ and
$(M,T',\coh')$ will be called skein equivalent in Section
\ref{SS:Skein}.
\end{description}

In the present paper we will use a renormalized version of $\Nr$ and
 extend it to an invariant $\Zr$ of (possibly disconnected)
admissible triples by
\begin{equation}\label{eq:Zdefi}
\Zr(M,T,\coh)=\eta^{b_0(M)}\lambda^{b_1(M)}\prod_{i=1}^n \Nr(M_i,T\cap
M_i,\coh_{|M_i})\in\C\end{equation}
where $b_0(M)=n$ is the number of connected component of $M$, $b_1(M)$ is its
first Betti number and $M_i$ is the $i$\textsuperscript{th} component of $M$.  The
scalars $\lambda$ and $\eta$ are given by 
$$\lambda=\frac{\sqrt{r'}}{r^2}\quad\text{ and }\quad\eta=\frac1{r\sqrt{r'}}.$$
Since we plan on using $\Zr$ to define a TQFT we need it to satisfy certain properties.   The normalization of $\Nr$ ensures these properties. 
In particular, the choice of $\eta$ and $\lambda$ guarantees that 
the TQFT has good behavior with respect to the $1$ and $2$-surgeries (see
Propositions \ref{P:1-surg} and \ref{P:2-surg}).  The invariant $\Zr$ also satisfies $\Zr(M\sqcup M')=\Zr(M)\Zr(M')$.  Finally, if $L$ is a 
computable presentation of a compatible triple $(M,
T,\omega)$ with $M$ connected, then we have the following useful expression:
\begin{equation}
  \label{eq:Zrconnected}
  \Zr(M,T,\omega)=\eta\lambda^{m}\dep^{-\sigma}F'(L\cup T)
\end{equation}
where the components of $L$ are colored by Kirby colors as in Theorem \ref{T:sl2CompSurgInv}, $m\in\N$ is the number of components of $L$, $\sigma\in\Z$ is the
signature of the linking matrix $\lk(L)$ and 
\begin{equation}
  \label{eq:delta}
  \dep=\lambda\Delta_+=(\lambda\Delta_-)^{-1}=q^{-\frac32}\e^{-i(s+1)\pi/4}
 \text{ where } s \text{ is in } \{1,2,3\} \text{ with } s \equiv r \text{ mod }4.
\end{equation}
Later on we will extend \eqref{eq:delta} to ``extended manifolds'' as needed to fix the framing anomaly (see formula \eqref{eq:Zrweighted}).

\renewcommand{\ds}{}
\section{The universal construction of TQFT}\label{sec:universal}
In this section we provide the general framework to which we apply the universal construction for TQFTs. 
After defining in detail what a decorated surface is in Subsection \ref{S:deco2}, we define the cobordisms in Subsection \ref{S:deco3} and finally provide the category $\Cob$ to which our functors will apply in Subsection \ref{S:cobcat}.
In Subsection \ref{SS:TQFTfunctor} we apply the universal construction and define the functors $\V$ and $\V'$.  The main point of attention here is the very last point of Definition \ref{D:Cobordisms} which is needed in order to make sure that when composing morphisms to get a closed morphism the obtained manifold is admissible according to Definition \ref{D:Admissiblethreeuple}. Because of this requirement,  the role in our theory of the spaces $\V(\Su)$ and $\V'(\Su)$ is not symmetric. 
In Subsection \ref{SS:Vdunion} we prove a list of properties of $\V$ and $\V'$ which are basically given ``for free'' by the universal construction; in particular they are ``lax monoidal'' (but not monoidal).

\subsection{Decorated surfaces}\label{S:deco2}
We define a 2+1-cobordism category of decorated manifolds 
(Turaev defined a similar category of ``extended'' cobordisms in
\cite{Tu}).
\begin{defi}[Objects]
  A \emph{decorated surface} is a $4$-uple $\Su =(\Su,\{p_i\},\coh,
  {\La})$ where:
\begin{itemize}
\item $\Su$ is a closed, oriented surface which is 
  an ordered disjoint union of connected surfaces each having a distinguished base point $*$; 
\item $\{p_i\}$ is a finite (possibly empty) set of homogeneous $\cat$-colored 
   oriented framed points in $\Su$
  distinct from the base points, i.e. each $p_i\in \Su$ is equipped
  with a sign, a non-zero tangent vector and a color which is an
  object of $ \cat_{\wb{\alpha_i}}$ for some $\wb{\alpha_i}\in \C/2\Z$;
 \item
     $\coh\in H^1(\Su\setminus \{p_1,\ldots, p_k\}, *;\C/2\Z)$ is a
    cohomology class such that $\coh( m_i)=\overline{\alpha_i}\
    \rm{mod}\ 2\mathbb{Z}$ where $m_i$ is the boundary of a regular neighborhood $D_i$ of $p_i$.  The meridian $m_i$ is
    oriented in the positive sense (as the boundary of $D_i$) if $p_i$
    is negative and in the negative sense if $p_i$ is
    positive\footnote{The choice of the orientations here is a consequence of the
      convention to orient boundaries described in Subsection
      \ref{S:deco3}.};
\item ${\La}$ is a lagrangian subspace of $H_1(\Su;\R)$.
\end{itemize}
\end{defi} 

The \emph{opposite or negative} of $(\Su,\{p_1,\ldots p_k\},\coh,{\La})$
is defined as $(\overline{\Su},\{\overline{p}_1,\ldots
\overline{p}_k\},\coh,{\La})$ where $\overline{\Sigma}$ is $\Sigma$ with the opposite orientation, $\overline{p}_i$ is $p_i$ with the opposite sign and opposite vector but same object of $\cat$.

\begin{rem} 
  There is exactly one base point on each connected
  component of $\Su$, so $H^1(\Su\setminus \{p_1,\ldots p_k\}
  ,*;\C/2\Z)=H^1(\Su\setminus \{p_1,\ldots p_k\};\C/2\Z)$.  Also all
  cohomology classes can be interpreted as equivalence class of flat
  $\C^*$-bundles trivialized at the base points (where
  $\C^*\simeq\C/2\Z$).
\end{rem}

\subsection{Decorated cobordisms}\label{S:deco3}
With one exception, we orient the boundaries of manifolds using the
``outward vector first'' convention.  The exception concerns the
$0+1$-dimensional strata: the boundary of an edge of a ribbon graph is
formed by its set of univalent (framed) vertices.  Such a vertex is
oriented by a $+$ sign if its adjacent edge is oriented outgoing from
the vertex and a $-$ sign if its adjacent edge is oriented incoming to
the vertex.  We follow here Turaev's convention (see \cite{Tu}, IV Section 1.5).

\begin{defi}[Cobordisms]\label{D:Cobordisms}
 Let $\ds{\Su}_\pm=(\Su_\pm,\{p_i^\pm\},\coh_\pm,{\La}_\pm)$ be decorated surfaces.  A \emph{decorated cobordism} from $\ds{\Su}_-$ to  $\ds{\Su}_+$ is a $5$-uple $\ds M=(M,T,f, \coh,n)$ where:
\begin{itemize}
\item $M$ is an oriented $3$-manifold with boundary $\partial M$;
\item $f: \overline{\Su_-}\sqcup \Su_+\to \partial M$
is a diffeomorphism that
  preserves the orientation;
 denote the image under $f$ of the base points of 
  $\overline{\Su_-}\sqcup \Su_+$ by $*$;  
\item $T$ is a $\cat$-colored ribbon graph in $M$ such that 
$\partial T=\{\wb{f(p_i^-)}\}\cup \{f(p_i^+)\}$
and the color of the edge of $T$ containing $f(p_i^\pm)$ equals the color of $p_i^\pm$; 
\item $\coh\in H^1(M\setminus T,*;\C/2\Z)$ is a cohomology class
  relative to the base points on $\partial M$, such that
  the restriction of $\coh$ to $(\partial M\setminus \partial T)\cap \Su_\pm$ is  $(f^{-1})^*(\coh_{\pm})$;
\item the coloring of $T$ is compatible with $\coh$, i.e. each oriented edge
  $e$ of $T$ is colored by an object in $\cat_{\coh(m_e)}$ where $m_e$ is the
  oriented meridian of $e$;
\item $n$ is an arbitrary integer
  called the {\em signature-defect} of $\ds M$;
\item each connected component of $M$ disjoint from 
$f(\overline{\Su_-})$ is   admissible\footnote{Here we extend Definition \ref{D:Admissiblethreeuple} to the case $\partial M\neq \emptyset$.}.
\end{itemize}
  Figure \ref{F:decobo} illustrates an example of decorated cobordism.  We use $f$ to identify $\partial_+\ds M=f({\Su_+})\simeq \Su_+$ and $\partial_-\ds M=f(\wb{\Su_-})\simeq\wb{\Su_-}$.   We will denote $(\Su_+,\{p^+_i\},\coh
_{|\Su_+},\La_+)$ by $\partial_+ \ds M$ and $(\Su_-,\{p^-_i\},\coh
_{|\Su_-},\La_-)$ by
$\wb{\partial_-\ds M}$.
\end{defi}

\begin{figure}[!]
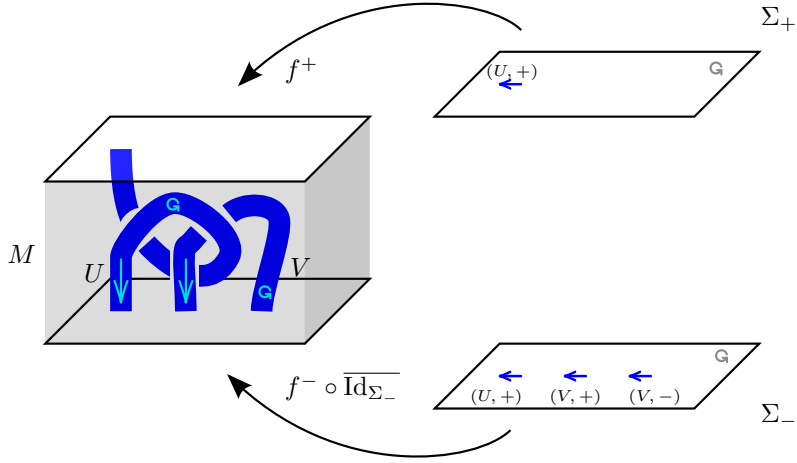

  \centering 
  \(\epsh{fig61}{40ex}
  \put(-285,-10){$M$}
 \put(0,80){$\Su_+$}\put(0,-70){$\Su_-$}
  \put(-180,60){$f^+$}\put(-180,-60){${f^-}\circ 
   \wb{\Id_{\Su_-}}$}
   \put(-256,-17){$U$}\put(-178,-15){${V}$}
  \put(-110,-62){\ms{(U,+)}}\put(-104,61){\ms{(U,+)}}
  \put(-80,-62){\ms{(V,+)}}\put(-50,-62){\ms{(V,-)}}
  \)
  \caption{
   Orientations of a decorated cobordism $M:\Su_-\to\Su_+$. The framed graph $T$ is in blue.  Its component are colored by the  modules $U$ and $V$.  Here   $\wb{\Id_{\Su_-}}:\Su_-\to\wb{\Su_-}$ is the identity, $f^{+}=f_{|\Su_+}$ and $f^{-}=f_{|\wb{\Su_-}}$.  The coloring,  framing and orientations of the dots of $\Su_\pm$ match those of  $T$.
   }
  \label{F:decobo}
\end{figure}
 We consider decorated cobordisms from $\ds{\Su}_-$ to $\ds{\Su}_+$  up to diffeomorphism:  a \emph{diffeomorphism}  $$g:~(M,T,f,\omega,n)\to   (M',T',f',\omega',n')$$ is  an orientation preserving diffeomorphism of the underlying manifolds $M$ and $M'$, still denoted by $g$,  
  such that $g(T)=T'$, $g\circ f=f'$, $\omega=g^*(\omega')$ and 
  $n=n'$.  Remark
  that up to diffeomorphism, $M=(M,T,f,\omega,n)$ only depends on $f$
  up to isotopy.

Notice that the last condition in Definition \ref{D:Cobordisms}
ensures that all cobordisms from $\emptyset$ to 
 $\ds{\Su}_2$ are
admissible (as in Definition \ref{D:Admissiblethreeuple}). In
particular, any closed cobordism $(M,T,\emptyset, \coh,n)$ is admissible.  Hence we 
extend the definition of $\Zr$ to closed decorated manifolds as follows:
\begin{equation}
  \label{eq:Zrweighted}
  \Zr(\ds{M},T,\emptyset, \coh,n):=\dep^{n}\Zr(\ds{M},T,\coh)
\end{equation}
where $\dep$ is given in Equation \eqref{eq:delta}.

\begin{rem}\label{rem:inducedcoh}
  Two different cohomology classes that are compatible with the same pair
  $(M,T)$ differ by an element of $H^1(M,\partial M;\C/2\Z)$.  When this group
  is zero the compatible cohomology class is unique. 
\end{rem}
\subsection{The decorated cobordism category}\label{S:cobcat}
For a symplectic vector space $(V,\symf)$ and a triple
$({\La}_-,{\La},{\La}_+)$ of Lagrangian subspaces of
$V$ the \emph{Maslov index} is defined as follows (for details
see \cite[Section IV.3.5]{Tu}).  For each vector $v\in
{\La}_-+{\La}$ choose a decomposition $v=v_-+v_0$ where
$v_-\in {\La}_-$ and $v_0\in {\La}$.  For $v,w\in
{\La}_+\cap ({\La}_-+{\La})$ set $\langle
v,w\rangle=\symf(v_0,w)$.  This assignment is a well defined symmetric
bilinear form on ${\La}_+\cap ({\La}_-+{\La})$.
The Maslov index $\mu({\La}_-,{\La},{\La}_+)\in \Z$
is the signature of the bilinear form $\langle\cdot, \cdot\rangle$.
The Maslov index is totally antisymmetric in its entries.

Given a decorated cobordism $\ds M=(M,T,f, \coh,n)$, from $\ds{\Su}_-$
to $\ds{\Su}_+$, let $f^-=f_{|\wb{\Su_-}}:\wb{\Su_-}\to M$ and
$f^+=f_{|\Su_+}:\Su_+\to M$ be the components of $f$. If $\La_-$
and $\La_+$ are the Lagrangians of $\ds{\Su}_-$ and $\ds{\Su}_+$,
respectively, then we define two other Lagrangians
$M_*(\La_-)=(f^+)_*^{-1}\bp{(f^-\circ
  \wb{\Id_{\Su_-}})_*(\La_-)}\subset H_1(\ds{\Su}_+;\R)$ and
$M^*(\La_+)=(f^-\circ
\wb{\Id_{\Su_-}})_*^{-1}\bp{(f^+)_*(\La_+)}\subset H_1(\ds{\Su}_-;\R)$
where $\wb{\Id_{\Su_-}}:\Su_-\to\wb{\Su_-}$ is the identity.  
The fact that $M_*(\La_-)\subset H_1(\Su_+,\R)$ and $M^*(\La_+)\subset
H_1(\Su_-,\R)$ are indeed Lagrangians follows from the analysis of the
Lagrangian relations induced by $f_+$ and $f_-$ (see \cite[Section
IV.3.4]{Tu}).

\begin{defi}\label{D:Compostion}
 For $j=1,2$,
  let $\ds M_j=(M_j,T_j,f_j, \coh_j,n_j)$ be decorated cobordism such
  that $\Su=(\Su,\{p_i\},\coh_\Su,\La)=\partial_+ \ds
  M_1=\overline{\partial_-\ds M_2}$.  
  Let ${\La}_-$ and ${\La}_+ $  
  be the Lagrangian subspaces of the decorated surfaces
  $\wb{\partial_-\ds M_1}$ and $\partial_+ \ds M_2$, respectively.
The composition $\ds
  M_2\circ \ds M_1$ is the decorated cobordism $(M,T,f,\coh,n)$ 
  from $\wb{\partial_-M_1}$ to $\partial_+M_2$
  defined as follows:
\begin{itemize}
\item
  $M=M_1\cup_{\Su} M_2 $ is the manifold obtained by gluing
  $\partial_+ \ds M_1$ and $\wb{\partial_-\ds M_2}$ through the map   $f_2^-\circ  (f_1^+)^{-1}$, 
\item $T$ is the $\cat$-colored ribbon graph $T_1\cup_{\{p_i\}} T_2$
  in $M$,
\item $f=f_1^-\sqcup f_2^+$,
\item $\coh$ is the 
cohomology class given in Lemma
  \ref{L:McohomEx} below,
\item $n=n_1+n_2-\mu({M_1}_*(\La_-),\La,{M_2}^*(\La_+))$ where $\mu$
  is the Maslov index of the Lagrangian subspaces of $H_1(\Su,\R)$.
\end{itemize}
\end{defi}

\begin{lemma}\label{L:McohomEx}
Let $\ds M_1, \ds M_2, $ and $\ds M=\ds M_2\circ \ds M_1$ be as in Definition \ref{D:Compostion}. 
We distinguish here the base points of $\wb{\partial_-\ds M_2}$,
$\Su$, $\partial_+M_1$ denoted respectively by $*_2$, $*_\Su$ and
$*_1$.  There is a unique a cohomology class
$\wt\coh\in H^1(M\setminus T, *_1\cup *_2\cup *_\Su;\C/2\Z)$ whose
restriction to $H^1(M_j\setminus T_j, *_j\cup *_\Su;\C/2\Z)$ is
$\coh_j$ for $j=1,2$.  
Define $\coh\in H^1(M\setminus T, *_1\cup *_2;\C/2\Z)$ by restricting  $\wt\coh$.
\end{lemma}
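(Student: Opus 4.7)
The plan is to treat this as a standard Mayer--Vietoris gluing problem for cohomology with $\C/2\Z$-coefficients, where the various base points play the role of trivializations that must be matched across the interface $\Su$. Set $X_j = M_j \setminus T_j$, $X = M \setminus T$, and $\Sigma_0 = \Su \setminus \{p_i\}$. By the definition of composition, $X = X_1 \cup X_2$ and $X_1 \cap X_2$ deformation retracts onto $\Sigma_0$. The class $\wt\coh$ being sought is an element of $H^1(X, *_1 \cup *_2 \cup *_\Su;\C/2\Z)$ lifting the pair $(\coh_1, \coh_2)$ under the natural restriction map to $H^1(X_1, *_1\cup *_\Su) \oplus H^1(X_2, *_2\cup *_\Su)$.

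The main step is the relative Mayer--Vietoris sequence associated to the cover $\{(X_1, *_1\cup *_\Su), (X_2, *_2\cup *_\Su)\}$ of $(X, *_1 \cup *_2 \cup *_\Su)$, whose relevant segment reads
\begin{equation*}
H^0(\Sigma_0, *_\Su) \to H^1(X, *_1\cup *_2\cup *_\Su) \xrightarrow{(r_1, r_2)} H^1(X_1, *_1\cup *_\Su) \oplus H^1(X_2, *_2\cup *_\Su) \xrightarrow{\delta} H^1(\Sigma_0, *_\Su),
\end{equation*}
all with $\C/2\Z$ coefficients. First I would argue that $H^0(\Sigma_0, *_\Su) = 0$: each connected component of $\Su$ carries a point of $*_\Su$ and remains connected after the removal of the finite set $\{p_i\}$, so every component of $\Sigma_0$ still meets $*_\Su$. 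This gives uniqueness of $\wt\coh$, since $(r_1, r_2)$ is then injective. For existence I would check that $\delta(\coh_1, \coh_2) = 0$, i.e.\ that $\coh_1$ and $\coh_2$ restrict to the same class on $\Sigma_0$; this is precisely the boundary compatibility clause in Definition \ref{D:Cobordisms}, since both restrictions are identified with $\coh_\Su$ under the gluing map $f_2^- \circ (f_1^+)^{-1}$ that defines $\Su$ as the common boundary.

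Once $\wt\coh$ is produced, $\coh$ is defined as its image under the functorial map $H^1(X, *_1\cup *_2\cup *_\Su; \C/2\Z) \to H^1(X, *_1\cup *_2; \C/2\Z)$ induced by the inclusion of pairs. I expect the only real obstacle to be the bookkeeping of base points and orientation-reversals in the composition: one must verify that the classes $\coh_j$, each originally defined relative to two distinct boundary base-point sets, genuinely match on the overlap after the identifications implicit in the gluing, and that no auxiliary choices enter. Once that verification is carried out, the Mayer--Vietoris argument is formal and delivers both the existence and uniqueness of $\wt\coh$ at once.
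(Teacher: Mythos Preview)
Your proposal is correct and follows essentially the same approach as the paper: both use the relative Mayer--Vietoris sequence for the cover $\{(M_1\setminus T_1,*_1\cup *_\Su),(M_2\setminus T_2,*_2\cup *_\Su)\}$, argue uniqueness from $H^0(\Su\setminus\{p_i\},*_\Su)=0$ (one base point per component), and deduce existence from the fact that $\coh_1$ and $\coh_2$ agree on the interface by the boundary-compatibility hypothesis.
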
 
\begin{proof}
  Denote the decorated surface $\partial_+ \ds
  M_1=\overline{\partial_-\ds M_2}$ by $(\Su,\{p_i\},\coh_0, {\La})$.
  Consider the relative Mayer-Vietoris exact sequence (see \cite{Hat}
  page 204) with coefficients in $\C/2\Z$:
  \begin{align*}
    \cdots \longrightarrow& H^0(\Su\setminus \{p_1,\ldots, p_k\},*_\Su)  
    \longrightarrow H^1(M\setminus T,*_1\cup *_2\cup *_\Su)\longrightarrow \\
    & H^1(M_1\setminus T_1,*_1\cup *_\Su)\oplus H^1(M_2\setminus
    T_2,*_2\cup *_\Su) \longrightarrow H^1(\Su\setminus \{p_1,\ldots,
    p_k\},*_\Su) \longrightarrow\cdots.
\end{align*}
The last map sends the pair $(\coh_1,\coh_2)$ to $0$;
indeed 
both $\coh_1$ and $\coh_2$ restrict to the same class in
$H^1(\Su\setminus \{p_i\},*_\Su)$ 
by hypothesis. So there exists $\wt\coh\in H^1(M\setminus T,*_1\cup
*_2\cup *_\Su)$ mapping onto $(\coh_1,\coh_2)$; moreover, this lift is
unique because $H^0(\Su\setminus\{p_i\},*_\Su)=0$ (since there is
exactly one base point in each component of $\Su$).
\end{proof}

\begin{rem}\label{R:McohomEx} 
In this remark we give a description of the kernel of the map $(\coh_1,\coh_2)\mapsto\coh$ given in Lemma \ref{L:McohomEx}.
 Excision gives
  canonical isomorphisms
  $$H^0(*_\Su;\C/2\Z)\cong
  H^0(*_\Su\cup*_j,*_j;\C/2\Z)\cong
  H^0(*_\Su\cup*_1\cup*_2,*_1\cup*_2;\C/2\Z).$$ With the sequences of
  the triples $(M_j\setminus T_j,*_\Su\cup*_j,*_j)$ and $(M\setminus
  T,*_\Su\cup*_1\cup*_2,*_1\cup*_2)$ it gives maps
  $\delta_j:H^0(*_\Su;\C/2\Z)\to H^1(M_j\setminus T_j,*_j\cup
  *_\Su;\C/2\Z)$ and $\delta:H^0(*_\Su;\C/2\Z)\to H^1(M\setminus
  T,*_\Su\cup*_1\cup*_2;\C/2\Z)$.  In particular, the long exact
  sequence with $\delta$ gives
  \begin{displaymath}
    \begin{array}{rcl}
      0\longrightarrow H^0(*_\Su)\stackrel{\delta}{\longrightarrow}
      H^1(M\setminus T,*_\Su\cup*_1\cup*_2)&\stackrel{}{\longrightarrow}&
      H^1(M\setminus T,*_1\cup*_2)\longrightarrow \cdots\\
      \wt\coh\quad&\mapsto&\quad\coh
    \end{array}
  \end{displaymath}
  Hence two compatible cohomology classes $\coh_1,\coh'_1$ on $M_1$
  (respectively $\coh_2,\coh'_2$ on $M_2$) lead to the same cohomology
  class $\coh$ on $M_2\circ M_1$ if and only if $\coh'_2=\coh_2+\delta_2(\vp)$
  and $\coh'_1=\coh_1+\delta_1(\vp)$ for the same $\vp\in
  H^0(*_\Su;\C/2\Z)$.
\end{rem}

As we now explain, decorated surfaces and cobordisms form a tensor
category $\Cob$.  The objects of $\Cob$ are decorated surfaces and
morphisms are 
diffeomorphism classes
of decorated cobordisms. 
 The composition is given in Definition \ref{D:Compostion}.  The tensor
product of $\Cob$ is given by the disjoint union: if
$\ds{\Su}_j=(\Su_j,\{p_i^j\},\coh_j,{\La}_j)$, for $ j=1,2$, are two
decorated surfaces then define $\ds{\Su}_1\otimes \ds \Su_2 $ as
$(\Su_1\sqcup \Su_2 ,\{p_i^1\} \sqcup \{p_i^2\},\coh_1\oplus
\coh_2,{\La}_1\oplus {\La}_2)$.
Here the ordering of the components of $\Su_1\sqcup \Su_2$ is obtained by
putting those of $\Su_1$ first then those of $\Su_2$.  In particular,
$\Su_1\sqcup \Su_2\neq\Su_2\sqcup\Su_1$ but they are isomorphic.
Similarly, if $\ds
M_j=(M_j,T_j,f_j,\coh_j,n_j), $ for $ j=1,2$, are two cobordisms then
define $\ds{M}_1\otimes \ds M_2 $ as $(M_1\sqcup M_2 ,T_1\sqcup
T_2
,f_1\sqcup f_2,\coh_1\oplus \coh_2,n_1+ n_2)$ (often, for the sake of clarity we will write $M_1\sqcup M_2$ instead of $M_1\otimes M_2$).  The composition of
cobordisms is given in Definition \ref{D:Compostion}.  It is known
that such a composition is associative: the integers decorating
3-manifolds can be interpreted as the signature of a 4-manifold
bounding them.  The correcting term in the gluing of these manifolds
reflects the defect of additivity of the signature by Wall's formula
(see \cite[Theorem IV.4.1.1]{Tu}).  The associativity
of gluing of 4-manifolds ensures that the above definition gives an
associative composition law.  For a direct algebraic proof of this
associativity see \cite{Tu} (in particular, Lemma IV.3.7).

In the following paragraphs, we detail two special classes of morphisms of particular interest.
\subsubsection{Mapping cylinders}\label{rem:mcgrepres}
Let  $\Su=(\Su,\{p_i\},\coh)$ be the data of a decorated surface without a Lagrangian.  Then given a Lagrangian $\La\subset H_1(\Su,\R)$, we denote $\Su_{\La}$ as the decorated surface $(\Su,\{p_i\},\coh, \La)$.  The cylinder $L_{\La,\La'}:=(\Su\times[0,1],\{p_i\}\times[0,1],\Id\times\{0\}\sqcup\Id\times\{1\}  ,\pi^*(\coh),0)$, where $\pi:\Su\times[0,1]\to\Su$ is the projection, defines a canonical isomorphism $\Su_{\La}\simeq\Su_{\La'}$. 

  If $f:\Su_1\to\Su_2$ is a diffeomorphism of decorated surfaces (in
  particular, $f_*(\La_1)=\La_2$),
   the mapping cylinder of $f$ is the decorated cobordism
  from $\Su_1$ to $\Su_2$ given by
  $M_f=(\Su_2\times[0,1],\{p_i^2\}\times[0,1],f\times\{0\}\sqcup
  \Id\times\{1\},\pi^*(\coh_2),0)$.
  The mapping cylinder construction is functorial: $M_f\circ M_g=M_{fg}$.
  The diffeomorphism class of $M_f$ only depends of $f$ up to isotopy;
  in particular, the mapping cylinder construction gives a
  group morphism from the Torelli group of $\bp{\Su,\{p_i\}}$ to the
  automorphisms of the decorated surface $\bp{\Su,\{p_i\},\coh,\La}$.
\subsubsection{Degree cylinders}\label{R:CylDef}
In Section \ref{S:PropTQFT}, we will use the following  special cylinders to define a degree on our TQFT.  For  a decorated
surface $\Su$ and $\vp\in H^0(\Su,\C/2\Z)$, we define
\begin{equation}
  \label{eq:IdH0}
  \Id_{\Su}^\vp=(\Su\times[0,1],\{p_i\}\times[0,1],\Id\times\{0\}
  \sqcup\Id\times\{1\},\pi^*(\coh)+\partial\vp,0):\Su\to\Su
\end{equation}
where the cohomology class $\partial \vp$ is the unique
cohomology class which is $0$ on cycles and whose value on the class
of the relative path $\{*\}\times[0, 1]$ is $\vp(*)$.  
More
precisely, let
$*_i$ be the set of base-points on $\Su\times \{i\}$.
Then for cohomology with coefficients in
$\C/2\Z$ we have:
$$H^0(\Su)=H^0(*_1)=H^0(*_0\sqcup*_1,*_0)\stackrel\partial\longrightarrow 
H^1(\Su\times[0,1]\setminus\{p_i\}\times[0,1],*_0\sqcup*_1)$$ where the map
$\partial$ is part of the long exact sequence associated to the triple
$(\Su\times [0,1],*_0\sqcup*_1,*_0)$.
Remark that as $\partial \vp$ is zero on meridians of
$\{p_i\}\times[0,1]$ 
then 
$\pi^*(\coh)+\partial\vp$ is a compatible
cohomology class.  Furthermore, the mapping 
$\vp\mapsto\Id_{\Su}^\vp$ is a group morphism 
$H^0(\Su,\C/2\Z)\to\operatorname{Aut}_{\Cob}(\Su)$ i.e. 
$\Id_\Su^0=\Id_\Su$ and $\Id_\Su^\vp\circ\Id_\Su^{\vp'}=\Id_\Su^{\vp+\vp'}$.

\subsection{The TQFT functor $V$}\label{SS:TQFTfunctor}
Let $\ds \Su$ be a decorated surface.  Define
 $$\cV(\ds \Su)=\text{Span}_\C\{\text{decorated cobordism from $\emptyset$ 
   to $\ds \Su$}\}$$ 
 $$\cV'(\ds \Su)=\text{Span}_\C\{\text{decorated cobordism from $\ds \Su$ 
   to $\emptyset$}\}.$$ 
  Clearly $\cV(\ds \Su)\subset \cV'(\wb \Su)$ and we have the pairing:
 $$\brk{\cdot,\cdot}_{\cV(\ds \Su)}: \cV'(\ds \Su)\times \cV(\ds \Su)\to 
 \C \text{ given by } (M_1,M_2)\mapsto \Zr({M}_1\circ M_2).$$ 
 Let $\V(\ds \Su)$ be the vector space $\cV(\ds \Su)$ modulo the right kernel
 of $\brk{\cdot,\cdot}_{\cV(\ds \Su)}$.  
  Similarly, let $\V'(\ds \Su)$
 be the vector space $\cV'(\ds \Su)$ modulo the left kernel of
 $\brk{\cdot,\cdot}_{\cV(\ds \Su)}$.
 If $W\in \cV(\ds \Su)$ then we will represent it image in  $\V(\ds \Su)$ by $[W]$.   

 The pairing $\brk{\cdot,\cdot}_{\cV(\ds \Su)}$ induces a non degenerate
 pairing $\brk{\cdot,\cdot}_{\ds \Su}:\V'(\ds \Su)\times \V(\ds \Su)
 \to \C$.  The results above imply that the following assignment determine
 a functor $\V$:
$$\{\text{decorated surface}\} \to \{\text{vector spaces}\},  \; \; \Su
\mapsto \V(\Su),$$ 
\begin{align*}
  \{\text{decorated cobordisms}\} \to \{\text{linear maps}\}, \; \; \ds M
  \mapsto \V(\ds M)
\end{align*}
where $ \V(\ds M): \V(\partial_- \ds M) \to \V(\partial_+ \ds M)$ is
given on the class $[W]$ of $W\in\cV(\partial_- \ds M)$ by
$\V(\ds M)([W])=\V(M \circ  W)$. 
\begin{rem}
Although the above definitions of $\cV$ and $\cV'$ seem symmetric they are not because of the very last condition in the definition of cobordisms (Definition \ref{D:Cobordisms}). More explicitly, the manifolds in $\cV$ should all be admissible but the manifolds in $\cV'$ are not necessarily admissible.  This is one of the new features of the theory introduced in the present paper.
\end{rem}

\subsection{$\V$ is lax monoidal}\label{SS:Vdunion}
Recall that $\Cob$ is the category of decorated cobordisms defined in
Sections \ref{S:deco2} and \ref{S:deco3}.  
 The category $\Cob$ is a strict symmetric monoidal category where the tensor product is given by the disjoint union, the unit is the empty decorated cobordism and the symmetric braiding is given by the 
trivial cobordism
$\tau_\sqcup:\Su_1\sqcup\Su_2\to\Su_2\sqcup\Su_1$.  
 We have the following:
\begin{prop}\label{P:dunion}
  For any decorated surfaces $\Su_1,\Su_2$, the disjoint union induces a
  well defined injective map
  \begin{equation}
    \label{eq:dunion}
    \begin{array}{rcl}
      \cdot\sqcup\cdot\,:\,\V(\Su_1)\otimes\V(\Su_2)&\to&\V(\Su_1\sqcup\Su_2)
      \\{}
      [M_1]\otimes [M_2]&\mapsto&[M_1\sqcup M_2].
    \end{array}
  \end{equation}
A similar statement holds for $\V'$.  
\end{prop}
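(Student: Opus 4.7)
The plan is to establish well-definedness and injectivity of the map separately, relying on two inputs: the functoriality of $\sqcup$ in the monoidal category $\Cob$, and the multiplicativity $\Zr(M\sqcup M')=\Zr(M)\Zr(M')$.

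For well-definedness, I would exploit the factorization
\[
M_1\sqcup M_2=(L_{\La_1,\La_1}\sqcup M_2)\circ M_1
\]
that comes from the interchange law applied to the monoidal category $\Cob$, where $L_{\La_1,\La_1}$ is the identity cylinder on $\Su_1$ from Subsection~\ref{rem:mcgrepres}. Suppose $M_1\in\cV(\Su_1)$ represents $0$ in $\V(\Su_1)$. For an arbitrary $N\in\cV'(\Su_1\sqcup\Su_2)$, set $\tilde N=N\circ(L_{\La_1,\La_1}\sqcup M_2):\Su_1\to\emptyset$; by associativity of composition in $\Cob$,
\[
\Zr\bigl(N\circ(M_1\sqcup M_2)\bigr)=\Zr(\tilde N\circ M_1)=\brk{\tilde N,M_1}_{\Su_1}=0,
\]
provided $\tilde N$ really lies in $\cV'(\Su_1)$. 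The only non-trivial check is admissibility of those components of $\tilde N$ that are disjoint from $\Su_1$: such a component is either a component of $N$ disjoint from both $\Su_1$ and $\Su_2$ (admissible by hypothesis on $N$) or it is produced by gluing a component of $N$ meeting $\Su_2$ to at least one component of $M_2$. In the latter case the glued component contains a whole component of $M_2$, and every component of $M_2\in\cV(\Su_2)$ is admissible because $\partial_-M_2=\emptyset$; since projective edge-colorings and non-integrality of cohomology survive gluing along boundary, the combined component is admissible. The case $[M_2]=0$ is handled by the symmetric factorization $(M_1\sqcup L_{\La_2,\La_2})\circ M_2$.

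For injectivity, the key identity is that for $N_i\in\cV'(\Su_i)$ and $M_i\in\cV(\Su_i)$,
\[
\brk{N_1\sqcup N_2,\,M_1\sqcup M_2}_{\Su_1\sqcup\Su_2}
=\Zr\bigl((N_1\circ M_1)\sqcup(N_2\circ M_2)\bigr)
=\brk{N_1,M_1}_{\Su_1}\,\brk{N_2,M_2}_{\Su_2},
\]
where a component-wise check (the components of $N_1\sqcup N_2$ disjoint from $\Su_1\sqcup\Su_2$ split as components of $N_1$ disjoint from $\Su_1$ and components of $N_2$ disjoint from $\Su_2$) shows $N_1\sqcup N_2\in\cV'(\Su_1\sqcup\Su_2)$. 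Given $x=\sum_i[M_1^i]\otimes[M_2^i]\in\V(\Su_1)\otimes\V(\Su_2)$ with image zero, I would rewrite it so that the $[M_2^i]$ are linearly independent in $\V(\Su_2)$; if $x\neq0$ then some $[M_1^j]\neq0$, so by non-degeneracy of $\brk{\cdot,\cdot}_{\Su_1}$ there is $N_1$ with $\brk{N_1,M_1^j}_{\Su_1}\neq0$. Then $y:=\sum_i\brk{N_1,M_1^i}_{\Su_1}[M_2^i]\neq0$ in $\V(\Su_2)$, and non-degeneracy of $\brk{\cdot,\cdot}_{\Su_2}$ yields $N_2$ with $\brk{N_2,y}_{\Su_2}\neq0$. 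The displayed identity then gives $\brk{N_1\sqcup N_2,\,\sum_i M_1^i\sqcup M_2^i}_{\Su_1\sqcup\Su_2}\neq0$, contradicting the assumption $\sum_i[M_1^i\sqcup M_2^i]=0$.

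The main obstacle in both steps is the asymmetric admissibility condition of Definition~\ref{D:Cobordisms}: both halves of the argument build auxiliary morphisms in $\cV'$ by gluing with elements of $\cV$, and one must verify in each case that the result still satisfies admissibility on components disjoint from the negative boundary. The enabling observation is that every component of an element of $\cV(\Su)$ is automatically admissible (since its negative boundary is empty), which makes both verifications routine. The statement for $\V'$ follows by the same arguments with compositions read in the opposite direction.
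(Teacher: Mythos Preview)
Your argument is correct and follows the same strategy as the paper's proof: for well-definedness you form $\tilde N=N\circ(\Id_{\Su_1}\sqcup M_2)\in\cV'(\Su_1)$ exactly as the paper does, and for injectivity you use that the pairing restricted to split vectors is the tensor product of the two non-degenerate pairings. Your write-up is in fact more complete than the paper's, since you spell out the admissibility check for $\tilde N$ (components disjoint from $\Su_1$ inherit admissibility from a contained component of $M_2$), a point the paper's proof asserts without comment.
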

\begin{proof} Let $M_i\in\cV(\Su_i)$. If $[M_1]=0$ then for any
  $N\in\cV'(\Su_1\sqcup\Su_2)$ we have $\brk{N,M_1\sqcup M_2}=\brk{N_1, M_1}=0$ where $N_1=N\circ (\Id_{\Su_1}\sqcup  M_2)\in\cV'(\Su_1)$.  
Similarly, if $[M_2]=0$ then $[M_1\sqcup M_2]=0$.  This prove that $\cdot\sqcup\cdot\,:\,\V(\Su_1)\otimes\V(\Su_2)\to\V(\Su_1\sqcup\Su_2)$  is well defined.

  To prove injectivity, remark that the restriction of
  $\brk{\cdot,\cdot}_{\Su_1\sqcup\Su_2}$ to
  $(\V(\Su_1)\sqcup\V(\Su_2))\otimes(\V'(\Su_1)\sqcup\V'(\Su_2))$ is
 a non-degenerate pairing given by the product of the two non-degenerate pairings  $\brk{\cdot,\cdot}_{\Su_1}$ and $\brk{\cdot,\cdot}_{\Su_2}$.
 Thus, an element in the kernel of $\sqcup$ is in
  the right kernel of $\brk{\cdot,\cdot}_{\Su_1}\brk{\cdot,\cdot}_{\Su_2}$
  and so is zero.
\end{proof}

It is clear that 
$\sqcup$ is a natural transformation satisfying the strict associativity constraints and the unitality constraints associated to the map $\C\stackrel{\sim}\to\V(\emptyset)$.
  Nevertheless, 
 $  \cdot\sqcup\cdot$ in Equation \eqref{eq:dunion} is not a natural isomorphism  and  in general,
$\V(\Su_1)\otimes\V(\Su_2)\not\simeq\V(\Su_1\sqcup\Su_2)$ (for
example, see Proposition \ref{prop:vspheres}(2)). 
Thus $\V$ fails to be a (strong) monoidal functor :
these kind of functors are sometimes called lax monoidal.  

\begin{defi}\label{D:EquivCob}
  Let $M_1$ and $M_2$ be linear combinations of decorated cobordisms.  We
  will say that $M_1$ is \emph{equivalent} to $M_2$ and will write $M_1\equiv M_2$ if
  for any decorated cobordism $M'$, one has $\V(M_1\sqcup M')=\V(M_2\sqcup
  M')$ as linear maps.
\end{defi}
Clearly, $M_1\equiv M_2\implies \V(M_1)=\V(M_2)$ but as $\V$ is not monoidal,
the converse is not true.  Proposition \ref{prop:vspheres}(2) will show that for some decorated surfaces $\hS_{\pm k}$, one has
$\V(\Id_{\hS_{k}})=0$ because $\V({\hS_{k}})=\{0\}$ but $\Id_{\hS_{k}}\not\equiv
0$ as $\V(\Id_{\hS_{k}}\sqcup\Id_{\hS_{-k}})\neq0$.  
\begin{defi}\label{D:admS}
  Let $\Su$ be a decorated surface.  We say that $\Su$ is admissible,
  if each of its connected component $\Su_i$ satisfies at least one of
  the conditions: 1) the restriction of the cohomology class to
  $\Su_i$ is non-integral or 2) $\Su_i$ has a point marked with a
  projective module of $\cat$.
\end{defi}
\begin{rem}\label{rem:admsurfprimenonprime}
If $\Su_-$ is admissible then
$\cV'(\wb\Su_-)\simeq\cV(\Su_-)$.  Furthermore, if $M:\Su_-\to\Su_+$
is any decorated cobordism with $\Su_-$ admissible, then $M$ can be
seen as an element of $M_\cV\in\cV(\Su_+\sqcup\wb{\Su_-})$.  Indeed $M_\cV:\emptyset\to\Su_+\sqcup\wb{\Su_-}$ satisfies the
last axiom of Definition~\ref{D:Cobordisms} since the components of
$M$ that intersect $f(\wb\Su_-)$ are automatically admissible as
$\Su_-$ is admissible.
\end{rem}

\begin{lemma}\label{L:cob-as-vect}
  Let $\Su_-$ be an admissible surface and let $M:\Su_-\to\Su_+$ be a
  (linear combination of) decorated cobordism.  Let $M_\cV$ be the
  element of $\cV(\Su_+\sqcup\wb{\Su_-})$ corresponding to $M$,
  described 
  above.
  Then $$[M_\cV]=0\in\V(\Su_+\sqcup\wb{\Su_-})\implies M\equiv0.$$  %
\end{lemma}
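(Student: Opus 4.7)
The plan is to unfold the definition of $\equiv$ and reduce $M\equiv 0$ to the vanishing of $[M_\cV]$, by using the admissibility of $\Su_-$ to bend the $\Su_-$-boundary from the outgoing side of $\cV$ to the incoming side of $\cV'$.

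First, fix an arbitrary decorated cobordism $M':\Sigma'_-\to\Sigma'_+$; by Definition~\ref{D:EquivCob} it is enough to prove that $\V(M\sqcup M')=0$. Since $\V(\Su_+\sqcup\Sigma'_+)$ is separated by its pairing with $\V'(\Su_+\sqcup\Sigma'_+)$ and any class of $\V(\Su_-\sqcup\Sigma'_-)$ is represented by some $W\in\cV(\Su_-\sqcup\Sigma'_-)$, this amounts to checking that
\[
\Zr\bigl(N\circ(M\sqcup M')\circ W\bigr)=0
\]
for every $W\in\cV(\Su_-\sqcup\Sigma'_-)$ and every $N\in\cV'(\Su_+\sqcup\Sigma'_+)$.

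Next, I rewrite the closed decorated manifold $\mathcal{M}:=N\circ(M\sqcup M')\circ W$ as a pairing involving $M_\cV$. I build a decorated cobordism $N'':\Su_+\sqcup\wb{\Su_-}\to\emptyset$ out of $N,M',W$ by promoting the $\Su_-$-portion of the outgoing boundary of $W$, together with the $\Su_+$-portion of the incoming boundary of $N$, to the incoming boundary of $N''$; topologically $N''$ is the complement in $\mathcal{M}$ of an open tubular neighbourhood of $M$, so that $N''\circ M_\cV\cong\mathcal{M}$ as decorated cobordisms, and in particular $\Zr(N''\circ M_\cV)=\Zr(\mathcal{M})$. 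The crucial point is that $N''$ really is a morphism of $\Cob$: its components disjoint from the incoming boundary $\Su_+\sqcup\wb{\Su_-}$ are closed components of $\mathcal{M}$ not meeting $M$, hence admissible because $\mathcal{M}$ is a closed admissible manifold; and the option of viewing the outgoing $\Su_-$-portion of $W$ as the incoming $\wb{\Su_-}$-portion of $N''$ without destroying admissibility is exactly Remark~\ref{rem:admsurfprimenonprime} applied to the admissible surface $\Su_-$. The cohomology class, coloured graph, Lagrangian and signature-defect of $N''$ are inherited from those of $N,M',W$, and they glue correctly by associativity of composition in $\Cob$.

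Consequently $\brk{N'',M_\cV}_{\cV(\Su_+\sqcup\wb{\Su_-})}=\Zr(\mathcal{M})$, and since $[M_\cV]=0$ in $\V(\Su_+\sqcup\wb{\Su_-})$ by hypothesis, this pairing vanishes. As $W$ and $N$ were arbitrary, we obtain $\V(M\sqcup M')=0$, i.e.\ $M\equiv 0$. The main obstacle is the verification that $N''$ fulfils the admissibility axiom of Definition~\ref{D:Cobordisms} and that the signature-defect glues correctly under the re-bracketing $N\circ(M\sqcup M')\circ W=N''\circ M_\cV$; both rest on the standing hypothesis that $\Su_-$ is admissible and on the additivity of signature-defects (via Wall's formula) under associative gluing of cobordisms.
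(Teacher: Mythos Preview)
Your argument is correct. The paper takes a more categorical route: it introduces the cylinder over $\Su_-$ both as $C:\emptyset\to\Su_-\sqcup\wb{\Su_-}$ and as $C':\wb{\Su_-}\sqcup\Su_-\to\emptyset$ (available precisely because $\Su_-$ is admissible), writes $M_\cV=(M\sqcup\Id_{\wb{\Su_-}})\circ C$, and then for any $N\in\cV(\Su_-\sqcup\Su'_-)$ observes directly that
\[
\V(M\sqcup M')([N])=\V(\Id_{\Su_+}\sqcup\, C'\sqcup M')\bigl([M_\cV]\sqcup[N]\bigr)=0,
\]
using only functoriality of $\V$ and the lax-monoidal map $\sqcup$. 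This avoids pairing against $\V'$ altogether, and the signature-defect bookkeeping is absorbed into the already-established associativity of $\Cob$ via the zig-zag identity $(\Id_{\Su_-}\sqcup C')\circ(C\sqcup\Id_{\Su_-})=\Id_{\Su_-}$. Your approach instead descends to $\Zr$ of closed manifolds and, for each test pair $(N,W)$, excises $M$ from $\mathcal{M}$ to produce an ad hoc $N''\in\cV'(\Su_+\sqcup\wb{\Su_-})$; this works, but you must assign $n_{N''}$ by hand so that $N''\circ M_\cV$ recovers the correct signature-defect of $\mathcal{M}$, and your $N''$ varies with $N$ and $W$ rather than being a single uniform cobordism. (A minor point of wording: since $M$ is codimension~$0$ in $\mathcal{M}$, ``complement of an open tubular neighbourhood of $M$'' should read ``closure of $\mathcal{M}\setminus M$''.)
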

\begin{proof}
  The fact that $\Su_-$ is an admissible surface implies that the
  cylinder $\Id_{\Su_-}=\Su_-\times[0,1]$ 
  can be also seen as a decorated cobordism $C$ from
  $\emptyset$ to $\Su_-\sqcup\wb{\Su_-}$.  The same cylinder can also
  be seen as an element $C':\wb{\Su_-}\sqcup\Su_-\to\emptyset$ and it holds $$\V\left((\Id_{\Su_-}\sqcup C')\circ (C\sqcup \Id_{\Su_-})\right)=\V(\Id_{\Su_-}).$$ 
  Then
  $M_\cV=(M\sqcup\Id_{\wb{\Su_-}})\circ
  C\in\cV(\Su_+\sqcup\wb{\Su_-})$.  Now assume that $[M_\cV]=0$ and
  let $M':\Su'_-\to\Su'_+$ be any decorated cobordism, then for any
  $N\in\cV(\Su_-\sqcup\Su'_-)$, we have $\V(M\sqcup M')([N])=[(M\sqcup
  M')\circ N]=\V(\Id_{\Su_+}\sqcup\, C'\sqcup M')([M_\cV]\sqcup[N])=0$
\end{proof}

\section{Properties of the  TQFT functor $V$}\label{S:PropTQFT}
This section contains the main technical results concerning the functor $\V$ and the contravariant functor $\V'$. 
Before getting to the details, let us give an overview of the  main ideas and results of the next subsections. 

In the standard Reshetikhin-Turaev TQFT, one of the main ideas is that, even if the vector space associated to a surface is 
formally spanned by all the manifolds bounding it, one can actually restrict only to one such manifold at the cost of allowing links inside the manifold. 
This leads directly to the definition of the Kauffman skein module, which then surjects onto the standard TQFT vector spaces. 

Something similar happens in our case. Subsection \ref{SS:Skein} is
devoted to formalize the notion of \emph{skein module} of a fixed
manifold and showing that for a \emph{connected} surface $\Su$, the skein
module $\Skein(M,\Su)$ of a connected manifold bounding $\Su$
surjects over $\V(\Su)$.  The technicalities in this subsection are
due to the fact that our cobordisms are not just manifolds but
decorated ones, so roughly speaking $\Skein(M,\Su)$ is a suitable
quotient of the span of all the decorated cobordisms (possibly
containing ribbon graphs, and with any compatible cohomology class)
bounding $\Su$ whose underlying $3$-manifold is $M$.  The quotient is
generated by local relations (as in the standard case) and a new kind
of relations of non-local nature which we called $\sigma$-equivalence.
In particular we show that the skein module of a handlebody is finite
dimensional. Also, pay attention to the fact that  in order to apply
a skein relation in a ball, one must make sure that the complement 
of the ball is admissible according to Definition \ref{D:Admissiblethreeuple} (see Remark \ref{rem:skeinlocal}).

Subsection \ref{subsec:1surg} shows that if two manifolds are related
by a connected sum \emph{of a particular kind} (taking into account
the ribbon graphs in the manifolds) then their images by $\V$ are
proportional. The limit of this statement is due to its hypotheses and
it is the technical reason why many difficulties arise later on:
essential spheres in manifolds cannot be cut and filled by balls
unless some conditions are satisfied.  On contrast, Subsection
\ref{subsec:2surg} is devoted to show that if two manifolds are
related by 2-surgeries then their skein modules generate the same
space in TQFT (see Proposition \ref{P:EqualSkien}).

Then in Subsection \ref{sub:findimconnected} we show that if $\Su$ is connected then $\V(\Su)$ is finite dimensional. The guiding idea is similar to the standard one: a manifold bounding a connected surface is related by $2$ surgeries to a handlebody and so its class in $\V(\Su)$ equals the class of a certain skein in the handlebody. But since the skein module of a handlebody is finite dimensional also $\V(\Su)$ is. 
A key point in the above argument is that we could use only $2$-surgeries: this accounts for the hypothesis $\Su$ being connected. 
Showing finite dimensionality of $\V(\Su)$ without this hypothesis will require more work. 

In Subsection \ref{SS:SphereS_k} we focus on some important examples: when $\Su$ is a sphere three points $p_i$ whose colors are $V_0,V_0$ and $\sigma^n$ and the class $\coh$ determined with these colors. We call these decorated surfaces $\hS_n$. They play a key role in our constructions as they provide the first examples showing that $\V$ is not monoidal: indeed we show that $\V(\hS_k)=0$ (if $k\neq 0$) but $\V(\hS_k\sqcup \hS_{-k})$ is one dimensional. 
As stated above, the origin of this is the fact that ``cutting along essential spheres'' is not always allowed, and this is the simplest example of this phenomenon. Actually we show that in general this cutting problem reduces exactly to the presence of some sphere $\hS_k$ embedded in a cobordism. 
In this section we also introduce natural cobordisms which we call ``pants'' $\Pa_{k,l}^{k+l}:\V(\hS_k\sqcup \hS_l)\to \V(\hS_{k+l})$ and similar morphisms ``upside down'' $\Pa_{k+l}^{k,l}$. 

Subsection \ref{SS:FiniteDimVdiscon} contains the main result of this section, Theorem \ref{teo:findimvv} which roughly states that for a disconnected surface $\Su=\Su_1\sqcup \ldots \sqcup \Su_n$ every element of $\V(\Su)$ is the image of some elements not of $\otimes_i \V(\Su_i)$ but of $\otimes_i\V(\Su_i \sqcup \hS_{k_i})$ for some finite set of integers $k_i$. This, together with a direct analysis of $\V(\Su_i\sqcup \hS_{k_i})$ allows to prove that $\V(\Su)$ is finite dimensional in general. The key idea here is that if a manifold bounding $\Su$ cannot be reduced via surgeries to a disjoint union of manifolds bounding the components $\Su_i$ separately, this is due to the presence of some essential sphere $\hS_{k}$: instead of cutting and filling it, we then just cut along it, and get a manifold in $\V(\Su\sqcup \hS_k\sqcup \overline{\hS_k})$. 

The final Subsection \ref{sub:grading} prepares to the next section. Its goal is to use the degree cylinders defined in Subsection \ref{R:CylDef} to let $H^0(\Su;\C/2/2\Z)$ act on $\V(\Su)$. The spectral decomposition of this action yields a grading which turns out to be integer-valued. This grading allows to re-understand the non-monoidality of $\V$: the image of the map $\sqcup :\V(\Su_1)\otimes \V(\Su_2)\to \V(\Su_1\sqcup \Su_2)$ is exactly the degree $0$ part of $\V(\Su_1\sqcup \Su_2)$, but in general non-zero degree parts may exist.

\subsection{Skein modules}\label{SS:Skein}
The Reshetikhin-Turaev functor $F$ assigns morphisms in $\cat$ to
$\cat$-colored ribbon graphs in $\R^2\times[0,1]$.  A linear
combination of graphs whose value by this functor is zero defines a
\emph{skein relation}.  
We are going to consider skein modules which are the span of all graphs in a fixed manifold modulo certain skein relations.

Let $\Su$ be a decorated surface.  Let  $M$ and $M'$ be oriented 3-manifolds
(with no decoration) which bound $\Su$ and $\wb{\Su}$,  respectively.  Let $\cV(M,\Su)$ be the subspace of
$\cV(\Su)$  generated by decorated
cobordims whose underlying 3-manifold is $M$.  Similarly, 
let $\cV'(M',\Su)$ be the subspace of
$\cV(\Su)$ generated by decorated
cobordims whose underlying 3-manifold is  $M'$.  

Let $\wbU=((U_1,\ve_1),\ldots,(U_n,\ve_n))$ be a sequence of
homogeneous modules $U_k\in\cat$ and signs $\ve_k\in\{\pm1\}$ such that $F(\wbU)\in\cat_{\bar0}$.  Let
$S^2_{\wbU}$ be the sphere $S^2\simeq\C\cup\{\oo\}$  
with framed dots $\{p_k\}_{k=1,...,n}$ where $p_k$ is colored by $(U_k,\ve_k)$ and the dot $p_k\in S^2\simeq\C\cup\{\oo\}$ is positioned at the point $k\in \C$ and the framing points in the positive imaginary direction.
Since $F(\wbU)\in\cat_{\bar0}$ then $S^2_{\wbU}$ admits a unique
compatible cohomology class in $H^1(S^2\setminus\{p_i\},\C/2\Z)$.
With the zero Lagrangian, $S^2_{\wbU}=(S^2, \{p_k\},\coh,\{0\} )$ is a
decorated surface.
 
Choose an embedding $f:B^3\to \R^2\times[0,1]$ identifying
$\R^+\subset\C\subset S^2$ with $\R^+\times \{0\}\times
\{1\}\subset\R\times \R\times[0,1]$.
 If $\rho\in \cV'(B^3,S^2_{\wbU})$ then $f(\rho)$ is a $\cat$-colored ribbon graph with $n$ strands meeting the boundary in $\R^+\times \{0\}\times \{0\}$.
 Consider the following map:
\begin{equation}\label{E:DefcF}\cF:\cV'(B^3,S^2_{\wbU})\to\Hom_\cat(F(\wbU),\unit) \; \text{ given by } \; \rho \mapsto F(f(\rho)).
\end{equation}
Let $R_{\wbU}$ be the kernel of this map.  The module $R_{\wbU}$ does
not depend of $f$.  

Let $R$ be the span in $\cV(M,\Su)$ of the
elements of the form 
$(\rho\sqcup\Id_\Su)\circ M'$ 
where $M'$ is a decorated
cobordism from $\emptyset$ to $S^2_{\wbU}\sqcup\Su$,   $\wbU$ is a tuple and
$\rho\in R_{\wbU}$ such that 
$(\rho\sqcup\Id_\Su)\circ M'\in  \cV(M,\Su)$.
 We say that $x,y\in \cV(M,\Su)$ are {\em skein
  equivalent} if they differ by an element of $R$.
  
\begin{rem}\label{rem:skeinlocal}
 Here topologically $M'=M\setminus B^3$.  A simple way to construct skein equivalent elements is to fill this $B^3$ in two ways with two elements of $\cV'(B^3,S^2_{\wbU})$ whose difference is in $R_{\wbU}$.  
 Also, for $M'$ to be a decorated cobordism it must be admissible (as in the last point of Definition \ref{D:Cobordisms}). 
This implies that before applying a skein relation one must always make sure that \emph{the complement of the ball where the move is applied is admissible}.
\end{rem}

Let $N\in \cV(M,\Su)$ such that  $T=T'\cup K$ where $K$ is a framed oriented knot
colored by $\sigma$.  Let $N'$ be the same decorated cobordism with
$K$ removed (remark that the cohomology class $\coh$ of $N$ is zero on
the meridian of $K$ thus $\coh$ induces a cohomology class on $N'$).
Let $R'$ be the span 
 in  $\cV(M,\Su)$ of elements $N'-(-1)^{r-1}q^{-2r'\omega(K_P)}N$
where $K_P$ is a parallel copy of $K$ and 
$N$ and $N'$ are any decorated cobordism as described above.  
We say that $x,y\in \cV(M,\Su)$ are {\em
  $\sigma$-equivalent} if they differ by an element of $R'$.

Extending $\V$ by linearity we have a map $\V:\cV(M,\Su)\to\V(\Su)$.
\begin{prop}\label{P:RR'kernel}
  The submodules $R$ and $R'$ are in the kernel of $\V$.
\end{prop}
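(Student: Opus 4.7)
The plan is to verify that $R,R'\subset\ker\V$ by showing, for any $y\in\cV'(\Su)$ and any generator $x$ of $R$ (resp.\ of $R'$), that $\Zr(y\circ x)=0$; since $\V(\Su)$ is $\cV(\Su)$ modulo the right kernel of $\brk{\cdot,\cdot}_{\cV(\Su)}$, this is exactly what is needed. In both cases $y\circ x$ is a closed admissible decorated cobordism (the last axiom of Definition \ref{D:Cobordisms} guarantees admissibility of every component of $x$, and gluing with $y$ preserves this), so $\Zr$ is computed component-by-component via Equation \eqref{eq:Zdefi}, each component being evaluated as $\eta\lambda^m\dep^{-\sigma}F'(L\cup T)$ on a computable surgery presentation via Equation \eqref{eq:Zrconnected}. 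If the component containing the ball $B^3$ (resp.\ the knot $K$) has integral cohomology and no projective edge, I first perform the local modification $T\rightsquigarrow T'$ of Case~2 after Definition \ref{D:Admissiblethreeuple} at a location far from the skein support; neither $\Zr$ nor the local data under study is affected.

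For the submodule $R$: the ball $B^3$ carrying $\rho$ can be placed disjoint from the surgery link $L$ inside the ambient $S^3$ of the presentation. Whether $F'$ is computed by cutting a Kirby-coloured component of $L$ (expanding the Kirby color via Equation \eqref{eq:Om} into simple summands) or a projective edge of $T$, the final ingredient is always $F$ evaluated on a tangle containing the ball $B^3$. By monoidality of the ribbon functor, the contribution of the ball to every such evaluation factors through the morphism $\cF(\rho)\in\Hom_\cat(F(\wbU),\unit)$ of \eqref{E:DefcF}. Since $\rho\in R_{\wbU}=\ker\cF$, each such contribution vanishes, hence $F'(L\cup T)=0$ and $\Zr(y\circ x)=0$.

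For the submodule $R'$: I would exploit that $\sigma=\C^H_{2r'}$ is $1$-dimensional with $H$ acting by $2r'$, so the universal $R$-matrix of $\Ubar$ and its twist act by explicit scalars on any tensor product in which one factor is $\sigma$, and the quantum dimension of $\sigma$ equals $q^{2r'}=(-1)^{r-1}$. A direct calculation, performed after isotoping the $\sigma$-coloured knot $K$ off the rest of $L\cup(T\setminus K)$ and tracking the braiding, framing-twist and quantum-dimension contributions, should yield
\[
F'(L\cup(T\setminus K)) = (-1)^{r-1}q^{-2r'\omega(K_P)}\,F'(L\cup T),
\]
where $\omega(K_P)\in\C/2\Z$ is identified via the surgery presentation with the total linking of $K_P$ with $L\cup(T\setminus K)$ (the exponential is well-defined on $\C/2\Z$ since $q^{4r'}=1$). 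Because the prefactor $\eta\lambda^m\dep^{-\sigma}$ is unchanged whether $K$ is present or not, Equation \eqref{eq:Zrconnected} then gives $\Zr(y\circ N')=(-1)^{r-1}q^{-2r'\omega(K_P)}\Zr(y\circ N)$, which is precisely the relation defining $R'$.

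The main obstacle lies in the scalar bookkeeping for $R'$: each crossing of $K$ with a strand of weight $\mu$ contributes a braiding factor of the form $q^{\pm r'\mu}$, and one must sum these contributions together with the twist coming from the framing of $K$ and check that the total matches $(-1)^{r-1}q^{-2r'\omega(K_P)}$ on the nose, not merely up to a stray fourth root of unity. By contrast, the $R$-case is an essentially immediate consequence of the locality and monoidality of the ribbon functor $F$.
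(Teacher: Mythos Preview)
Your approach is correct and essentially the same as the paper's. For $R$ you both localize the ball away from the surgery link and use that $F$ factors through $\cF(\rho)=0$; for $R'$ you both unlink the $\sigma$-colored knot via braiding scalars and remove it via the quantum dimension, matching the relation defining $R'$.

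Regarding your stated obstacle on the $R'$ bookkeeping: the paper settles it with two explicit identities (Equation~\eqref{eq:sigmabraid}): the braiding of $\sigma$ past a strand in $\cat_{\wb\alpha}$ contributes exactly $q^{2r'\wb\alpha}$, and the isolated $\sigma$-colored unknot evaluates to $(-1)^{r-1}$. Since $\sigma\in\cat_{\wb0}$, self-crossings of $K$ contribute trivially, so there is no framing correction to worry about; summing the braiding factors over all crossings reproduces $\coh(K_P)$ as the $\C/2\Z$-valued total linking of $K$ with $L\cup(T\setminus K)$, for which the paper cites \cite[Lemma~4.7]{CGP}. This removes any ambiguity of a stray root of unity.
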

\begin{proof}
Equation \eqref{eq:Zdefi} implies we can assume that $M$ is connected.  
 Let us prove that $R$ is in the kernel.
First, consider an element $X$ in $R$ of the form $(\rho\sqcup\Id_\Su)\circ M'$ where $\rho\in R_{\wbU}$ and $M'\in\cV(\emptyset,S^2_{\wbU}\sqcup\Su)$ are as described above.  We will show that for any $M''\in \cV'(\Su)$ it holds $\Zr(M''\circ X)=0$.  

Here $M''\circ X=\rho \circ N$ where $N=(\Id_{S^2_{\wbU}}\sqcup M'')\circ M'$ is an admissible cobordism.  So it suffices to compute $\Zr(\rho \circ N,T,\coh)$ where $T$ and $\coh$ are the  $\cat$-colored ribbon graph and cohomology class of the cobordism $\rho \circ N$.  The graph $T= f(\rho)\circ T''$ where  $T''=T\cap \rho$.  
Since $\rho$ is a ball 
then there exists a surgery presentation $L\subset S^3$ of $(\rho \circ N,T,\coh)$ such that  $T'' \cup L$ is in the lower hemisphere of $S^3$ and ${f(\rho)}$ is in the upper hemisphere of $S^3$.  We will assume that $\coh$ is non-integral, otherwise a similar argument works after modifying the triple $(\rho \circ N,T,\coh)$ as discussed after   Definition \ref{D:Admissiblethreeuple}.  Following the proof of Proposition  1.5 in  \cite{CGP} we can slide a generically colored edge over the components of $L$ to make it computable.  Since $f(\rho)$ contains no surgery components this can be done completely in the lower hemisphere of $S^3$ and thus not touching $f(\rho)$.  

To show $\Zr(M''\circ X)=0$ it is enough to see that $F'(L\cup T'')=0$.  
To compute $F'(L\cup T'')$ we choose a (1,1)-ribbon graph $T_{V_\alpha}$ obtained by cutting an edge $e$ of $L\cup T''$ 
colored by $V_\alpha$ for some $\alpha\in \Cp$.
   Then since $f(\rho)$ is in the upper hemisphere of $S^3$ we have 
$$F(T_{V_\alpha})= \left(\Id_{V_\alpha} \otimes F\left(f(\rho)\right)\right)\circ g$$
where $g$ is a morphism from $V_\alpha$ to $V_\alpha\otimes F(\wbU)$.
But $F(f(\rho))=0$ as $\rho \in R_\wbU$ implying $F(T_{V_\alpha})=0$.  Thus, $F'(L\cup T')$ and $\Zr(M''\circ X)$ are both zero.  Finally, since a general element of $R$ is a sum of elements of the form considered above and $\V$ is linear we have that $R$ is in the kernel of $\V$.

Next, let us prove that $R'$ is in the kernel.  
Let $N\in \cV(M,\Su)$ be such that  $T=T'\cup K$ where $K$ is a framed oriented knot
colored by $\sigma$, and $N'$ be the same decorated cobordism with
$K$ removed. For any $M''\in \cV'(\Su)$ we will show that $\Zr(M''\circ N')=(-1)^{r-1}q^{-2r'\coh(K_P)}\Zr(M''\circ N)$, where $K_P$ is a parallel of $K$. If $V\in \cat_{\wb{\alpha}}$ then we have
\begin{equation}
  \label{eq:sigmabraid}
  F\left( \put(5,17){$\sigma$}\put(18,17){$V$}\epsh{fig26}{9ex}\right)=
  q^{2r' \wb{\alpha}}F\left(\put(5,17){$\sigma$}\put(18,17){$V$}\epsh{fig27}{9ex}
  \right),\ \ F\left( \put(8,10){$\sigma$}\epsh{fig12}{9ex} \right)=(-1)^{r-1}.
\end{equation}
So unlinking $K$ from the surgery diagram of $M''\circ N$ and then removing it the resulting invariant changes by an overall factor $(-1)^{r-1}q^{2r'\coh(K_P)}$: indeed one can check that the value $\coh(K_P)$ is the ``$\C/2\Z$-valued linking number of $K$ with the surgery link,'' for a detailed proof, see  \cite{CGP}, Lemma 4.7. This concludes as the result is a surgery diagram of $M''\circ N'$.
\end{proof}

\begin{defi}[Admissible skein modules]
  A $\cat$-colored ribbon graph in a $3$-manifold is \emph{admissible} if at
  least one of its edges is colored by a projective module.  
 If $M$ is connected, let $\cV_A(M,\Su)$ be the subspace of $\cV(M,\Su)$ generated by cobordisms $(M,T,\coh)$ where $T$ is admissible.  In general, if $M=\bigsqcup_i M_i$ where $M_i$ are the connected components of $M$ then let  $\cV_A(M,\Su)=\bigsqcup_i\cV_A(M_i,M_i\cap\Su)$.  Define $\Skein(M,\Su)$ as the \emph{admissible skein module}:
  $$\Skein(M,\Su)=\cV_A(M,\Su)/(\cV_A(M,\Su)\cap (R+R')).$$
  Proposition \ref{P:RR'kernel} implies that 
  $\V$ 
  induces a map
  \begin{equation}\label{E:DefV}
  \V:\Skein(M,\Su)\to\V(\Su).
  \end{equation}
\end{defi}
Next, we define a map $\cF^\#$ which is kind of dual to the map $\cF$ defined in Equation~\eqref{E:DefcF}.  Let $(S^2_{\wbU}, \{p_k\},\coh, \La)$ be the decorated surface defined as above. 
Choose an embedding $g:B^3\to \R^2\times[0,1]$ identifying $\R^+\subset\C\subset S^2$ with $\R^+\times \{0\}\times \{1\}\subset\R\times \R\times[0,1]$.
If $\rho\in \cV(B^3,S^2_{\wbU})$ then $g(\rho)$ is a $\cat$-colored ribbon graph.  Consider the following map:
\begin{equation}\label{E:DefcFdual}
\cF^\#:\cV(B^3,S^2_{\wbU})\to\Hom_\cat(\unit,F(\wbU)) \; \text{ given by } \; \rho \mapsto F(g(\rho)).
\end{equation}

\begin{lemma}\label{lem:skeinhom}
  If $F(\wbU)$ is projective in $\cat$, then $\cF^\#$ induces an isomorphism:
  $$\Skein(B^3,S^2_{\wbU}) \to \Hom_\cat(\unit,F(\wbU)).$$
\end{lemma}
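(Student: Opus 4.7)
The plan is to prove three things in sequence: that $\cF^\#$ descends to a well-defined map out of $\Skein(B^3,S^2_\wbU)$; that this induced map is surjective; and that it is injective.

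For descent, I would check that $\cF^\#$ annihilates both $R\cap\cV_A(B^3,S^2_\wbU)$ and $R'\cap\cV_A(B^3,S^2_\wbU)$. The vanishing on $R$ is immediate from the definition of $R_{\wbU'}$ as the kernel of the analogous functor $\cF$: a locally inserted $\rho''\in R_{\wbU'}$ propagates zero through $F$ by tensor functoriality. The vanishing on $R'$ is the braiding computation \eqref{eq:sigmabraid}, which shows that unlinking and removing a $\sigma$-colored framed knot $K$ multiplies $F$ by exactly the scalar $(-1)^{r-1}q^{-2r'\omega(K_P)}$ appearing in the definition of $R'$, as in the proof of Proposition \ref{P:RR'kernel}.

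For surjectivity, the hypothesis that $F(\wbU)$ is projective is essential. Fix any $\alpha\in\C\setminus\Z$; since $V_\alpha$ generates the ideal of projectives in $\cat$, there exist $W\in\cat$ and morphisms $g:V_\alpha\otimes W\to F(\wbU)$, $h:F(\wbU)\to V_\alpha\otimes W$ with $g\circ h=\Id_{F(\wbU)}$. Given $f\in\Hom_\cat(\unit,F(\wbU))$, I would realize $f=g\circ(h\circ f)$ by a two-coupon ribbon graph $\rho_f\subset B^3$: a bottom coupon labeled $h\circ f:\unit\to V_\alpha\otimes W$ joined to a top coupon labeled $g:V_\alpha\otimes W\to F(\wbU)$ by a $V_\alpha$-strand and a $W$-strand, with the top outputs ending at the marked points of $S^2_\wbU$. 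The $V_\alpha$-strand makes $\rho_f$ admissible, and $\cF^\#(\rho_f)=f$.

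For injectivity, suppose $\rho\in\cV_A(B^3,S^2_\wbU)$ satisfies $\cF^\#(\rho)=0$. By admissibility $\rho$ has an edge colored by a projective module; if that module is not $V_\alpha$ with $\alpha\in\C\setminus\Z$, I would insert a $\Gamma_\alpha$-type local modification as in Case 2 after Definition \ref{D:Admissiblethreeuple}. Since $F(\Gamma_\alpha)=\Id$, this insertion belongs to $R$ and does not affect $[\rho]$. Thus I may assume $\rho$ has a designated $V_\alpha$-edge $e$. I would then isotope $\rho$ so that $e$ lies outside a sub-ball $B'\subset B^3$ containing the remainder of $\rho$ and appeal to the standard Reshetikhin--Turaev--Shum coherence: two $\cat$-colored ribbon tangles with the same image under $F$ differ by a linear combination of local relations from $R_{\wbU'}$. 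Applied to $\rho|_{B'}$, which has $F$-value zero, this writes $[\rho]$ as an element of $R$; at every step $B^3\setminus B'$ remains admissible thanks to $e$, so the moves are legitimate per Remark \ref{rem:skeinlocal}, and we conclude $[\rho]=0$ in $\Skein(B^3,S^2_\wbU)$. The delicate point is this injectivity step: one must keep the admissibility-witness edge $e$ outside the region where local skein moves are performed throughout the reduction, which is precisely the reason for the preliminary insertion of $\Gamma_\alpha$.
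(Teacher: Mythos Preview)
Your descent and surjectivity arguments are fine and close in spirit to the paper's (the paper uses the projective module $F(\wbU)$ itself for the witness strand rather than factoring through $V_\alpha\otimes W$, but either works).

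The injectivity argument, however, has a genuine gap. You assert that ``$\rho|_{B'}$ has $F$-value zero'', but this does not follow from $\cF^\#(\rho)=0$. With your setup, $B^3\setminus B'$ contains the arc $e$ (colored by $V_\alpha$) together with the trivial strands going to the $\wbU$-points, so $\rho|_{B'}$ is a tangle from $\emptyset$ to some $\wbU'=\wbU\cup\{(V_\alpha,+),(V_\alpha,-)\}$, and $\cF^\#(\rho)$ is obtained from $F(\rho|_{B'})$ by composing with $\Id_{F(\wbU)}\otimes\ev_{V_\alpha}$. The vanishing of this composite does not force $F(\rho|_{B'})=0$, so you cannot conclude $\rho|_{B'}\in R_{\wbU'}$, and the coherence statement you invoke does not apply. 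Local skein moves performed inside $B'$ preserve $F(\rho|_{B'})$, so if that value is nonzero you will never reduce to zero this way.

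The paper's fix is to move in the opposite direction: rather than extracting a piece of $\rho$ to serve as the admissibility witness outside, one \emph{adds} something on top. Precisely, one first replaces $T$ by the skein-equivalent $T'=\Gamma_\wbU\circ T$, where $\Gamma_\wbU$ is two $\Id_{F(\wbU)}$-coupons joined by a single $F(\wbU)$-colored strand. Now the \emph{entire} original tangle $T$ sits in the inner ball and has $F(T)=0$ by hypothesis, hence its dual $T^*$ lies in $R_{\overline\wbU}$; the outer collar $S^2\times[0,1]$ carries $\Gamma_\wbU$, whose $F(\wbU)$-colored strand (projective by assumption) provides admissibility. This is exactly where the hypothesis that $F(\wbU)$ is projective is used for injectivity. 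You could adapt your $V_\alpha$-idea to this scheme---insert a factorization of $\Id_{F(\wbU)}$ through $V_\alpha\otimes W$ in the collar rather than inside $\rho$---but the essential point is that the admissibility witness must live in the added collar, not be carved out of $\rho$.
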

\begin{proof}
 First, let us show that $\cF^\#|_{\cV_A(B^3,S^2_{\wbU})}$ is surjective.  
 To do this consider the ribbon  graph $\Gamma_{\wbU}:\wbU\to\wbU$ in $\R^2\times[0,1]$ composed of two coupons colored by the identity of $F(\wbU)$ related by a  straight strand colored by $F(\wbU)$: 
 $$\text{schematically for }n=4,\qquad \Gamma_{\wbU}=\epsh{fig11}{8ex}\put(-5,0){\ms{F(\wbU)}}\put(-12,10){\ms{\Id}}\put(-12,-9){\ms{\Id}}.$$
  The Reshetikhin-Turaev  functor sends this graph to the identity of $F(\wbU)$.  
  Let $G\in \Hom_\cat(\unit,F(\wbU))$. 
The $\cat$-colored ribbon graph $\Gamma_{\wbU}\circ g(G)$
is admissible because it contains an edge colored by the  projective module $F(\wbU)$.  Then $(B^3,\Gamma_{\wbU}\circ g(G), (g|_{S^2})^{-1}, 0, 0)$ is an element of $\cV_A(B^3,S^2_{\wbU})$ which  is mapped to $G$ under $\cF^\#$. 

  Next we will show that the kernel of the morphism $\cF^\#$ is exactly $R+R'$.  
  The module $R\subset\cV(B^3,S^2_{\wbU})$ is generated by elements of the form $(r\sqcup\Id_{S^2_{\wbU}})\circ M'$ where $r\in R_{V}$,
  $M'\in\cV(S^2\times[0,1],S^2_{V}\sqcup S^2_{\wbU})$ and 
  $V=((V_1,\ve'_1),\ldots,(V_n,\ve'_{n'}))$ is a sequence of
  homogeneous modules $V_i\in\cat$ with signs $\ve'_i\in\{\pm1\}$.
  Clearly $\cF^\#$ sends such elements to morphisms of 
  $\Hom_\cat(\unit,F(\wbU))$ that factor through the zero morphism of
  $\Hom_\cat(\unit,F(\wb V))$ where $\wb V=((V_1,-\ve'_1),\ldots,(V_n,-\ve'_{n'}))$.  Thus, $R$ is  contained in the  kernel of $\cF^\#$.

 Next, let $N$ be in the kernel of $\cF^\#$ then $N$ 
 can be represented by a linear combination $T$ of admissible $\cat$-colored ribbon graphs from $\emptyset$ to $\wbU$
  in $\R^2\times[0,1]$.  
  Since $F(\Gamma_{\wbU})=\Id_{F(U)}$ and the graphs in $T$ are admissible we have that $T$ is skein equivalent to $T'=\Gamma_{\wbU}\circ T$, see Remark \ref{rem:skeinlocal}.  Let $\wb \wbU=((U_1,-\ve_1),\ldots,(U_n,-\ve_n))$.  
   Now we can represent the element $T'\in
  \cV(B^3,S^2_{\wbU})$ as the composition of two cobordisms: the
  first goes from $\emptyset$ to $S^2_{\wbU}\sqcup S^2_{\wb{\wbU}}$
  and is topologically $S^2\times [0,1]$ containing $\Gamma_{\wbU}$;
  the second is the cobordism $S^2_{\wb{\wbU}}\to \emptyset$ which is a
  ball containing the dual $T^*\in R_{\wb{\wbU}}$ of the ribbon graph
  representing $T$.
  Thus, $T'\in R$ and $T-T'\in R$ so $T\in R$.  (Notice that here we
  used the fact that $F(\wbU)$ is projective in order to state that
  the first cobordism is admissible!). To conclude observe that since
  we are working in $B^3$ the relations in $R'$ are contained in those
  of $R$.
\end{proof}

\begin{prop}\label{P:Skein-finite} 
  Let $\Su_g$ be a connected decorated surface of genus $g$ and let
  $H_g$ be a handlebody with boundary $\Su_g$.  Then
  $\Skein(H_g,\Su_g)$ is a finite dimensional complex vector space.
\end{prop}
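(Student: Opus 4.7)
The plan is to reduce every class of $\Skein(H_g,\Su_g)$ to a standard form supported in a tubular neighborhood of a spine of $H_g$, and then to bound the data appearing in this standard form. Fix $g$ disjoint properly embedded meridian disks $D_1,\dots,D_g \subset H_g$ whose complement is a 3-ball $B$, chosen so that $\partial B$ meets $\Su_g$ in a disk containing all marked points $\{p_k\}$ of $\Su_g$.

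\textbf{Step 1 (Standard form via fusion).} By isotopy any admissible $\cat$-colored ribbon graph $T \subset H_g$ may be taken transverse to each $D_i$, with no coupons in a tubular neighborhood of $D_i$. Parallel strands through a single $D_i$ may then be combined using fusion coupons---local skein moves belonging to $R$---into exactly one strand of color $W_i \in \cat$. Thus every class $[T] \in \Skein(H_g,\Su_g)$ is represented by a graph crossing each $D_i$ in a single strand colored by $W_i$, with all remaining structure lying in $B$.

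\textbf{Step 2 (Finiteness of handle colors).} Compatibility with $\coh$ forces $W_i \in \cat_{\coh(m_i)}$, where $m_i$ is the meridian of the strand crossing $D_i$. Decomposing $W_i$ into indecomposable summands, I may assume $W_i$ is indecomposable. The key reduction is a $\sigma$-shift relation in the skein module: a strand colored $W_i \otimes \sigma$ equals, up to a nonzero scalar prescribed by $R'$, the same strand colored $W_i$. This is obtained by splitting the $W_i\otimes\sigma$ strand into parallel $W_i$- and $\sigma$-colored strands (via a coupon implementing the identity of $W_i\otimes\sigma$ together with duality for the invertible object $\sigma$), closing the $\sigma$-component into a small loop encircling the $W_i$-strand, and then absorbing that loop by the defining relation of $R'$. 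Combined with the classification of indecomposable modules of $\cat$ from the Appendix---which shows that in each graded piece $\cat_{\bar\alpha}$ there are only finitely many isomorphism classes of indecomposables modulo tensoring by powers of $\sigma$---this reduces the possible values of $W_i$ to a finite set $\mathcal{F}_i \subset \cat_{\coh(m_i)}$.

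\textbf{Step 3 (Finite-dimensional ball contribution).} For each fixed $(W_1,\dots,W_g) \in \mathcal{F}_1 \times \cdots \times \mathcal{F}_g$, the restriction of $T$ to $B$ is an admissible $\cat$-colored ribbon graph with boundary on $\partial B$ of prescribed colors $\wbU$, formed by the $p_k$ and by pairs of dual endpoints of the handle strands $W_i$. By further isotopy, the projective edge guaranteed by admissibility of $T$ may be arranged to touch $\partial B$ (or already appears among the $W_i$), so that $F(\wbU)$ is projective since projectivity is preserved under tensoring. Lemma \ref{lem:skeinhom} then identifies the admissible skein module of $(B,\partial B)$ with these boundary data with $\Hom_\cat(\unit, F(\wbU))$, which is finite-dimensional. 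Summing the ball contributions over the finitely many choices of $(W_1,\dots,W_g) \in \mathcal{F}_1 \times \cdots \times \mathcal{F}_g$ yields finite-dimensionality of $\Skein(H_g,\Su_g)$.

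\textbf{Main obstacle.} The delicate step is the $\sigma$-shift of Step 2: expressing the change of color $W \rightsquigarrow W\otimes\sigma$ as a composition of fusion coupons and a $\sigma$-loop absorption, and tracking the resulting scalar via $R'$. This, together with the finite-modulo-$\sigma$ classification of indecomposables furnished by the Appendix, is what makes the bound work; without it, the infinite family of simple modules $V_\alpha$ in each graded piece of $\cat$ would obstruct any finite-dimensional bound on the handle data.
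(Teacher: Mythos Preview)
Your overall architecture---cut along meridian disks, fuse to single strands $W_i$, reduce the $W_i$ modulo $\sigma$, then invoke Lemma~\ref{lem:skeinhom} on the residual ball---is close to the paper's, and with a single ball rather than three it would actually avoid the weight estimates the paper needs for $W_0$ and $W_{g+1}$. But two genuine gaps prevent the argument from going through as written.

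\textbf{The $\sigma$-shift does not work as you describe.} A ``small loop encircling the $W_i$-strand'' does not cross $D_i$, so removing it via $R'$ cannot change the colour passing through $D_i$; you would still have $W_i\otimes\sigma$ there. What actually works is to let the $\sigma$-colored component be a \emph{core of the $i$-th handle}: this is a closed knot in $H_g$ crossing $D_i$ exactly once, so after fusion it shifts $W_i\leadsto W_i\otimes\sigma$, and $\sigma$-equivalence removes it at the cost of the scalar $(-1)^{r-1}q^{-2r'\coh(K_P)}$. The paper's version of this move (between adjacent disks $D_k,D_{k+1}$ in its three-ball picture) is the same idea.

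\textbf{Projectivity is not arranged in time, and the finiteness claim in Step~2 is not what the Appendix provides.} The Appendix classifies only the \emph{projective} indecomposables and shows these are finite modulo $\sigma$; it says nothing about arbitrary indecomposables in $\cat_{\bar0}\cup\cat_{\bar1}$, where non-projective indecomposables beyond the $S_i$ exist and no such finiteness is available. Your Step~2 therefore needs every $W_i$ to be projective \emph{before} you decompose and $\sigma$-reduce. The fix is the paper's manoeuvre: before fusing, isotope the admissible graph so that the given projective edge passes through every disk $D_i$ (this is possible in a handlebody). Then each fused colour $W_i$ is projective, its indecomposable summands are projective, and Proposition~\ref{P:proj-mod_New} gives the finite list of $\sigma$-orbit representatives. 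Your ``further isotopy'' in Step~3 comes too late---once you have applied skein relations to fix the $W_i$, you can no longer isotope the original projective edge---and dragging it through a single $D_i$ would not suffice anyway, since the remaining $W_j$ could then be non-projective.

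With these two corrections (handle-core $\sigma$-knots, and forcing projectivity through every disk before fusion) your one-ball approach does work, and is in fact a mild streamlining of the paper's proof: you trade the three-ball decomposition and the weight bounds of Lemma~\ref{L:boundweight} for an independent $\sigma$-reduction on each handle.
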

  \newcommand{\pro}{W}
\begin{proof}
We will describe a cell decomposition of $H_g$ into three 3-balls $B_0, B_1$ and $B_2$ obtained  by cutting it along $g+2$ disks $\{D_k\}_{k=0}^{g+1}$ with $\partial D_k\subset  \Sigma_g$.
  \begin{figure}[!]
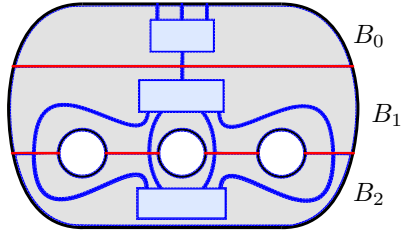
  
    \centering
     \(\epsh{fig15}{20ex}\put(-2,30){$B_0$} 
    \put(5,0){$B_1$}
     \put(-2,-30){$B_2$}
     \)
    \caption{
    A slice
      of a handlebody bounding a genus 3 surface
      with 3 framed dots containing $\Gamma$. 
      The five cutting discs are represented as horizontal lines in red.   The dual graph is in blue.  Here a slice of the three balls $B_0, B_1$ and $B_2$ are represented in gray. }
    \label{F:Handlebody}
  \end{figure} 
  The disc $D_0$ obtained by pushing interior of  a disc in $\Su$ containing all the points $\{p_i\}_{i=1}^n$ into $H_g$.  The ball $B_0$ is formed by cutting along the disc $D_0$.  Hence $B_0$ contains the points $\{p_i\}_{i=1}^n$.  Cutting $B_0$ out of $H_g$ yields a handlebody of genus $g$, which we decompose into the union of two $3$-balls $B_1,B_2$ glued to each other along disjoint disks $D_1,\ldots D_g$ in their boundaries. 
Consider an uncolored ribbon graph $\Gamma$ which is dual to this cell decomposition described as follows.  
 Each of the 3-balls $B_0, B_1$ and $B_2$ contains a coupon with $n+1$, $g+2$ and $g+1$ legs, respectively.  The $n$ points $\{p_i\}$ are connected to $n$ of the legs in the coupon in $B_0$.  The final leg in the coupon in $B_0$ is connected to a leg in the coupon in $B_1$ (intersecting the disk $D_0$ once).   For $1\leq k \leq n$, the $k$th leg in the coupon in $B_2$ is connected to a leg in the coupon in $B_1$ in such a way that it intersects the boundary of $B_1$ exactly once in the disk $D_k$.  
 An example of a projection of $\Gamma$ in handlebody with 3 holes and 3 framed points is given in Figure \ref{F:Handlebody}.
  Next we show that $\Skein(H_g,\Su_g)$ is generated by 
  all
  $\cat$-colorings of $\Gamma$. 
 We will complete the proof by showing that actually only finitely many of these colorings generate $\Skein(H_g,\Su_g)$.

  Let $T$ be an admissible graph in $H_g$.  Up to entangling $T$ in $H_g$ by an isotopy, we can suppose that 
  each disc $D_k$ intersects at least one edge
  of $T$ which is colored by a projective module.  We now modify $T$
  in a neighborhood of $D_k$ for which we have chosen an orientation:
  we can assume that $T$ intersects $D_k$ transversally on colinear
  $\cat$-colored dots forming a sequence $\wbU=((U_1,\ve_1),\ldots,(U_n,\ve_n))$ of homogeneous modules
  $U_i\in\cat$ with signs $\ve_i\in\{\pm1\}$ given by the orientations
  of the edges.  As at least one of the $U_i$ is projective thus the
  tensor product $F(\wbU)$ is projective.  Hence this module splits
  as a direct sum of indecomposable projective modules ${\pro}_\alpha$.  So
  there exists maps $f_\alpha:F(\wbU)\to {\pro}_\alpha$ and
  $g_\alpha:{\pro}_\alpha\to F(\wbU)$ such that $\Id_{F(\wbU)}=\sum_\alpha g_\alpha\circ f_\alpha$.  This implies that a
  tubular neighborhood of the oriented disc $D_k$ which consists in a
  cylinder containing $n$-strands colored by the $U_i$ is skein
  equivalent to the sum of graphs~:
  $$\sum_\alpha\epsh{fig11}{12ex}\put(-18,15){\ms{g_\alpha}}
  \put(-18,-13){\ms{f_\alpha}}\put(-10,0){\ms{{\pro}_\alpha}}\,.$$ 
  We do this substitution for all discs $D_k$ and we obtain a skein
  equivalent element consisting of graphs $T'$ which meet each $D_k$
  on a unique edge $e_k$ colored by a projective indecomposable module
  ${\pro}_k$.  Now we can use Lemma \ref{lem:skeinhom} and replace the
  content of each 3-ball by a skein equivalent coupon.  Thus any
  element of $\Skein(H_g,\Su_g)$ is equal to a linear combination of
  colorings of $\Gamma$ where the edge $e_k$ intersecting $D_k$ is colored
  by an indecomposable projective module ${\pro}_k$
  and the coupons are colored by some morphisms living in the finite
  dimensional space $\Hom_\cat(\unit,{\pro}_1\otimes\cdots\otimes
  {\pro}_{g+1})$, $\Hom_\cat({\pro}_1\otimes\cdots\otimes
  {\pro}_{g+1},{\pro}_0)$ 
  and 
  $\Hom_\cat({\pro}_0,\bigotimes_iX_i)\simeq \Hom_\cat(\unit,{\pro}_0^*\otimes\bigotimes_iX_i)$
  where $X_i$ is the fixed color of $p_i$.  
   To finish the proof,
  we now show that one can restrict the coloring of the edges to a
  finite set of indecomposable projective modules.

  Let $\wb\alpha_k=\omega([\partial D_k])$ where $\omega$ is the
  cohomology class of $\Su_g$.  The compatibility condition for
  triples implies that the color of $e_k$ is an object of
  $\cat_{\wb\alpha_k}$.  Fix a finite set $\{P^j_k\}$ of
  representatives of the $\sigma$-orbits of indecomposable projective
  modules of $\cat_{\wb\alpha_k}$ (i.e. any indecomposable module of
  $\cat_{\wb\alpha_k}$ is isomorphic to $P^j_k\otimes\sigma^n$ for
  some $j$ and some $n\in\Z$).  For $k=1,\ldots,g$, we can now add to
  the graph a $\sigma$-colored curve made of two unknotted strands
  respectively from $D_k$ to $D_{k+1}$ in one 3-cell and from
  $D_{k+1}$ to $D_k$ in the other 3-cell.
The union of this curve with a coloring $T$ of $\Gamma$ is $\sigma$-equivalent to $T$ and is skein equivalent to a coloring $T'$ of $\Gamma$ where the projective  modules ${\pro}_k$ and ${\pro}_{k+1}$ of $T$ have been replaced by ${\pro}_k\otimes \sigma$ and ${\pro}_{k+1}\otimes \sigma^{-1}$.  
  Doing this, we see that
  any coloring of $\Gamma$ is proportional in $\Skein(H_g,\Su_g)$ to a
  coloring where ${\pro}_1,\ldots,{\pro}_g$ belong to the finite set
  $\{P^j_k\}$: we are left to show that then also ${\pro}_{0}$ and ${\pro}_{g+1}$ must range in a finite set.  Now pick such a coloring and assume the coupons are
  colored by non zero morphisms.  We use the fact that these morphisms
  are $H$-linear maps to restrict the range of ${\pro}_0$ and ${\pro}_{g+1}$ to
  a finite set.  Indeed, for all but finitely many $n\in\Z$, the
  weights of ${\pro}_0^j\otimes\sigma^n$ are disjoint from those of
$\bigotimes_iX_i$ and so  $\Hom_\cat({\pro}_0^j\otimes\sigma^n,\bigotimes_iX_i)=\{0\}$. 
 Similarly,
  for all but finitely many $n\in\Z$, zero is not a weight of
  ${\pro}_1\otimes\cdots\otimes {\pro}_{g+1}^j\otimes\sigma^n$ thus
  $\Hom_\cat(\unit,{\pro}_1\otimes\cdots\otimes {\pro}_{g}\otimes
  {\pro}_{g+1}^j\otimes \sigma^n)=\{0\}$. 
\end{proof}

For any $\alpha\in\C$, the subset $\alpha+2\Z$ of complex numbers is
naturally totally ordered by comparing the real parts.  Hence a
homogeneous module $V\in\cat_{\wb\alpha}$ has a lowest weight that we
denote by ${\lw}(V)\in\alpha+2\Z$ and a highest weight
${\hw}(V)\in\alpha+2\Z$.  If $\Su=(\Su,\{p_i\}_{i},\coh, {\La})$ is a
decorated surface where $p_i$ is decorated by a module $V_i\in\cat$, 
then we set ${\lw}(\Su):={\lw}(\bigotimes_iV_i)$ and
${\hw}(\Su):={\hw}(\bigotimes_iV_i)$.
\begin{lemma}\label{L:boundweight}
  Let $\Su_g$ be a connected decorated surface of genus $g$ and let
  $H_g$ be a handlebody with boundary $\Su_g$.  Then ${\lw}(\Su)$ and
  ${\hw}(\Su)$ are even integers.  Furthermore $\Skein(H_g,\Su_g)=0$
  unless
  $${\lw}(\Su)\le 4(g+2)(r-1)\et {\hw}(\Su)\ge -4(g+2)(r-1).$$
\end{lemma}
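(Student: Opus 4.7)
By Proposition~\ref{P:Skein-finite}, any non-zero element of $\Skein(H_g,\Su_g)$ is a linear combination of colorings of the dual graph $\Gamma$ in which the edges crossing the discs $D_0,D_1,\ldots,D_{g+1}$ are colored by indecomposable projectives $P_0,P_1,\ldots,P_{g+1}$ of $\cat$, and in which the three coupons are colored by non-zero morphisms $\psi_0\in\Hom_\cat(P_0,\bigotimes_i V_i)$, $\psi_1\in\Hom_\cat(\bigotimes_{j=1}^{g+1}P_j,P_0)$, $\psi_2\in\Hom_\cat(\unit,\bigotimes_{j=1}^{g+1}P_j)$, where the $V_i$ are the colors on $\Su_g$. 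The plan is to assume $\Skein(H_g,\Su_g)\neq 0$ and fix such a non-zero coloring; otherwise both assertions of the lemma are vacuous.

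Since every morphism of $\cat$ is $H$-equivariant, the non-vanishing of each $\psi_k$ imposes both a degree compatibility in $\C/2\Z$ and a weight overlap of the relevant modules. From $\psi_2\neq 0$ the weight $0$ must occur in $\bigotimes_{j=1}^{g+1}P_j$, which forces $\sum_{j=1}^{g+1}\deg(P_j)=0$ in $\C/2\Z$ as well as $\sum_j\lw(P_j)\le 0\le\sum_j\hw(P_j)$. From $\psi_1\neq 0$ one then deduces $\deg(P_0)=0$ together with $\lw(P_0)\le \sum_j\hw(P_j)$ and $\sum_j\lw(P_j)\le \hw(P_0)$. Finally $\psi_0\neq 0$ gives $\deg(\bigotimes_i V_i)=\deg(P_0)=0$, so $\lw(\Su),\hw(\Su)\in 2\Z$ (which is the first assertion of the lemma), along with $\lw(P_0)\le \hw(\Su)$ and $\lw(\Su)\le \hw(P_0)$.

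For the numerical bounds I would invoke the classification of indecomposable projectives of $\cat$ recalled in the Appendix to produce a uniform width estimate $\hw(P)-\lw(P)\le 4(r-1)$ on every indecomposable projective $P\in\cat$: this is immediate for the generic $r$-dimensional modules $V_\alpha$ whose $r$ weights form $\alpha+\Hr$, while for the $2r$-dimensional projective covers of integer-weight simples it follows from their explicit weight decomposition. Combining this uniform bound with the chain of inequalities above gives
\[\lw(P_0)\le \sum_{j=1}^{g+1}\hw(P_j)\le \sum_{j=1}^{g+1}\lw(P_j)+(g+1)\cdot 4(r-1)\le 4(g+1)(r-1),\]
hence $\hw(P_0)\le \lw(P_0)+4(r-1)\le 4(g+2)(r-1)$, and symmetrically $\lw(P_0)\ge -4(g+2)(r-1)$. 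The weight-overlap inequalities $\lw(\Su)\le \hw(P_0)$ and $\hw(\Su)\ge \lw(P_0)$ then yield $\lw(\Su)\le 4(g+2)(r-1)$ and $\hw(\Su)\ge -4(g+2)(r-1)$, as required.

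The principal obstacle is the uniform width bound $\hw(P)-\lw(P)\le 4(r-1)$ on indecomposable projectives of $\cat$. It is immediate for the generic one-parameter family $V_\alpha$ but genuine for the $2r$-dimensional non-generic projective covers, and its verification is exactly the algebraic content deferred to the Appendix; everything else in the proof is a bookkeeping computation chaining the three weight-overlap conditions around the dual graph $\Gamma$.
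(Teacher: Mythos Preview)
Your argument for the ``furthermore'' part is essentially the paper's own: the same dual graph $\Gamma$, the same three coupons in $B_0,B_1,B_2$, the same weight-overlap inequalities, and the same appeal to the uniform width bound $\hw(P)-\lw(P)\le 4(r-1)$ on indecomposable projectives (Proposition~\ref{P:proj-mod_New} in the Appendix), chained in the same order.

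There is one small slip. You write that if $\Skein(H_g,\Su_g)=0$ then ``both assertions of the lemma are vacuous''. The second assertion is indeed vacuous in that case, but the first one---that $\lw(\Su),\hw(\Su)\in 2\Z$---is an unconditional statement about the decorated surface and still requires proof. Your derivation of it goes through the existence of a non-zero coloring and the equation $\deg(\bigotimes_i V_i)=\deg(P_0)$, which is unavailable when the skein module vanishes. The paper handles this directly and independently of $\Skein$: the curve $\partial D_0$ bounds in $\Su\setminus\{p_i\}$ (it bounds the genus-$g$ complement of the disc), so $[\partial D_0]=0$ and hence $\wb\alpha_0=\coh([\partial D_0])=\wb 0$; this forces $\bigotimes_i V_i\in\cat_{\wb 0}$ and the weights lie in $2\Z$. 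This topological observation patches your gap with no change to the rest of the argument.
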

\begin{proof} 
  We use the notation of the proof of Proposition
  \ref{P:Skein-finite}.  First remark that $\partial D_0$ is a
  bounding curve in $\Su\setminus\{p_i\}$ thus $[\partial D_0]=0$ and
  $\wb \alpha_0=\wb0$.  This implies that the set of weights of
  $\bigotimes_iV_i$ is included in $2\Z$ so
  $\lw(\Su),\hw(\Su)\in2\Z$.  Now for any coloring of $\Gamma$,
  ${\pro}_0$ and ${\pro}_1\otimes\cdots\otimes {\pro}_{g+1}$ also belong to
  $\cat_{\wb 0}$. 
   Then,  by Proposition \ref{P:proj-mod_New}, we have that
  ${\hw}({\pro}_0)\le {\lw}({\pro}_0)+4(r-1)$ and similarly,
  ${\hw}({\pro}_1\otimes\cdots\otimes {\pro}_{g+1})\le
  {\lw}({\pro}_1\otimes\cdots\otimes {\pro}_{g+1})+4(g+1)(r-1)$.  Assuming that
  the coupons of the coloring are colored by non zero morphisms, then for each ball in the decomposition of $H_g$ as in the proof of Proposition \ref{P:Skein-finite} we have two inequalities:
  $$
  \begin{array}{ccc}
B_0: &   {\hw}(\bigotimes_iV_i)\ge {\lw}({\pro}_0)&{\lw}(\bigotimes_iV_i)\le {\hw}({\pro}_0)\\
B_1: &   {\hw}(\bigotimes_{k\ge1}{\pro}_i)\ge {\lw}({\pro}_0)&
  {\lw}(\bigotimes_{k\ge1}{\pro}_i)\le {\hw}({\pro}_0)\\
  B_2: &    {\hw}(\bigotimes_{k\ge1}{\pro}_i)\ge 0&{\lw}(\bigotimes_{k\ge1}{\pro}_i)\le 0
  \end{array}
  $$
  As a consequence, we have that ${\hw}(\bigotimes_iV_i)\ge-4(g+2)(r-1)$
  and ${\lw}(\bigotimes_iV_i)\le 4(g+2)(r-1)$. Indeed by the relations $B_2$ we have that $$0\leq {\hw}({\pro}_1\otimes\cdots\otimes {\pro}_{g+1})\le
  {\lw}({\pro}_1\otimes\cdots\otimes {\pro}_{g+1})+4(g+1)(r-1)$$ and so $ {\lw}({\pro}_1\otimes\cdots\otimes {\pro}_{g+1})\geq -4(g+1)(r-1)$. Hence  ${\hw}({\pro}_0)\geq -4(g+1)(r-1)$. But ${\hw}({\pro}_0)\leq {\lw}({\pro}_0)+4(r-1)$ so ${\lw}({\pro}_0) \geq -4(g+2)(r-1)$. One concludes using the relations coming from $B_1$. 
  (A similar argument shows the inequality for $\lw(\Su)$.)
\end{proof}
\begin{lemma}[Adding typical colors]\label{lem:addingtypicals} 
Let $\alpha$ be any element of $\Cp=(\C\setminus \Z)\cup r\Z$ then the
   skein module $\Skein(M,\Su)$ is
  generated by 
  the set of decorated cobordisms containing a ribbon graph with an 
  edge colored by $V_\alpha$ in each connected component of $M$.\end{lemma}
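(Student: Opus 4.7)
The plan is to use, in each connected component of $M$, the same local ``$V_\alpha$-trick'' that underlies the Case~2 extension of $\Nr$ to integral triples discussed just after Definition~\ref{D:Admissiblethreeuple}. Recall that since $V_\alpha$ generates the ideal of projective modules of $\cat$, for any projective $P\in\cat$ there exist a module $W\in\cat$ and morphisms $g:V_\alpha\otimes W\to P$, $f:P\to V_\alpha\otimes W$ with $gf=\Id_P$; stacking the coupons $g$ and $f$ yields a $(1,1)$-ribbon graph $\Gamma_\alpha^P:(P,+)\to(P,+)$ containing an edge colored by $V_\alpha$ (and one colored by $W$) such that $F(\Gamma_\alpha^P)=\Id_P$.

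Starting from any admissible representative $(M,T,\coh)\in\cV_A(M,\Su)$, in each connected component $M_i$ pick an edge $e_i$ of $T$ colored by a projective module $P_i$ (such an edge exists by admissibility), then choose a small embedded $3$-ball $B_i\subset M_i$ that meets $T$ in a trivial arc of $e_i$. Replacing this arc by $\Gamma_\alpha^{P_i}$ yields a new ribbon graph $T'$, and there is a unique cohomology class $\coh'$ on $M\setminus T'$ that coincides with $\coh$ outside $\bigsqcup_iB_i$ and is compatible with $T'$: its value on the new meridians inside each $B_i$ is forced to be $\bar\alpha$ on the $V_\alpha$-meridian and the complementary value on the $W$-meridian, which extends uniquely thanks to the simple connectedness of $B_i$ (cf.\ Remark~\ref{rem:inducedcoh}). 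The new cobordism $(M,T',\coh')$ contains a $V_\alpha$-colored edge in every connected component and is still admissible, since $V_\alpha$ is itself projective and the original $P_i$-edges persist outside the insertions.

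It remains to verify that $(M,T,\coh)$ and $(M,T',\coh')$ are skein equivalent, i.e.\ differ by an element of the relation module $R$ introduced before Proposition~\ref{P:RR'kernel}. This is a local statement in each ball $B_i$: viewed as elements of $\cV'(B^3,S^2_{((P_i,-),(P_i,+))})$, the trivial $P_i$-strand and $\Gamma_\alpha^{P_i}$ are both sent by the functor $\cF$ of \eqref{E:DefcF} to $\Id_{P_i}\in\End_\cat(P_i)\simeq\Hom_\cat(F(\wbU),\unit)$, so their difference lies in $R_{\wbU}=\ker\cF$ and hence the globally glued cobordisms differ by an element of $R$, as required. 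The main obstacle is just the bookkeeping of cohomology classes under the local replacement: once one knows that the cohomology uniquely extends across each $B_i$, the skein equivalence follows immediately from the identity $F(\Gamma_\alpha^{P_i})=\Id_{P_i}$ together with the definition of $R$.
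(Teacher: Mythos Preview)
Your proof is correct and follows essentially the same approach as the paper's own proof: both insert, on a projective-colored edge in each connected component, a local $(1,1)$-graph with $F$-value $\Id_P$ that contains a $V_\alpha$-edge, and observe that this insertion is a skein equivalence. The paper's version is slightly more concrete (it takes $W=V_{-\alpha}\otimes P_i$ explicitly, using the evaluation morphism), while you invoke the abstract generation-of-the-projective-ideal argument from after Definition~\ref{D:Admissiblethreeuple}; your extra care about admissibility of the complement and uniqueness of the extended cohomology class is justified but the paper leaves this implicit.
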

\begin{proof} 
Let $M_i$ be the connected components of $M$.  By definition $\Skein(M, \Su)$ is generated by cobordisms $(M,T,\coh)$ such that $T\cap M_i$ is admissible for each $i$, i.e. each component $M_i$ has an edge colored by a projective module $P_i$.   Let $T$ be such a graph.  
  Consider the epimorphisms
  $f_i:V_\alpha\otimes V_{-\alpha}\otimes P_i\to P_i $ given by the evaluation morphism 
  tensor the identity.  Since $P_i$ is projective the morphism $f_i$ has a
  left inverse $g_i$, i.e. $f_i\circ g_i=\Id_P$.  Thus, for each connected component $M_i$, we can replace a small
  portion of the edge colored with $P_i$ by two connected coupons
  colored with $g_i$ and $f_i$.  The obtained graph has an edge colored by $V_\alpha$ in each connected component and  is skein equivalent to $T$.  Thus, in $\Skein(M,\Su)$, the cobordisms $(M,T,\coh)$ is equal to a cobordism with a graph having an edge colored by $V_\alpha$ in each connected component.
\end{proof}

\subsection{1-surgery}\label{subsec:1surg}
Recall the definition of surgery in terms of cobordisms.  Let $M$ and
$M'$ be two 3-manifolds seen as cobordisms from $\emptyset$ to the
surface $\Su$.  Let $S^{p-1}\times B^{4-p}$ and $B^{p}\times
S^{3-p}$ be the two obvious manifolds bounding $S^{p-1}\times
S^{3-p}$.  We think of these manifolds as cobordisms from $\emptyset$
to the surface $S^{p-1}\times S^{3-p}$.
We say that $M'$ is obtained by a \emph{$p$-surgery} on $M$ if there exists a
cobordism $N:S^{p-1}\times S^{3-p}\to \Su$ such that
$$M=N\circ (S^{p-1}\times B^{4-p}:\emptyset\to S^{p-1}\times S^{3-p})
\text{ and } M'=N\circ (B^{p}\times S^{3-p}:\emptyset\to S^{p-1}\times
S^{3-p}).$$ 

In this subsection, we will show that if $M'$ is obtained from $M$ by a 1-surgery then any vector in $\V(\Sigma)$ represented by a
decorated cobordism in $M$ can be expressed as a decorated cobordism
in $M'$.

The following proposition describes the effect of a 1-surgery on a
vector of $\V(\Su)$.  For $\alpha\in\Cp$, let
$S_\alpha=(S^2,\{p_1,p_2\}, \coh, {\La})$ be the decorated surface
defined as follows.  Here $S^2\simeq\C\cup\{\oo\}$ is the sphere and
the points $p_1, p_2$ are both colored by $V_\alpha$ and oriented by
$+$ and $-$, respectively.  The Lagrangian ${\La}$ is trivial and
$\coh$ is the unique cohomology class relative to the base point which
is compatible with colorings of $\{p_1,p_2\}$.

\begin{prop}
  \label{P:1-surg}
  Let $u_\alpha\in\cV(S_\alpha)$ be the class of a ball $B^3$ containing an unknotted arc colored
  by $\alpha\in\Cp$
  with the unique compatible cohomology class.  
  Let $S_\alpha\times[0,1]\in\cV(S_\alpha\sqcup \wb{S_\alpha})$ be the cylinder  with any compatible cohomology class.  Then
  $$\eta\,\qd(\alpha)\,\V(S_\alpha\times[0,1])=\V(u_\alpha\sqcup \wb{u_\alpha}).$$
Visually this equality can be represented as:
  \[\eta\,\qd(\alpha)\,\V\bp{\epsh{fig24}{6ex}}
  \put(-62,-20){$S^2\times B^1$\ } =\V\bp{\epsh{fig25}{3ex}\quad
    \epsh{fig25}{3ex}\ } \put(-45,-20){$B^3\times S^0$}\]
\end{prop}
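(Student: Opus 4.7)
The plan is to test the desired identity against all elements of $\cV'(S_\alpha\sqcup\wb{S_\alpha})$: by the very definition of $\V(S_\alpha\sqcup\wb{S_\alpha})$ as the quotient of $\cV(S_\alpha\sqcup\wb{S_\alpha})$ by the right kernel of the pairing $\brk{\cdot,\cdot}_{\Zr}$, the claim reduces to the scalar equality
$$\eta\,\qd(\alpha)\,\Zr\bp{N\circ(S_\alpha\times[0,1])}=\Zr\bp{N\circ(u_\alpha\sqcup\wb{u_\alpha})}$$
for every $N\in\cV'(S_\alpha\sqcup\wb{S_\alpha})$. Fix such an $N$ and set $M_2=N\circ(S_\alpha\times[0,1])$ and $M_1=N\circ(u_\alpha\sqcup\wb{u_\alpha})$. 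Both are closed and admissible because their ribbon graphs contain edges colored by the projective module $V_\alpha$ (since $\alpha\in\Cp$), so $\Zr$ is well-defined on them.

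Topologically, $M_2$ contains the $2$-sphere $\Sigma_\ast=S^2\times\{\tfrac12\}$ sitting in the cylinder, meeting the ribbon graph transversally in two $V_\alpha$-colored points; cutting $M_2$ along $\Sigma_\ast$ and capping the two resulting boundary spheres with copies of $u_\alpha$ and $\wb{u_\alpha}$ produces $M_1$, so $M_2$ is obtained from $M_1$ by a $1$-surgery. I would then compare the two invariants through surgery presentations in $S^3$: start from a surgery link $L$ for $M_1$, arranged so that the two short $V_\alpha$-arcs coming from the caps lie in disjoint small balls $B_1',B_2'\subset S^3$ unlinked from $L$; a surgery presentation of $M_2$ is then $L\cup U_0$, where $U_0$ is a $0$-framed unknot encircling an arc from $B_1'$ to $B_2'$ (the core of the newly attached $1$-handle), Kirby-colored by $\Omega_\beta$ with $\beta$ the value of $\omega$ on the resulting cycle, and where $T_2$ is obtained from $T_1$ by joining the two short arcs into two long $V_\alpha$, $V_\alpha^*$-arcs threading the disk bounded by $U_0$.

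Applying formula \eqref{eq:Zrconnected} to both manifolds, the ratio $\Zr(M_1)/\Zr(M_2)$ splits as (i) an elementary combinatorial factor recording the changes in $\eta^{b_0}$, $\lambda^m$ and $\delta^{-\sigma}$ --- adjoining the $0$-framed unlinked $U_0$ raises $m$ by one and preserves $\sigma$, while $b_0$ is preserved or decreases by one according to whether the $1$-handle connects two components of $M_1$ --- times (ii) the local Reshetikhin-Turaev ratio $F'(L\cup T_1)/F'(L\cup U_0\cup T_2)$. The second factor is purely local in a ball and is computed from the Kirby-color slide formula: expanding $\Omega_\beta=\sum_{k\in\Hr}\qd(\beta+k)V_{\beta+k}$ and evaluating the resulting Hopf-like pairings, the $\Omega_\beta$-colored $0$-framed meridian encircling the two parallel dual strands $V_\alpha$, $V_\alpha^*$ collapses onto the cup-cap $\mathrm{coev}\circ\mathrm{ev}$ rescaled by a scalar that, combined with (i), yields exactly $\eta\,\qd(\alpha)$. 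The main obstacle is this final Kirby-color computation: one must check that the scalar really does come out to $\eta\,\qd(\alpha)$ uniformly in the connected and disconnected cases, and handle the subcase in which $\beta$ happens to be integral, where the above presentation is not directly computable and one must first replace $(M_2,T_2,\omega_2)$ by an admissible skein-equivalent modification as in Case~2 following Definition~\ref{D:Admissiblethreeuple}, using a projective $V_\alpha$-arc already present in $T_2$.
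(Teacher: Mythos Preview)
Your approach is essentially the paper's: pair against an arbitrary $N\in\cV'(S_\alpha\sqcup\wb{S_\alpha})$, realize the cylinder side as the ball side with one extra $0$-framed surgery component $U_0$ encircling the two $V_\alpha$, $V_\alpha^*$ strands, and compute the resulting local scalar. The paper pins down that local scalar via Lemma~\ref{lem:old71} (case $n=0$): the $\Omega_\beta$-colored $0$-framed meridian on $V_\alpha\otimes V_{-\alpha}$ equals $r^3\qd(\alpha)^{-1}$ times the cup-cap, which combined with the extra $\lambda$ from the new surgery component gives exactly $(\eta\qd(\alpha))^{-1}$. Your handling of the integral-$\beta$ subcase is also in the same spirit; the paper first applies Lemma~\ref{lem:addingtypicals} to insert an edge colored by a genuinely generic $\beta\in\C\setminus\Z$ and then reroutes the path $\gamma$ around its meridian.

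One point where your write-up is thinner than the paper: the case where the two boundary spheres of $N$ lie in \emph{different} connected components. You invoke formula~\eqref{eq:Zrconnected} ``to both manifolds'', but that formula is only stated for connected $M$, and a single surgery link in $S^3$ does not present a disconnected closed manifold. The paper treats this case separately: gluing the cylinder produces a banded connected sum $N_1\#_{e_1,e_2}N_2$ along $V_\alpha$-edges, and the identity $\Zr(N_1\#_{e_1,e_2}N_2)=\qd(\alpha)^{-1}\eta^{-1}\Zr(N_1\sqcup N_2)$ is quoted from \cite[Proposition~3.11]{CGP}, the $\eta$ coming from the drop in $b_0$. You should either cite that result or carry out the bookkeeping with~\eqref{eq:Zdefi} componentwise rather than with~\eqref{eq:Zrconnected}.
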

\begin{proof}
  To prove the statement we glue $M'\in \cV'(S_\alpha\sqcup
  \wb{S_\alpha})\simeq\cV(S_\alpha\sqcup \wb{S_\alpha})$ (see Remark \ref{rem:admsurfprimenonprime}) to both $S_\alpha\times[0,1]$ and $u_\alpha\sqcup \wb{u_\alpha}$ then compute the invariant $\Zr$.
    As $M'$
  contains an edge colored by the projective module $V_\alpha$, it can
  be replaced as in Lemma \ref{lem:addingtypicals} by a skein
  equivalent element with an edge colored by an element $\beta\in\C\setminus\Z$.   

  Let us first suppose that the two spheres are contained in the same
  component of $M'$.  For simplicity, we may assume that $M'$ is
  connected.  Let $M_1=M'\circ (S_\alpha\times[0,1])$ and
  $M_2=M'\circ (u_\alpha\sqcup \wb{u_\alpha})$. 
Recall that a decorated surface has two kinds of points: a base point $*$ and the colored points $\{p_i\}$.   We will construct a surgery presentation of $M_2$ which can be extended to a surgery presentation of $M_1$.

To do this we need to find a ``nice'' 1-cycle in $M_1$ as follows.  
Choose a path $\gamma$ in $M'$ from the base point $*$ of $S_\alpha$ to the base point $*$ of $\wb{S_\alpha}$ which becomes close to the edge of $M'$ which is labeled by $\beta$.  Let $\gamma'$ be the trivial path $\{*\}\times[0,1]$ in $S_\alpha\times[0,1]$ from the base point $*$ of $\wb{S_\alpha}$ to the base point $*$ of $S_\alpha$.   Let $\coh_1$ be the $\C/2\Z$-valued cohomology class of the cobordism of $M_1$.  We can and will assume that $\coh_1(\gamma\circ \gamma')\notin \Z/2\Z$ where $\gamma\circ \gamma'$ is the concatenation of $\gamma$ and $\gamma'$ (otherwise, we can modify the choice of $\gamma$ by taking a connected sum of $\gamma$  with a meridian $m$ of the edge labeled by $\beta$; then $ \coh_1((\gamma\#m)\circ \gamma')=\beta+\coh_1(\gamma\circ \gamma')\notin \Z/2\Z$).

  Let $L$ be a computable surgery presentation of $M_2$.  Let $B$ be a
  3-ball in $S^3$ corresponding to the union of a neighborhood of
  $\gamma$ and the two 3-balls added to $M'$.  Then a surgery
  presentation of $M_1$ is obtained by
  adding a new component to $L$ as follows.  Replace the ball $B$  
  by a ball with a $0$-framed trivial knot on whose meridian the value of $\coh$ is  $\coh_1(\gamma\circ \gamma')$ and whose Seifert disc intersects the two parallel strands colored respectively by $\alpha,-\alpha$.
  Then Lemma \ref{lem:old71} with $n=0$ implies that
  $$\Zr(M_1)=\qd(\alpha)^{-1}r^3\lambda\Zr(M_2)=\qd(\alpha)^{-1}\eta^{-1}\Zr(M_2)$$
  and the equality follows.

  Next consider the second case: the two spheres are contained in
  two different components $M'_1$ and $M'_2$ of $M'$.  Again for
  simplicity, assume that $M'=M'_1\sqcup M'_2$.
Let $N_1=M'_1\circ u_\alpha$ and $N_2= M'_2\circ  \wb{u_\alpha}$.  Then the cobordism 
$M'\circ (S_\alpha\times[0,1])$ is the  banded connected sum $N_1\#_{e_1,e_2}N_2$ of $N_1$ and $N_2$ along  edges $e_i$ colored by $\alpha$.
      Then  \cite[Proposition 3.11]{CGP} implies  $\Zr(N_1\#_{e_1,e_2}N_2)=\qd(\alpha)^{-1}\eta^{-1}\Zr(N_1\sqcup  N_2)$ where the factor $\eta$ comes from  $b_0(N_1\#_{e_1,e_2}N_2)=b_0(N_1\sqcup N_2)-1$, see Equation  \eqref{eq:Zdefi}.
\end{proof}

\subsection{2-surgery}\label{subsec:2surg}
 Here we consider the effect of 2-surgery along a knot on a vector of $\V(\Su)$.  We use this to show that if $M$ and $M'$ are two manifolds   which are related by a 2-surgery with $\partial M=\partial M'$ then their skein modules have the same image through $\V$, see Proposition~\ref{P:EqualSkien}.  

We consider the following decorated surface and cobordisms.  
  Let $\wb\alpha\in\C/2\Z\setminus\Z/2\Z$ and $\omega$ be the cohomology class on $S^1\times S^1$ defined by $\omega([\{1\}\times S^1])=0$ and $\omega([S^1\times\{1\}])=\wb\alpha$ (here $[\gamma]\in H_1(S^1\times S^1;\Z)$ is the class of a curve $\gamma$).  Let $\La$ be the Lagrangian of $H_1(S^1\times S^1,\R)$  
generated by the homology class $[S^1\times \{1\}]$ or equivalently the kernel of the map $H_1(S^1\times S^1)\to H_1(B^2\times S^1)$ and $\Su$ be the decorated surface $\Su=(S^1\times
  S^1,\emptyset,\omega,\La)$.  Let  $(\wb{S^1\times B^2},\emptyset,\Id,\omega_1,n)$ (the $\overline{\cdot}$ is due to our outward vector first convention applied to $B^2$ and to $S^1\times B^2$)
  and $(B^2\times S^1,K_{\wb   \alpha},\Id,\omega_2,n)$ be the two decorated cobordisms from $\emptyset$ to $\Su$ defined as follows. The ribbon graph $K_{\wb\alpha}$ is the zero framed knot $[0,\frac12]\times S^1$ in $B^2\times S^1$  colored with a Kirby color of degree $\wb\alpha$; $\omega_1$  and $\omega_2$ are the unique cohomology classes compatible with $\omega$.  Finally, $n$ is any integer.  

\begin{prop}
\label{P:2-surg}  With the notation of the previous paragraph we have
  $$\V(\wb{S^1\times B^2},\emptyset,\Id,\omega_1,n)=
  \lambda\V(B^2\times S^1,K_{\wb
    \alpha},\Id,\omega_2,n)\in\V(\Su)$$
  \[\V\bp{\epsh{fig21}{12ex}\put(-60,-17){$\circlearrowright$}}
  \put(-75,-35){$-{\color{Orange}S^1}\times{\color{green}B^2}$} =
  \lambda\,\V\bp{\epsh{fig20}{12ex}
    \put(-19,12){\ensuremath{\color{blue}\Omega_{\wb\alpha}}}
    \put(-60,-17){$\circlearrowleft$}}
  \put(-75,-35){${\color{Orange}B^2}\times{\color{green}S^1}\supset
    {\color{blue}K_{\wb \alpha}}$}.
  \]
\end{prop}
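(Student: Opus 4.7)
The plan is to prove the equality by pairing with an arbitrary test cobordism $M\in\cV'(\Su)$ and showing that the resulting values of $\Zr$ satisfy the stated ratio. Setting
\[
M_1 := M\circ(\wb{S^1\times B^2},\emptyset,\Id,\omega_1,n)\quad\text{and}\quad
M_2 := M\circ(B^2\times S^1,K_{\bar\alpha},\Id,\omega_2,n),
\]
the common signature defect $n$ makes the factors $\delta^{n}$ from \eqref{eq:Zrweighted} cancel, so the claim reduces to $\Zr(M_1)=\lambda\,\Zr(M_2)$. Using Lemma~\ref{lem:addingtypicals} we may replace $M$ by a skein-equivalent cobordism containing a typical-colored edge $V_\beta$ with $\beta\notin\Z$ in each connected component; this ensures that every component of $M_1$ and $M_2$ is admissible in the sense of Definition~\ref{D:Admissiblethreeuple}, and that the discussion following that definition gives a recipe for computing $\Zr$ even when the naive surgery presentation is not computable in the sense of Definition~\ref{def:adm}(a).

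The topological input driving the proof is that $M_1$ is obtained from $M_2$ by the $0$-framed $2$-surgery along the core $K=\{0\}\times S^1$ of the solid torus $B^2\times S^1\subset M_2$.  On the boundary torus $\Su$ the meridian of $K$ is $S^1\times\{1\}$, carrying the non-integral value $\omega=\bar\alpha$, while the $0$-framed longitude is $\{1\}\times S^1$, on which $\omega=0$.  This is precisely the compatibility needed for $\omega_2\!\mid_{M_2\setminus K}$ to extend across the $2$-handle attached by the surgery, producing the cohomology class $\omega_1$ on $M_1$.  Concretely, pick a computable surgery presentation $L$ of $M_2$ in $S^3$ and, by an isotopy supported in the solid torus $B^2\times S^1$, arrange the ribbon component $K$ to appear in $S^3$ as a $0$-framed unknot lying in a small ball disjoint from $L$.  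Then $L\cup K$ is a surgery presentation of $\chi_K(M_2)=M_1$, and because $K$ is $0$-framed and unlinked with $L$ the signature is preserved: $\sigma(L\cup K)=\sigma(L)$.

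Applying Equation~\eqref{eq:Zrconnected} to both presentations, the $F'$-integrands are identical (the link-diagram in $S^3$ is the same, with $K$ drawn as a $0$-framed unknot carrying the Kirby color $\Omega_{\bar\alpha}$ in either interpretation), so the only difference is the power of $\lambda$: one extra surgery component in the presentation of $M_1$ accounts for exactly one additional factor of $\lambda$, yielding $\Zr(M_1)=\lambda\,\Zr(M_2)$. The main technical obstacle is to legitimize this computation for $M_1$: strictly speaking, using $L\cup K$ as a presentation of $M_1$ violates Definition~\ref{def:adm}(a) because the meridian of $K$ bounds the new $2$-handle in $M_1$ and so carries $\omega_1$-value $0$. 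This is handled exactly as in Case~2 following Definition~\ref{D:Admissiblethreeuple}: the typical edge supplied by Lemma~\ref{lem:addingtypicals} provides an admissible projective edge, and one uses the local skein modification via a graph $\Gamma_\alpha$ to shift the relevant meridian value off $\Z/2\Z$; the fact that $\Zr$ is insensitive to this modification then justifies keeping the Kirby color $\Omega_{\bar\alpha}$ on $K$ throughout the computation.
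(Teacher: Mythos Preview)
Your argument has a genuine gap in the treatment of framings, linking numbers, and the signature-defect.  Two separate claims fail:

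\textbf{(a) The claim that $K$ can be made $0$-framed and unlinked from $L$.}  The core $K=\{0\}\times S^1$ sits in the solid torus $B^2\times S^1\subset M_2$, and when $M_2$ is presented as $S^3(L)$ that solid torus becomes some embedded solid torus in $S^3\setminus L$.  An isotopy supported in this solid torus cannot change how the torus links $L$, nor can it change the self-linking of $K$ in $S^3$ (which is determined by how the product framing on $B^2\times S^1$ compares to the Seifert framing in $S^3$).  So in general $K$ is neither unlinked from $L$ nor $0$-framed in $S^3$, and $\sigma(L\cup K)\neq\sigma(L)$.

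\textbf{(b) The claim that the signature-defects of $M_1$ and $M_2$ agree.}  By Definition~\ref{D:Compostion} the signature-defect of a composition picks up a Maslov correction.  For $M_2$ the pushed-forward Lagrangian from $B^2\times S^1$ is $\La$ itself, so $n_{M_2}=n_N+n-\mu(\La,\La,\La_+)=n_N+n$.  But for $M_1$ the pushed-forward Lagrangian from $\wb{S^1\times B^2}$ is $\La_-=\R\cdot[\{1\}\times S^1]\neq\La$, and $n_{M_1}=n_N+n-\mu(\La_-,\La,\La_+)$.  The Maslov index $\mu(\La_-,\La,\La_+)$ depends on $N$ and is typically nonzero (take for instance $\La_+=\R\cdot([S^1\times\{1\}]+[\{1\}\times S^1])$, where one computes $\mu=1$).

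The paper's proof shows that these two discrepancies cancel \emph{exactly}: Wall's non-additivity formula for signatures of $4$-manifolds gives $\sigma(L\cup K_{\bar\alpha})=\sigma(L)-\mu(\La_-,\La,\La_+)$, so that
\[
n_{M_1}-\sigma(L\cup K_{\bar\alpha})=n_{M_2}-\sigma(L),
\]
and the $\dep$-powers in \eqref{eq:Zrconnected} match.  This cancellation is the heart of the argument and is precisely what the ``extended manifold'' formalism with signature-defects is designed to produce; it cannot be bypassed by the isotopy you describe.
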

\begin{proof}
Let $N=(N,T,f,\omega_N,n_N)$  be a connected decorated cobordism from $\Su$ to $
  \emptyset$.  Let $M=N\circ
  (B^2\times S^1,K_{\wb \alpha},\Id,\omega_2,n)$ and $M'= N\circ
  (\wb{S^1\times B^2},\emptyset,\Id,\omega_1,n)$.  We need to show that
  $\Zr(M')=\lambda\Zr(M)$.  

  Let 
  $L\subset S^3$ be a computable surgery
  presentation of $(M,T\cup K_{\wb \alpha})$.  Then $L\cup
  K_{\wb \alpha}$ is a computable surgery presentation for
  $(M', T)$.  Let $W_L$ be the oriented 4-manifold bounding $M$ obtained by
  gluing $2$-handles $B^2\times B^2$ along a tubular neighborhood of $L$
  in $S^3=\partial B^4$.  Then $W_{L\cup K_{\wb \alpha}}=(\wb{B^2\times B^2})\cup_{B^2\times K_{\wb
      \alpha}}W_L$ is a 4-manifold bounding $M'$.  Conventions on orientations imply that 
  $$\partial (B^2\times B^2)=S^1\times B^2\ \cup\ B^2\times S^1,\quad 
  \partial (B^2\times S^1)=S^1\times S^1=\wb{\partial (S^1\times B^2)}.$$
 Consider the following two Lagrangians of $H_1(\Su,\R)$:
  \begin{itemize}
  \item $\La_-=\ker\bp{H_1(\Su,\R)\to H_1(S^1\times B^2,\R)}=\R.[\{1\}\times S^1]$
  \item  $\La_+=\ker\bp{H_1(\Su,\R)\to H_1(N,\R)}$.
  \end{itemize}
  By Wall's theorem on signatures of $4$-manifolds,
  one has
  $$\sigma(W_{L\cup K_{\wb \alpha}})=\sigma(\wb{B^2\times
  B^2})+\sigma(W_L)-\mu(\La_-,\La,\La_+)$$ where
  $\sigma(W_L)=\sigma(L)$ is the signature of the 4-manifold (which is
  given by the signature of the linking matrix of $L$).     As $\sigma(B^2\times B^2)=0$, we have $\sigma(L\cup K_{\wb
    \alpha})=\sigma(L)-\mu(\La_-,\La,\La_+)$.  By definition of the
  composition of decorated cobordisms, the signature-weight of $M$ is given
  by $$n_M=n_N+n-\mu(\La,\La,\La_+)=n_N+n$$ while the signature-weight of $M'$
  is $$n_{M'}=n_N+n-\mu(\La_-,\La,\La_+)=n_M-\mu(\La_-,\La,\La_+).$$
  By definition $\Zr(M)$
  and $\Zr(M')$ are both the product of $F'(L\cup K_{\wb \alpha} \cup T)$ with
  a normalizing term (see Equations \eqref{eq:Zrweighted} and
  \eqref{eq:Zrconnected}).  Thus one has
  $\Zr(M')=\dfrac{\eta\lambda^{m+1}\dep^{n_{M'}-\sigma(L\cup K_{\wb
        \alpha})}} {\eta\lambda^{m}\dep^{n_M-\sigma(L)}}\Zr(M)
  =\lambda\Zr(M)$.
\end{proof}

\begin{lemma}
  \label{L:trick}
 Let $\wb\alpha \in \C/2\Z \setminus \Z/2\Z$.   
  Consider two parallel copies $K^\pm$ of the framed knot
  $K=[0,\frac12]\times S^1\subset B^2\times S^1$ identical except
  $K^+$ has framing one more and $K^-$ has framing one less.  We color
  $K^+$ with a Kirby color of degree $-\wb\alpha$ and $K^-$ with a
  Kirby color of degree $\wb\alpha$.  Then
  $$\V(B^2\times S^1,\emptyset,\Id,\omega_1,n)=\lambda^{2}
  \V(B^2\times S^1,K^+\cup K^-,\Id,\omega_2,n)\in\V(S^1\times
  S^1,\emptyset,\omega,\La)$$ where $\omega$ is the cohomology class
  with value $0$ on the meridian $S^1\times\{1\}$ and $\wb \alpha$ on
  the longitude $\{1\}\times S^1$, $\omega_1$ and $\omega_2$ are the unique
  compatible cohomology classes, $n$ is any integer and $\La$ any
  Lagrangian.
\[\V\bp{\epsh{fig23}{12ex}}
\put(-75,-35){${\color{Orange}B^2}\times{\color{green}S^1}$}
=\lambda^2\,\V\bp{\epsh{fig22}{12ex}
  \put(-34,7){\ms{\color{blue}\Omega_{-\!\wb\alpha}}}\put(-7,10){\ms{\color{blue}\Omega_{\wb\alpha}}}}
\put(-75,-35){${\color{Orange}B^2}\times{\color{green}S^1}\supset
  {\color{blue}K_{+}\cup K_-}$}
\]
\end{lemma}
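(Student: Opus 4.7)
\emph{Proof plan.} The plan is to deduce the identity from two successive applications of Proposition~\ref{P:2-surg}, interpreting the pair $K^+\cup K^-$ as arising from a round trip between the two solid-torus fillings of the torus $\Su$: namely $B^2\times S^1$ and $\wb{S^1\times B^2}$.

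First I would apply a symmetric variant of Proposition~\ref{P:2-surg}, obtained by interchanging the roles of meridian and longitude of $\Su$ (equivalently, by reversing which of the two solid-torus fillings plays the role of the ``empty'' side), to rewrite
$$\V(B^2\times S^1,\emptyset,\Id,\omega_1,n)=\lambda\,\V(\wb{S^1\times B^2},K,\Id,\omega',n'),$$
where $K\subset\wb{S^1\times B^2}$ is a zero-framed knot colored by the Kirby color of degree $-\wb\alpha$. Next I would apply Proposition~\ref{P:2-surg} itself to the right-hand side, to replace $\wb{S^1\times B^2}$ back by $B^2\times S^1$. This second step introduces a fresh zero-framed Kirby-colored knot of degree $\wb\alpha$ and preserves $K$, but shifts its framing by $\pm 1$ due to the linking between $K$ and the new surgery curve. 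Careful bookkeeping of orientations and linking should identify the resulting pair as exactly $K^+\cup K^-$ with the framings $\pm 1$ and Kirby colors $\Omega_{\mp\wb\alpha}$ prescribed in the statement. The two factors of $\lambda$ combine to give the required $\lambda^2$, while the signature-defects $n$ and $n'$ should match thanks to the same Wall-formula computation used in the proof of Proposition~\ref{P:2-surg}, with the two Maslov-index contributions cancelling because the two surgeries are mutually inverse.

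The hard part will be the accurate tracking of the $\pm 1$ framing shifts and of the reversal of Kirby-color degrees through the two successive $2$-surgeries. The crucial point is that the two surgery curves, viewed in the $4$-manifold built from the two $2$-handles, form an algebraically linked pair with linking number $\pm 1$, so that after restricting to the $3$-manifold boundary the two persistent curves $K^+$ and $K^-$ pick up framing corrections $+1$ and $-1$ respectively. The opposite Kirby-color degrees $\pm\wb\alpha$ should arise from the fact that the second surgery is the ``reverse'' of the first: its color appears with opposite sign modulo $2\Z$ in order to be compatible with the fixed cohomology class $\omega$ on $\Su$, whose value on the longitude $\{1\}\times S^1$ is $\wb\alpha$. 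As an alternative or a sanity check, I would also verify the identity by pairing both sides with an arbitrary test cobordism $N:\Su\to\emptyset$ and comparing the resulting $\Zr$ values using a shared computable surgery presentation, as in the proof of Proposition~\ref{P:2-surg}; the required cancellations then reduce to the Kirby-calculus identity for a Hopf-like pair of opposite-framed Kirby-colored parallels.
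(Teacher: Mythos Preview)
Your overall instinct—two applications of Proposition~\ref{P:2-surg}—is right, but the route you outline does not go through. The obstruction is in step~2. The hypothesis of Proposition~\ref{P:2-surg} is that $\omega$ takes the non-integral value on $[S^1\times\{1\}]$ and $0$ on $[\{1\}\times S^1]$, whereas the class in Lemma~\ref{L:trick} has these swapped. So step~2, like step~1, would again call for your ``symmetric variant'' rather than the proposition itself, and applying that variant in the reverse direction simply undoes step~1 without producing a second knot. More concretely: if you try to realise $\wb{S^1\times B^2}$ as $2$-surgery on the core $L$ of $B^2\times S^1$ and invoke Proposition~\ref{P:2-surg} on $N(L)$, the meridian disc of $B^2\times S^1$ shows $[S^1\times\{1\}]=[m_L]$ in the link complement, forcing $\omega(m_L)=0$; the hypothesis fails on $\partial N(L)$, and no ``fresh Kirby-colored knot of degree $\wb\alpha$'' is produced. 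Your framing-shift mechanism for $K$ is thus unsupported.

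The paper goes in the opposite direction and applies Proposition~\ref{P:2-surg} not to the ambient solid torus but to the \emph{tubular neighbourhoods of $K^+$ and $K^-$} inside $(B^2\times S^1,K^+\cup K^-,\omega_2)$. A short K\"unneth computation gives $H_1\bigl((B^2\times S^1)\setminus(K^+\cup K^-)\bigr)=\Z m_+\oplus\Z m_-\oplus\Z\ell$ with $[S^1\times\{1\}]=m_++m_-$; compatibility forces $\omega_2(m_\pm)=\mp\wb\alpha$ and $\omega_2(\ell)=\wb\alpha$, and the $\pm1$ framings give framing longitudes $\ell_+=\ell+m_+$, $\ell_-=\ell-m_-$, hence $\omega_2(\ell_\pm)=0$. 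Thus the hypothesis of Proposition~\ref{P:2-surg} is met on each $\partial N(K^\pm)$, two applications produce the factor $\lambda^2$, and the resulting $+1$- and $-1$-framed surgeries on parallel copies cancel, returning the empty $B^2\times S^1$. The opposite Kirby-color degrees $\mp\wb\alpha$ and the $\pm1$ framings are precisely what make the proposition applicable on the inner tori while maintaining $\omega_2(m_++m_-)=0$ on the outer boundary; they are the input to the argument, not a by-product to be tracked after the fact.
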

\begin{proof}
  First $(B^2\times S^1)\setminus(K^+\cup K^-)$ is the cartesian
  product of $S^1$ with a disc minus two points.  Hence by K\"unneth
  formula $H_1\bp{(B^2\times S^1)\setminus(K^+\cup K^-)}=\Z
  m_+\oplus\Z m_-\oplus \Z \ell$ where $\ell=[K]$ is the longitude and
  $m_\pm$ is the class of a meridian of $K_\pm$.  For the meridian
  $m=[S^1\times\{1\}]$ of the torus we have $m=m_++m_-$.  Clearly it holds 
  $-\omega_2(m_+)=\omega_2(m_-)=\omega_2(\ell)=\wb\alpha$ as $\coh_2$ is compatible.  
  The homology class of the
  parallel to $K_+$ is $\ell_+=\ell+m_+$ while the homology class of
  the parallel to $K_-$ is $\ell_-=\ell-m_-$.  Hence
  $\omega_2(\ell_+)=\omega_2(\ell_-)=0$ and we can apply twice
  Proposition \ref{P:2-surg} to both the tubular neighborhood of $K_+$
  and $K_-$.  But surgery on $K_+$ cancels the surgery on $K_-$ thus
  yielding the same 3-manifold $B^2\times S^1$.
\end{proof}

\begin{prop}\label{P:EqualSkien}
  Let $M$ and $ M'$ be two manifolds related by a 2-surgery, bounding
  the same decorated surface $\Su$.  If $(M,T,f,\coh,n)\in
  \Skein(M,\Su)$ then there exists a $(M',T',f,\coh',n)\in
  \Skein(M',\Su)$ such that
  $$\V(M,T,f,\coh,n)=\V(M',T',f,\coh',n)\in\V(\Su)$$
where $\V$ is the map given in Equation \ref{E:DefV}.  It follows $\V(\Skein(M,\Su))=\V(\Skein(M',\Su))$. 
\end{prop}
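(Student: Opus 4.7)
The plan is to realize the 2-surgery as an exchange of the two solid-torus fillings of a common torus boundary, and apply Proposition~\ref{P:2-surg}. Writing the surgery as $M = N \cup V$ and $M' = N \cup V'$, where $V \cong S^1 \times B^2$ is a tubular neighborhood of the framed surgery knot $K \subset M$, $V' \cong B^2 \times S^1$, and $T^2 := \partial V = \partial V'$ with the meridian of $V'$ equal to the framing longitude $\ell$ of $K$, I start with a representative $(M,T,f,\coh,n)$ of a class in $\Skein(M,\Su)$ and first use Lemma~\ref{lem:addingtypicals} to arrange, up to skein equivalence, that each connected component of $M$ contains an edge colored by $V_{\alpha}$ for some $\alpha \in \C \setminus \Z$, and isotope $T$ off $V$.

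Endow $T^2$ with the Lagrangian $\La_0 := \R[\ell] = \ker\bigl(H_1(T^2;\R) \to H_1(V';\R)\bigr)$ and with the restriction $\coh|_{T^2}$. Compatibility on $V$ forces the value on the meridian of $K$ to be zero, and we set $\wb\beta := \coh([\ell]) \in \C/2\Z$. When $\wb\beta \notin \Z/2\Z$, the decorated torus exactly matches the setup of Proposition~\ref{P:2-surg}, which gives
\[
  \V\bigl(V : \emptyset \to T^2\bigr) \;=\; \lambda \, \V\bigl(V' \text{ with Kirby-colored core of degree } \wb\beta\bigr)
\]
in $\V(T^2, \coh|_{T^2}, \La_0)$, with $\lambda$ the scalar of Section~\ref{sec:invariants}. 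Composing with the cobordism $N : T^2 \to \Su$ and using functoriality of $\V$ yields $\V(M, T, \coh, n) = \lambda\, \V(M', T \cup K_{\wb\beta}, \coh', n)$ in $\V(\Su)$, where $\coh'$ is the unique cohomology class on $M'$ compatible with the enlarged graph. Since $\Omega_{\wb\beta}$ is a combination of the projective modules $V_{\beta+k}$, the enlarged graph is admissible, and rescaling the Kirby color by $\lambda^{-1}$ yields the required representative in $\Skein(M',\Su)$.

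The main obstacle is the residual case $\wb\beta \in \Z/2\Z$, where Proposition~\ref{P:2-surg} does not apply directly. My plan there is to reduce to the previous case by modifying $(M,T,\coh)$ within its $\V$-class, exploiting the non-integrally colored edge $e$ provided by Lemma~\ref{lem:addingtypicals}: band-sum $e$ with a meridian of $K$ through a local skein relation in a small ball meeting a meridian disk of $V$, implemented by the $V_\alpha$-evaluation and coevaluation. The resulting skein-equivalent cobordism $(M, \widetilde T, \wt\coh)$ has new compatible cohomology class with $\wt\coh([\ell]) = \wb\beta + \wb\alpha \notin \Z/2\Z$, placing us in the previous case. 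The final equality $\V(\Skein(M,\Su)) = \V(\Skein(M',\Su))$ then follows by symmetry, since $M$ is itself obtained from $M'$ by a dual 2-surgery along the core of $V'$.
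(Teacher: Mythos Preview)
Your proof is correct and follows essentially the same route as the paper: apply Proposition~\ref{P:2-surg} directly when the cohomology class on the framing longitude is non-integral, and otherwise first modify the configuration to make it non-integral using a generically colored edge. The only difference is a dual one: the paper takes the connected sum of the surgery knot $K$ with a meridian of the $V_\alpha$-colored edge $e$ (which does not change the result of the surgery, since $K\# m_e$ is isotopic to $K$ in $M$), whereas you band-sum $e$ with a meridian of $K$ --- but this is simply an isotopy of $T$ in $M$ (your ``skein relation via evaluation/coevaluation'' is really just the zig-zag identity, i.e.\ an isotopy in the ball), and both moves have exactly the same effect of shifting the value of the cohomology class on the longitude by~$\wb\alpha$.
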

\begin{proof}
  In the following, we omit the parametrization of the boundaries
  $f:\Su\to\partial M=\partial M'$ and the signature-weight that are
  constant.
Suppose that $M'$ is obtained from $M$ by doing a 2-surgery along a knot $K$.  Let $(M,T,\coh)\in \Skein(M,\Su)$.  If  $\omega([K])\notin\Z$ then we can apply Proposition \ref{P:2-surg} to obtain a  cobordisms $(M',T',\coh')\in \Skein(M',\Su)$ such that $\V(M,T,\coh)=\V(M',T',\coh')$.  If $\omega([K])\in\Z$ then by the proof of Lemma \ref{lem:addingtypicals}  we can assume $T$ has an edge colored with $\alpha\in\C\setminus\Z$.  Let $K'$ be a knot in $M$ obtained by taking the connected sum of $K$ with a meridian of this edge of $T$.  Doing a 2-surgery along $K'$ we obtain $M'$ and a cobordisms $(M',T',\coh')\in \Skein(M',\Su)$.  Then Proposition \ref{P:2-surg} implies that $\V(M,T,\coh)=\V(M',T',\coh')$.
Finally, if $(M',T',\coh')$ is any cobordism in $ \Skein(M',\Su)$, then the above argument implies that there exists a cobordism $(M,T,\coh)\in \Skein(M,\Su)$ such that $\V(M,T,\coh)=\V(M',T',\coh')$.  Thus, the last statement of the proposition follows.  
\end{proof}

\subsection{Finite dimensionality of $\V$ for connected surfaces}\label{sub:findimconnected}
 In this subsection, we use 1~and~2-surgeries to show that the map $\V:\Skein(M,\Su)\to\V(\Su)$ is surjective for connected $M$ and consequently the vector space $\V(\Su)$ is finite dimensional.

\begin{prop}\label{P:trick} For any $\alpha\in\Cp$, 
  the image of the map $\cV(M,\Su)\to\V(\Su)$ is generated by the
  image under $\V$ of the set of decorated cobordisms containing a graph with one
  edge colored by $V_\alpha$ in each connected component of $M$.
\end{prop}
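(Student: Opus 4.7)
The plan is twofold: first, to use Proposition~\ref{P:2-surg} to modify a cobordism so that every connected component of its underlying manifold carries an edge colored by a projective module; then, to insert $V_\alpha$-colored edges in each component by Lemma~\ref{lem:addingtypicals}.

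Let $N=(M,T,\coh,\ldots)\in\cV(M,\Su)$. The last bullet of Definition~\ref{D:Cobordisms}, applied with $\Su_-=\emptyset$, will force each component $M_i$ of $M$ to be admissible: either $T\cap M_i$ already contains an edge colored by a projective module, or $\coh|_{M_i}$ is non-integral. For each $M_i$ of the latter type, non-integrality will allow me to choose an embedded simple closed curve $\gamma_i\subset M_i\setminus T$ with $\wb{\beta_i}:=\coh([\gamma_i])\in\C/2\Z\setminus\Z/2\Z$. A tubular neighborhood $\tau_i\cong S^1\times B^2$ of $\gamma_i$ is a solid torus on which $\coh$ evaluates as $\wb{\beta_i}$ on the longitude and $0$ on the meridian; cutting along $\partial\tau_i$ decomposes $N=N^\circ\circ\tau_i$, with $\tau_i$ fitting (up to orientation and framing) the $(\wb{S^1\times B^2},\emptyset,\coh_1,n_i)$ cobordism of Proposition~\ref{P:2-surg} for $\alpha=\beta_i$. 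That proposition then yields
\[
[\tau_i]=\lambda\,[B^2\times S^1,K_{\wb{\beta_i}},\coh_2,n_i]\in\V(S^1\times S^1),
\]
and applying the linear map $\V(N^\circ)$ gives $[N]=\lambda\,[N'_i]$ in $\V(\Su)$, where $N'_i$ replaces $\tau_i$ by the dual solid torus with Kirby-colored core $K_{\wb{\beta_i}}$.

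Iterating over the offending components produces a cobordism $N'$ on some underlying manifold $M'$ (with the same boundary as $M$), such that $[N]=\lambda^{s}[N']$ in $\V(\Su)$ and every connected component of $M'$ carries an edge colored by a projective module: either one inherited from $T$, or a summand $V_{\beta_i+k}$ (for $k\in\Hr$) of the Kirby-colored knot $K_{\wb{\beta_i}}$, projective because $\beta_i\in\C\setminus\Z$. Thus $N'\in\cV_A(M',\Su)$ descends to a class in $\Skein(M',\Su)$.

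It remains to apply Lemma~\ref{lem:addingtypicals}: each projective edge $P$ of $N'$ can be split through $V_\alpha\otimes V_{-\alpha}\otimes P$ by means of two small coupons, producing a skein-equivalent graph with a $V_\alpha$-colored edge in every component. This yields a cobordism $N''$ with $[N']=[N'']$ in $\V(\Su)$, whence $[N]=\lambda^{s}[N'']$ exhibits $[N]$ as a scalar multiple of a class of a cobordism containing a $V_\alpha$-colored edge in each connected component. Linearity extends the conclusion to arbitrary elements of $\cV(M,\Su)$.

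The main obstacle will be the bookkeeping of Lagrangian subspaces and signature-defects so that Proposition~\ref{P:2-surg} applies verbatim to the embedded $\tau_i\subset N$; this can be handled by choosing compatible Lagrangians on $\partial\tau_i$ and adjusting $n_i$ without affecting the final identity in $\V(\Su)$.
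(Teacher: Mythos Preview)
Your argument is sound up to the point where you land on a cobordism $N''$ with a $V_\alpha$-colored edge in every component, but it proves a weaker statement than the one asserted: the phrase ``in each connected component of $M$'' in Proposition~\ref{P:trick} means the generating cobordisms are required to lie in $\cV(M,\Su)$, with the \emph{same} underlying manifold $M$. Your use of Proposition~\ref{P:2-surg} replaces the solid torus $\tau_i\cong \wb{S^1\times B^2}$ by the dual solid torus $B^2\times S^1$, so the resulting $N''$ lives over a surgered manifold $M'\neq M$. You acknowledge this explicitly (``on some underlying manifold $M'$''), but then the conclusion $[N]=\lambda^s[N'']$ does not place $[N]$ in the span of classes coming from $\cV(M,\Su)$.

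The paper avoids this by invoking Lemma~\ref{L:trick} rather than Proposition~\ref{P:2-surg}. Lemma~\ref{L:trick} is precisely the ``do the $2$-surgery and then undo it'' trick: it applies Proposition~\ref{P:2-surg} twice, so that the underlying solid torus $B^2\times S^1$ (a tubular neighborhood of $\gamma_i$) is \emph{unchanged}, but now carries two parallel Kirby-colored knots $K^\pm$ of degrees $\pm\wb{\beta_i}$. Composing with the complement $N^\circ$ therefore yields a cobordism with the same underlying manifold $M$ and a projective-colored edge in the offending component, at the cost of a factor $\lambda^2$. From there your Step~2 (Lemma~\ref{lem:addingtypicals}) goes through verbatim. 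So the fix is a one-line substitution: replace your single application of Proposition~\ref{P:2-surg} by Lemma~\ref{L:trick}, and the bookkeeping worry you flag about Lagrangians and signature-defects disappears as well, since nothing is actually surgered.
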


\begin{proof}
Let $M_T=(M,T,f,\omega,n):\emptyset\to\Su$ be a cobordism in $\cV(M,\Su)$.  We have the following two cases.  \\
Case 1.  Suppose $T$ has an edge colored by a projective object $V$.  Then $M_T\in\Skein(M,\Su)$ and the proposition follows from Lemma \ref{lem:addingtypicals}.\\
Case 2.  If  $T$ does not have an edge colored by a projective object, then the
  admissibility condition on 
  the cobordism $M_T$ 
  implies that 
 that there exists a curve $\gamma$ such that  $\omega(\gamma)\in (\C\setminus \Z)/2\Z$.  
  So we can apply Lemma \ref{L:trick}
  on a tubular neighborhood of this curve to make appear two knots colored by
  Kirby colors of degree $\pm\omega([\gamma])$ without changing the underlying
  manifold $M$ nor the value of $\V(M_T)$ (up to the factor $\lambda^2$).
  Then we can apply Case 1 to this new decorated manifold.
\end{proof}

\begin{theo}\label{teo:skeinsurjects}
  Let $\Su$ be a decorated surface, then for any connected manifold
  $M$ bounding $\Su$, the map $\V: \Skein(M,\Su)\to\V(\Su)$, given in Equation \eqref{E:DefV}, is surjective.
  As a consequence, if $\Su$ is connected $\dim_\C(\V(\Su))<\oo$.
\end{theo}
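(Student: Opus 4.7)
The plan is to show that any class $[N]\in \V(\Su)$, represented by some decorated cobordism $(N,T,f,\omega,n)\in \cV(\Su)$, admits a representative whose underlying $3$--manifold is $M$ and which lies in $\cV_A(M,\Su)$; this will give the surjection $\V:\Skein(M,\Su)\to \V(\Su)$. Finite-dimensionality will then follow by specializing $M$ to be a handlebody $H_g$ bounding $\Su$ and invoking Proposition~\ref{P:Skein-finite}.

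The first step is to normalize the representative. Fix once and for all some $\alpha\in \Cp$. By Proposition~\ref{P:trick} we may replace $[N]$ by a representative in which each connected component of $N$ carries a ribbon-graph edge colored by $V_\alpha$; in particular every component is admissible. If $N$ has several components, we then merge them pairwise by running Proposition~\ref{P:1-surg} in the reverse direction: two components each carrying a $V_\alpha$-edge can be replaced, up to the nonzero scalar $\eta\,\qd(\alpha)$, by their banded connected sum along those edges. After finitely many such steps we may assume that $N$ is connected.

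Next I would use $3$-manifold topology to bridge $N$ and $M$. Since $\Su$ is connected and both $M$ and $N$ are connected $3$-manifolds with boundary canonically identified with $\Su$, the relative $4$-dimensional cobordism between them admits a handle decomposition with only $1$- and $2$-handles; equivalently, $N$ can be obtained from $M$ by a finite sequence of $1$-surgeries and $2$-surgeries in the interior. Running this sequence in reverse starting from $N$, each $2$-surgery step is accommodated by Proposition~\ref{P:EqualSkien} (applied after re-invoking Proposition~\ref{P:trick} if one needs an $\alpha$-edge to accommodate integral values of $\omega$ on the surgery curve, as in the proof of Proposition~\ref{P:EqualSkien}), and each $1$-surgery step is accommodated by Proposition~\ref{P:1-surg} applied in the appropriate direction. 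At the end of this process the class $[N]$ is represented by a decorated cobordism whose underlying manifold is $M$ and which still contains a $V_\alpha$-edge, so it lies in $\cV_A(M,\Su)$ and thus gives a class in $\Skein(M,\Su)$ mapping to $[N]$.

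For the finite-dimensional consequence, pick any handlebody $H_g$ with $\partial H_g = \Su$: surjectivity of $\V:\Skein(H_g,\Su)\to \V(\Su)$ together with Proposition~\ref{P:Skein-finite} gives $\dim_\C \V(\Su)<\infty$. The main obstacle I expect is the bookkeeping in the middle step: ensuring that admissibility is preserved at every intermediate stage (so that the surgery propositions actually apply) and that the cohomology class $\omega$ and signature defect $n$ transform as prescribed by Propositions~\ref{P:1-surg} and \ref{P:2-surg}. The guarantee provided by Proposition~\ref{P:trick} that each component carries a $V_\alpha$-edge throughout the process is what makes this bookkeeping manageable, since it always provides a projective edge to localize skein equivalence and a nonintegral meridional class to make surgery presentations computable.
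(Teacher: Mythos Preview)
Your overall strategy matches the paper's: normalize via Proposition~\ref{P:trick}, connect the components via Proposition~\ref{P:1-surg}, then pass from the connected representative to $M$ by surgeries, and finally invoke Proposition~\ref{P:Skein-finite} for finite-dimensionality. Two points need correction.

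First, you write ``since $\Su$ is connected'' in your third paragraph, but the theorem does not assume this; only $M$ is assumed connected. The surjectivity of $\V:\Skein(M,\Su)\to\V(\Su)$ is claimed (and needed later, e.g.\ in Theorem~\ref{teo:findimvv}) for arbitrary decorated $\Su$.

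Second, and more substantively, your step~3 invokes a handle decomposition with both $1$- and $2$-handles and claims each $1$-surgery step ``is accommodated by Proposition~\ref{P:1-surg}''. That proposition, however, applies only to a very specific $2$-sphere---one meeting the ribbon graph transversally in a single $V_\alpha$-colored strand. An arbitrary $1$-handle in a relative cobordism need not produce such a sphere, so this step is a genuine gap as written. The paper sidesteps this entirely: once both $N''$ and $M$ are \emph{connected} with the same boundary, a standard fact (essentially Lickorish--Wallace for manifolds with boundary) says they are related by a sequence of $2$-surgeries alone, so only Proposition~\ref{P:EqualSkien} is needed at this stage. Dropping the $1$-surgeries from step~3 both removes your gap and eliminates the ``bookkeeping'' you flag as the main obstacle.
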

\begin{proof} 
Let $M$ be a connected manifold bounding $\Su$.  Let 
$N\in\cV(\Su)$ be a cobordism where the underlying manifold has $c$ connected components.  Fix $\alpha\in\Cp$, then by Proposition \ref{P:trick}, there exists a cobordism 
$N'\in \Skein(N,\Su)$ such that $\V(N)=\V(N')$ where the graph of $N'$ 
has an  edge colored by $\alpha$ in each connected component.  Now we apply  the 1-surgery of Proposition~\ref{P:1-surg} $c-1$-times to get a cobordism 
 $N''$ whose the underlying manifold of is connected, whose graph is still admissible  and such that $\V(N)=(\eta\qd(\alpha))^{c-1}\V(N'')\in \cV(\Su)$.   
Now the underlying manifold of $N''$ is related to the manifold $M$ by a finite sequence of 2-surgeries.  Thus, Proposition \ref{P:EqualSkien} implies there exists a cobordism $N'''\in \Skein(M,\Su)$ such that $\V(N'')=\V(N''')$.  Thus, $N'''$ maps onto $\V(N)$. 
  Finally, the last sentence of the  theorem follows from Proposition \ref{P:Skein-finite} since $\dim_\C(\Skein(M,\Su))<\oo$ when $M$ is a handlebody bounding $\Su$.
\end{proof}

\begin{cor}
Let $\Su$ be a decorated connected surface, then $\V'(\Su)$ is
  finite dimensional.
\end{cor}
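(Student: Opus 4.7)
The plan is to deduce finite dimensionality of $\V'(\Su)$ from that of $\V(\Su)$ (already established in Theorem \ref{teo:skeinsurjects}) via the non-degenerate pairing $\brk{\cdot,\cdot}_\Su : \V'(\Su) \times \V(\Su) \to \C$ constructed in Subsection~\ref{SS:TQFTfunctor}.

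First I would recall that by construction, $\V(\Su)$ is the quotient of $\cV(\Su)$ by the right kernel of $\brk{\cdot,\cdot}_{\cV(\Su)}$, while $\V'(\Su)$ is the quotient of $\cV'(\Su)$ by its left kernel. Tautologically then, the induced pairing $\brk{\cdot,\cdot}_\Su$ is non-degenerate on both sides: if $\brk{[M'],[M]}_\Su = 0$ for all $[M] \in \V(\Su)$, then by definition $M' \in \ker_L$ and so $[M'] = 0$ in $\V'(\Su)$; the dual statement on the right is identical.

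The key step is then to observe that non-degeneracy on the left gives an injective linear map
\[
\Phi : \V'(\Su) \longrightarrow \V(\Su)^*, \qquad [M'] \longmapsto \brk{[M'], \cdot}_\Su.
\]
Since $\Su$ is connected, Theorem~\ref{teo:skeinsurjects} provides $\dim_\C \V(\Su) < \infty$, hence $\dim_\C \V(\Su)^* < \infty$, and the injectivity of $\Phi$ yields $\dim_\C \V'(\Su) \leq \dim_\C \V(\Su) < \infty$. Symmetrically, using the right non-degeneracy, one gets the reverse inequality, so in fact $\dim_\C \V'(\Su) = \dim_\C \V(\Su)$, although only the finiteness is asserted in the statement.

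There is no serious obstacle here: the content of the corollary is entirely formal once Theorem~\ref{teo:skeinsurjects} is in hand, and the only subtle point to check is that the definitions of $\V$ and $\V'$ as quotients by the appropriate one-sided kernels make $\brk{\cdot,\cdot}_\Su$ non-degenerate on both sides, which is already noted in Subsection~\ref{SS:TQFTfunctor}.
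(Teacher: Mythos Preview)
Your argument is correct and is exactly the approach taken in the paper: the non-degenerate pairing $\brk{\cdot,\cdot}_\Su$ together with $\dim_\C \V(\Su)<\infty$ from Theorem~\ref{teo:skeinsurjects} forces $\dim_\C \V'(\Su)=\dim_\C \V(\Su)<\infty$. The paper's proof is a one-line version of what you wrote.
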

\begin{proof}
  The pairing $\V'(\Su)\otimes \V(\Su)\to\C$ is
  non-degenerate so $\dim_\C(\V'(\Su))=\dim_\C(\V(\Su))$.
\end{proof}

\subsection{The spheres $\hS_k$}\label{SS:SphereS_k}
From Theorem \ref{teo:skeinsurjects}  we know $\V(\Su)$ is finite dimensional if $\Su$ is connected.  In Subsection \ref{SS:FiniteDimVdiscon} we will extend this result to surfaces $\Su$ which are disconnected.  In this subsection we consider some special decorated spheres which will be used in Subsection  \ref{SS:FiniteDimVdiscon}.  
 
 Let $V_0$ be the simple projective highest module of weight $r-1$ and let $\sigma^k=\C^H_{2kr'}$, see Subsection \ref{SS:Quantsl2}.  Recall the definition of the decorated sphere $S^2_{\wbU}$ given in Subsection \ref{SS:Skein}. For $k\in\Z$, let $\hS_k=S^2_{\wbU}$ where 
$$\wbU =
\begin{cases}
((V_{0},+1),(\sigma^k,+1),(V_0,-1)), & \text{if }k\neq 0 \\
((V_{0},+1),(V_0,-1)), & \text{if }k=0
\end{cases}.
$$
 Let $f$
be an orientation reversing diffeomorphism of $S^2$ exchanging the points labeled with $V_0$ and, if $k\neq 0$, fixing the point labeled with $\sigma^k$.  
\begin{prop} \label{prop:easyspheres}
  \begin{enumerate}
  \item Let $\wbU=((V_0,+),(\sigma^k,+))$, then $\V(S^2_{\wbU})=\Skein(B^3,S^2_{\wbU})=0$.
  \item $\V(S^2_{\emptyset})=\Skein(B^3,S^2_{\emptyset})=\C$ is
    generated by an unknot in $B^3$ 
    colored by $V_\alpha$ for any $\alpha \in \Cp$.
  \item If $k\neq0$ then $\V(\hS_k)=\Skein(B^3,\hS_k)=\{0\}$.
  \item $\V(\hS_0)=\Skein(B^3,\hS_0)=\C $ is generated by $u_0$: an unknotted $V_0$-colored arc in $B^3$ connecting the two $V_0$-colored points in $\hS_0$.
  \end{enumerate}
\end{prop}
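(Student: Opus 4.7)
The plan is to compute $\Skein(B^3,\Su)$ explicitly in each case via Lemma \ref{lem:skeinhom} (or a variant) and then invoke Theorem \ref{teo:skeinsurjects} with $M=B^3$ to transfer the information to $\V(\Su)$; nonvanishing assertions will be established by computing the pairing $\Zr$ against the mirror cobordism.

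For parts (1), (3) and (4) the $\cat$-module $F(\wbU)$ involves $V_0$, which is projective; since projectives form a tensor ideal, $F(\wbU)$ is projective and Lemma \ref{lem:skeinhom} yields $\Skein(B^3,S^2_{\wbU})\cong\Hom_\cat(\unit,F(\wbU))$. In case (1), $F(\wbU)=V_0\otimes\sigma^k=V_{2kr'}$ is simple of highest weight $2kr'+r-1\neq 0$, so the Hom space vanishes. In case (3), using $\sigma^k\otimes V_0\cong V_{2kr'}$ and the rigidity isomorphism, $\Hom_\cat(\unit,V_0\otimes\sigma^k\otimes V_0^*)\cong\Hom_\cat(V_0,V_{2kr'})$, which is zero for $k\neq0$ since both modules are simple and non-isomorphic. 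In case (4), $\Hom_\cat(\unit,V_0\otimes V_0^*)\cong\End_\cat(V_0)=\C\,\Id_{V_0}$, and the generator corresponds precisely to the arc $u_0$ through the identification in Lemma \ref{lem:skeinhom}. Theorem \ref{teo:skeinsurjects} then gives the surjections $\Skein(B^3,\Su)\twoheadrightarrow\V(\Su)$, proving (1), (3), and the upper bound $\dim\V(\hS_0)\le 1$ for (4). To finish (4), pair $u_0$ with the corresponding cobordism $\wb{u_0}\in\cV'(\hS_0)$: the composition is $S^3$ together with a $V_0$-colored $0$-framed unknot and trivial cohomology class, so
\begin{equation*}
\Zr(S^3,V_0\text{-unknot},0)=\eta\,F'(V_0\text{-unknot})=\eta\,\qd(0)=\eta(-1)^{r-1}\neq 0,
\end{equation*}
using $\qn{r-j}=\qn{j}$ to evaluate the product formula \eqref{E:Def_qd} at $\beta=0$. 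Hence $\V(\hS_0)=\C$.

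Part (2) requires an extra step because $F(\wbU)=\unit$ is not projective, so Lemma \ref{lem:skeinhom} does not apply directly. Fix any $\alpha\in\Cp$; by Lemma \ref{lem:addingtypicals} every class in $\Skein(B^3,S^2_\emptyset)$ is represented by an admissible closed ribbon graph $T$ containing an edge colored by $V_\alpha$. Cut $T$ along this edge to obtain a $(1,1)$-tangle supported inside a small ball $B'\subset B^3$ with boundary decoration $\wbU'=((V_\alpha,+),(V_\alpha,-))$. Now $F(\wbU')=V_\alpha\otimes V_\alpha^*$ is projective, so Lemma \ref{lem:skeinhom} applied to $B'$ identifies the cut tangle with $c\cdot\Id_{V_\alpha}$ in the skein module, where $c\in\C$ is the scalar determined by $F$ (using that $V_\alpha$ is simple). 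The admissibility hypothesis required to apply the resulting skein move in the ambient $B^3$ (Remark \ref{rem:skeinlocal}) is automatic: the complement of $B'$ still contains the closing $V_\alpha$-edge. Re-inserting into $B^3$ shows $T$ is skein equivalent to $c$ times the $V_\alpha$-colored unknot, so $\Skein(B^3,S^2_\emptyset)$ is generated by this unknot. For nonvanishing of its image in $\V(S^2_\emptyset)$, pair it with the empty cobordism $S^2_\emptyset\to\emptyset$: the result is $(S^3,V_\alpha\text{-unknot},0)$ with $\Zr=\eta\,\qd(\alpha)\neq 0$ since $\alpha\in\Cp$ ensures no denominator in \eqref{E:Def_qd} vanishes. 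Combined with the surjection of Theorem \ref{teo:skeinsurjects}, this gives $\V(S^2_\emptyset)=\Skein(B^3,S^2_\emptyset)=\C$.

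The main technical subtlety is part (2): the failure of $\unit$ to be projective forces the detour through Lemma \ref{lem:addingtypicals} and a sub-ball application of Lemma \ref{lem:skeinhom}, with careful bookkeeping of admissibility as prescribed by Remark \ref{rem:skeinlocal}. The remaining parts are direct applications of the Hom-space computation of Lemma \ref{lem:skeinhom}, plus an explicit evaluation of $\Zr$ on an unknot for the nonvanishing of (4).
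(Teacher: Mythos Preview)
Your proof is correct and follows essentially the same approach as the paper: compute the Hom spaces via Lemma \ref{lem:skeinhom} for parts (1), (3), (4), handle part (2) by first invoking Lemma \ref{lem:addingtypicals} and then reducing to an endomorphism of the simple module $V_\alpha$, and verify nonvanishing in (2) and (4) by pairing with an empty ball (respectively $\wb{u_0}$) and evaluating $\Zr$ on the resulting unknot in $S^3$. Your treatment is somewhat more explicit than the paper's---for instance in computing $\qd(0)=(-1)^{r-1}$ and in tracking the admissibility of the complement in part (2) via Remark \ref{rem:skeinlocal}---but the underlying argument is the same.
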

\begin{proof}
By Theorem \ref{teo:skeinsurjects}, we have $\Skein(B^3,S^2_{\wbU})$ surjects onto $\V(S^2_{\wbU})$.  We will use this fact to prove (1)--(4).  In particular, 
for all $ k\neq 0$ we have 
$$\Hom(\unit,V_0\otimes \sigma^k)=0 \text{ and } \Hom(\unit,V_0\otimes V_0^*\otimes \sigma^k)=0.$$ 
Then Lemma \ref{lem:skeinhom} and Theorem \ref{teo:skeinsurjects} implies  
(1) and (3).

 To prove (2)  observe that by Lemma \ref{lem:addingtypicals} any element of $\Skein(B^3,S^2_{\emptyset})$ is represented by a non-empty $\cat$-colored ribbon graph with at least one edge colored by a projective module $V_\alpha$.  
Now, the graph outside a small sub-arc of this edge is skein equivalent to an endomorphism of $V_\alpha$ and so to a scalar. 
 So $\Skein(B^3,S^2_{\emptyset})$ is generated by unknots colored by $V_\alpha$.
We are left to prove that the map $\Skein(B^3,S^2_{\emptyset})\to \V(S^2_{\emptyset})$ is non-zero.  This is seen by pairing a ball containing a $V_\alpha$-colored unknot with an empty ball bounding $S^2$ the result is $S^3$ with a $V_\alpha$-colored unknot whose invariant is $\qd(\alpha)\neq 0$.

Similarly to prove $(4)$, Lemma \ref{lem:skeinhom} implies that $\Skein(B^3,\hS_0)=\C$ as $\Hom(\unit, V_0\otimes V_0^*)=\Hom(V_0,V_0)=\C$. Then the map $\Skein(B^3,\hS_0)\to \V(\hS_0)$ is non-zero as coupling the vector represented by one strand in $B^3$ colored by $V_0$ with a similar vector in $\V'(\hS_0)$ gives $S^3$ containing an unknot colored by $V_0$ whose invariant is non-zero. 
\end{proof}

The surface obtained by removing two little discs from a disc is usually called a pant.  We consider its 3-dimensional analog:
\begin{defi}[3-dimensional pants] Let
  $\Pa_{k,\ell}^{k+\ell}:\hS_k\sqcup\hS_\ell\to \hS_{k+\ell}$ be the
  cobordism described as follows.  The underlying manifold of
  $\Pa_{k,\ell}^{k+\ell}$ is the closed Euclidean ball in 
  $\R^3$ of radius $4$ centered at $(0,0,0)$ minus two open balls
  of radius $1$ centered at $(\pm2,0,0)$.  The graph inside $\Pa_{k,\ell}^{k+\ell}$
  is contained in the plane $y=0$ of $\R^3$ and is drawn in Figure \ref{F:pants}.    The coupon in the graph is colored with the identity morphism $\sigma^k~\otimes ~\sigma^\ell\to \sigma^{k+\ell}$.   
  \begin{figure}[!]
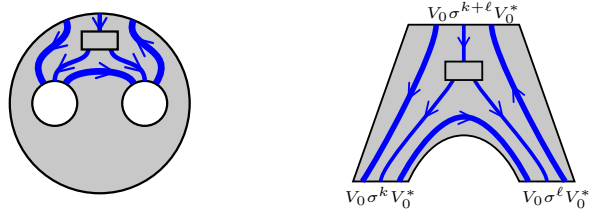
  
    \centering
    $\epsh{fig40}{16ex}\qquad\qquad\qquad
    \epsh{fig41}{14ex}
    \put(-56,35){\ms{V_0\sigma^{k+\ell}V_0^*}}
    \put(-87,-35){\ms{V_0\sigma^kV_0^*}}
    \put(-20,-35){\ms{V_0\sigma^\ell V_0^*}}$
    \caption{$\Pa_{k,\ell}^{k+\ell}\cap\{y=0\}$ and its schematic
      representation. }
    \label{F:pants}
  \end{figure}
  The decorated cobordism $\Pa_{k,\ell}^{k+\ell}$ is equipped with the
  obvious parametrization of its boundary  and has signature-weight $0$.  The cohomology class of this cobordism is defined as follows.   Let us assume that the basis point $*$ of a sphere $\hS_i$ is at the south pole.  If $p$ is any path in the southern hemisphere of  $\Pa_{k,\ell}^{k+\ell}$ between basis points of the boundary spheres in  $\Pa_{k,\ell}^{k+\ell}$  then set $\coh(p)=0$.  This assignment extends to a  unique compatible cohomology class $\coh$ for $\Pa_{k,\ell}^{k+\ell}$ (which is zero if $r$ is odd).  

 Similarly, let $\Pa^{k,\ell}_{k+\ell}:\hS_{k+\ell}\to\hS_k\sqcup\hS_\ell$ be the decorated cobordism described as follows.  Consider reflecting $\Pa_{k,\ell}^{k+\ell}$ through the $z=0$ plane.   Reversing the arrows of the graph inside the obtained manifold and replacing the coupon with  the identity morphism $\sigma^{k+\ell}\to \sigma^k~\otimes ~\sigma^\ell$ we have  $\Pa^{k,\ell}_{k+\ell}$.   As in the case of $\Pa_{k,\ell}^{k+\ell}$, $\Pa^{k,\ell}_{k+\ell}$ has a unique compatible cohomology class and signature-weight $0$.  
\end{defi}
\begin{prop}\label{P:pants}Let $k,\ell\in\Z$, then 
  $$\Pa^{k,\ell}_{k+\ell}\circ \Pa_{k,\ell}^{k+\ell}\equiv\frac{(-1)^{r-1}}{\eta}\Id_{\hS_k\sqcup\hS_\ell}$$
   where the equivalence $\equiv$ is defined in Definition \ref{D:EquivCob}.
\end{prop}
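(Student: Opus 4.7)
The plan is to show that $M := \Pa^{k,\ell}_{k+\ell}\circ \Pa_{k,\ell}^{k+\ell}$ differs from $\Id_{\hS_k\sqcup\hS_\ell}$ by one application of the 1-surgery formula of Proposition~\ref{P:1-surg}. Since each component of $\hS_k\sqcup\hS_\ell$ carries a $V_0$-marked point and $V_0$ is projective, this surface is admissible, so by Lemma~\ref{L:cob-as-vect} it is enough to check the announced equality at the level of $\V$ of the associated vectorified cobordisms.

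First I would cancel the two coupons in the middle of $M$. They are connected through $\hS_{k+\ell}$ by a $\sigma^{k+\ell}$-colored strand and carry respectively the identity morphisms $\sigma^k\otimes\sigma^\ell\to \sigma^{k+\ell}$ and $\sigma^{k+\ell}\to \sigma^k\otimes\sigma^\ell$; their composition is $\Id_{\sigma^k\otimes\sigma^\ell}$, and a skein move inside a ball surrounding this central region replaces it by two parallel through-strands colored $\sigma^k$ and $\sigma^\ell$. The complement of the ball is admissible thanks to the surrounding $V_0$-colored arcs, as required by Remark~\ref{rem:skeinlocal}.

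After this simplification, the underlying manifold of $M$ is $S^3\setminus 4B^3$ (connected), whereas $\Id_{\hS_k\sqcup\hS_\ell}$ has underlying manifold $(S^2\times [0,1])\sqcup (S^2\times [0,1])$ (disconnected); the two are related by a 1-surgery along a 2-sphere $S\subset M$ that separates the two $\hS_k$ boundaries from the two $\hS_\ell$ boundaries. After an isotopy, the graph meets $S$ transversally in exactly two $V_0$-colored points: one on the cup in $\Pa_{k,\ell}^{k+\ell}$ and one on the cap in $\Pa^{k,\ell}_{k+\ell}$, both being $V_0$-arcs that join the ``inner'' $V_0$-dots of $\hS_k$ to those of $\hS_\ell$. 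Because $\Pa^{k,\ell}_{k+\ell}$ is obtained from $\Pa_{k,\ell}^{k+\ell}$ by vertical reflection and arrow reversal, the cap is oriented opposite to the cup, so the two intersection points carry opposite signs. A tubular neighborhood of $S$ is therefore identified with $S_0\times [0,1]$ (the decorated sphere of Proposition~\ref{P:1-surg} for $\alpha=0$); replacing it by $u_0\sqcup\wb{u_0}$ reconnects the cup and cap remnants into ordinary through-strands on each side, yielding exactly $\Id_{\hS_k\sqcup\hS_\ell}$. Proposition~\ref{P:1-surg} then gives
$$\eta\,\qd(0)\,\V(M)=\V(\Id_{\hS_k\sqcup\hS_\ell}),$$
and since $\qd(0)=(-1)^{r-1}$ (by the obvious limit in the defining formula of $\qd$), we obtain $\V(M)=\frac{(-1)^{r-1}}{\eta}\V(\Id_{\hS_k\sqcup\hS_\ell})$, which is the claimed factor.

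The main technical difficulty is the identification in the previous paragraph of the separating sphere $S$ and the matching of its decoration with $S_0$: this requires careful bookkeeping of the orientations of the cup and cap arcs (which follows from the reflection/arrow-reversal relation between the two pants), an isotopy to straighten the graph in a neighborhood of $S$, and a verification that the compatible cohomology classes, Lagrangians, and signature-defects agree on both sides---the last point being automatic by Remark~\ref{rem:inducedcoh} and the uniqueness of compatible cohomology classes on $S^2\times[0,1]$.
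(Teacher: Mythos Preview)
Your proof is correct and follows essentially the same route as the paper: reduce via Lemma~\ref{L:cob-as-vect} to an equality of vectors, apply a skein move to dissolve the two identity coupons on the $\sigma$-strands, and then apply the 1-surgery formula of Proposition~\ref{P:1-surg} with $\alpha=0$ along a separating sphere that meets the graph in exactly two oppositely-signed $V_0$-points. Two small remarks: at the end you write $\V(M)$ and $\V(\Id_{\hS_k\sqcup\hS_\ell})$ where you mean the vectorified classes $[M_\cV]$ and $[\Id_{\hS_k\sqcup\hS_\ell}]$ (this is what Lemma~\ref{L:cob-as-vect} requires, and it is also what makes ``applying Proposition~\ref{P:1-surg} in a neighborhood'' legitimate---you are composing a cobordism $N$ on top of $[S_0\times[0,1]]$); and $\qd(0)=(-1)^{r-1}$ follows directly from the product formula~\eqref{E:Def_qd} since $\qn{r-j}=\qn{j}$, no limit is needed.
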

\begin{proof} By Lemma \ref{L:cob-as-vect}, it is enough to show that
  $[\Pa^{k,\ell}_{k+\ell}\circ
  \Pa_{k,\ell}^{k+\ell}]=\frac{(-1)^{r-1}}{\eta}[\Id_{\hS_k}\sqcup\Id_{\hS_\ell}]$
  as vectors of
$V(\hS_k\sqcup\hS_\ell\sqcup\wb{\hS_{\ell}}\sqcup\wb{\hS_{k}})$.  
Here is a
  graphical proof:
  $$\bc{\Pa^{k,\ell}_{k+\ell}\circ
  \Pa_{k,\ell}^{k+\ell}}=\bc{\epsh{fig44}{12ex}}
  =\bc{\epsh{fig45}{12ex}}=(\eta\qd(0))^{-1}\bc{\epsh{fig46}{12ex}}.$$
  The second equality is a skein equivalence and the last equality is an
  application of Proposition~\ref{P:1-surg} with $\alpha=0$ in a
  neighborhood of the sphere in dashed red on the diagram.
\end{proof}
\begin{prop}\label{prop:vspheres}
  Let $k,\ell\in\Z$ and $\beta\in\C/2\Z=H^0(\hS_k,\C/2\Z)$.  Recall the definition of the  cylinder  $\Id_{\hS_k}^\beta$ given in Subsection \ref{R:CylDef}.    Let $C_k^\beta$ be 
 $\Id_{\hS_k}^\beta$ seen as an element of
 $\cV(\hS_k\sqcup\wb{\hS_{k}})$.
  \begin{enumerate}
\item  \label{Ei1:CylProp} If $k\neq \ell$, then $\V(\hS_k\sqcup\wb{\hS_{\ell}})=\{0\}$.
\item \label{Ei2:CylProp}$\V(\hS_k\sqcup\wb{\hS_{k}})=\C$ is generated by $[C_k^0]$.
 \item \label{Ei3:CylProp}$[C_k^\beta]=q^{-2r'k\beta}[C_k^0]\et\Id_{\hS_k}^\beta\equiv q^{-2r'k\beta}\Id_{\hS_k}.$
 \item \label{Ei4:CylProp} If $\Su$ is any decorated surface then the vector spaces $\V(\Su \sqcup \wb{\hS_k})$ and $ \V(\Su \sqcup \hS_{-k})$ are isomorphic.   Thus, $\C=\V(\hS_k\sqcup\hS_{-k})\ncong \V(\hS_k)\otimes\V(\hS_{-k})=\{0\}.$
  \end{enumerate}
\end{prop}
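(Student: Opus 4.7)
My plan is to prove the four assertions in the order (4), (1), (2), (3), using as principal tools the pants cobordisms of Proposition~\ref{P:pants}, Proposition~\ref{prop:easyspheres}, the sigma-equivalence of Proposition~\ref{P:RR'kernel}, and Lemma~\ref{L:cob-as-vect} (which lifts vector-space identities in $\V$ to cobordism equivalences in the sense of Definition~\ref{D:EquivCob}). For (4), I construct an isomorphism $\alpha:\V(\Su\sqcup\wb{\hS_k})\to\V(\Su\sqcup\hS_{-k})$ by a ``zigzag'' (duality) construction: the coevaluation-like element is $\eta_k=\Pa^{k,-k}_0\circ u_0\in\cV(\hS_k\sqcup\hS_{-k})$, the pants of Proposition~\ref{P:pants} composed with the generator $u_0$ of $\V(\hS_0)$ from Proposition~\ref{prop:easyspheres}(4), while the evaluation-like cobordism is the identity cylinder viewed as a morphism $\wb{\hS_k}\sqcup\hS_k\to\emptyset$, and $\alpha$ is their tensor assembly via Proposition~\ref{P:dunion}. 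The zigzag identity $\alpha\circ\alpha^{-1}\equiv\Id$ reduces via Lemma~\ref{L:cob-as-vect} to a diagrammatic equality inside a $3$-ball, where the pants/cylinder combination contracts to the identity by Proposition~\ref{P:pants} and routine skein manipulation.

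For (1) and (2), using (4) it suffices to compute $\V(\hS_k\sqcup\hS_{-\ell})$. The pants $\Pa_{k,-\ell}^{k-\ell}:\hS_k\sqcup\hS_{-\ell}\to\hS_{k-\ell}$ combined with Proposition~\ref{P:pants} produce a linear injection $\V(\hS_k\sqcup\hS_{-\ell})\hookrightarrow\V(\hS_{k-\ell})$ with left inverse $\eta(-1)^{r-1}\V(\Pa^{k,-\ell}_{k-\ell})$. For $k\neq\ell$, Proposition~\ref{prop:easyspheres}(3) gives $\V(\hS_{k-\ell})=\{0\}$, proving (1). For $k=\ell$ we instead obtain an injection into $\V(\hS_0)=\C$, so $\dim\V(\hS_k\sqcup\wb{\hS_k})\le 1$; to establish (2), I exhibit $[C_k^0]\neq 0$ by verifying that its image under the composition $\V(\Pa_{k,-k}^0)\circ\alpha:\V(\hS_k\sqcup\wb{\hS_k})\to\V(\hS_0)=\C$ is a non-zero multiple of the generator $[u_0]$, which reduces to a direct skein calculation inside $B^3$.

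For (3), I compare $[C_k^\beta]$ with $[C_k^0]$ through their common pairing against any test element of $\V'(\hS_k\sqcup\wb{\hS_k})$. The resulting closed decorated triples agree except for the cohomology class, which by the relative nature of $\partial\beta$ and the gluing formula of Lemma~\ref{L:McohomEx} acquires an extra $\beta$ on the closed loop obtained by concatenating the two axis paths; this loop is homologous to the closed $\sigma^k$-circle formed by joining the $\sigma^k$-arcs of the two cylinders, so the shift translates into $\omega(K_P)=\beta$ on a parallel $K_P$ of that circle. Decomposing $\sigma^k=\sigma^{\otimes k}$ and applying the sigma-equivalence of Proposition~\ref{P:RR'kernel} $k$ times strips the $\sigma^k$-circle at the cost of $(-1)^{k(r-1)}q^{-2r'k\omega(K_P)}$; the sign is common to both pairings and cancels in the ratio, leaving $q^{-2r'k\beta}$. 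By the one-dimensionality established in (2), this proves $[C_k^\beta]=q^{-2r'k\beta}[C_k^0]$, and Lemma~\ref{L:cob-as-vect} promotes it to $\Id_{\hS_k}^\beta\equiv q^{-2r'k\beta}\Id_{\hS_k}$. The ``Thus'' part of (4) finally follows by combining (2) and (4) with $\Su=\hS_k$, together with Proposition~\ref{prop:easyspheres}(3).

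The principal obstacle is (3): rigorously tracking the gluing of relative cohomology classes via Lemma~\ref{L:McohomEx} to obtain $\omega(K_P)=\beta$, and iterating the sigma-equivalence for $\sigma^k=\sigma^{\otimes k}$ with the correct sign $(-1)^{k(r-1)}$, both demand careful book-keeping. A secondary difficulty in (4) is establishing the zigzag invertibility: because there is no direct ``transition cobordism'' $\wb{\hS_k}\to\hS_{-k}$ with connected underlying $3$-manifold respecting the boundary-orientation convention, the isomorphism must be built by the duality construction above rather than by a single cobordism.
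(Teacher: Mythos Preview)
Your overall architecture --- proving the items in the order (4), (1), (2), (3), and deducing (1) and (2) from the injectivity of $\V(\Pa_{k,-\ell}^{k-\ell})$ furnished by Proposition~\ref{P:pants} together with Proposition~\ref{prop:easyspheres} --- is exactly the paper's. Two points diverge and deserve comment.

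For (4), your assertion that ``there is no direct transition cobordism $\wb{\hS_k}\to\hS_{-k}$'' is false, and the paper simply builds one. Let $c_k$ be the mapping cylinder of an orientation-reversing reflection of $S^2$ exchanging the two $V_0$-points; when $k\neq 0$ add a third parallel strand carrying a coupon for an isomorphism $(\sigma^k)^*\stackrel{\sim}{\to}\sigma^{-k}$. Using the inverse isomorphism one builds $c_k':\hS_{-k}\to\wb{\hS_k}$ with $c_k\circ c_k'\equiv\Id_{\hS_{-k}}$ and $c_k'\circ c_k\equiv\Id_{\wb{\hS_k}}$, so $\Id_\Su\sqcup c_k$ already gives the isomorphism $\V(\Su\sqcup\wb{\hS_k})\to\V(\Su\sqcup\hS_{-k})$. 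Your zigzag can be made to work, but it is needlessly indirect, and the ``secondary difficulty'' you flag is illusory.

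For (3), your pairing argument has a genuine gap as stated. For an \emph{arbitrary} test element $N\in\cV'(\hS_k\sqcup\wb{\hS_k})$ the $\sigma^k$-colored part of $N\circ C_k^\beta$ need not be a single closed knot (the graph in $N$ may contain coupons, splittings, etc.), so the ``closed $\sigma^k$-circle formed by joining the $\sigma^k$-arcs of the two cylinders'' you invoke does not exist in general, and the iterated $\sigma$-equivalence cannot be applied as you describe. The paper instead pushes $[C_k^\beta]$ through the specific injective map $\V\bigl(\Pa_{k,-k}^0\circ(\Id_{\hS_k}\sqcup c_k)\bigr):\V(\hS_k\sqcup\wb{\hS_k})\to\V(\hS_0)$. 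There a genuine $\sigma$-equivalence applies (the $\sigma^k$-strand closes into an honest knot inside the pant), reducing to the $k=0$ case with factor $\bigl((-1)^{r-1}q^{-2r'\beta}\bigr)^k$; and for $k=0$ two applications of Proposition~\ref{P:1-surg} give $[C_0^\beta]=(\eta\qd(0))^{-1}[u_0\sqcup\wb{u_0}]=[C_0^0]$. Since you have already established (2), you could repair your argument by fixing a \emph{single} well-chosen test element rather than ``any'' --- but the paper's route through $c_k$ and the pant is both shorter and what makes the non-vanishing in (2) and the scalar in (3) come out of one and the same computation.
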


\begin{proof}
  We start by proving the first statement of \eqref{Ei4:CylProp}.
  Consider the decorated cobordism $c_k: \wb{\hS_k}\to \hS_{-k}$
  defined as follows.  The underlying manifold of $c_k$ is $S^2\times
  [0,1]$.  If $k=0$ then $c_0$ is the mapping cylinder of an
  orientation reversing diffeomorphism of $S^2$ (say a reflexion of
  $\C\subset S^2$) which is an orientation preserving diffeomorphism
  $\wb{\hS_0}\to \hS_{0}$.  If $k\neq 0$ then $c_k$ is similar to
  $c_0$ but the graph in $c_k$ has a third additional
  parallel strand between the two $V_0$ colored strands.  This new
  strand has a coupon filled with an isomorphism
  $(\sigma^k)^*\stackrel\sim\to\sigma^{-k}$.  We pull back the
  cohomology class of $\hS_{-k}$ through the projection of the
  cylinder on $\hS_{-k}$.  Finally, this cobordism has
  signature-weight $0$. Using the inverse isomorphism
  $\sigma^{-k}\stackrel\sim\to(\sigma^k)^*$ we build similarly a
  cobordism $c'_k: \hS_{-k}\to\wb{\hS_k}$ such that $c_k\circ
  c'_k\equiv\Id_{\hS_{-k}}$ and $c'_k\circ
  c_k\equiv\Id_{\wb{\hS_{k}}}$.
  Thus, if $\Su$ is
  any decorated cobordism, the decorated cobordism $\Id_\Su\sqcup
  c_k$ induces an isomorphism $\V(\Su \sqcup \wb{\hS_k})\to \V(\Su
  \sqcup \hS_{-k})$.  

To  prove \eqref{Ei1:CylProp} and \eqref{Ei2:CylProp} we use the fact that  Proposition \ref{P:pants} implies that the map
  $\V(\Pa_{k,-\ell}^{k-\ell}):\V(\hS_k\sqcup\hS_{-\ell})\to \V(\hS_{k-\ell})$ is
  injective for all $k,l\in \Z$.  If $k\neq \ell$ then Proposition \ref{prop:easyspheres} implies $\V(\hS_{k-\ell})=0$ and $\V(\hS_k\sqcup\hS_{-\ell})=0$.  Now the isomorphism at the end of the previous paragraph implies 
  $\V(\hS_k\sqcup\wb{\hS_{\ell}})\cong\V(\hS_k\sqcup\hS_{-\ell})$ which is zero.  Thus, we have proved  \eqref{Ei1:CylProp}.  
  
  For \eqref{Ei2:CylProp} we have $\dim(\V(\hS_{0}))=1$ and so
  $\dim(\V(\hS_k\sqcup\hS_{-k}))\leq 1$.  To see that equality holds,
  notice that $\Pa_{k,-k}^{0}\circ (\Id_{\hS_k}\sqcup c_k) \circ
  C_k^\beta$ is $\sigma$-equivalent to
  $\bp{(-1)^{r-1}q^{-2r'\beta}}^k\Pa_{0,0}^{0}\circ
  (\Id_{\hS_0}\sqcup c_0) \circ C_0^\beta$. 
  Observe furthermore that applying twice
  Proposition \ref{P:1-surg} to both $C_0^0$ and $C_0^\beta$, gives 
  $[ C^0_0]=(\eta\qd(0))^{-1}[u_0 \sqcup \wb{u_0}]=[ C_0^\beta]$.
  Then we have
  \begin{align}\label{E:PantsAndCyl}
    [\Pa_{k,-k}^{0}\circ (\Id_{\hS_k}\sqcup  c_k)\circ C_k^\beta]
  &=\bp{(-1)^{r-1}q^{-2r'\beta}}^k[\Pa_{0,0}^{0}\circ (\Id_{\hS_0}\sqcup c_0) \circ C_0^\beta]\notag\\
  & =\bp{(-1)^{r-1}q^{-2r'\beta}}^k[\Pa_{0,0}^{0}\circ (\Id_{\hS_0}\sqcup  c_0) \circ C_0^0]\notag\\
  &=q^{-2r'k\beta}[\Pa_{k,-k}^{0}\circ (\Id_{\hS_k}\sqcup  c_k) \circ C_k^0]
  \end{align}
  where in the last equality we applied again a $\sigma$-equivalence. 
 Now 
  Proposition \ref{P:1-surg}, implies 
  $[(\Id_{\hS_0}\sqcup c_0) \circ C^0_0]=(\eta\qd(0))^{-1}[u_0 \sqcup u_0]$ and so    
  $$(\eta\qd(0))[\Pa_{0,0}^{0}\circ (\Id_{\hS_0}\sqcup  c_0) \circ C_0^0]=[\Pa_{0,0}^{0}\circ
  (u_0 \sqcup u_0)]=[u_0]\neq0.$$
  Thus, Equation \eqref{E:PantsAndCyl} is an 
  equality of non-zero numbers.  So $\V(\hS_k\sqcup\wb{\hS_{k}})$
  is one dimensional and generated by $C_k^0$.  Furthermore, Equation
  \eqref{E:PantsAndCyl} implies $[C_k^\beta]=q^{-2r'k\beta}[C_k^0]$.
  Then \eqref{Ei3:CylProp} follows from applying Lemma
  \ref{L:cob-as-vect} to $C_k^\beta-q^{-2r'k\beta}C_k^0$.
  
  Finally, the second statement of \eqref{Ei4:CylProp} follows from combining \eqref{Ei2:CylProp}, Proposition \ref{prop:easyspheres}  and the first statement of \eqref{Ei4:CylProp}. 
\end{proof}
\begin{cor}\label{C:pants2} Let $k,\ell\in\Z$, then
    $$\Pa_{k,\ell}^{k+\ell}\circ\Pa^{k,\ell}_{k+\ell}\equiv
    \frac{(-1)^{r-1}}{\eta}\Id_{\hS_{k+\ell}}.$$
\end{cor}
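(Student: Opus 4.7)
The plan is to mirror the proof of Proposition \ref{P:pants}. First I would apply Lemma \ref{L:cob-as-vect} to reduce the claimed equivalence to an equality of vectors in $\V(\hS_{k+\ell}\sqcup\wb{\hS_{k+\ell}})$; this reduction is legal because $\hS_{k+\ell}$ is admissible in the sense of Definition \ref{D:admS}, its marked points being coloured by $V_0$ (projective) and, if $k+\ell\neq0$, also $\sigma^{k+\ell}$. By Proposition \ref{prop:vspheres}\eqref{Ei2:CylProp} the vector space $\V(\hS_{k+\ell}\sqcup\wb{\hS_{k+\ell}})$ is one-dimensional and generated by $[\Id_{\hS_{k+\ell}}]$, so the statement reduces to computing a single scalar.

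Next I would describe the composite cobordism graphically. Its underlying 3-manifold is obtained by stacking two 3-pants along both of their ``leg'' spheres $\hS_k,\hS_\ell$, giving a manifold homeomorphic to $(S^2\times[0,1])\#(S^2\times S^1)$ and containing the graph obtained by juxtaposing the two pants graphs of Figure \ref{F:pants}. In particular, between the two identity-coloured coupons there are six connecting strands with colours $(V_0,\sigma^k,V_0^*)$ and $(V_0,\sigma^\ell,V_0^*)$. A first skein equivalence merges the two coupons: their composition is the identity morphism on $V_0\otimes\sigma^{k+\ell}\otimes V_0^*$, so by Lemma \ref{lem:skeinhom} the double coupon inside a small 3-ball can be replaced by three pass-through strands coloured $(V_0,\sigma^{k+\ell},V_0^*)$.

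The crucial step is then to apply Proposition \ref{P:1-surg} with $\alpha=0$ to an essential sphere $S$ in the simplified cobordism. One chooses $S$ to be a sphere inside the $(S^2\times S^1)$-summand that meets the graph transversally in exactly two points, both lying on a single $V_0$-coloured strand; the compatibility condition on cohomology forces the meridian of $S$ to carry the class $0$, matching the hypothesis $\alpha=0$ of Proposition \ref{P:1-surg}. Applying that proposition replaces a tubular neighbourhood $S^2\times B^1$ of $S$ by $B^3\sqcup B^3$, each containing a $V_0$-coloured arc. The resulting cobordism is $\Id_{\hS_{k+\ell}}$, and the overall scaling factor is $(\eta\,\qd(0))^{-1}=\dfrac{(-1)^{r-1}}{\eta}$; here $\qd(0)=(-1)^{r-1}$ follows from the formula \eqref{E:Def_qd} together with the identity $\qn{r-j}=\qn{j}$ (itself a consequence of $q^r=-1$).

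The main obstacle is the geometric bookkeeping around the skein merging: one has to identify the correct essential sphere $S$ after the coupons have been collapsed and verify that it meets the graph in exactly two points on a single $V_0$-strand and that its meridian carries the trivial cohomology class, so that the precise hypotheses of Proposition \ref{P:1-surg} are met. Once this is arranged the numerical coefficient is forced by Proposition \ref{P:1-surg} and yields the claimed scalar, in direct analogy with the three-step graphical calculation used in the proof of Proposition \ref{P:pants}.
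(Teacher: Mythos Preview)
Your reduction via Lemma~\ref{L:cob-as-vect} and Proposition~\ref{prop:vspheres}\eqref{Ei2:CylProp} is the same as the paper's first step, but from there the paper takes a very different and much shorter route. Since $\V(\hS_{k+\ell}\sqcup\wb{\hS_{k+\ell}})$ is one-dimensional, one has $\bc{\Pa_{k,\ell}^{k+\ell}\circ\Pa^{k,\ell}_{k+\ell}}=c\,\bc{\Id_{\hS_{k+\ell}}}$ for some scalar $c$; pre-composing this equality with $\V(\Pa_{k,\ell}^{k+\ell})$ and post-composing with $\V(\Pa^{k,\ell}_{k+\ell})$ turns the left side into $\bc{\Pa^{k,\ell}_{k+\ell}\circ\Pa_{k,\ell}^{k+\ell}\circ\Pa^{k,\ell}_{k+\ell}\circ\Pa_{k,\ell}^{k+\ell}}$ and the right side into $c\,\bc{\Pa^{k,\ell}_{k+\ell}\circ\Pa_{k,\ell}^{k+\ell}}$. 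Now Proposition~\ref{P:pants} applied to the inner (and then outer) pair gives $c=\frac{(-1)^{r-1}}{\eta}$ immediately. No further geometry is needed: the corollary is a formal ``sandwich'' consequence of Proposition~\ref{P:pants}.

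Your direct geometric computation, by contrast, has a genuine gap in the skein step. In $\Pa_{k,\ell}^{k+\ell}\circ\Pa^{k,\ell}_{k+\ell}$ the two coupons involve only the $\sigma$-strands, not $V_0\otimes\sigma^{k+\ell}\otimes V_0^*$, and more importantly they are connected by \emph{two} $\sigma$-strands, $\sigma^k$ and $\sigma^\ell$, passing through the two distinct glued spheres $\hS_k$ and $\hS_\ell$. After isotoping the coupons together (say through $\hS_k$) the $\sigma^\ell$ strand between them runs once around the $S^2\times S^1$ handle, so the two coupons do \emph{not} sit together in a $3$-ball with all their connecting strands and cannot simply be composed to the identity. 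Consequently the natural essential sphere (a parallel copy of $\hS_k$ or $\hS_\ell$) meets the graph in three points $(V_0,\sigma^{k}\text{ or }\sigma^{\ell},V_0^*)$, not two, and Proposition~\ref{P:1-surg} does not apply without further work (e.g.\ pulling the $\sigma^\ell$ strand across the handle using the braiding relations of~\eqref{eq:sigmabraid}, then tracking the resulting phases and $\sigma$-equivalences). This can be made to work, but it is substantially more bookkeeping than you indicate, whereas the paper's sandwich argument avoids it entirely.
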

\begin{proof}
  By Lemma \ref{L:cob-as-vect}, it is enough to show that
  $\bc{\Pa_{k,\ell}^{k+\ell}\circ\Pa^{k,\ell}_{k+\ell}}
  =\frac{(-1)^{r-1}}{\eta}\bc{\Id_{\V(\hS_{k+\ell})}}$ as vectors of
  $\V(\hS_{k+\ell}\sqcup\wb{\hS_{k+\ell}})$.  But Proposition \ref{prop:vspheres} implies $\dim(\V(\hS_{k+\ell}\sqcup\wb{\hS_{k+\ell}}))=1$.   So we have 
  $$\bc{\Pa_{k,\ell}^{k+\ell}\circ\Pa^{k,\ell}_{k+\ell}}=c\bc{\Id_{\V(\hS_{k+\ell})}}$$ 
  for some constant $c$.  Pre-composing 
  this equality with $\V(\Pa_{k,\ell}^{k+\ell})$ and post-composing it with
  $\V(\Pa^{k,\ell}_{k+\ell})$ we have $ \bc{\Pa^{k,\ell}_{k+\ell}\circ\Pa_{k,\ell}^{k+\ell}\circ\Pa^{k,\ell}_{k+\ell}\circ\Pa_{k,\ell}^{k+\ell}}=c\bc{ \Pa^{k,\ell}_{k+\ell}\circ \Pa_{k,\ell}^{k+\ell}}$. Then Proposition~\ref{P:pants} implies $c=\frac{(-1)^{r-1}}{\eta}$.\end{proof}
\begin{exo} \label{exo:pants}  This exercise is a generalization of Proposition~\ref{prop:vspheres}.  For $k,\ell,m\in\Z$, let  $\Pa_{k,\ell,m}^{k+\ell+m}=
    \Pa_{k+\ell,m}^{k+\ell+m}\circ(\Pa_{k,\ell}^{k+\ell}\otimes\Id_{\hS_m})$.   Recursively define
    $$\Pa_{k_1,\ldots,k_n}^{k_1+\cdots+k_n}:\bigsqcup_i\hS_{k_i}\to\hS_{\sum k_i}.$$ 
    Similarly define their analogs
    $\Pa^{k_1,\ldots,k_n}_{k_1+\cdots+k_n}:\hS_{\sum
      k_i}\to\bigsqcup_i\hS_{k_i}$.
  \begin{enumerate}
  \item Check that 
    $\Pa_{k,\ell,m}^{k+\ell+m}$
    and
    $\Pa_{k,\ell+m}^{k+\ell+m}\circ(\Id_{\hS_k}\otimes\Pa_{\ell,m}^{\ell+m})$
    are skein equivalent for any $k,\ell,m\in\Z$. 
  \item Show that 
    $$\Pa^{k_1,\ldots,k_n}_{k_1+\cdots+k_n}\circ
    \Pa_{k_1,\ldots,k_n}^{k_1+\cdots+k_n}\equiv
    \frac{(-1)^{(r-1)(n-1)}}{\eta^{n-1}}
    \Id_{\hS_{k_1}\sqcup\cdots\sqcup\hS_{k_n}}\et$$
    $$\Pa_{k_1,\ldots,k_n}^{k_1+\cdots+k_n}\circ
    \Pa^{k_1,\ldots,k_n}_{k_1+\cdots+k_n}\equiv
    \frac{(-1)^{(r-1)(n-1)}}{\eta^{n-1}}
    \Id_{\hS_{\sum_i k_i}}$$
     for any
    $k_1,\ldots,k_n\in\Z$.
   Thus, the pants $\Pa_{k_1,\ldots,k_n}^{k_1+\cdots+k_n}$ lead to
 an isomorphisms
    \begin{equation}
      \label{eq:isohS}
      \V(\Su\sqcup\hS_{k_1}\sqcup\cdots\sqcup\hS_{k_n})\cong\V(\Su\sqcup\hS_{\sum_i k_i}
      )
    \end{equation}
    where  $\Su$ is any decorated surface.
      \end{enumerate}
\end{exo}
\begin{lemma} \label{lem:pantpermutation}
Consider the permutation
  $\tau_\sqcup:\hS_{k}\sqcup\hS_{\ell}\to \hS_{\ell}\sqcup\hS_{k}$ of the two
  components.  We can view $\tau_\sqcup$ as a cylinder cobordism.  
  Then $$\Pa_{k,\ell}^{k+\ell}\equiv((-1)^{r-1})^{k\ell}\Pa_{\ell,k}^{k+\ell}\circ\tau_\sqcup$$
  where the equivalence $\equiv$ is defined in Definition \ref{D:EquivCob}.
\end{lemma}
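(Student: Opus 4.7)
My approach is to reduce the claim to a local braiding computation in $\cat$, noting that the factor $((-1)^{r-1})^{k\ell}$ is precisely the braiding scalar $c_{\sigma^k,\sigma^\ell}$.

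First I apply Lemma \ref{L:cob-as-vect}: since each sphere $\hS_k$ and $\hS_\ell$ carries a point colored by the projective simple module $V_0$, the surface $\hS_k\sqcup\hS_\ell$ is admissible, and hence $\equiv$ is equivalent to the equality of the corresponding vectors in $\V(\hS_{k+\ell}\sqcup\wb{\hS_k}\sqcup\wb{\hS_\ell})$. It therefore suffices to establish
\[
[\Pa_{k,\ell}^{k+\ell}]\;=\;((-1)^{r-1})^{k\ell}\,[\Pa_{\ell,k}^{k+\ell}\circ\tau_\sqcup]
\]
in this space.

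Next I identify the two cobordisms at the topological level. The cylinder $\tau_\sqcup$ can be absorbed into $\Pa_{\ell,k}^{k+\ell}$, so that $\Pa_{\ell,k}^{k+\ell}\circ\tau_\sqcup$ is realized as a pair of pants whose boundary parametrization identifies the two input spheres with $\hS_k$ and $\hS_\ell$ in the same order as $\Pa_{k,\ell}^{k+\ell}$; the (unique) compatible cohomology class is also the same. Thus the two cobordisms share underlying 3-manifold, parametrization and cohomology, and differ only in their embedded $\cat$-colored ribbon graph. Explicitly, after the identification the graph of $\Pa_{\ell,k}^{k+\ell}\circ\tau_\sqcup$ is obtained from that of $\Pa_{k,\ell}^{k+\ell}$ by a local half-twist that interchanges the bundles $V_0\otimes\sigma^k\otimes V_0^*$ and $V_0\otimes\sigma^\ell\otimes V_0^*$ emerging from the two input spheres.

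Then I arrange the $V_0$- and $V_0^*$-arcs. The 3 outgoing $V_0$- and 3 incoming $V_0$-endpoints are connected inside the pants; since this connection pattern does not involve $k$ or $\ell$, it is symmetric under the $k\leftrightarrow\ell$ swap. Therefore local isotopies within a 3-ball of the pants (valid skein moves, as the complement is admissible because of the $V_0$-coloring) bring the $V_0,V_0^*$ parts of the two graphs to be identical. The only remaining difference is then a single crossing of the $\sigma^k$-strand over the $\sigma^\ell$-strand, localized in a 3-ball.

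Finally I compute the resulting scalar. Because $\sigma^k$ and $\sigma^\ell$ are one-dimensional weight modules on which $E$ and $F$ act as zero, the braiding $c_{\sigma^k,\sigma^\ell}$ is a scalar given by the diagonal part of the $R$-matrix, and by iterated application of the hexagon, $c_{\sigma^k,\sigma^\ell}=c_{\sigma,\sigma}^{k\ell}$. The self-braiding $c_{\sigma,\sigma}=(-1)^{r-1}$ can be read off from the ribbon data in \eqref{eq:sigmabraid} (equivalently, the diagonal $K\otimes K$ part of the $R$-matrix acts on $\sigma\otimes\sigma$ as $q^{2r'^2}$, which equals $(-1)^{r-1}$ in both cases $r$ odd and $r\equiv 2\pmod 4$). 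Hence the total scalar is $((-1)^{r-1})^{k\ell}$, as required.

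The main obstacle is the step establishing that the $V_0,V_0^*$-arcs can be made identical in both cobordisms before resolving the half-twist; this requires careful use of the topology of the pair of pants together with the admissibility condition to justify the local skein isotopies. Once this is in place, the rest is a one-scalar computation in the ribbon category $\cat$.
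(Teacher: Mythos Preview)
Your reduction via Lemma~\ref{L:cob-as-vect} and your identification of the braiding scalar $c_{\sigma^k,\sigma^\ell}=((-1)^{r-1})^{k\ell}$ are both correct. However, the step you yourself flag as ``the main obstacle'' is not merely delicate: it is false as stated. In $\Pa_{k,\ell}^{k+\ell}$ the $V_0$-arc from the output sphere lands on the $V_0$-point of $\hS_k$, while in $\Pa_{\ell,k}^{k+\ell}\circ\tau_\sqcup$ (after the ball-swapping isotopy that identifies the underlying manifolds) that same output $V_0$-point is joined to the $V_0$-point of $\hS_\ell$. The $V_0$ and $V_0^*$ arcs in the two cobordisms therefore have \emph{different connectivity rel boundary}, and no isotopy inside a ball can make them coincide. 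Consequently the two graphs do not differ only by a $\sigma^k$--$\sigma^\ell$ crossing, and the comparison cannot be reduced to that single braiding. Trying to repair this by a skein relation in a large ball does not help either: the relevant morphism space $\Hom_\cat\bigl((V_0\otimes\sigma^k\otimes V_0^*)\otimes(V_0\otimes\sigma^\ell\otimes V_0^*),\,V_0\otimes\sigma^{k+\ell}\otimes V_0^*\bigr)$ is not one-dimensional, so $F$ of the two graphs need not be proportional.

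The paper's proof circumvents this by working not in a Hom space in $\cat$ but in the \emph{one-dimensional} TQFT space $\V(\hS_{k}\sqcup\hS_{\ell}\sqcup\wb{\hS_{k+\ell}})$ (Exercise~\ref{exo:pants}), which guarantees a priori that $[\Pa_{\ell,k}^{k+\ell}\circ\tau_\sqcup]=\lambda_{k,\ell}[\Pa_{k,\ell}^{k+\ell}]$ for some scalar. It then uses $\sigma$-equivalence to reduce the computation of $\lambda_{k,\ell}$ to $\lambda_{0,0}$ (this is where the factor $(s_r)^{k\ell}$ appears), and finally computes $\lambda_{0,0}=1$ via $1$-surgery (Proposition~\ref{P:1-surg}). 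If you want to salvage a direct skein argument, you would need to handle the full braiding of the two three-strand bundles, including all $V_0$--$V_0$ and $V_0$--$\sigma$ crossings and the change of $V_0$-arc connectivity; this is substantially more than the single $\sigma$--$\sigma$ crossing you isolate.
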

\begin{proof}
  According to the Exercise \ref{exo:pants}, $[\Pa_{k,\ell}^{k+\ell}]$ is a
  generator of the one dimensional space
  $\V(\hS_{k}\sqcup\hS_{\ell}\sqcup \wb{\hS_{k+\ell}})$.  Hence there exists
  $\lambda_{k,\ell}\in\C$ such that
  \begin{equation}\label{E:Pkltwist}
  [\Pa_{\ell,k}^{k+\ell}\circ\tau_\sqcup]=\lambda_{k,\ell}[\Pa_{k,\ell}^{k+\ell}]
  \end{equation}
  as vectors of $\V(\hS_{k}\sqcup\hS_{\ell}\sqcup\wb{\hS_{k+\ell}})$.  The
  cobordisms associated to both sides of this equality are diffeomorphic.  Such
  a diffeomorphism can be obtained by an isotopy permuting the two removed
  3-balls of Figure \ref{F:pants}.  This isotopy modifies the graph in
  $\Pa_{\ell,k}^{k+\ell}\circ\tau_\sqcup$ which is then no longer contained in the
  plane $y=0$ but in a neighborhood of $y=0$: see Figure \ref{F:twistedPants}.
  \begin{figure}
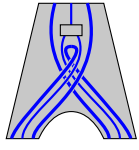

    \centering
    $\epsh{fig49}{12ex}$
    \caption{Twisted pants.}
    \label{F:twistedPants}
  \end{figure}
  To compute $\lambda_{k,\ell}$, we glue
  $\Pa_{k+\ell}^{k,l}\in\cV'(\hS_{k}\sqcup\hS_{\ell}\sqcup\wb{\hS_{k+\ell}})$
  to both sides of Equation \ref{E:Pkltwist}. 
  This yields two closed 3-manifolds $Y_{k,\ell}$ and $Y'_{k,\ell}$.  The
  strategy is then to compare $\Zr(Y'_{k,\ell})$ with $\Zr(Y'_{0,0})$ and
  $\Zr(Y_{k,\ell})$ with $\Zr(Y_{0,0})$.  This will give
  $\lambda_{k,\ell}=(s_r)^{k\ell}\lambda_{0,0}$.  Then we use 1-surgery to
  compute  $\lambda_{0,0}$. 

  The R-matrix acts on the module $\sigma\otimes\sigma$ by the scalar
  $q^{(2r')^2/2}=(q^{2r'})^{r'}$.  This scalar is just a sign and as
  $r'$ is always odd, it is equal to $s_r=q^{2r'}=(-1)^{r-1}$.  This implies that
  $$F\bp{\epsh{fig26}{8ex}
    \put(-21,-5){\ms{\sigma}}\put(-4,-5){\ms{\sigma}}} =s_r
  F\bp{\epsh{fig32}{8ex}
    \put(-21,-5){\ms{\sigma}}\put(-4,-5){\ms{\sigma}}} \et
  F\bp{\epsh{fig26}{8ex}
    \put(-22,-5){\ms{\sigma^k}}\put(-4,-5){\ms{\sigma^\ell}}}
  =(s_r)^{k\ell}\Id_{\sigma^{k+l}}. $$
  Applying these relations,
  we have that $Y'_{k,\ell}$ is $\sigma$-equivalent to $(s_r)^{k\ell+k+\ell}$
  times $Y'_{0,0}$ (see Equation \eqref{eq:sigmabraid}).  Also, $Y_{k,\ell}$
  is $\sigma$-equivalent to $(s_r)^{k+\ell}$ times $Y_{0,0}$.  Hence
  $$\lambda_{k,\ell}=(s_r)^{k\ell}\lambda_{0,0}.$$
  Finally, we compute $\lambda_{0,0}$: using Proposition \ref{P:1-surg} twice
  and Lemma \ref{L:cob-as-vect}
  we have that $\Pa_{0,0}^{0}\equiv (\eta\qd(0))^{-2} u_0\circ({u_0^*}\sqcup
  {u_0^*})$ where $u_0^*$ is the cobordism from
  $\hS_0$ to $\emptyset$ whose underlying manifold is the manifold of $u_0$
  with the opposite orientation.  Now  $({u_0^*}\sqcup
  {u_0^*})\circ\tau_\sqcup$ is diffeomorphic to ${u_0^*}\sqcup
  {u_0^*}$ thus $\Pa_{0,0}^{0}\circ\tau_\sqcup\equiv\Pa_{0,0}^{0}$ and $\lambda_{0,0}=1$.
\end{proof}
\subsection{Finite dimensionality of $\V$}\label{SS:FiniteDimVdiscon} In this subsection we will show that $\V(\Su)$ is finite dimensional when $\Su$ is disconnected. 

\begin{lemma}\label{lem:finitedimensionalityeasycase}
  Let $\Su$ be a connected decorated surface.  Then $\dim(\V(\Su\sqcup\hS_n))<\oo$  for
  any $n\in\Z$.  Moreover, $\V(\Su\sqcup\hS_n)=\{0\}$ for all but
  finitely many $n\in\Z$.
\end{lemma}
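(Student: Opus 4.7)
The plan is to extend the surjectivity argument of Theorem \ref{teo:skeinsurjects} to the disconnected surface $\Su\sqcup\hS_n$ and then adapt Proposition \ref{P:Skein-finite} and Lemma \ref{L:boundweight} to bound the resulting skein module.

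First I would construct a connected $3$-manifold bounding $\Su\sqcup\hS_n$. Let $H_g$ be a handlebody of genus $g$ with $\partial H_g=\Su$, and let $M=H_g\setminus B^3$ be obtained by removing an open $3$-ball from its interior. Then $M$ is connected with $\partial M=\Su\sqcup S^2$; I would place the decoration of $\hS_n$ on the inner sphere, joining the two $V_0$-colored points by an unknotted $V_0$-colored arc (and adding a short projective edge if needed, so that every component of the resulting cobordism becomes admissible in the sense of Definition \ref{D:Cobordisms}). The proof of Theorem \ref{teo:skeinsurjects} relies only on the connectedness of the bounding manifold, so it still yields surjectivity of $\V\colon\Skein(M,\Su\sqcup\hS_n)\to\V(\Su\sqcup\hS_n)$.

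Next I would show that $\Skein(M,\Su\sqcup\hS_n)$ is finite dimensional by adapting Proposition \ref{P:Skein-finite}. The manifold $M$ admits a cell decomposition into four $3$-balls by cutting along $g+2$ disks: the disk $D_0$ used in Proposition \ref{P:Skein-finite} that separates a ball $B_0$ containing the marked points of $\Su$, a new disk $D_{g+1}$ separating a ball $B_3$ containing the three marked points of $\hS_n$, and two disks $D_1,\ldots,D_g$ cutting the remainder into the two balls $B_1,B_2$. The dual graph $\Gamma$ now has one coupon in each of the four balls and one edge across each disk. The same sequence of steps (Lemma \ref{lem:skeinhom} to replace balls by coupons, direct-sum decomposition of projective tensor products, and $\sigma$-equivalence to reduce the projective colors on the $g$ inner edges to a fixed finite set of representatives) then shows that $\Skein(M,\Su\sqcup\hS_n)$ is generated by finitely many colorings of $\Gamma$, whence finite dimensional.

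For the vanishing statement I would adapt Lemma \ref{L:boundweight}. The weights of $V_0\otimes\sigma^n\otimes V_0^*$ lie in $2nr'+\{-2(r-1),-2(r-1)+2,\ldots,2(r-1)\}$, so the shift by $\sigma^n$ translates $\lw$ and $\hw$ of the tensor product of colors on $\Su\sqcup\hS_n$ by exactly $2nr'$ compared with a fixed (finite) value depending only on $\Su$. Applying the inequalities produced by the balls $B_0$ and $B_3$ of the adapted decomposition, together with the bound $\hw(P)-\lw(P)\le 4(r-1)$ for an indecomposable projective $P$ crossing $D_{g+1}$, forces $2nr'$ into an interval of the form $[-K,K]$ with $K$ depending only on $g$, $r$, and $\Su$. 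Hence $n$ ranges over a finite set, and outside it the skein module vanishes, so $\V(\Su\sqcup\hS_n)=\{0\}$.

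The main obstacle is the bookkeeping required to transcribe Proposition \ref{P:Skein-finite} and Lemma \ref{L:boundweight} to $M=H_g\setminus B^3$ and to the disconnected surface $\Su\sqcup\hS_n$: the extra cutting disk $D_{g+1}$ produces an additional coupon and edge in the dual graph, the $\sigma$-equivalence must be applied with care to control the projective modules on both the inner and outer sides, and the weight inequalities must be stated so that the dependence on $n$ is transparent. None of these steps introduces a new idea, but they must be carried out systematically in order to extract the uniform bound $|n|\le N(\Su,r)$.
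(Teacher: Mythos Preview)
Your overall strategy is sound and close in spirit to the paper's, but the cell decomposition you describe for $M=H_g\setminus B^3$ does not work as stated. The manifold $M$ has disconnected boundary and is therefore not a handlebody; it cannot be cut into $3$-balls using properly embedded $2$-disks alone. Concretely, there is no disk $D_{g+1}$ that ``separates a ball $B_3$ containing the three marked points of $\hS_n$'': any properly embedded disk in $M$ with boundary on the inner sphere $\hS_n$ is boundary-parallel (its boundary circle bounds two disks in $\hS_n$, and irreducibility of $H_g$ forces one of the resulting spheres to bound a ball in $M$), while a disk with boundary on $\Su$ can never cut off a ball containing the entire sphere $\hS_n$. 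To decompose $M$ into balls you would need to cut along an \emph{annulus} connecting $\hS_n$ to $\Su$; once you do that, the resulting manifold is a genuine handlebody bounded by a connected surface, and Proposition~\ref{P:Skein-finite} and Lemma~\ref{L:boundweight} apply directly.

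This is exactly what the paper does, packaged differently. Rather than adapt the skein-module arguments to $H_g\setminus B^3$, the paper introduces the connected decorated surface $\hat\Su_n$ obtained from $\Su$ by adding the three marked points of $\hS_n$, and exhibits a cobordism $C_n:\hat\Su_n\to\Su\sqcup\hS_n$ (a collar $\Su\times[0,1]$ with a small ball removed, carrying the three strands to the inner sphere). A short argument using Theorem~\ref{teo:skeinsurjects} and Proposition~\ref{prop:vspheres}(3) shows that $\V(C_n)$ is surjective, so $\dim\V(\Su\sqcup\hS_n)\le\dim\V(\hat\Su_n)<\infty$, and Lemma~\ref{L:boundweight} applied to $\hat\Su_n$ (whose highest and lowest weights are those of $\Su$ shifted by $2r'n\pm 2(r-1)$) gives the finiteness bound on $n$ immediately. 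The advantage of this route is that it invokes the handlebody results as black boxes instead of reproving them for a non-handlebody; the annulus you would need is precisely the trace in $C_n$ of a small arc joining the base points, so fixing your decomposition amounts to rediscovering the paper's cobordism $C_n$.
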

\begin{proof}
  Let $n\in\Z$ and $\hat\Su_n$ be $\Su$ with three more framed dots
  colored as the dots on $\hS_n$.  There is a cobordism
  $C_n:\hat\Su_n\to\Su\sqcup\hS_n$ given by removing a little 3-ball
  in $\Su\times[0,1]$ and joining vertically the dots of $\hS_n$ to
  the corresponding dots of $\hat\Su_n$.  We claim that
  $\V(C_n):\V(\hat\Su_n)\to\V(\Su\sqcup\hS_n)$ is surjective.  Indeed,
  by Theorem \ref{teo:skeinsurjects}, $\V(\Su\sqcup\hS_n)$ is
  generated by connected manifolds.  Let $M$ be a connected manifold
  in $\cV(\Su\sqcup\hS_n)$ and $c\subset M$ be an arc connecting the base-points of $\Su$ and of $\hS_n$.  Up to modifying $M$ by gluing the cylinder of
  Proposition \ref{prop:vspheres}(3), we may suppose that $\omega([c])=0$. The tubular neighborhood $N$ of $\Su\cup c\cup \hS_n$ in $M$ is then
  diffeomorphic to $C_n$ and its complement can be parametrized as an
  element $M'$ of $\cV(\hat\Su_n)$ such that $M=C_n\circ M'$.  Thus
  $[M]=\V(C_n)([M'])$.  It follows from Theorem
  \ref{teo:skeinsurjects} that
  $\dim(\V(\Su\sqcup\hS_n))\le\dim(\V(\hat\Su_n))<\oo$.

  Now
  $\hw(\hat\Su_n)=\hw(\Su)+2r'n+2r-2$ and
  $\lw(\hat\Su_n)=\lw(\Su)+2r'n-2r+2$ thus, applying Lemma
  \ref{L:boundweight} to $\hat\Su_n$, we have
  $$\V(\Su\sqcup\hS_n)\neq0\implies-\dfrac{(4g+10)(r-1)+\hw(\Su)}{2r'}\le n\le 
  \dfrac{(4g+10)(r-1)-\lw(\Su)}{2r'}.$$
\end{proof}

\begin{lemma}\label{lem:zerospheres}
  Let $k\in\Z\setminus r'\Z$, and $\alpha\in\C\setminus\Z$.  Let
  $S=S^2_{((V_{\alpha+2k},+),(V_{\alpha},-))}$ then 
   $\Id_S\equiv0$.
\end{lemma}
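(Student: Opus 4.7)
The plan is to invoke Lemma~\ref{L:cob-as-vect}: since $S$ is admissible (its point colored $V_{\alpha+2k}$ carries a projective module, because $\alpha+2k\in\C\setminus\Z$), it suffices to show that the class of $\Id_S$, viewed as an element of $\cV(S\sqcup\wb S)$ via Remark~\ref{rem:admsurfprimenonprime}, vanishes in $\V(S\sqcup\wb S)$. Equivalently, one must establish that $\Zr(X\circ\Id_S)=0$ for every $X\in\cV'(S\sqcup\wb S)$. A preliminary computation using Lemma~\ref{lem:skeinhom} gives
$$\Skein(B^3,S)\cong\Hom_\cat\bp{\unit,V_{\alpha+2k}\otimes V_\alpha^*}\cong\Hom_\cat(V_\alpha,V_{\alpha+2k})=0,$$
since $V_\alpha$ and $V_{\alpha+2k}$ are non-isomorphic simple modules. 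By Theorem~\ref{teo:skeinsurjects} this gives $\V(S)=0$.

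The central geometric observation is that the closed $3$-manifold $N:=X\circ\Id_S$ contains the embedded sphere $\Sigma=S^2\times\{1/2\}\subset\Id_S$, which meets the ribbon graph transversally in two points with the colors defining $S$. I proceed by case analysis on whether $\Sigma$ separates $N$. First, suppose $\Sigma$ separates (equivalently, the two boundary spheres of $X$ lie in different connected components of $X$); write $N=N_1\cup_\Sigma N_2$. The piece $N_1$ is a decorated cobordism $\emptyset\to S$, admissible because it contains a $V_{\alpha+2k}$-colored edge. Hence $[N_1]\in\V(S)=0$, and $\Zr(N)=\langle [N_2],[N_1]\rangle_S=0$.

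Second, suppose $\Sigma$ is non-separating, so that $X$ is connected. I reduce to the separating case via a $2$-surgery: choose a loop $\wt\gamma\subset N$ obtained by concatenating an arc in $X$ joining its two boundary spheres with a vertical arc through $\Id_S$, so that $\wt\gamma$ meets $\Sigma$ transversally in one point. Using Proposition~\ref{P:EqualSkien} to replace $\wt\gamma$ by its connected sum with a meridian of a projective-colored edge if necessary, we may arrange $\omega(\wt\gamma)\in\C/2\Z\setminus\Z/2\Z$ to be generic. Proposition~\ref{P:2-surg} then yields $\Zr(N)=\lambda\cdot\Zr(M',T'\cup K_{\omega(\wt\gamma)},\omega')$, where the $2$-surgery cancels the $1$-handle structure of $\Id_S$ so that $M'$ contains a separating sphere $\Sigma'$ (a former boundary component of $X$, now capped by a ball), carrying the $S$-decoration and meeting the Kirby-colored knot $K_{\omega(\wt\gamma)}$ in a single point. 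For each simple summand $V_{\omega(\wt\gamma)+j}$ of the Kirby color with generic $\omega(\wt\gamma)$, the augmented three-marked-point sphere $\Sigma'_j$ has trivial associated Hom space, so the separating-case argument applies termwise, yielding $\Zr(M')=0$. Since $\lambda\neq0$ we conclude $\Zr(N)=0$. The main technical obstacle is ensuring that the Kirby-colored knot introduced by the $2$-surgery does not restore a nonzero skein on $\Sigma'$, which is precisely what the generic choice of $\omega(\wt\gamma)$ achieves.
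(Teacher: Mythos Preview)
Your Case~1 is correct and matches the paper's argument.

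Your Case~2, however, contains a genuine topological error. You claim that after $2$-surgery on $\wt\gamma$ the manifold $M'$ contains a separating sphere $\Sigma'$ that meets the Kirby-colored knot $K_{\omega(\wt\gamma)}$ in a single point. This is impossible: a \emph{closed} curve cannot have algebraic intersection $1$ with a \emph{separating} closed surface, since a separating surface is null-homologous. Concretely, the boundary circle of $\Sigma\setminus D$ (where $D=\Sigma\cap\text{tub}(\wt\gamma)$) is the meridian of the old solid torus; under the surgery gluing it becomes a \emph{longitude} of the new $B^2\times S^1$, parallel to the Kirby core $K$, and hence does \emph{not} bound a disc there. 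So $\Sigma$ simply does not cap off to a sphere in $M'$; your ``three-marked-point sphere $\Sigma'_j$'' does not exist, and the appeal to a trivial Hom space is vacuous.

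If you carry your $2$-surgery through correctly, what you actually obtain is the closed manifold $M'$ with the Kirby-colored unknot $K$ sitting inside a ball and \emph{encircling} the two parallel strands colored $V_{\alpha+2k}$ and $V_{-\alpha}$. The vanishing then does not come from any separating-sphere argument but from the fact that this encirclement is the zero morphism: this is precisely Lemma~\ref{lem:old71}, which says $f_k=0$ for $k\notin r'\Z$. That is exactly how the paper handles Case~2, only phrased directly in terms of a surgery presentation of $N$ in $S^3$: the non-separating sphere $S$ gives an $S^2\times S^1$ connected summand, realized as a $0$-framed meridian around the two strands, and Lemma~\ref{lem:old71} kills it. Your attempt to avoid Lemma~\ref{lem:old71} by reducing to Case~1 cannot succeed; the lemma (or an equivalent computation) is the essential input.
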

\begin{proof}
  Let $M$ be 
  the cobordism $\Id_S\equiv0$ seen as an element of 
  $\cV(S\sqcup\wb S)$ (we can since $S$ is admissible).  By
  Lemma \ref{L:cob-as-vect} it is enough to show that
  $[M]=0\in\V(S\sqcup\wb S)$.

  Let $M'\in \cV'(S\sqcup\wb S)$.  First, suppose that the connected component of $M'$ containing $S$
  does not contain $\wb S$.  In this case one may consider a surgery
  presentation of $M'\circ M$ which is cut by a sphere in $S^3$
  intersecting the graph in two strands colored by $V_{\alpha+2k}$ and
  $V_{\alpha}^{*}$.  Since $\Hom_\cat(\unit,V_{\alpha+2k}\otimes
  V_{-\alpha})=\{0\}$ if $k\neq0$ then $\Zr(M'\circ M)=0$. 
  
On the other hand, if a
  connected component of $M'$ contains both $S$ and $\wb S$, then we may
  present $M'\circ M$ by a surgery diagram containing a zero-framed
  meridian  encircling two strands
  colored by $V_{\alpha+2k}$ and $V_{-\alpha}$.  Here the zero-framed
  meridian corresponds to an $S^2\times S^1$ connected summand in
  $M'\circ M$ whose essential sphere is $S$.  Now applying Lemma
  \ref{lem:old71} we see that $\Zr(M'\circ M)=0$. Thus, we have proved that $[M]=0\in \V(S\sqcup\wb S)$.
\end{proof}
\begin{prop}\label{V=VV_0}
  For any decorated surface $\Su$, one has
  $\V(\Su\sqcup\hS_0)\cong\V(\Su)$.
\end{prop}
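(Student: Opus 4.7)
The plan is to construct linear maps $\alpha : \V(\Su) \to \V(\Su \sqcup \hS_0)$ and $\beta : \V(\Su \sqcup \hS_0) \to \V(\Su)$ whose two compositions are both multiplication by the nonzero scalar $\eta\,\qd(0)=\eta(-1)^{r-1}$. I will use the generator $u_0\in\cV(\hS_0)$ of Proposition~\ref{prop:easyspheres}(4), a $3$-ball with an unknotted $V_0$-colored arc, together with its mirror $u_0^*\in\cV'(\hS_0)$ from Lemma~\ref{lem:pantpermutation} (the same ball with the opposite orientation). Concretely, define $\alpha([M]):=[M\sqcup u_0]$ (well defined by Proposition~\ref{P:dunion}) and $\beta([N]):=[(\Id_\Su\sqcup u_0^*)\circ N]$ (well defined by functoriality of $\V$).

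For $\beta\circ\alpha$, I would rewrite $\beta(\alpha([M]))=[(\Id_\Su\sqcup u_0^*)\circ(M\sqcup u_0)]=[M\sqcup(u_0^*\circ u_0)]$. The closed cobordism $u_0^*\circ u_0$ is $(S^3,K_{V_0},0)$, and by \eqref{eq:Zrconnected} together with the direct computation $\qd(0)=(-1)^{r-1}$ coming from \eqref{E:Def_qd} its invariant is $\Zr(u_0^*\circ u_0)=\eta\, F'(K_{V_0})=\eta\,\qd(0)=\eta(-1)^{r-1}$. Multiplicativity of $\Zr$ under disjoint union implies $[M\sqcup X]=\Zr(X)[M]$ for any closed cobordism $X$, since pairing with $N'\in\cV'(\Su)$ gives $\Zr(N'\circ(M\sqcup X))=\Zr(N'\circ M)\,\Zr(X)$. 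Therefore $\beta\circ\alpha=\eta(-1)^{r-1}\Id_{\V(\Su)}$.

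The main step is $\alpha\circ\beta$. We have $\alpha(\beta([N]))=[((\Id_\Su\sqcup u_0^*)\circ N)\sqcup u_0]=[(\Id_\Su\sqcup(u_0\circ u_0^*))\circ N]$, so it suffices to prove that $u_0\circ u_0^*\colon\hS_0\to\hS_0$ is equivalent in the sense of Definition~\ref{D:EquivCob} to $\eta(-1)^{r-1}\Id_{\hS_0}$. Viewed via Remark~\ref{rem:admsurfprimenonprime} as an element of $\cV(\hS_0\sqcup\wb{\hS_0})$, the cobordism $u_0\circ u_0^*$ has underlying manifold $B^3\sqcup B^3$ and coincides with $u_0\sqcup\wb{u_0}$. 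Identifying $\hS_0$ with $S_0$ in the notation of Proposition~\ref{P:1-surg} and applying that proposition at $\alpha=0$ yields $[u_0\sqcup\wb{u_0}]=\eta(-1)^{r-1}[\hS_0\times[0,1]]$ in $\V(\hS_0\sqcup\wb{\hS_0})$. Since $\hS_0$ is admissible (it carries a point colored by the projective module $V_0$), Lemma~\ref{L:cob-as-vect} promotes this vector-space identity to the cobordism equivalence $u_0\circ u_0^*\equiv\eta(-1)^{r-1}\Id_{\hS_0}$, whence $\alpha\circ\beta=\eta(-1)^{r-1}\Id_{\V(\Su\sqcup\hS_0)}$. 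Since $\eta(-1)^{r-1}\neq 0$, the maps $\alpha$ and $(\eta(-1)^{r-1})^{-1}\beta$ are mutually inverse isomorphisms.

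The only subtle point is the re-interpretation of the composition $u_0\circ u_0^*$ as the disjoint union $u_0\sqcup\wb{u_0}$ in $\cV(\hS_0\sqcup\wb{\hS_0})$, which is exactly what allows Proposition~\ref{P:1-surg} to apply and hence reduces the calculation of $\alpha\circ\beta$ to the same scalar $\eta(-1)^{r-1}$ that already appeared in $\beta\circ\alpha$.
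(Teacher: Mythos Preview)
Your proof is correct and follows essentially the same approach as the paper: both construct the isomorphism via $\V(\Id_\Su\sqcup u_0)$ and $\V(\Id_\Su\sqcup u_0^*)$, compute $u_0^*\circ u_0$ directly as $\eta\qd(0)$, and use Proposition~\ref{P:1-surg} together with Lemma~\ref{L:cob-as-vect} to show $u_0\circ u_0^*\equiv\eta\qd(0)\Id_{\hS_0}$. Your write-up is slightly more explicit (spelling out $\qd(0)=(-1)^{r-1}$, the multiplicativity $[M\sqcup X]=\Zr(X)[M]$, and the identification of $u_0\circ u_0^*$ with $u_0\sqcup\wb{u_0}$), but the argument is the same.
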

\begin{proof}
  Recall the decorated cobordism $u_0$ from $\emptyset$ to $S_0=\hS_0$ defined
  in Proposition \ref{P:1-surg}.  Similarly, let $u_0^*$ be the cobordism from
  $\hS_0$ to $\emptyset$ whose underlying manifold is the manifold of $u_0$
  with the opposite orientation.  Consider the decorated cobordisms
  $\Id_\Su\sqcup u_0:\Su\to\Su\sqcup\hS_0$ and $\Id_\Su\sqcup
  u_0^*:\Su\sqcup\hS_0\to\Su$.  Now $u_0^*\circ u_0$ is an unknot colored by $V_0$ in $S^3$ which has value $\eta\qd(0)$.  Thus $(\eta\qd(0))^{-1}\V(\Id_\Su\sqcup u_0^*)$ is a left inverse of
  $\V(\Id_\Su\sqcup u_0)$.  
Furthermore, Proposition \ref{P:1-surg} implies  $u_0\circ u_0^*=\eta\qd(0)\Id_{\hS_0}$.  Then Lemma \ref{L:cob-as-vect} implies $u_0\circ u_0^*\equiv \eta\qd(0)\Id_{\hS_0}$.  Thus, $\V(\Id_\Su\sqcup u_0)\circ(\eta\qd(0))^{-1}\V(\Id_\Su\sqcup  u_0^*)=\Id_{\Su\sqcup\hS_0}$ and we have the desired isomorphism.  
  \end{proof}
\begin{defi}\label{D:Pac}
  For each $c$-uple of decorated surfaces $(\Su_1,\ldots,\Su_c)$ and
  $\vec{n}=(n_1,...,n_c)\in\Z^c$, let $\Su=\bigsqcup_i\Su_i$ and $n=\sum_i n_i$.  
 For such data, define the map
  $$\Pac_{\vec{n}}:
  \V(\Su_1\sqcup \hS_{n_1})\otimes \cdots \otimes \V(\Su_c\sqcup
  \hS_{n_c})\to \V(\Su\sqcup \hS_{n})$$ given by
  $\V\bp{\Id_{\sqcup_i\Su_i}\sqcup \Pa_{\vec n}^{n}}\circ f\circ
  \bigsqcup$ where $\Pa_{\vec n}^{n}$ is the map defined in Exercise
  \ref{exo:pants}, $f$ is a reordering of the components (putting first $\Su_1$, then $\Su_2$ etc.) 
  and the big disjoint union is the generalization of the map of Proposition
  \ref{P:dunion}. 
 If $n=0$, let also define $\wh{\Pac_{\vec n}}:\bigotimes_i\V(\Su_i\sqcup
  \hS_{n_i})\to \V(\Su)$ as the map $\Pac_{\vec n}$ composed
  with the isomorphism of Proposition \ref{V=VV_0}.
\end{defi}

\begin{theo}\label{teo:findimvv}
  Let $\Su=\Su_1\sqcup \cdots \sqcup \Su_c$ be a decorated surface
  where each $\Su_i$ is connected.  
  There exists a finite set $I\subset \Z^c$ of $c$-uples such that for each $(n_1,\ldots n_c)\in I$ we have $\sum_i n_i=0$
  and
  the map:
  $$ \wh{\Pac}:=\bigoplus_{\vec n\in I} \wh{\Pac_{\vec n}}:\bigoplus_{\vec{n}\in I}  \left(\V(\Su_1\sqcup \hS_{n_1})\otimes 
    \cdots \otimes \V(\Su_1\sqcup \hS_{n_c})\right)\to \V(\Su)
  $$
  is surjective. Thus, $\V(\Su)$  is finite dimensional.
\end{theo}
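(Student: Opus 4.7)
The plan is to iteratively cut $M$ along essential $2$-spheres to reduce a general class in $\V(\Su)$ to a sum of classes represented by disjoint unions of pieces each bounding some $\Su_i\sqcup\hS_{n_i}$, and then invoke Lemma \ref{lem:finitedimensionalityeasycase} both to restrict $I$ to a finite set and to deduce finite dimensionality.

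Start with $[M]\in\V(\Su)$. By Theorem \ref{teo:skeinsurjects} we may represent it by a decorated cobordism $M$ and, by Proposition \ref{P:trick}, assume $T$ contains an edge colored by a fixed generic $V_\alpha$. If $M$ is already a disjoint union of pieces each bounding a single $\Su_i$, then $[M]$ lies in the image of the disjoint-union map of Proposition \ref{P:dunion}, which coincides with $\wh{\Pac_{\vec 0}}$ via the identifications $\V(\Su_i)\cong\V(\Su_i\sqcup\hS_0)$ of Proposition \ref{V=VV_0}. Otherwise I would find an essential embedded $2$-sphere $S\subset M$ separating one component $\Su_c$ from $\Su_1\sqcup\cdots\sqcup\Su_{c-1}$, and normalize $S$ into standard form $\hS_k$ by skein equivalences in a tubular neighborhood: Lemma \ref{lem:skeinhom} rewrites $S\cap T$ as a sum over indecomposable summands of the tensor product of the colors at the intersection points, and insertion of auxiliary $V_0$-arcs together with the projective-cover constructions of Lemma \ref{lem:addingtypicals} brings each surviving summand into the form $V_0\otimes\sigma^k\otimes V_0^*$ (or $V_0\otimes V_0^*$ when $k=0$) for some integer $k$ read off from the value of $\coh$ on a meridian in a neighborhood of $S$; the remaining summands vanish by arguments in the spirit of Lemma \ref{lem:zerospheres}.

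Once $S\cong\hS_k$, cutting $M$ along $S$ produces pieces $M^{(c)}:\emptyset\to\Su_c\sqcup\hS_k$ and $M^{(<c)}:\emptyset\to\Su_1\sqcup\cdots\sqcup\Su_{c-1}\sqcup\wb{\hS_k}$; Proposition \ref{prop:vspheres}(\ref{Ei4:CylProp}) replaces $\wb{\hS_k}$ by $\hS_{-k}$. Corollary \ref{C:pants2} applied to the pants $\Pa_{k,-k}^{0}$ then recovers $[M]$, up to a nonzero scalar, as $\V(\Id_\Su\sqcup\Pa_{k,-k}^0)\bigl([M^{(c)}]\sqcup[M^{(<c)}]\bigr)$. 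Iterating the same argument on $M^{(<c)}$ (cutting off one component at a time) and composing the resulting pants using the associativity of Exercise \ref{exo:pants}(1) expresses $[M]$ as an element in the image of $\wh{\Pac_{\vec n}}$ for some $\vec n\in\Z^c$ with $\sum_i n_i=0$. The sum-zero constraint is automatic: each cut contributes a balancing pair $(k,-k)$ of sphere-decorations, which is exactly what lets the final pants $\Pa_{\vec n}^{n}$ target $\hS_0$, as required for $\wh{\Pac_{\vec n}}$ to be defined.

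Finite dimensionality follows from Lemma \ref{lem:finitedimensionalityeasycase}: each $\V(\Su_i\sqcup\hS_{n_i})$ is finite dimensional and vanishes outside a finite range of $n_i$, so one may take $I$ to be the finite set of tuples $\vec n\in\Z^c$ with $\sum_i n_i=0$ and $\V(\Su_i\sqcup\hS_{n_i})\neq 0$ for every $i$; the source of $\wh{\Pac}$ is then finite dimensional, forcing $\dim_\C\V(\Su)<\infty$. The main obstacle will be the skein-theoretic normalization of an arbitrary separating sphere $S$ into $\hS_k$: one must arrange all local modifications so that the two pieces cut by $S$ remain admissible in the sense of Definition \ref{D:Cobordisms} (cf.\ Remark \ref{rem:skeinlocal}), and carefully track cohomology classes across each cut so that the integer $k$ is well-defined and so that the iterated pants reassembly produces the correct overall scalar.
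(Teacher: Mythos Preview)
Your strategy matches the paper's in outline --- cut along spheres, normalize each to some $\hS_k$, reassemble via pants, finish with Lemma \ref{lem:finitedimensionalityeasycase} --- but two steps are under-specified and one is a genuine gap. First, you should not search for separating spheres in an arbitrary $M$: the paper uses Theorem \ref{teo:skeinsurjects} to \emph{choose} the underlying manifold to be the connected sum $H_1\#\cdots\#H_c$ with $\partial H_i=\Su_i$, so that the $c$ connecting spheres are present from the start and their complement splits as a pant $P\cong S^3\setminus(c\text{ open balls})$ glued to the punctured handlebodies $H'_i$. In a general connected $3$-manifold bounding a disconnected surface there need not be any separating sphere at all. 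Second, after isotoping the $V_\alpha$-strand to cross all $c$ spheres and normalizing, the paper replaces the central piece $P$ in one step rather than by your inductive cut-and-reglue: the vector $P$ represents lies in $\V(\hS_{-n_1}\sqcup\cdots\sqcup\hS_{-n_c})$, which by Exercise \ref{exo:pants} is one-dimensional when $\sum n_i=0$ and zero otherwise, so $P$ is a scalar multiple of $u_0\circ\Pa^0_{n_1,\ldots,n_c}$. This sidesteps your appeal to Corollary \ref{C:pants2} and makes the appearance of $\wh{\Pac_{\vec n}}$ immediate.

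The genuine gap is the normalization of a sphere to $\hS_k$. Lemma \ref{lem:skeinhom} only reduces the intersection with $T$ to indecomposable projectives of $\cat_{\wb 0}$, which are the modules $P_i\otimes\sigma^n$, not $V_0\otimes\sigma^k\otimes V_0^*$; and Lemma \ref{lem:addingtypicals} inserts generic $V_\alpha$-edges, not $V_0$-arcs, so neither lemma yields the form you claim. The tool the paper uses is Corollary \ref{C:factor V0^2_New}: for any projective $P\in\cat_{\wb 0}$ one can factor $\Id_P$ through finitely many modules of the two shapes $V_0\otimes\sigma^{n_i}\otimes V_0^*$ and $V_{\alpha+2k_j}\otimes V_{-\alpha}$ with $k_j\in\Z\setminus r'\Z$; spheres of the second type are then killed by Lemma \ref{lem:zerospheres}, leaving only $\hS_{n_i}$. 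Without this structural corollary (or an equivalent argument) you cannot force the sphere into standard form. Note also that the integer $k$ is not ``read off from $\coh$ on a meridian of $S$'' --- a $2$-sphere has trivial first homology --- but arises from this module-theoretic decomposition.
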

\begin{proof}
  By Theorem \ref{teo:skeinsurjects}, it is enough to consider
  cobordisms of $\cV(\Su)$ represented by a connected sum
  $H=H_1\#H_2\#\cdots \#H_c$ of handlebodies (with $\partial
  H_i=\Su_i$) with some $\cat$-colored graph and some cohomology
  class.  Topologically this connected sum is diffeomorphic to the
  manifold obtained by gluing a pant $P$ with $c$ boundary spheres
  (i.e. the complement of $c$ open balls in $S^3$) to $\sqcup_{i=1}^c
  H'_i$ where $H'_i:=H_i\setminus \mathring{B}^3$ (for some open ball
  $B^3\subset H_i$).  By Proposition \ref{P:trick} we may suppose that
  there always is a strand of the $\cat$-colored graph which is
  colored by a generic projective module, $V_\alpha,\alpha \in
  \C\setminus \Z$.  Up to isotopy we may suppose that this strand
  intersects all the $c$ spheres in $\partial P$.  Then Corollary
  \ref{C:factor V0^2_New} implies that we may reduce to the case where
  these spheres are isomorphic to either $\hS_{n_i}$ or
  $S^2_{(V_{\alpha+2n_i},+),(V_\alpha,-)}$ with $n_i\in\Z\setminus
  r'\Z$.  In this last case, Lemma \ref{lem:zerospheres} implies that
  the cobordism is equivalent to $0$. Moreover, since the element of
  $\V(\hS_{-n_1}\sqcup\cdots\sqcup{\hS_{-n_c}})$ represented by $P$
    (and the part of graph contained in $P$) is a scalar multiple
  of $[u_0\circ\Pa_{n_1,\ldots n_c}^0]\in
  \V(\hS_{-n_1}\sqcup\cdots\sqcup{\hS_{-n_c}})$ (because
    $\V(\hS_{-n_1}\sqcup\cdots\sqcup{\hS_{-n_c}})$ has dimension $1$
      if $\sum n_i=0$ and $0$ else), then we may replace $P$ (along
      with the part of graph in it) with
      $u_0\circ\Pa_{n_1,\ldots,n_c}^0$ (recall that $u_0$ is the generator of $\V(\hS_0)$ represented by a $B^3$ containing an unknotted arc colored by $V_0$).
  Cutting along the $c$ spheres, shows that the vector space spanned
  by all the cobordisms whose underlying manifold is $H$ is in the
  image of the map $$\wh{\Pac}:\bigoplus_{n_1,\ldots n_{c}\in \Z}
  \V(\Su_1\sqcup \hS_{n_1})\otimes\cdots\otimes \V(\Su_c\sqcup
  \hS_{n_{c}})\to \V(\Su)$$ obtained by gluing along the essential
  spheres.  But the first statement of Lemma
  \ref{lem:finitedimensionalityeasycase} ensures that each of the
  $n_1,\ldots, n_c$ must range in a finite set and that
  $\dim(\V(\Su_i\sqcup \hS_{n_i}))<\infty$ for all $n_i$.
\end{proof}
\subsection{A grading on $\V$}\label{sub:grading}
Here we introduce a multigrading on $\V(\Su)$ given by the spectral
decomposition of the action of $H^0(\Su,\C/2\Z)$ on $\V(\Su)$.

Let $M=(M,T,f,\coh,n)\in\cV(\Su)$ and $\vp\in H^0(\Su,\C/2\Z)$.
Recall the cylinder $\Id_\Su^\vp$ given by the equality
\eqref{eq:IdH0}, then $\Id_\Su^\vp\circ M=(M,T,f,\coh+\partial\vp,n)$
where here the boundary map is part of the long exact sequence in
cohomology for the pair $(M\setminus
T,*)$:
$$H^0(\Su;\C/2\Z)=H^0(*;\C/2\Z)\stackrel{\partial}\longrightarrow 
H^1(M\setminus T,*;\C/2\Z).$$ This same exact sequence ensures that
$\partial\vp=0$ if and only if $\vp$ is the restriction of a
cohomology class of $H^0(M\setminus T;\C/2\Z)$.  This is in particular
the case for constant $\vp$.

Let $\Su$ be a decorated surface with $n$ ordered components.  The
ordering of the components of $\Su$ gives a canonical isomorphism
$H_0(\Su,\Z)\cong\Z^n$; explicitly if $\Su=\Su_1\sqcup \cdots \sqcup\Su_n$ and $*_i\in \Su_i$ then $\degr=\sum_i \de_i[*_i]\in H_0(\Su;\Z)$; we will also write $\degr=(\de_1,\ldots ,\de_n)\in \Z^n$ and let $|\degr|=\sum_i \de_i\in \Z$.
We consider the pairing
$H^0(\Su,\C/2\Z)\times H_0(\Su,\Z)\to\C^*$ given by $(\vp,\degr)\mapsto
q^{-2r'\vp(\degr)}$.
\begin{defi} For $\degr\in H_0(\Su,\Z)$, let 
  \begin{equation}
    \label{eq:Vh}
    \V_\degr(\Su)=\{[M]\in\V(\Su):\ \forall \vp\in H^0(\Su,\C/2\Z),\,
    \V(\Id_\Su^\vp)([M])=q^{-2r'\vp(\degr)}[M]\}
  \end{equation}
   \begin{equation}
    \label{eq:V'h}
    \V'_\degr(\Su)=\{[M]\in\V'(\Su):\ \forall \vp\in H^0(\Su,\C/2\Z),\,
    \V'(\Id_\Su^\vp)([M])=q^{2r'\vp(\degr)}[M]\}.
  \end{equation} 
  \end{defi}

\begin{prop}\label{prop:grading}
  Let $\Su$ be a decorated surface with $n$ connected components.
  \begin{enumerate}
  \item \label{propI1:grading} If  $\V_\degr(\Su)\neq\{0\}$ then $ |\degr |=0$; in particular, if
    $\Su$ is connected then $\V(\Su)=\V_0(\Su)$.
  \item \label{propI2:grading} If $\Su'$ is a decorated surface, 
  using the morphism defined in Proposition \ref{P:dunion}, it holds:  
    $$\V_\degr(\Su)\sqcup\V_{\degr'}(\Su')\subset\V_{(\degr,\degr')}(\Su\sqcup\Su').$$
  \item \label{propI3:grading}
   If  $\V_\degr(\Su\sqcup\hS_k)\neq\{0\}$ then $ \de_{n+1}=k$ where $\de_{n+1}$
    is the last coordinate of
    $\degr\in\Z^{n+1}=H_0(\Su\sqcup\hS_k,\Z)$.
 \item \label{propI4:grading} $\V(\Su)=\bigoplus_{\degr \in\Z^n}\V_\degr(\Su).$
 \item \label{propI5:grading} If $M\in \V_{\degr_1}(\Su)$ and $ N\in \V'_{\degr_2}(\Su)$ then
   $\brk{N,M}=0$ unless $\degr_1+\degr_2=0$. Moreover, for any $\degr\in
   \Z^n$ the pairing $\V'_{-\degr}(\Su)\otimes \V_{\degr}(\Su)\to \C$
   is non-degenerate.
 \end{enumerate}
\end{prop}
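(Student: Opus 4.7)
I would prove the five claims in the order \eqref{propI1:grading}, \eqref{propI2:grading}, \eqref{propI3:grading}, then \eqref{propI4:grading}, then \eqref{propI5:grading}. The key tools are Proposition \ref{prop:vspheres}\eqref{Ei3:CylProp} (which pins down the cylinder action on $\hS_k$), Theorem \ref{teo:findimvv} (the surjection $\wh{\Pac}$), and Lemma \ref{L:cob-as-vect}.

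For \eqref{propI1:grading}, I first observe that for a constant class $\vp\in H^0(\Su,\C/2\Z)$ the connecting class $\partial\vp$ vanishes in the relative $H^1$ of any cobordism glued above, since a constant lifts to the globally constant function on $\wt M\setminus T$; hence $\Id_\Su^\vp$ agrees with $\Id_\Su$ as a decorated cobordism and $\V(\Id_\Su^\vp)=\Id$. Comparing with the defining equation of $\V_\degr$ forces $q^{-2r'c|\degr|}=1$ for every $c\in\C/2\Z$, hence $|\degr|=0$; when $\Su$ is connected all of $H^0(\Su,\C/2\Z)=\C/2\Z$ is constant, so every $[M]\in\V(\Su)$ already satisfies the equation defining $\V_0(\Su)$. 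For \eqref{propI2:grading}, the identity $\Id_{\Su\sqcup\Su'}^{(\vp,\vp')}=\Id_\Su^\vp\sqcup\Id_{\Su'}^{\vp'}$ combined with the naturality of the $\sqcup$ map from Proposition \ref{P:dunion} gives the claim directly. For \eqref{propI3:grading}, Proposition \ref{prop:vspheres}\eqref{Ei3:CylProp} gives $\Id_{\hS_k}^\beta\equiv q^{-2r'k\beta}\Id_{\hS_k}$; disjoint-unioning with $\Id_\Su$ via Definition \ref{D:EquivCob} and applying $\V$ shows that the component $(0,\ldots,0,\beta)$ of $\vp$ acts on $\V(\Su\sqcup\hS_k)$ by $q^{-2r'k\beta}$, which matches $q^{-2r'\vp(\degr)}$ only when $\de_{n+1}=k$.

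Part \eqref{propI4:grading} is the main step. By Theorem \ref{teo:findimvv}, every vector in $\V(\Su)$ is a sum of images of $\wh{\Pac_{\vec n}}(\bigotimes_i[M_i])$ over $\vec n\in I$, with $[M_i]\in\V(\Su_i\sqcup\hS_{n_i})$. The first subclaim is that each factor lies entirely in one graded piece: $\V(\Su_i\sqcup\hS_{n_i})=\V_{(-n_i,n_i)}(\Su_i\sqcup\hS_{n_i})$. Indeed Proposition \ref{prop:vspheres}\eqref{Ei3:CylProp} shows $(0,\beta)$ acts by $q^{-2r'n_i\beta}$, the constant class $(\beta,\beta)$ acts trivially by the argument of \eqref{propI1:grading}, so $(\beta,0)$ must act by $q^{2r'n_i\beta}$. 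The second subclaim is the commutation identity $\V(\Id_\Su^\vp)\circ\wh{\Pac_{\vec n}}=\wh{\Pac_{\vec n}}\circ\bigotimes_i\V(\Id_{\Su_i\sqcup\hS_{n_i}}^{(\vp_i,0)})$, obtained by pushing $\Id_\Su^\vp$ down past the pant $\Pa_{\vec n}^0$ (which is the identity on the $\Su$-factor) and using that the resulting cohomology classes glue correctly via Lemma \ref{L:McohomEx}. Combining the two subclaims shows $\wh{\Pac_{\vec n}}$ maps into $\V_{-\vec n}(\Su)$, so $\V(\Su)=\sum_{\vec n\in I}\V_{-\vec n}(\Su)$; directness of the sum is automatic because the characters $\vp\mapsto q^{-2r'\vp(\degr)}$ are pairwise distinct as $\degr$ varies in $\Z^c$. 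The main obstacle here is the careful bookkeeping of the cohomology classes in the commutation identity.

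Finally, for \eqref{propI5:grading}, orthogonality follows from functoriality of the pairing: $\langle\V'(\Id_\Su^\vp)N,M\rangle=\Zr(N\circ\Id_\Su^\vp\circ M)=\langle N,\V(\Id_\Su^\vp)M\rangle$ gives $q^{2r'\vp(\degr_2)}\langle N,M\rangle=q^{-2r'\vp(\degr_1)}\langle N,M\rangle$ for all $\vp$, forcing $\langle N,M\rangle=0$ unless $\degr_1+\degr_2=0$. For nondegeneracy, the same functoriality identifies $\V'(\Id_\Su^\vp)$ with the transpose of $\V(\Id_\Su^\vp)$ along the nondegenerate pairing, so the decomposition of $\V(\Su)$ from \eqref{propI4:grading} dualises to $\V'(\Su)=\bigoplus_\degr\V'_\degr(\Su)$ with $\V'_{-\degr}\cong\V_\degr^*$ canonically. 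Since the global pairing is nondegenerate and block-diagonal by orthogonality, each block $\V'_{-\degr}\otimes\V_\degr\to\C$ must itself be nondegenerate.
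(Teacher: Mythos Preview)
Your proposal is correct and follows essentially the same route as the paper's proof: parts \eqref{propI1:grading}--\eqref{propI3:grading} use the constant-class argument and Proposition~\ref{prop:vspheres}\eqref{Ei3:CylProp} exactly as in the paper; part \eqref{propI4:grading} first reduces to the special case $\Su_i\sqcup\hS_{n_i}$ and then uses the surjectivity of $\wh{\Pac}$ from Theorem~\ref{teo:findimvv} together with the commutation identity (the paper's Equation~\eqref{E:VIdcommPn}); part \eqref{propI5:grading} uses the same adjointness $\brk{N\circ\Id_\Su^{-\vp},M}=\brk{N,\Id_\Su^\vp\circ M}$ and block-diagonality. Your explicit remark that directness of the sum in \eqref{propI4:grading} follows from distinctness of the characters $\vp\mapsto q^{-2r'\vp(\degr)}$, and your spelling out of the dual decomposition $\V'(\Su)=\bigoplus_\degr\V'_\degr(\Su)$ in \eqref{propI5:grading}, make explicit two small points the paper leaves tacit, but there is no substantive difference in approach.
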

\begin{proof}
  \eqref{propI1:grading} Let $[M]\in \V_{\degr}(\Su)$ and $|\degr|\neq0$.  
 If  $\vp\in H^0(\Su\setminus \{p_i\};\C/2\Z)$ is a class with a constant generic value $\wb\alpha\notin\Q/2\Z$ on each base point then $\partial\vp=0\in
  H^1(M\setminus T,*;\C/2\Z)$ and so $\Id_\Su^\vp\circ M=M$.  
Thus, $\V(\Id_\Su^\vp)([M])=V(\Id_\Su^\vp\circ M)=[M]$.  
 But $[M]\in \V_{\degr}(\Su)$ and so by definition $\V(\Id_\Su^\vp)$ acts on $[M]$ by $q^{-2r'\vp(\degr)}=q^{-2r'\alpha |\degr|}\neq1$.  Hence $[M]=0$.

  \eqref{propI2:grading} Let $[M]\in \V_\degr(\Su), [M']\in \V_{\degr'}(\Su')$ and $\vp\in
  H^0(\Su\sqcup \Su'\setminus\{p_i\};\C/2\Z)$.  Then
  $\Id_{\Su\sqcup\Su'}^\vp=\Id_{\Su}^{\vp_1}\sqcup\Id_{\Su'}^{\vp_2}$
  where $\vp_1=\vp|_{\Su}$ and $\vp_2=\vp|_{\Su'}$.  Thus
  \begin{align*}
    \V(\Id_{\Su\sqcup\Su'}^\vp)([M]\sqcup
  [M'])&=\V(\Id_{\Su}^{\vp_1})([M])\sqcup\V(\Id_{\Su'}^{\vp_2})([M]')
  =q^{-2r'\vp_1(\degr)}q^{-2r'\vp_2(\degr')}[M]\sqcup [M']\\
  &=q^{-2r'\vp(\degr,\degr')}[M]\sqcup [M'].
  \end{align*}

  \eqref{propI3:grading} Let $\vp\in
  H^0(\Su\sqcup \hS_k\setminus\{p_i\};\C/2\Z)$ be the class which is
  zero on all components except for $\hS_k$ where it is $\beta$.  Then  $\Id^\vp_{\Su\sqcup\hS_k}=\Id_{\Su}\sqcup\Id^\beta_{\hS_k}$.  Moreover, by Proposition \ref{prop:vspheres}\eqref{Ei3:CylProp},
  we have $\Id^\beta_{\hS_k}\equiv 
  q^{-2r'\beta k}\Id_{\hS_k}$ and so 
  \begin{equation}\label{E:IdSuSkcont}
  \Id^\vp_{\Su\sqcup\hS_k}\equiv
  q^{-2r'\beta k}\Id_{\Su\sqcup\hS_k}.
  \end{equation}  Finally, by definition
  $\V(\Id^\vp_{\Su\sqcup\hS_k})$ acts by $q^{-2r'\vp(\degr)}=q^{-2r'\beta \de_{n+1}}$ on
  $\V_\degr(\Su\sqcup \hS_k)$.  Thus, if $[M]\in \V_\degr(\Su\sqcup \hS_k)$ then $\V(\Id^\vp_{\Su\sqcup\hS_k})[M]=0$  unless $\de_{n+1}=k$. 

  \eqref{propI4:grading} First, we prove the statement in the case when $\Su=\Su_1\sqcup \hS_k$ with
  $\Su_1$ connected.  In this case $H^0(\Su\setminus\{p_i\};\C/2\Z)=(\C/2\Z)^2$.  
  If $\vp=(\alpha, \alpha)\in
  (\C/2\Z)^2=H^0(\Su\setminus\{p_i\};\C/2\Z)$ is a constant cohomology
  class then $\partial\vp=0$.
  Therefore,
  for such $\vp=(\alpha, \alpha)$, the action of the cylinder
  $\V(\Id_\Su^\vp)$ on $\V(\Su)$ is trivial.  Next, consider classes
  of the form $\vp=(0,\beta)\in (\C/2\Z)^2$.  For such
  $\vp=(0,\beta)$, Equation \eqref{E:IdSuSkcont} implies that
  $\V(\Id_\Su^\vp)$ acts on $\V(\Su)$ by $q^{-2r'\beta k}$.  Finally,
  if $\vp=(\alpha, \alpha+\beta)\in (\C/2\Z)^2$ is a general
  $0$-cohomology class then from the discussion above $\V(\Id_\Su^\vp)$
  acts on $\V(\Su)$ by 
  $q^{-2r'\beta k}=q^{-2r'(-k \alpha +k(\alpha+\beta) )}$.  Hence
  $\V(\Su)\subset \V_{(-k,k)}(\Su)$ and so $\V(\Su)=\V_{(-k,k)}(\Su)$.

  In general, let $\Su=\Su_1\sqcup \cdots \sqcup \Su_c$ and  $\vp\in
  H^0(\Su\setminus\{p_i\};\C/2\Z)$.  By Theorem
  \ref{teo:findimvv} the map
 \begin{equation*}
   \wh{\Pac}=\oplus_{\vec n}\wh{\Pac_{\vec n}}:
   \bigoplus_{\vec n=(n_1,\ldots n_{c})\in \Z^n,\, \sum n_i =0}
   \V(\Su_1\sqcup \hS_{n_1})\otimes\cdots\otimes \V(\Su_c\sqcup
   \hS_{n_{c}})\to \V(\Su)
  \end{equation*}
  is surjective.  
  Now, we have 
  \begin{equation}\label{E:VIdcommPn}
  \V(\Id_\Su^\vp)\circ\wh{\Pac_{\vec n}}
  =\wh{\Pac_{\vec n}}\circ\bp{\otimes_j\V(\Id^{\vp_j}_{\Su_j}\sqcup\Id_{\hS_{n_j}})}
  =q^{2r'\vp(\vec n)}\wh{\Pac_{\vec n}}
  \end{equation}
  where $\vp_j=\vp_{|\Su_j}$ and the second equality follows from the discussion in the previous paragraph applied to each summand.  
 So the image of $\wh{\Pac_{\vec n}}$ is in $\V_{-\vec n}(\Su)$.  Thus, since the map  $\wh{\Pac}=\oplus_{\vec n}\wh{\Pac_{\vec n}}$ is surjective the result follows.  
 
 \eqref{propI5:grading} For all $\vp\in H^0(\Su\setminus\{p_i\};\C/2\Z)$ we have  $$\brk{[N],[M]}=\brk{[N\circ\Id_{\Su}^{-\vp}], [\Id_{\Su}^{\vp}\circ M]}
 =q^{-2r'\vp(\degr_1+\degr_2)}\brk{[N],[M]}.$$
 This implies that $\brk{N,M}=0$ if $\degr_1+\degr_2\neq 0$.  Finally, by construction we know that the pairing $\V'(\Su)\otimes
 \V(\Su)\to \C$ is non-degenerate, and we just proved that it is zero
 on pairs of vectors whose degrees do not sum to $0$, hence the last
 statement follows.
\end{proof}

\section{The monoidal TQFT $\VV$}\label{sec:VV}
If $\Su$ is a decorated surface with $n$ connected components then Proposition \ref{prop:grading} implies $\V(\Su)$ is a finite
dimensional $\Z^{n}$-graded vector space, completely contained in the
span of vectors whose multidegree has total sum $0$.   This is actually
the reason why $\V$ is not a monoidal (see Proposition \ref{prop:vspheres}). 
Thus we are led to define a new functor $\VV$ (see Definition \ref{def:VV}) which we prove to be a monoidal functor with values in the category 
of graded vector spaces and degree $0$-morphisms in Theorem \ref{teo:monoidalVV}. 
One of the new interesting features of the functor $\VV$ is that in order to get monoidality one should use a symmetric 
braiding on the latter category which is trivial if $r$ is odd and else it is that of super vector spaces (see Remark \ref{rem:tau}). 
In this sense, for $r$ even our TQFT is a ``super TQFT''. 
Similar results hold for the dual version of $\VV$, denoted $\VV'$. 

Finally, in Subsection \ref{sub:verlinde} we use the properties of this functor to compute the 
graded dimension of $\VV(\Su)$ when $\Su$ is an admissible decorated surface. 

\subsection{Definition of the functor}
\begin{defi}[The 
functor $\VV$]\label{def:VV} For a decorated 
  surface  $\Su$ we define the $\Z$-graded vector space
  $$\VV(\Su)=\bigoplus_{m\in\Z}\VV^m(\Su)\quad\text{ where }\quad
  \VV^m(\Su)=\V(\Su\sqcup \hS_{-m}).$$ By Lemma
  \ref{lem:finitedimensionalityeasycase} the dimension of $\VV(\Su)$
  is finite.  
\end{defi}
A cobordism $M:\Su\to \Su'$ induces a \emph{degree $0$}-map
$\VV(M):\VV(\Su)\to \VV(\Su')$ by acting on the degree $-m$ submodules 
via $$\V\big(M \sqcup \Id_{\hS_m}\big):\V(\Su\sqcup \hS_{m}) \to \V(\Su'\sqcup \hS_{m}).$$
Hence $\VV$ defines a functor from the category $\Cob$ of decorated
cobordisms to the category of $\Z$-graded vector spaces.
Let $\Su$ be a decorated surface with $n$ connected components.  The
$\Z$-grading of $\VV(\Su)$ can be refined to a $\Z^n$-grading:  If
$\degr\in\Z^n$, recall that $|\degr|\in\Z$ is the sum of the
coordinates of $\degr$.  Let
$\VV_\degr(\Su)=\V_{(\degr,-|\degr|)}(\Su\sqcup \hS_{-|\degr|})$, then
Proposition \ref{prop:grading} implies that
$\VV^m(\Su)=\bigoplus_{\degr\in\Z^n,\,|\degr|=m}\VV_\degr(\Su)$.
Hence one has
$$\VV(\Su)=\bigoplus_{\degr\in\Z^n}\VV_\degr(\Su).$$
We will refer to this $\Z^n$-grading as the \emph{multidegree} of
elements of $\VV(\Su)$.

\begin{defi}[The functor $\VV'$]
  Let $\Su$ be a decorated surface.  We define the $\Z$-graded space
  $$\VV'(\Su)=\bigoplus_{m\in\Z}{\VV'}^m(\Su)\quad\text{ where }\quad
  {\VV'}^m(\Su)=\V'(\Su\sqcup \hS_{m}).$$
\end{defi}
As above, for a decorated surface with $n$ connected
components $\Su$, we define the \emph{multidegree} $\degr\in\Z^n$
subspace of $\Su$ as
$\VV'_\degr(\Su)=\V'_{(\degr,-|\degr|)}(\Su\sqcup\hS_{-|\degr|})$.

A cobordism $M:\Su\to \Su'$ induces a \emph{degree $0$}-map
$\VV'(M):\VV'(\Su')\to \VV'(\Su)$ by acting on the degree $m$
submodules via $ \V'\big(M \sqcup \Id_{\hS_m}\big).$ We get a
contravariant functor $\VV'$ from $\Cob$ with values in graded vector
spaces.
\begin{defi}[Homogeneous pairing]\label{D:VVpairing}
  We define a pairing $\brkk{\cdot,\cdot}_\Su:\VV'(\Su)\otimes
  \VV(\Su)\to \C$ by extending
  bilinearly $$\brkk{M',M}_\Su=\delta_{a,-b}\brk{M',M}_{\Su\sqcup\hS_a},
  \ \ \ \forall M\in {\VV}^b(\Su),\ \forall M'\in {\VV'}^a(\Su).$$ It
  is a direct consequence of the last point of Proposition
  \ref{prop:grading} that $\brkk{\cdot,\cdot}_\Su$ is non-degenerate
  and realizes isomorphisms $\VV'_{-\degr}(\Su)\cong\VV_\degr(\Su)^*$ for
  any $\degr\in\Z^n=H_0(\Su)$.  In particular, ${\VV'}(\Su)$ is a
  finite dimensional vector space.
\end{defi}

\subsection{Monoidality of $\VV$}
Let $\Su,\Su'$ be two
decorated surfaces.  Consider the map
$$\Pac:\VV(\Su)\otimes \VV(\Su')\cong\bigoplus_{m,n\in\Z}\V(\Su\sqcup\hS_m)
\otimes \V(\Su'\sqcup\hS_n)\longrightarrow \VV(\Su\sqcup \Su')$$ given by the
essentially finite direct sum $\Pac=\bigoplus_{m,n\in\Z}\Pac_{(m,n)}$
of the maps\\ $$\Pac_{(m,n)}: \V(\Su\sqcup\hS_m)\otimes
\V(\Su'\sqcup\hS_n)\to \V(\Su\sqcup\Su'\sqcup\hS_{m+n})$$ introduced in
Definition \ref{D:Pac}.
\begin{theo}[Monoidality of $\VV$]\label{teo:monoidalVV}
  Let $\Su,\Su'$ be two (possibly disconnected) decorated
  surfaces.  Then 
  $$\Pac:\VV(\Su)\otimes \VV(\Su')\to \VV(\Su\sqcup \Su')$$
  is an isomorphism of multigraded vector spaces.
 Moreover if $M_1:\Su_1\to \Su'_1$ and $M_2:\Su_2\to \Su'_2$ are
  decorated cobordisms then
  \begin{equation}\label{E:VcommPU} 
  \VV(M_1\sqcup M_2)\circ \Pac= \Pac\circ\left(\VV(M_1)\otimes_{\C} \VV(M_2)\right).
  \end{equation}
\end{theo}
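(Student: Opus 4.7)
The plan is to separate the two claims: the intertwining relation \eqref{E:VcommPU}, which is essentially formal, and the isomorphism property of $\Pac$, which relies on the finite-generation results from Section \ref{SS:FiniteDimVdiscon} together with the pants calculus developed in Section \ref{SS:SphereS_k}.

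For \eqref{E:VcommPU}, I would chase a simple tensor $[N_1]\otimes[N_2]\in\V(\Su_1\sqcup\hS_m)\otimes\V(\Su_2\sqcup\hS_n)$ through both sides: they both evaluate to the class of the cobordism obtained by taking the disjoint union $(M_1\circ N_1)\sqcup(M_2\circ N_2)$ and capping with the pants $\Pa_{m,n}^{m+n}$, and the agreement follows from strict associativity of disjoint union and composition in $\Cob$.

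For injectivity of $\Pac$, I would factor each graded piece as
\begin{equation*}
\Pac_{(m,n)}=\V(\Id_{\Su\sqcup\Su'}\sqcup\Pa_{m,n}^{m+n})\circ\V(\tau_\sqcup)\circ\sqcup,
\end{equation*}
where $\tau_\sqcup$ is the reordering diffeomorphism $\Su\sqcup\hS_m\sqcup\Su'\sqcup\hS_n\to\Su\sqcup\Su'\sqcup\hS_m\sqcup\hS_n$. Proposition \ref{P:dunion} gives injectivity of $\sqcup$; reordering is a diffeomorphism hence an isomorphism; and the pants morphism $\V(\Id\sqcup\Pa_{m,n}^{m+n})$ is an isomorphism with two-sided inverse $\eta(-1)^{r-1}\V(\Id\sqcup\Pa^{m,n}_{m+n})$ by combining Proposition \ref{P:pants} and Corollary \ref{C:pants2} through Lemma \ref{L:cob-as-vect}. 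Since the images of $\Pac_{(m,n)}$ for distinct pairs $(m,n)$ sit in distinct multidegree summands by Proposition \ref{prop:grading}, the direct sum $\Pac$ is also injective.

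For surjectivity I would fix $N\in\Z$ and show $\V(\Su\sqcup\Su'\sqcup\hS_N)$ lies in the image of $\bigoplus_{m+n=N}\Pac_{(m,n)}$. Applying Theorem \ref{teo:findimvv} to the decorated surface $\Su\sqcup\Su'\sqcup\hS_N$ with components $\Su_1,\ldots,\Su_c,\Su'_1,\ldots,\Su'_{c'},\hS_N$, any class is a finite sum of images of $\wh{\Pac_{\vec n}}$ for sum-zero tuples $\vec n=(n_1,\ldots,n_c,n'_1,\ldots,n'_{c'},n_0)$. By Proposition \ref{prop:vspheres}\eqref{Ei4:CylProp}, the factor $\V(\hS_N\sqcup\hS_{n_0})$ vanishes unless $n_0=-N$, so only tuples with $m+n=N$ contribute, where $m:=\sum_i n_i$ and $n:=\sum_j n'_j$. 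Using the associativity of pants from Exercise \ref{exo:pants}(1), the three-pants factor as $\Pa_{m,n,-N}^0\equiv\Pa_{N,-N}^0\circ(\Pa_{m,n}^N\sqcup\Id_{\hS_{-N}})$, which lets me rewrite each $\wh{\Pac_{\vec n}}$ as $\Pac_{(m,n)}$ applied to the inner pants $\Pac_{(n_1,\ldots,n_c)}$ and $\Pac_{(n'_1,\ldots,n'_{c'})}$, with the residual cylindrical factor $\V(\hS_N\sqcup\hS_{-N})\cong\C$ contracted through $\V(\Pa_{N,-N}^0)$ and the canonical identification $\V(\hS_0)\cong\C$ from Proposition \ref{V=VV_0}. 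This exhibits $[M]$ as lying in the image of $\Pac$.

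The hardest step will be executing the last factorization cleanly, that is, checking that the scalar produced by contracting the cylindrical factor $\V(\hS_N\sqcup\hS_{-N})$ against its generator $[C_{-N}^0]$ from Proposition \ref{prop:vspheres}\eqref{Ei2:CylProp} is nonzero so that no image is lost, and verifying that the regrouping of the pants in Exercise \ref{exo:pants}(1) is compatible with the $\V(-\sqcup\hS_0)\cong\V(-)$ identification used in the definition of $\wh{\Pac_{\vec n}}$.
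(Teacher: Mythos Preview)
Your approach is correct and close to the paper's, with the naturality and injectivity arguments essentially identical. The difference lies in how you organize the surjectivity step, and the paper's route is a bit cleaner precisely where you flag the ``hardest step''.

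You apply Theorem \ref{teo:findimvv} to the component list $(\Su_1,\ldots,\Su_c,\Su'_1,\ldots,\Su'_{c'},\hS_N)$, which forces you to carry the extra tensor factor $z\in\V(\hS_N\sqcup\hS_{-N})$ and then argue that contracting it against the pants output amounts to composing with a scalar multiple of $\Id_{\hS_N}$. This is true (it follows from Proposition \ref{prop:vspheres}\eqref{Ei2:CylProp} together with Lemma \ref{L:cob-as-vect}), but it is exactly the bookkeeping you worried about. The paper instead fixes a full multidegree $(\degr,\degr')$, sets $k=-|\degr|$ and $k'=-|\degr'|$, and applies Theorem \ref{teo:findimvv} to the list $(\Su_1,\ldots,\Su_c,\hS_k,\Su'_1,\ldots,\Su'_{c'},\hS_{k'})$, with both spheres already present. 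The grading argument then singles out a \emph{unique} contributing tuple $\vec n=(-\degr,-k,-\degr',-k')$, and the key factorization is the one-line identity
\[
u_0^*\circ\Pa_{0,0}^0\ \equiv\ (\eta\qd(0))^{-1}\,(u_0^*\sqcup u_0^*)
\]
coming from Proposition \ref{P:1-surg}, which splits $\wh{\Pac_{\vec n}}$ directly as $\wh{\Pac_{\vec n_1}}\sqcup\wh{\Pac_{\vec n_2}}$ and lands the image in $\V(\Su\sqcup\hS_k)\sqcup\V(\Su'\sqcup\hS_{k'})$. So the paper reduces to surjectivity of the disjoint-union map \eqref{eq:dunion} in one stroke, whereas your version routes through an additional cylinder-contraction argument. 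Both reach the same conclusion; the paper's organization simply absorbs your ``hardest step'' into the choice of surface list fed to Theorem \ref{teo:findimvv}.
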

\begin{proof}
  Suppose that $\Su$ and $\Su'$ have $c$ and $c'$
  components, respectively.  By Theorem \ref{teo:findimvv}, $\VV(\Su), \VV(\Su')$,
  and $\VV(\Su \sqcup \Su')$ are all finite dimensional vector spaces over
  $\C$.

  We first prove the last statement of the theorem.  As mentioned above, both $M_1$ and $M_2$
  induce degree $0$ morphisms $$\VV(M_i): \VV(\Su_i)\to \VV(\Su'_i), \text{ for } 
   i=1,2.$$  So it is sufficient to prove the statement for fixed
  degrees $m_1,m_2\in\Z$.  Let
  $$\sigma_\sqcup:\Su_1\sqcup\hS_{-m_1}\sqcup\Su_2\sqcup\hS_{-m_2}
  \to\Su_1\sqcup\Su_2\sqcup\hS_{-m_1}\sqcup\hS_{-m_2}$$ and
  $$\sigma'_\sqcup:\Su'_1\sqcup\hS_{-m_1}\sqcup\Su'_2\sqcup\hS_{-m_2}
  \to\Su'_1\sqcup\Su'_2\sqcup\hS_{-m_1}\sqcup\hS_{-m_2}$$ be the
  trivial cobordisms permuting the components and
  $$\Pa=\Pa_{-m_1,-m_2}^{-m_1-m_2}:\hS_{-m_1}\sqcup\hS_{-m_2}\to\hS_{-m_1-m_2}.$$
  On $\VV^{m_1}(\Su_1)\otimes \VV^{m_2}(\Su_2)$ one has
  \begin{align*}
    \Pac\circ\bp{\VV(M_1)\otimes_{\C} \VV(M_2)}&
    =\V\bp{\Id_{\Su'_1\sqcup \Su'_2}\sqcup\Pa}\circ\V(\sigma'_\sqcup)
    \circ\sqcup\circ\bp{\V(M_1\sqcup\Id_{\hS_{-m_1}})\otimes \V(M_2\sqcup\Id_{\hS_{-m_2}})}\\
    &=\V\bp{\Id_{\Su'_1\sqcup \Su'_2}\sqcup\Pa}\circ \V\bp{M_1\sqcup
      M_2\sqcup\Id_{\hS_{-m_1}}\sqcup \Id_{\hS_{-m_2}}}
    \circ\V(\sigma_\sqcup)\circ\sqcup\\
    &=\V(M_1\sqcup M_2\sqcup\Pa)\circ\V(\sigma_\sqcup)\circ\sqcup\\
    &=\V\bp{M_1\sqcup M_2\sqcup\Id_{\hS_{-m_1-m_2}}}
    \circ\V\bp{\Id_{\Su_1\sqcup \Su_2}\sqcup\Pa}
    \circ\V(\sigma_\sqcup)\circ\sqcup\\
    &=\VV(M_1\sqcup M_2)\circ \Pac
  \end{align*}
which proves Equation \eqref{E:VcommPU}.  

 Next we prove that $\Pac$ is an isomorphism in several steps.  First, we claim that $\Pac$ is homogeneous with respect to the
  multigradings, i.e. for any $\degr\in \Z^c,\ \degr'\in \Z^{c'}$ we
  have
  \begin{equation}\label{eq:Pacmultideg}
    \Pac:\VV_{\degr}(\Su)\otimes \VV_{\degr'}(\Su')\to
    \VV_{(\degr,\degr')}(\Su\sqcup \Su'). 
  \end{equation}
  This follows from Proposition \ref{prop:grading}(2) and the fact
  that for  $\vp\in H^0(\Su\sqcup\Su',\C/2\Z)$ the map $\V(\Id_{\Su \sqcup \Su'}^\vp)$
  commutes with the
  composition with a pants glued on the spheres (also see Equation \eqref{E:VIdcommPn} in the proof of
  Proposition \ref{prop:grading}(4)).

Equation \eqref{eq:Pacmultideg} implies that
 it is enough to show that $\Pac$ is
  bijective in each multidegree.  Let $\degr\in\Z^c$ and
  $\degr'\in\Z^{c'}$ and let $k=-|\degr|$, $k'=-|\degr'|$.  The map
  $\Pac:\VV_{\degr}(\Su)\otimes \VV_{\degr'}(\Su')\to
  \VV_{(\degr,\degr')}(\Su\sqcup \Su')$ is the composition of the
  ``disjoint union map'' (see Proposition \ref{P:dunion}), of a
  permutation of the components and of the map $\V(\Id_{\Su\sqcup
    \Su'}\otimes \Pa_{k,k'}^{k+k'})$.  Since the last two maps are
  bijective, it is enough to show that the disjoint union
  map 
  \begin{equation}
    \label{eq:sqcupdeg}
    \V_{(\degr,k)}(\Su\sqcup\hS_k)\otimes
    \V_{(\degr',k')}(\Su'\sqcup\hS_{k'})\stackrel{\sqcup}\longrightarrow
    \V_{(\degr,k,\degr',k')}(\Su\sqcup\hS_k\sqcup\Su'\sqcup\hS_{k'})
  \end{equation}
  is bijective.  It is injective by Proposition \ref{P:dunion}.
  
  We now claim that this map is surjective. We consider $\vec
  n_1=(-\degr,-k)$, $\vec n_2=(-\degr',-k')$ and $\vec n=(\vec
  n_1,\vec n_2)\in\Z^{c+c'+2}$ and apply Theorem \ref{teo:findimvv} to
  the sequence of decorated surfaces
  $$\Gamma=(\Su_1,\ldots,\Su_c,\hS_{k},
  \Su'_1,\ldots,\Su'_{c'},\hS_{k'}).$$  Hence we have a surjective map
  onto $\V(\Su\sqcup\hS_{k}\sqcup\Su'\sqcup\hS_{k'})$.  As above, this
  map commutes with 
   the map $\V(\Id_{\Su\sqcup\hS_{k}\sqcup\Su'\sqcup\hS_{k'}}^\vp)$
   for  $\vp\in H^0(\Su\sqcup\hS_{k}\sqcup\Su'\sqcup\hS_{k'},\C/2\Z)$.
  Thus,  Proposition~\ref{prop:grading} implies that the only summand
  contributing in multidegree ${(\degr,k,\degr',k')}$ is
  $$\wh{\Pac_{\vec n}}=u_0^*\circ\Pac_{\vec n}:
  \bigotimes_i\V(\Gamma_i\sqcup\hS_{n_i})\to
  \V_{(\degr,k,\degr',k')}(\Su\sqcup\hS_{k}\sqcup\Su'\sqcup\hS_{k'})$$
  which is consequently also surjective (here $\Gamma_i$ is the
  $i$\textsuperscript{th} surface of the sequence $\Gamma$).
  Now Proposition \ref{P:1-surg} implies that $u_0^*\circ
  \Pa_{0,0}^0\equiv(\eta\qd(0))^{-1}(u_0^*\sqcup u_0^*)$.  We also have  that $\Pa^0_{\vec n}$ and
  $\Pa^0_{(0,0)}\circ(\Pa^0_{\vec n_1}\sqcup\Pa^0_{\vec n_2})$ are skein
  equivalent (see Exercise \ref{exo:pants}).  These two relations imply that for any
  $\otimes_ix_i\in\bigotimes_i\V(\Gamma_i\sqcup\hS_{n_i})$,
  \begin{align*}
    \wh{\Pac_{\vec n}}(\otimes_ix_i)&=
    \V(\Id\sqcup u_0^*)\circ\Pac_{\vec n}(\otimes_ix_i)\\
    &= \V(\Id\sqcup u_0^*)\circ\Pac_{(0,0)}\circ(\Pac_{\vec
      n_1}\otimes\Pac_{\vec
      n_2})(\otimes_ix_i)\\
    &=(\eta\qd(0))^{-1}\wh{\Pac_{\vec
        n_1}}(\otimes_{i=1}^{c+1}x_i)\sqcup\wh{\Pac_{\vec
        n_1}}(\otimes_{i=1}^{c'+1}x_{c+1+i})
  \end{align*}
  which is in $\V_{(\degr,k)}(\Su\sqcup\hS_k)\sqcup
    \V_{(\degr',k')}(\Su'\sqcup\hS_{k'})$.
  Hence the map of Equation \eqref{eq:sqcupdeg} is surjective, so
  $\Pac$ is bijective.
\end{proof}
\begin{cor}\label{C:FunctorExists}
  The functor $\VV:\Cob\to\GrVect$ is monoidal with coherence maps
  $\Pac$ and
  $u_0:\C\stackrel{\sim}\longrightarrow\VV(\emptyset)=\V(\hS_0)$
 where $\GrVect$ is the category of finite dimensional $\Z^n$-graded $\C$-vector spaces.

\end{cor}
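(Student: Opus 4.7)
The plan is to verify the axioms of a strong monoidal functor, using Theorem \ref{teo:monoidalVV} for the main content and appealing to the explicit structure of the pants cobordisms for the coherence checks.

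First I would dispose of the unit isomorphism. We have $\VV(\emptyset)=\bigoplus_m\V(\hS_{-m})$, and Proposition \ref{prop:easyspheres} gives $\V(\hS_{-m})=0$ for $m\neq0$ while $\V(\hS_0)=\C$ is generated by $[u_0]$. So the map $\C\to\VV(\emptyset)$ sending $1\mapsto [u_0]$ is an isomorphism concentrated in degree $0$. For the natural transformation $\Pac$, Theorem \ref{teo:monoidalVV} already establishes that $\Pac_{\Su,\Su'}$ is a multigraded isomorphism and that the naturality square $\VV(M_1\sqcup M_2)\circ \Pac=\Pac\circ(\VV(M_1)\otimes\VV(M_2))$ holds, which includes naturality in both object arguments by specializing to identity cobordisms.

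Next I would verify the pentagon. Given surfaces $\Su_1,\Su_2,\Su_3$, both composites
$$\Pac\circ(\Pac\otimes\Id)\et \Pac\circ(\Id\otimes\Pac)$$
decompose by multidegree. Restricted to a summand $\V(\Su_1\sqcup\hS_{n_1})\otimes\V(\Su_2\sqcup\hS_{n_2})\otimes\V(\Su_3\sqcup\hS_{n_3})$, the definition of $\Pac_{\vec n}$ expresses each composite as $\V(\Id_{\sqcup_i\Su_i}\sqcup P)\circ\sigma\circ\sqcup$ for the same reordering $\sigma$ and disjoint union map $\sqcup$, where $P$ is respectively $\Pa_{n_1+n_2,n_3}^{n_1+n_2+n_3}\circ(\Pa_{n_1,n_2}^{n_1+n_2}\sqcup\Id_{\hS_{n_3}})$ or $\Pa_{n_1,n_2+n_3}^{n_1+n_2+n_3}\circ(\Id_{\hS_{n_1}}\sqcup\Pa_{n_2,n_3}^{n_2+n_3})$. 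By Exercise \ref{exo:pants}(1), both are skein equivalent to the iterated pants $\Pa_{n_1,n_2,n_3}^{n_1+n_2+n_3}$, so Proposition \ref{P:RR'kernel} forces them to induce the same linear map under $\V$.

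For the triangle identity, it suffices to show $\Pac\circ(u_0\otimes\Id_{\VV(\Su)})=\Id_{\VV(\Su)}$ once $\emptyset\sqcup\Su$ is identified with $\Su$ (and symmetrically on the right). On the degree $m$ piece this reduces to checking that the cobordism $\Pa_{0,-m}^{-m}\circ(u_0\sqcup\Id_{\hS_{-m}})\equiv \Id_{\hS_{-m}}$. Using Proposition \ref{prop:vspheres}(2), the target space $\V(\hS_{-m}\sqcup\wb{\hS_{-m}})$ is one-dimensional, so the cobordism is a scalar multiple of $\Id_{\hS_{-m}}$. One then pins down the scalar to be $1$ by pairing with an explicit test vector (e.g. $\wb{u_0}\sqcup u_0^*$ composed with $\Pa_{0,m}^{m}\circ(u_0\sqcup\Id)$), using Proposition \ref{P:1-surg} to collapse the two balls around $u_0$; the factor $\eta\qd(0)$ produced by the 1-surgery exactly cancels the factor $(\eta\qd(0))^{-1}$ implicit in the skein generator of $\V(\hS_0)$.

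The main subtlety will be this last bookkeeping in the triangle identity: the normalizations $\eta$, $\lambda$ and $\qd(0)$ entering $\Zr$ and $\Pa^{k,\ell}_{k+\ell}\circ\Pa_{k,\ell}^{k+\ell}\equiv\tfrac{(-1)^{r-1}}{\eta}\Id$ (Proposition \ref{P:pants} and Corollary \ref{C:pants2}) have been chosen precisely so that capping $u_0$ against the pants $\Pa_{0,-m}^{-m}$ gives a strictly unital coherence constraint. Once that is checked the three axioms combine to give that $(\VV,\Pac,u_0)$ is a monoidal functor $\Cob\to\GrVect$.
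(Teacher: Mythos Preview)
Your proposal is correct and follows the same outline as the paper: Theorem \ref{teo:monoidalVV} for naturality and invertibility of $\Pac$, Exercise \ref{exo:pants}(1) for the pentagon, and the pants-capped-by-$u_0$ identity for the triangle.

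The only place you take a longer road than necessary is the unitality constraint. You argue via the one-dimensionality of $\V(\hS_{-m}\sqcup\wb{\hS_{-m}})$ and then attempt to pin down the scalar by a pairing computation involving Proposition \ref{P:1-surg}. This works in principle, but the invocation of $1$-surgery and the claimed cancellation of an ``implicit $(\eta\qd(0))^{-1}$ in the skein generator of $\V(\hS_0)$'' is muddled: $u_0$ carries no such hidden factor, and no $1$-surgery is needed. The paper's proof simply records the equality of decorated cobordisms
\[
\Pa_{0,k}^k\circ (u_0\sqcup \Id_{\hS_k})= \Id_{\hS_k}=\Pa_{k,0}^k\circ (\Id_{\hS_k}\sqcup u_0),
\]
which is immediate once you observe that filling one inner boundary sphere of the pants with the ball $u_0$ yields a cylinder $\hS_k\times[0,1]$, and the graph (after deleting the trivially colored $\sigma^0$-strand and its identity coupon) is exactly the standard graph in $\Id_{\hS_k}$. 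So the scalar is $1$ by inspection, not by a pairing calculation.
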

\begin{proof}
  The previous theorem shows that $\Pac$ is a natural isomorphism.  The
  associativity and unitality constraints are easily checked using
  Exercise \ref{exo:pants} and $\Pa_{0,k}^k\circ (u_0\sqcup
  \Id_{\hS_k})= \Id_{\hS_k}=\Pa_{k,0}^k\circ (\Id_{\hS_k}\sqcup u_0)$.
\end{proof}
\begin{rem}
  Proposition \ref{V=VV_0} can be reformulated as follows: for any
  decorated surface $\Su$, there is a natural isomorphism of multigraded spaces
  $$\VV^0(\Su)\stackrel\sim\longrightarrow\V(\Su).$$
  Furthermore, if $M$ is a closed decorated manifold seen as a
  cobordism $\emptyset\to\emptyset$ then
  \begin{equation}
    \label{eq:VV=Z}
      \VV(M)=\Zr(M)\Id_{\VV(\emptyset)}.
  \end{equation}
\end{rem}

The pairing $\brkk{\cdot,\cdot}$ of Definition \ref{D:VVpairing}
identifies $\VV'$ and 
the dual $\Hom_\C(\VV(\cdot),\C)$ of $\VV$.
Hence, the  transpose of $\Pac$ gives the natural isomorphisms for the
monoidality of $\VV'$ (see the following corollary).
\begin{cor}[Monoidality of $\VV'$]
  Let $\Su_1,\Su_2$ be two (possibly disconnected) decorated
  surfaces. There is an isomorphism of multigraded vector spaces
  $${\Paca}:\VV'(\Su_1\sqcup \Su_2)\to \VV'(\Su_1)\otimes_{\C} \VV'(\Su_2)$$
  defined as the adjoint of the isomorphism $\Pac$ relative to the
  bilinear pairing $\brkk{\cdot,\cdot}$.  

  Moreover if $M_1:\Su_1\to \Su'_1$ and $M_2:\Su_2\to \Su'_2$ are
  decorated cobordisms then
  $$\Paca\circ\VV'(M_1\sqcup M_2)= 
  \left(\VV'(M_1)\otimes_{\C} \VV'(M_2)\right)\circ \Paca.$$ 
\end{cor}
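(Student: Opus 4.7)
The plan is to deduce this corollary essentially by dualization from the monoidality of $\VV$ established in Theorem~\ref{teo:monoidalVV}. The pairing $\brkk{\cdot,\cdot}$ from Definition~\ref{D:VVpairing} is non-degenerate and realizes a canonical identification $\VV'(\Su)\cong \VV(\Su)^*$ in each multidegree. Under this identification, I would define $\Paca$ to be the transpose of the isomorphism
$$\Pac:\VV(\Su_1)\otimes\VV(\Su_2)\longrightarrow \VV(\Su_1\sqcup\Su_2),$$
that is, for $N\in\VV'(\Su_1\sqcup\Su_2)$ and $M_i\in\VV(\Su_i)$ I would set
$$\brkk{\Paca(N),M_1\otimes M_2}_{\Su_1}\otimes\brkk{\cdot,\cdot}_{\Su_2}
  :=\brkk{N,\Pac(M_1\otimes M_2)}_{\Su_1\sqcup\Su_2}.$$
Here one uses the standard identification $(\VV(\Su_1)\otimes\VV(\Su_2))^*\cong \VV(\Su_1)^*\otimes\VV(\Su_2)^*\cong\VV'(\Su_1)\otimes\VV'(\Su_2)$ coming from finite dimensionality (Definition~\ref{D:VVpairing}) and the product pairing. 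Since $\Pac$ is bijective by Theorem~\ref{teo:monoidalVV} and the pairings are non-degenerate, $\Paca$ is automatically a linear isomorphism, and it is multigraded because $\Pac$ is and the pairing couples $\VV_{\degr}$ with $\VV'_{-\degr}$.

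For the naturality statement, I would argue again by adjunction. Fix cobordisms $M_j:\Su_j\to\Su'_j$ for $j=1,2$. For any $N\in\VV'(\Su'_1\sqcup\Su'_2)$ and any $x_j\in\VV(\Su_j)$, compute
\begin{align*}
\brkk{\bp{\Paca\circ\VV'(M_1\sqcup M_2)}(N),\,x_1\otimes x_2}
&=\brkk{N,\Pac\circ\bp{\VV(M_1)\otimes\VV(M_2)}(x_1\otimes x_2)}\\
&=\brkk{N,\VV(M_1\sqcup M_2)\circ\Pac(x_1\otimes x_2)}\\
&=\brkk{\VV'(M_1\sqcup M_2)(N),\Pac(x_1\otimes x_2)}\\
&=\brkk{\bp{\VV'(M_1)\otimes\VV'(M_2)}\circ\Paca(N),\,x_1\otimes x_2},
\end{align*}
where the second equality is exactly Equation~\eqref{E:VcommPU} and the third uses that $\VV'(M_1\sqcup M_2)$ is defined to be the transpose of $\VV(M_1\sqcup M_2)$ with respect to the pairing $\brkk{\cdot,\cdot}$. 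Non-degeneracy of the pairing then yields the desired equality of maps.

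Finally, one should verify the unit and associativity coherence conditions for $\Paca$, but these follow tautologically by transposing the corresponding coherences for $\Pac$ verified in Corollary~\ref{C:FunctorExists}, together with the fact that $u_0$ generates $\VV(\emptyset)$ and its $\brkk{\cdot,\cdot}$-dual generates $\VV'(\emptyset)$. The only step requiring actual input is the identity $\VV'(M_1\sqcup M_2)= \VV'(M_1)\boxtimes \VV'(M_2)$ interpreted via pairings, and this is built into the very definition of $\VV'$ on cobordisms (acting by $\V'(M\sqcup\Id_{\hS_m})$). Thus there is no real obstacle: the corollary is a formal consequence of Theorem~\ref{teo:monoidalVV} and non-degeneracy of $\brkk{\cdot,\cdot}$; the one point to watch is to keep track of signs/degree shifts coming from the $\hS_m$--components in $\VV$ and $\VV'$, which however cancel precisely because the pairing couples degree $m$ with degree $-m$.
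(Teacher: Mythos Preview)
Your approach is exactly the paper's: $\Paca$ is defined as the transpose of $\Pac$ with respect to the non-degenerate pairing $\brkk{\cdot,\cdot}$, and naturality is deduced from Equation~\eqref{E:VcommPU} by adjunction (the paper's own proof is in fact terser than yours, simply saying the second part ``follows from a similar statement for $\VV$''). One cosmetic remark: in your displayed chain the ordering is scrambled---unwinding $\Paca$ from the left-hand side gives your fourth line first, then the transpose property gives the third, then \eqref{E:VcommPU} gives the second, and from there the $\Paca$-adjunction for $\Su'_1,\Su'_2$ together with the transpose of each $\VV(M_j)$ yields the fifth; as written, your step from the fourth to the fifth line is not independently justified. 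All the ingredients are present, so this is a matter of presentation, not a gap.
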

\begin{proof}
  The pairing $(\VV'(\Su_1)\otimes
  \VV'(\Su_2))\otimes(\VV(\Su_1)\otimes \VV(\Su_2))\to\C$ is defined
  as the product of the two pairings
  $\brkk{\cdot,\cdot}_{\Su_1}\otimes\brkk{\cdot,\cdot}_{\Su_2}$.  By
  definition, for any $N_1\in\cV(\Su_1)$, $N_2\in\cV(\Su_2)$ and
  $N'\in\cV'(\Su_1\sqcup \Su_2)$,
  $\brkk{\Paca([N']),[N_1]\otimes[N_2]}=\brkk{[N'],\Pac([N_1]\otimes[N_2])}$.
  The map $\Paca$ is an isomorphism because $\Pac$ is an isomorphism
  and the pairing are non degenerate.  The second part of the
  corollary also follows from a similar statement for $\VV$.  Remark
  that Proposition \ref{P:pants} implies that in degree $m_1,m_2\in\Z$, 
  $(\Paca)^{-1}:\VV'(\Su_1)\otimes_{\C} \VV'(\Su_2)\to
  \VV'(\Su_1\sqcup \Su_2)$ is given by
  $$(\Paca)^{-1}=(\eta\qd(0))^{-1}\V'\bp{\Id_{\Su_1\sqcup \Su_2}\sqcup\Pa^{m_1,m_2}_{m_1+m_2}}
  \circ\V'(\Id\sqcup\tau_\sqcup\sqcup\Id)\circ\sqcup$$ where
  $\tau_\sqcup:\Su_2\sqcup\hS_{m_1}\to \hS_{m_1}\sqcup\Su_2$ is the
  trivial cobordism. 
\end{proof}

\begin{rem}[The braiding of the monoidal category]\label{rem:tau}
  Recall that because the connected components of decorated surfaces
  are ordered, we have $\Su_1\sqcup \Su_2\neq \Su_2\sqcup \Su_1$.  But
  these two surfaces are isomorphic through the reordering that can be
  realized as the trivial cobordism $\tau_\sqcup$.  Composing this
  trivial braiding with the isomorphisms $\Pac$, we get a map
  $$\tau=(\Pac)^{-1}\circ \V(\tau_\sqcup)\circ\Pac:
  \VV(\Su_1)\otimes\VV(\Su_2)\to\VV(\Su_2)\otimes\VV(\Su_1)$$ Because
  of the use of $\Pa_{\de_1,\de_2}^{\de_1+\de_2}$ in the definition of
 $\Pac$ and because of Lemma
  \ref{lem:pantpermutation},  
  we get 
  $$\tau(x\otimes y)=(-1)^{\de_1\de_2(r-1)} y\otimes x$$
  for homogeneous vectors $x\otimes y\in
  \VV^{\de_1}(\Su_1)\otimes \VV^{\de_2}(\Su_2)$.   
  A similar statement holds for
  $\tau^*=\Paca\circ \V'(\tau_\sqcup)\circ(\Paca)^{-1}$.

  Observe then that if $r$ is odd this boils down to a trivial
  morphism coinciding with the standard flip and $\VV$ is a braided functor.  On the other hand, if $r$ is even then a super commutation rule appears.    Hence for $r$ even, $\VV$ is a braided monoidal functor with target
  the category of super vector spaces.
\end{rem}

\subsection{The Verlinde formula and the graded dimensions of $\VV(\Su)$.}\label{sub:verlinde}
The category $\Cob$ is not pivotal because non admissible surfaces
do not have a dual.  Nevertheless its full sub-category of admissible
surfaces (see Definition \ref{D:admS}) is pivotal and even ribbon.  Moreover,  one can see that
$\VV$ is a ribbon functor on this sub-category.  Then $\VV$ commutes with
the categorical trace and, as we now explain, leads to a kind of graded
Verlinde formula.

Let $\Su=(\Su,\{p_i\},\coh,\La)$ be a connected decorated surface that is
admissible (cf Definition \ref{D:admS}).  For
each ${\wb\beta}\in \C/2\Z$ let $\Id_\Su^\vp:\Su\to\Su$ 
where $\vp$ is the cohomology class of $H^0\left(\Su;\C/2\Z\right)$ which is
$\wb \beta$ on the base point.

Let $\cup:\emptyset\to \Su\sqcup\overline{\Su}$ and
$\cap:\overline\Su\sqcup{\Su}\to\emptyset$ be given by the cylinder of $\Su$.
Then 
\begin{equation}
  \label{eq:Sudual}
  ( \Id_{\Su}\sqcup\ \cap)\circ (\cup\sqcup\Id_{\Su})=\Id_{\Su}.
\end{equation}
Now for $\wb\beta\in\C/2\Z$, let $\Su\times S^1_{\wb\beta}$ be the decorated
3-manifold obtained as follows: the 3-manifold is $\Su\times S^1$, it contains
the link $T=\{p_i\}\times S^1$ and its cohomology class is given by
$\coh\oplus\wb\beta\in H^1(\Su\times S^1\setminus T,\C/2\Z)\simeq
H^1(\Su\setminus \{p_i\},\C/2\Z)\oplus H^1(S^1,\C/2\Z)\simeq H^1(\Su\setminus
\{p_i\},\C/2\Z)\oplus \C/2\Z$ (the first isomorphism comes from K\"unneth
formula: $H_1((\Su\setminus \{p_i\})\times S^1)\cong H_1(\Su\setminus
\{p_i\})\oplus H_1(S^1)$).  It is clear that $\Su\times S^1_{\wb\beta}$
can be decomposed as
$$\Su\times S^1_{\wb\beta}=\cap\circ \tau_\sqcup\circ 
(\Id_\Su^{{\wb\beta}}\sqcup \Id_{\overline{\Su}})\circ\cup.$$

 Let now
$(e_i)_i$ be a homogeneous basis of $\VV(\Su)$ and write
$(\Pac)^{-1}([\cup\sqcup u_0])=\sum_ie_i\otimes
e^i\in\VV(\Su)\otimes\VV(\wb\Su)$.  Applying the morphisms of
\eqref{eq:Sudual} to $e_j$ we have $\VV(\cap)(\Pac(e^i\otimes
e_j))=\delta_i^j[u_0]$.  Let $\de_i$ be the degree of $e_i$, then $e^i$ has
degree $-\de_i$ because
$\VV(\cup)([u_0])\in\VV^0(\Su\sqcup\overline{\Su})$.  So we compute
\begin{align*}
  \VV(\Su\times S^1_{\wb\beta})([u_0])&= \VV(\cap)\circ \Pac\circ \tau\circ
  (\VV(\Id_\Su^{{\wb\beta}})\otimes\Id_{\VV(\overline{\Su})})
  (\sum_ie_i\otimes e^i)\\
  &=\sum_iq^{-2r'\de_i\beta}\VV(\cap)\circ\Pac\circ \tau(e_i\otimes
  e^i)\\
  &=\sum_i(-1)^{(r-1)\de_i}q^{-2r'\de_i\beta}\VV(\cap)\circ\Pac(e^i\otimes
  e_i)\\ 
  &=\bp{\sum_{\de\in\Z}(-1)^{(r-1)\de}q^{-2r'\de\beta}\dim(\VV_\de(\Su))}[u_0]
\end{align*}
and this with the identity \eqref{eq:VV=Z} leads to a graded version of the
Verlinde formula:

\begin{theo}[Verlinde formula for graded dimensions]\label{P:verlinde}
  Let $\Su=(\Su,\{p_1,\ldots p_n\},\coh,{\La})$ be a connected admissible
  surface of genus $g$.
  Define the graded (super)-dimension of $\VV(\Su)$ by
  $$\dim_t(\VV(\Su))=\left\{
    \begin{array}{ll}
      \sum_{{\de}\in \Z} \dim(\VV_{{\de}}(\Su))t^{\de} &\text{ if $r$ odd}\\
      \sum_{{\de}\in \Z} (-1)^{\de}\dim(\VV_{{\de}}(\Su))t^{\de} &\text{ if $r$ even.}
    \end{array}\right.$$
  $$\text{Then}\qquad\dim_{q^{-2r'\beta}}(\VV(\Su))=\Zr(\Su\times S^1_{\wb\beta}).$$
  If $W=\otimes_iW_i$ is the tensor product of the
  colors of the $p_i$, then 
  $$\Zr(\Su\times S^1_{\wb\beta})=\frac1r(r')^{g} \sum_{k\in
    \Hr}\bp{\frac{\qn{r\beta}}{\qn{\beta+k}}}^{2g-2}\Phi_{W,\beta+k}$$
  where $\Phi_{W,\beta}$ is the scalar
   associated  to an open Hopf link whose long and closed strands are colored by $V_\beta$ and $W$, respectively 
   (see Equation \eqref{eq:Phi}). 
   In particular,
  if $W_i=V_{c_i}$ for $c_1,\ldots,c_n\in\Cp$ then 
  \begin{displaymath}
    \Zr(\Su\times S^1_{\wb\beta})=\frac{(-1)^{n(r-1)}}r(r')^{g}q^{c\beta}\sum_{k\in \Hr}q^{ck}  
    \bp{\frac{\qn{r\beta}}{\qn{\beta+k}}}^{2g-2+n}
  \end{displaymath}
  where $c=\sum_ic_i$.
\end{theo}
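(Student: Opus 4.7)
The first equality is essentially the computation performed in the paragraph immediately preceding the theorem statement. My plan is to use that computation directly: decomposing $\Su\times S^1_{\wb\beta}$ as $\cap\circ\tau_\sqcup\circ(\Id_\Su^{\wb\beta}\sqcup\Id_{\wb\Su})\circ\cup$, applying $\VV$ and evaluating on $[u_0]$, using that $\VV(\Id_\Su^{\wb\beta})$ acts by $q^{-2r'\de\beta}$ on the degree $\de$ summand (Equation \eqref{eq:Vh}) and that the braiding $\tau$ contributes the super-sign $(-1)^{(r-1)\de}$ via Lemma \ref{lem:pantpermutation}, and finally invoking Equation \eqref{eq:VV=Z} to read off the trace-like sum $\sum_{\de}(-1)^{(r-1)\de}q^{-2r'\de\beta}\dim(\VV_{\de}(\Su))$.

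The core of the proof is the second equality, which I would establish by computing $\Zr(\Su_g\times S^1_{\wb\beta})$ directly from a surgery presentation of $\Su_g\times S^1$. I would fix a surgery diagram in which one distinguished $0$-framed unknot corresponds to the $S^1$-factor while the remaining $2g$ components realize $\Su_g$ through its standard Heegaard decomposition. The cohomology class takes value $\wb\beta$ on the meridian of the distinguished component and $\coh$ on the marked-point meridians; computing via Equation \eqref{eq:Zrconnected} thus requires coloring the distinguished component by the Kirby color $\Omega_{\wb\beta}=\sum_{k\in\Hr}\qd(\beta+k)V_{\beta+k}$, producing the sum indexed by $k\in\Hr$. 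For fixed $k$, the remaining closed ribbon graph is a genus-$g$ skeleton colored by $V_{\beta+k}$ linked with $T$, whose value factors as $\Phi_{W,\beta+k}$ times $2g-2$ reciprocal quantum dimensions $1/\qd(\beta+k)$ (one per pair-of-pants in a pants decomposition of $\Su_g$); using the identity $\qd(\beta+k)=(-1)^{r-1-k}r\qn{\beta+k}/\qn{r\beta}$ recasts this as $(\qn{r\beta}/\qn{\beta+k})^{2g-2}$ up to a global sign which disappears because $2g-2$ is even. Tracking the prefactors $\eta$, $\lambda$, and $\dep$ from Equations \eqref{eq:Zdefi} and \eqref{eq:Zrconnected}, together with the Betti numbers and signature of the surgery link, yields the overall factor $(r')^g/r$.

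For Part 3, I would compute $\Phi_{W,\beta+k}$ explicitly via the multiplicativity of the open Hopf link invariant, $\Phi_{V_1\otimes V_2,U}=\Phi_{V_1,U}\Phi_{V_2,U}$, to factorize $\Phi_{W,\beta+k}=\prod_i\Phi_{V_{c_i},V_{\beta+k}}$. The elementary Hopf link computation, using the weights $c_i+r-1,c_i+r-3,\ldots,c_i-r+1$ of $V_{c_i}$, gives $\Phi_{V_{c_i},V_{\beta+k}}=\sum_{j=0}^{r-1}q^{(c_i+r-1-2j)(\beta+k)}=q^{c_i(\beta+k)}\qn{r(\beta+k)}/\qn{\beta+k}=q^{c_i(\beta+k)}(-1)^k\qn{r\beta}/\qn{\beta+k}$. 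Multiplying over $i$ gives $q^{c(\beta+k)}=q^{c\beta}q^{ck}$ with $c=\sum_i c_i$, raises the exponent of $\qn{r\beta}/\qn{\beta+k}$ from $2g-2$ to $2g-2+n$, and contributes a sign $(-1)^{nk}$ that becomes the constant $(-1)^{n(r-1)}$ since all $k\in\Hr$ have parity $r-1$.

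The main obstacle is Part 2: producing a concrete surgery description of $\Su_g\times S^1$ in which the cohomology class $\coh\oplus\wb\beta$ is geometrically transparent, and rigorously recovering the normalization $(r')^g/r$ by carefully bookkeeping the signature defect, Betti numbers, and the constants $\eta,\lambda,\dep$. The Verlinde-style identity that a closed $\Su_g$ with $V_{\beta+k}$-colored skeleton contributes $\qd(\beta+k)^{2-2g}\Phi_{W,\beta+k}$ is the geometric heart of the argument, and verifying it via the pair-of-pants decomposition requires systematic application of the fusion rules for $V_\alpha\otimes V_\gamma$ recalled in Subsection \ref{SS:Quantsl2}.
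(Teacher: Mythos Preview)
Your treatment of Part 1 and Part 3 is essentially the paper's: Part 1 is exactly the computation preceding the theorem statement, and your factorization of $\Phi_{W,\beta+k}$ via multiplicativity and the explicit weight sum agrees with the paper's formula (the sign $(-1)^{nk}$ indeed collapses to $(-1)^{n(r-1)}$ since every $k\in\Hr$ has parity $r-1$).

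Part 2 has a genuine gap. In a surgery presentation of $\Su_g\times S^1$ with $2g+1$ components, \emph{all} $2g+1$ components are surgery components and must carry Kirby colors $\Omega_{\wb\alpha_i},\Omega_{\wb\beta_i},\Omega_{\wb\beta}$, not just the distinguished one. After expanding only $\Omega_{\wb\beta}$ into $\sum_k\qd(\beta+k)V_{\beta+k}$ you do not obtain ``a genus-$g$ skeleton colored by $V_{\beta+k}$'': you still have $2g$ Kirby-colored surgery components linked with the $V_{\beta+k}$-strand and with $T$. The paper's mechanism for eliminating these is Lemma~\ref{lem:old71}: each $\Omega_{\wb\alpha_i}$-colored meridian encircles two parallel $V_{\beta+k}$-strands and is replaced by $r^3\qd(\beta+k)^{-1}$ times a cup/cap; the resulting $\Omega_{\wb\beta_i}$-colored component then becomes an isolated meridian on one strand and is removed by a second application of Lemma~\ref{lem:old71}, giving another $r^3\qd(\beta+k)^{-1}$. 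After $g$ handles this yields the factor $r^{6g}\qd(\beta+k)^{-2g}$, and what remains is precisely a $V_{\beta+k}$-colored unknot with $n$ meridians, whose $F'$ is $\qd(\beta+k)\Phi_{W,\beta+k}$. Together with the Kirby-color coefficient $\qd(\beta+k)$ this gives $\qd(\beta+k)^{2-2g}$.

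Your proposed ``pair-of-pants plus fusion rules'' route does not replace this step: it would amount to a TQFT gluing computation for $\Zr$, but the pair-of-pants factors you need are exactly what Lemma~\ref{lem:old71} computes. You also omit the holomorphicity argument (from \cite{CGP}, Section~2.5, using $b_1>0$) that lets one assume all $\wb\alpha_i,\wb\beta_i$ are non-integral so that the $2g$ Kirby colors are defined; without it the surgery presentation is not computable for general $\coh$. Finally, the normalization: $b_1(\Su\times S^1)=2g+1$, the linking matrix has signature $0$, so $\Zr=\eta\lambda^{2g+1}\sum_k\qd(\beta+k)^{2-2g}r^{6g}\Phi_{W,\beta+k}$, and $\eta\lambda^{2g+1}r^{6g}r^{2-2g}=(r')^g/r$ after substituting $\qd(\beta+k)=r\qn{\beta+k}/\qn{r\beta}$ for $k\in\Hr$.
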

\begin{proof}
The first statement of the theorem is justified directly above the statement of theorem.  So we start by proving the second statement.  

Since $b_1(\Su\times S^1)>0$, in Section 2.5 of \cite{CGP} it is shown that the
  invariants $\Nr_r(\Su\times S^1_{\wb\beta})$ depend holomorphically
  on the cohomology class.  
  Therefore, we may restrict ourselves to computing the invariants in
  the ``generic'' case, namely when all the colors 
  have non-integral degree.  We now compute $\Zr(\Su\times
  S^1_{\wb\beta})$ via the following surgery presentation:
$$
\epsh{fig53}{30ex}
\put(-200,55){\ms{\wb{\beta_1}}}\put(-238,40){\ms{\wb{\alpha_1}}}\put(-80,55){\ms{\wb{\beta_g}}}
\put(-118,40){\ms{\wb{\alpha_g}}}\put(-5,-29){\ms{\wb{\beta}}}\put(-180,-25){\ms{c_1}}
\put(-120,-25){\ms{c_n}}
$$
where $c_i$ is the color of the point $p_i$ in $\Su$ and the colors $\wb{\alpha_i}$ and $\wb{\beta_i}$ (for $i=1,\ldots
g$) are the Kirby colors of degree $\coh([a_i])$ and $\coh([b_i])$ if
$[a_i],[b_i],i=1,\ldots g$ form a symplectic basis of $H_1(\Su;\Z)$.
In particular,  a component marked with 
$\wb\gamma$ is colored by $\Omega_{\gamma}=\sum_{k\in \Hr} \qd(\gamma+k)V_{\gamma+k}$ of degree $\wb\gamma$. 

Then applying Lemma \ref{lem:old71} to the $\Omega_{\alpha_i}$-colored strand and then to the $\Omega_{\beta_i}$-colored ones we see that each local sub-tangle of the above presentation is equivalent to:
$$
\epsh{fig50}{20ex}
\put(-13,35){\ms{\wb{\alpha_i}}}\put(-15,-10){\ms{\wb{\beta_i}}}\put(-38,7){\ms{\beta'}}
=r^3\qd(\beta')^{-1}\epsh{fig51}{20ex}
\put(-18,-10){\ms{\wb{\beta_i}}}\put(-18,-29){\ms{\beta'}}\put(-48,-20){\ms{\beta'}}
=r^6 \qd(\beta')^{-2}\Id_{V_{\beta'}}
$$
where $\beta'\in \Hr+\beta$ is any color appearing in  $\Omega_{\beta}$.  
Hence applying the same reduction on all the $g$ copies of the above
tangle we reduce the computation to a sum of $\beta'$-colored unknot,
encircled by $n$ meridians colored by the colors of $p_i$ as follows:
\begin{align*}
\Zr(\Su\times S^1_{\wb\beta})&=\eta\lambda^{b_1(\Su\times S^1)}
\sum_{k\in \Hr}\qd(\beta+k)^{2-2g}r^{6g}\Phi_{W,\beta+k}\\
&=\bp{\frac{\sqrt{r'}}{r^2}}^{2g+1}\frac{1}{r\sqrt{r'}}r^{6g}
\sum_{k\in \Hr}r^{2-2g}\bp{\frac{\qn{r\beta}}{\qn{\beta+k}}}^{2g-2}\Phi_{W,\beta+k}\\
&={\frac{{r'}^g}{r}}
\sum_{k\in \Hr}\bp{\frac{\qn{r\beta}}{\qn{\beta+k}}}^{2g-2}\Phi_{W,\beta+k}
\end{align*}
where 
$$\Phi_{V_{c_1}\otimes\cdots\otimes V_{c_n},V_{\beta+k}}=\qd(\beta+k)^{-n}(-1)^{n(r-1)}r^nq^{(\beta+k)\sum_{i=1}^n c_i}=(-1)^{n(r-1)}q^{(\beta+k)c}\bp{\frac{\qn{r\beta}}{\qn{\beta+k}}}^{n}$$ 
is the scalar corresponding to a strand colored by $\beta+k$ encircled
by $n$ meridians colored by $V_{c_1},\ldots V_{c_n}$.

In general, since $b_1(\Su\times S^1_{\wb\beta})>0$, as shown in \cite{CGP} (see Section 2.5) the invariant $\Nr_r(\Su\times S^1_{\wb\beta})$ depends holomorphically on $\coh$ and, when $\beta$ is generic is defined for every $\alpha_i,\beta_i,c_j$. So the above formula still applies. 
\end{proof}
\begin{rem}  \label{r:dim}
  For $r$ odd, the formula for $\Zr(\Su\times S^1_{\wb\beta})$ gives
  $\dim\VV(\Su)$ when $\beta$ goes to an integer value.  For $r$ even,
  the same is true when $\beta$ goes to an odd integer.  For example,
  for an empty surface of genus $g\ge2$,  
  applying the formula of the theorem for $W=\unit$ (so $\Phi_{W,\beta}=1$)
  $$\dim\VV(\Su)=\left\{
    \begin{array}{l}
      r^{3g-3}\text{ if $r$ is odd,}\\
      \frac{r^{3g-3}}{{2}^{g-1}}\text{ if $r$ is even.}
    \end{array}
  \right.
  $$
  and for the torus, it gives $\dim\VV(\Su)=r'$.
\end{rem}

\begin{example}\label{ex:SdXS1}[The case $\Su=\hS_{\de}$.]
 Recall the definition of $\hS_{\de}$ given in Subsection \ref{SS:SphereS_k}.    Let $M= \hS_{\de}\times S^1$.  By definition of $\hS_{\de}$, $M$
  contains a $3$-component link $L$ of the form $\{p_1,p_2,p_3\}\times S^1$
  whose components are colored by $V_0,V_0$ and $\sigma^{\de}$; let also
  $\coh\in H^1(M\setminus L;\C/2\Z)$ be any cohomology class compatible with
  the coloring of $L$ and let $\beta=\coh(*\times S^1)$.  Then 
  $\Zr(\hS_{\de}\times S^1)=(-1)^{(r-1){\de}}q^{-2r'\de\beta}$. Indeed,
  applying the $\sigma$-equivalence (see Subsection
  \ref{SS:Skein}) ${\de}$-times we may reduce to the computation to computing 
  $(-1)^{(r-1){\de}}q^{-2r'\de\beta}\Zr(\hS_0\times S^1)$.  Then  
  by Proposition \ref{P:1-surg}, $\Zr(\hS_0\times
  S^1)=(\eta\qd(0))^{-1}\eta\qd(0)=1$.  This is a particular case of the
  Verlinde formula when $\Su=\hS_{\de}$, in which case the whole
  $\VV(\Su)$ is concentrated in degree ${\de}$ and is $1$-dimensional.
\end{example}

\begin{rem}
Even if it is not apparent the above formula yields a Laurent polynomial in
$q^{\beta}$. Indeed $\frac{\qn{r\beta}}{\qn{\beta}}=\sum_{k\in \Hr} q^{\beta
  k}$.
\end{rem}
\begin{example}\label{Ex:dimTore}
  Suppose $\Su=(S^1\times S^1, \emptyset, \coh)$ is a decorated
  surface.  If $\coh$ is non-integral then $\VV(\Su)$ is concentrated
  in degree $0$ and has dimension $r'$ (i.e. $r$ if $r$ is odd and
  $\frac{r}{2}$ if $r$ is even).  Similarly (but less trivially) if
  $r$ is odd and $\Su_2$ has genus $2$: in this case one can check
  that $\VV(\Su_2)$ is concentrated in degree $0$ and that it has
  dimension $r^{3g-3}$.  If instead $g=2$ but $r$ is even then a
  direct computation shows that
  $$\dim_t(\VV(\Su_2))=(r')^3(-t^{-1}+2-t)
  \text{ so }\dim(\VV^{\pm1}(\Su_2))=(r')^3$$
\end{example}

\begin{question}
  If $\Su$ is a decorated surface which is not admissible, the
  invariant $\Zr(\Su\times S^1_{\wb\beta})$ is still well defined for
  generic values of $\beta$.  Is
  there a relation between $\Zr(\Su\times S^1_{\wb\beta})$ and
  $\dim_t(\VV(\Su))$?
\end{question}

\section{Examples and applications}\label{sec:applications}
In this section we provide various examples and applications of our
constructions.  We start with some examples: First, we consider the case when $\Su$ is a
sphere containing some points colored by at least one projective color, see  Subsection \ref{sub:generalspheres}. In Subsection
\ref{sub:torusccase} we then proceed to the case of $\Su$ being a
torus : first we compute $\VV(\Su)$ if $\coh$ is a ``generic'' (non integral) cohomology class; 
then we discuss some results in the non generic case.  Then, in
Subsection \ref{sub:genusgcase} we attack the general case of $\Su$
being a genus $g$ surface equipped with a generic cohomology class and
we provide a construction for a basis of $\VV_d(\Su)$ in each
degree $d\in \Z$.  Since in this construction $\coh$ is generic (and
in particular non-zero) the action of the mapping class group may not
fix $\coh$ so we get a representation in $\VV(\Su)$ of the stabilizer
of $\coh$ in the mapping class group rather than of the whole
group. In particular, in Subsection \ref{sub:torelli} we show that the
action of the Torelli group is more interesting than in the standard
case of the Reshetikhin-Turaev representations: indeed while in the
standard case the generators of the Torelli group act with order $r$, 
in our case they have \textit{infinite} order.
Subsection \ref{sub:kauffman} is dedicated to relating our constructions to the standard skein algebras: we show that for every $\coh\in H^1(\Su;\C/2\Z)$ the finite dimensional vector space $$\VVV(\Su)=\oplus_{h\in H^1(\Su;\Z_2)} \VV(\Su,\coh+h)$$ is acted on by the Kauffman skein algebra of $\Su$.

The above results are related with the genericity of $\coh$
but our techniques, theoretically speaking, allow a treatment also of
the non-generic cases, and in particular when $\coh=0$: in this case
we get a representation of the whole mapping class group.
Subsection \ref{sub:kashaevTQFT} is dedicated to study these cases: 
we relate the resulting TQFT as to the volume conjecture and show that 
the action of Dehn-twists along separating curves has infinite order. 
These non-generic cases are more difficult to analyze as they involve a
careful analysis of the algebra of the projective modules in $\cat_{\wb 0}$
and $\cat_1$ : we leave further results in this direction for a more algebraic work \cite{CGP3}, and here we bound ourselves
to formulate a conjecture on the graded dimensions of $\VV(S^1\times S^1)$ in the non generic cases (see Subsection \ref{sub:torusccase}).  
This analysis is much easier when $r=2$ where only two
projective modules exist (up to tensoring by powers of $\sigma$).  
Using this fact, in Subsection \ref{sub:r=2torsion}, we show that when $r=2$ the invariant $\Zr$ is related to the Reidemeister torsion, and in particular we show that it distinguishes lens spaces up to orientation preserving diffeomorphisms (Proposition \ref{prop:distinguishlens}).  
Then in Subsection \ref{sub:torus2} we provide a basis of
$\V(S^1\times S^1,\coh=0)$
for $r=2$.  Finally in Subsection \ref{sub:mcgrep} we show that
(already for $r=2$) the projective action of the mapping class group
on $\V(S^1\times S^1,\coh=0)$
is faithful modulo the center.

\subsection{General Spheres}\label{sub:generalspheres}
Let $\wbU=((U_1,\ve_1),\ldots,(U_n,\ve_n))$ be a sequence of
homogeneous modules and $S^2_{\wbU}$ be the decorated 2-sphere
introduced in Section \ref{SS:Skein}.

\begin{prop}
 If one of the $U_i$ is a
  projective object, then $\V(S_{\wbU}^2)=\Hom_\cat(\unit,F(\wbU))$.
\end{prop}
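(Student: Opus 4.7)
The plan is to identify $\V(S^2_{\wbU})$ as a quotient of the admissible skein module $\Skein(B^3,S^2_{\wbU})$ and then show this quotient is an isomorphism once $F(\wbU)$ is projective. Since projective objects form a tensor ideal in $\cat$, the hypothesis that some $U_i$ is projective forces $F(\wbU)$ to be projective. Theorem \ref{teo:skeinsurjects}, applied to the connected manifold $B^3$ bounding the connected surface $S^2$, provides a surjection $\V:\Skein(B^3,S^2_{\wbU})\to\V(S^2_{\wbU})$, and Lemma \ref{lem:skeinhom} provides an isomorphism $\cF^\#:\Skein(B^3,S^2_{\wbU})\to\Hom_\cat(\unit,F(\wbU))$. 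Composing these gives a surjective linear map $\Phi=\V\circ(\cF^\#)^{-1}:\Hom_\cat(\unit,F(\wbU))\to\V(S^2_{\wbU})$, and the task reduces to proving that $\Phi$ is injective.

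For injectivity I would show that for every non-zero $G\in\Hom_\cat(\unit,F(\wbU))$ one can find $H\in\Hom_\cat(F(\wbU),\unit)$ producing a non-zero pairing with the class $\Phi(G)\in\V(S^2_{\wbU})$. Represent $\Phi(G)$ by the admissible cobordism $T_G=\Gamma_{\wbU}\circ g(G)\in\cV(B^3,S^2_{\wbU})$ of the proof of Lemma \ref{lem:skeinhom}, and introduce the upside-down analogue $\rho_H\in\cV'(B^3,S^2_{\wbU})$ built from $H$ and a copy of $\Gamma_{\wbU}$. The composition $\rho_H\circ T_G$ is a closed decorated cobordism on $S^3$ whose ribbon graph, after collapsing the two stacked copies of $\Gamma_{\wbU}$, reduces to a single closed loop carrying the coupons $G$ and $H$ joined by an $F(\wbU)$-colored edge. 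Cutting this projective edge yields the $(F(\wbU),F(\wbU))$-tangle with value $G\circ H\in\End_\cat(F(\wbU))$, and invoking the extension of $F'$ to graphs with a projective edge (via the modified trace $\mathsf{t}$ on the ideal of projectives, as recorded in the Appendix) one obtains
$$\Zr(\rho_H\circ T_G)=\eta\,F'(\rho_H\circ T_G)=\eta\,\mathsf{t}_{F(\wbU)}(G\circ H).$$
The desired non-vanishing follows from the non-degeneracy of the bilinear pairing $(G,H)\mapsto \mathsf{t}_{F(\wbU)}(G\circ H)$ on $\Hom_\cat(\unit,F(\wbU))\times\Hom_\cat(F(\wbU),\unit)$, a general property of modified traces on projective objects cited from the Appendix.

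The main obstacle I expect is the careful bookkeeping in the identification $F'(\rho_H\circ T_G)=\mathsf{t}_{F(\wbU)}(G\circ H)$: one must track the coupons of both copies of $\Gamma_{\wbU}$ after gluing the two balls into $S^3$, verify that their composition simplifies to the identity of $F(\wbU)$, and correctly identify the endomorphism obtained by cutting the resulting projective strand (paying attention to framings, orientations and the normalization factor $\eta$ in \eqref{eq:Zdefi}). The non-degeneracy of the modified trace pairing itself is algebraic input about $\cat$ that is deferred to the Appendix and to \cite{CGP3}, so it introduces no further topological difficulty once cited.
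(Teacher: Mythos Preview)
Your proof is correct and follows essentially the same route as the paper: surjectivity via Theorem~\ref{teo:skeinsurjects} and Lemma~\ref{lem:skeinhom}, and injectivity via the non-degeneracy of the modified trace pairing (Proposition~\ref{P:qtnondegen}) applied to covectors in $\V'(S^2_{\wbU})$ coming from $\Hom_\cat(F(\wbU),\unit)$. The paper's argument is simply a terser version of yours, omitting the explicit computation $\Zr(\rho_H\circ T_G)=\eta\,\qt_{F(\wbU)}(G\circ H)$ that you spell out.
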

\begin{proof}
  The module $P=F(\wbU)$ is projective and Theorem
  \ref{teo:skeinsurjects}
gives is a surjective map $\Skein(B^3,S^2_{\wbU})\to \V(S_{\wbU}^2)$.    By Lemma \ref{lem:skeinhom}, $\Skein(B^3,S^2_{\wbU})$ is
  isomorphic to $\Hom_\cat(\unit,P)$.  Now using a coupon we have a map from $ \Hom_\cat(P,\unit)\to \V'(S_{\wbU}^2)$.  Finally, Proposition~\ref{P:qtnondegen} implies the pairing
  $\Hom_\cat(\unit,P)\otimes \Hom_\cat(P,\unit)\to\C$ is non-degenerate.  Combining the above facts we have   the map 
  $\Skein(B^3,S^2_{\wbU})\to\V(S_{\wbU}^2)$ is injective.
\end{proof}

\begin{cor} Recall $\Hr=\{-(r-1),-(r-3)\ldots, r-1 \}$.
  \begin{enumerate}
  \item 
  Let $\alpha,\beta\in\Cp$,    $\gamma\in \C\setminus \Z$ and $n\in\Z$.  Then $\V(S^2_{((V_{\alpha},+),(V_{\beta},+),(V_{\gamma},+),(\sigma^n,+)))})$ zero dimensional 
  if $\alpha+\beta+\gamma+2nr'  \notin   \Hr$ and is one dimensional otherwise.  
  \item If $i\in \{0,1,\ldots r-2\}$ and $\ n\in \Z$, then
    $\V(S^2_{((V_{0},+),(S_i,+),(\sigma^n,+))})=\{0\}$.
  \item Let $i,j\in \{0,1,\ldots r-2\},\ n\in \Z$.  Then
    $\V(S^2_{((P_{j},+),(S_i,+),(\sigma^n,+)})$ is zero dimensional if $i\neq j$ or
    $n\neq 0$ and is one dimensional if $i=j$ and $n=0$.
  \end{enumerate}
\end{cor}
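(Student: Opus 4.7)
The plan is to apply the preceding proposition to each of the three items, reducing the determination of $\V(S^2_{\wbU})$ to the algebraic computation of $\dim_{\C}\Hom_\cat(\unit,F(\wbU))$. In every case at least one entry of $\wbU$ is projective in $\cat$: the typical modules $V_\alpha,V_\beta,V_\gamma$ (with $\alpha,\beta\in\Cp$, $\gamma\in\C\setminus\Z$) are projective in (1); $V_0=S_{r-1}$ is projective in (2); and $P_j$ is projective by definition in (3). Throughout I will use the self-dualities $V_\mu^*\cong V_{-\mu}$ and $S_i^*\cong S_i$ together with the tensor identity $V_\mu\otimes\sigma^n\cong V_{\mu+2nr'}$.

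For (1), I would rewrite
\[
\Hom_\cat(\unit,V_\alpha\otimes V_\beta\otimes V_\gamma\otimes\sigma^n)\;\cong\;\Hom_\cat\!\bigl(V_{-\alpha}\otimes V_{-\beta},\,V_{\gamma+2nr'}\bigr),
\]
and decompose $V_{-\alpha}\otimes V_{-\beta}\cong\bigoplus_{k\in\Hr}V_{-\alpha-\beta+k}$ via the tensor rule of Subsection~\ref{SS:Quantsl2}, which is valid when $\alpha+\beta\notin\Z$. Since $V_{\gamma+2nr'}$ is typical simple, each summand contributes $1$ to the Hom exactly when $-\alpha-\beta+k=\gamma+2nr'$, i.e.\ when $k=\alpha+\beta+\gamma+2nr'\in\Hr$; the elements of $\Hr$ are pairwise distinct integers, so at most one such $k$ exists and the dimension is $1$ or $0$ as claimed. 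When instead $\alpha+\beta\in\Z$, the fact that $\gamma\notin\Z$ forces $\alpha+\beta+\gamma+2nr'\notin\Z\supset\Hr$, so the predicted value is $0$; I would confirm this by reassociating to pair $V_\alpha$ (or $V_\beta$) with $V_\gamma$ first, where the hypothesis of the decomposition rule is satisfied, obtaining only typical simple summands, none of which is $\unit$.

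For (2), the dualities yield $\Hom_\cat(\unit,V_0\otimes S_i\otimes\sigma^n)\cong\Hom_\cat(S_i\otimes\sigma^{-n},V_0)$; since $S_i\otimes\sigma^{-n}$ is simple of dimension $i+1\le r-1$ and $V_0$ is simple of dimension $r$, these cannot be isomorphic and the Hom vanishes. For (3), similarly $\Hom_\cat(\unit,P_j\otimes S_i\otimes\sigma^n)\cong\Hom_\cat(S_i\otimes\sigma^{-n},P_j)$, which counts the multiplicity of $S_i\otimes\sigma^{-n}$ in the socle of $P_j$; the socle of $P_j$ is $S_j$ (a standard property of the projective cover of $S_j$, recalled in the Appendix), so this Hom is $1$-dimensional exactly when $S_i\otimes\sigma^{-n}\cong S_j$, equivalently when $i=j$ and $n=0$, and $0$ otherwise. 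The main obstacle is the bookkeeping in (1) when $\alpha+\beta\in\Z$, which must be handled by the reassociation trick above; all other ingredients are direct consequences of the structure of $\cat$ recalled in Subsection~\ref{SS:Quantsl2} and the Appendix.
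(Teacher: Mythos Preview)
Your proof is correct and follows essentially the same approach as the paper: apply the preceding proposition to identify $\V(S^2_{\wbU})$ with $\Hom_\cat(\unit,F(\wbU))$, then compute this Hom space via dualization and the known decomposition rules. The only differences are cosmetic choices of which factors to dualize (e.g.\ in (3) you use the socle of $P_j$ where the paper uses its head---both equal $S_j$), and your extra care in (1) about the case $\alpha+\beta\in\Z$ is in fact unnecessary, since the sphere $S^2_{\wbU}$ is only defined when $F(\wbU)\in\cat_{\bar0}$, which forces $\alpha+\beta+\gamma\in\Z$ and hence $\alpha+\beta\notin\Z$ whenever $\gamma\notin\Z$.
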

\begin{proof}
  For the first item, we use $\Hom_\cat(\unit, V_\alpha\otimes
  V_\beta\otimes V_\gamma\otimes
  \sigma^n)\simeq\Hom_\cat(V_{-\gamma-2nr'}, V_\alpha\otimes V_\beta)$
  which is $0$ unless $k\in2\Z$.  If $k\in2\Z$, $V_\alpha\otimes
  V_\beta\simeq\bigoplus_{h\in\Hr}V_{\alpha+\beta+h}$ so $\V(S)$ is
  non zero if and only if there is an $h\in\Hr$ with
  $\alpha+\beta+h=-\gamma-2nr'$.  Similarly for the second item, $\Hom_\cat(\unit,V_{0}\otimes
  S_i\otimes\sigma^n)\simeq\Hom_\cat(V_{-2r'n},S_i)=\{0\}$.  Finally, $\Hom_\cat(\unit,P_{j}\otimes
  S_i\otimes\sigma^n)\simeq\Hom_\cat(P_{j},S_i\otimes\sigma^n)$ where
  $S_i\otimes\sigma^n$ is simple.  But the only simple quotient of
  $P_j$ is isomorphic to $S_j$.
\end{proof}

\subsection{The torus}\label{sub:torusccase} 
Let $\Su=S^1\times S^1$ with a non-integral cohomology class $\omega$.
Up to diffeomorphism, we can assume that $\omega$ takes a value
$\wb\alpha\in\C/2\Z$ for some $\alpha\in\C\setminus \Z$ on the meridian
$[S^1\times\{1\}]$.  Let $T_\alpha\simeq B^2\times S^1$ be the solid
torus whose core is the oriented framed knot $\{0\}\times S^1$ colored by $\alpha$.
\begin{prop}\label{P:torusccase} Let $\Hr=\{1-r,3-r,\ldots,r-1\}$ and for $r$
  even, let $\Hr^+=\Hr\cap\R^+$.
  \begin{enumerate}
  \item If $r$ is odd then $([T_{\alpha+k}])_{k\in\Hr}$ is a basis of $\V(\Su)=\VV_0(\Su)=\VV(\Su)$.
  \item If $r$ is even then $([T_{\alpha+k}])_{k\in\Hr^+}$ is a basis of $\V(\Su)=\VV_0(\Su)=\VV(\Su)$.
  \end{enumerate}
\end{prop}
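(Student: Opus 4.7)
The plan is to combine the surjectivity of $\Skein(T_\alpha,\Sigma)\to\V(\Sigma)$ from Theorem \ref{teo:skeinsurjects} with an analysis of the skein module of the solid torus, and then to compare the number of generators produced with the dimension of $\V(\Sigma)$ computed in Example \ref{Ex:dimTore}.

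First I would invoke Example \ref{Ex:dimTore} (itself a direct consequence of Theorem \ref{P:verlinde}) to conclude that $\dim_\C\V(\Sigma)=r'$ under the hypothesis that $\omega$ is non-integral. Since $|\Hr|=r=r'$ for $r$ odd and $|\Hr^+|=r/2=r'$ for $r$ even, the proposed family has exactly $\dim_\C\V(\Sigma)$ elements in both cases, so it suffices to check that $\{[T_{\alpha+k}]\}$ generates $\V(\Sigma)$.

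Next, by Theorem \ref{teo:skeinsurjects}, $\Skein(T_\alpha,\Sigma)$ surjects onto $\V(\Sigma)$. Following the strategy from the proof of Proposition \ref{P:Skein-finite} in the case of the solid torus with empty boundary decoration, any admissible skein in $T_\alpha$ reduces to a linear combination of core loops $[T_\beta]$ colored by indecomposable projectives in $\cat_{\bar\alpha}$: one uses Lemma \ref{lem:addingtypicals} to introduce a $V_\alpha$-colored edge, then the local idempotent decomposition on the meridian disc to replace an arbitrary projective color by a direct sum of indecomposable projective ones, and finally isotopes the resulting edge to become a parallel copy of the core. Since $\alpha\notin\Z$, the category $\cat_{\bar\alpha}$ is semisimple and its indecomposable projectives are exactly the simple typical modules $V_\beta$ for $\beta\in\alpha+2\Z$ (automatically of the correct parity, since $\beta+r-1$ and $\alpha+r-1$ have the same parity mod~$2$).

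Finally, the $\sigma$-equivalence of Subsection \ref{SS:Skein} applied to a parallel $\sigma$-colored loop around the core — coming from the identification $V_{\beta+2r'}\cong V_\beta\otimes\sigma$ realized by a pair of coupons — identifies $[T_{\beta+2r'}]$ with a non-zero scalar multiple of $[T_\beta]$ in $\Skein(T_\alpha,\Sigma)$. Consequently the family $\{[T_\beta]\}_{\beta\in\alpha+2\Z}$ collapses to one representative per $\sigma$-orbit. For $r$ odd, $\sigma$ shifts weights by $2r'=2r$, so the $r$ even integers in $\Hr=\{1-r,3-r,\ldots,r-1\}$ lie in distinct orbits; for $r$ even, $\sigma$ shifts by $2r'=r$, so the odd integers $k$ and $k+r$ of $\Hr$ lie in the same orbit and $\Hr^+=\{1,3,\ldots,r-1\}$ is a system of $r/2$ orbit representatives.

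Combining the surjectivity, the skein reduction, the $\sigma$-equivalence collapse, and the dimension count, we get that $\{[T_{\alpha+k}]\}$ spans a space of dimension $\le r'=\dim\V(\Sigma)$ which must therefore be all of $\V(\Sigma)$, so it is a basis. The most delicate point is verifying that the $\sigma$-equivalence yields exactly the claimed fundamental domain — this is where the asymmetry between $\Hr$ for $r$ odd and $\Hr^+$ for $r$ even arises, reflecting that the $\sigma$-orbit on the simple projectives of $\cat_{\bar\alpha}$ has length $r'$ (rather than $r$) when $r$ is even.
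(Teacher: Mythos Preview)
Your approach is correct and takes a genuinely different route from the paper's own proof. Both proofs establish spanning the same way (Theorem~\ref{teo:skeinsurjects} plus skein reduction to a single projectively-colored core strand plus $\sigma$-equivalence), but they diverge on linear independence. The paper proves independence \emph{directly}: it pairs each $[T_{\alpha+k}]$ against explicit elements $[T'_{-\alpha+h}]\in\V'(\Su)$ obtained from orientation-reversed solid tori, chooses a simple curve $\lambda$ with $\omega([\lambda])\notin\Z/2\Z$ intersecting the meridian once, and computes $\Zr(T'_{-\alpha+h}\circ T_{\alpha+k})$ via a surgery presentation together with Lemma~\ref{lem:old71}; the resulting pairing matrix is (after a permutation of indices) diagonal with non-zero entries. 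Only \emph{after} this does the paper invoke the Verlinde dimension (Example~\ref{Ex:dimTore}) to conclude $\V(\Su)=\VV(\Su)$. You instead front-load Example~\ref{Ex:dimTore} to obtain $\dim\V(\Su)=r'$ (using that the Verlinde polynomial for the bare torus is the constant $r'$, hence $\VV$ is concentrated in degree $0$), and then spanning plus matching cardinality gives the basis. Your route is shorter and avoids the explicit pairing computation; the paper's route yields as a byproduct an explicit dual family in $\V'(\Su)$.

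One minor imprecision to fix: your claim that the indecomposable projectives of $\cat_{\bar\alpha}$ are the $V_\beta$ with $\beta\in\alpha+2\Z$ is only correct for $r$ odd. Since $V_\beta\in\cat_{\overline{\beta+r-1}}$, compatibility with $\omega(m)=\bar\alpha$ forces $\beta\in\alpha+(r-1)+2\Z$, i.e.\ $\beta\in\alpha+1+2\Z$ when $r$ is even; this is exactly the $\epsilon$-shift in the paper's proof, and your parenthetical about parity does not address it. Fortunately your conclusion is unaffected: when $r$ is even the elements of $\Hr$ are all odd, so $\alpha+k$ for $k\in\Hr^+$ already lands in the correct coset $\alpha+1+2\Z$, and your $\sigma$-orbit count still gives $r'$ representatives.
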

\begin{proof}
  Let $\epsilon=0$ if $r$ is odd and $\epsilon=1$ if $r$ is even.
  By Theorem \ref{teo:skeinsurjects}, $\V(\Su)$ is generated by
  cobordisms whose underlying manifold is $B^2\times S^1$. 
 Moreover, every $\cat$-colored graph $\Gamma\subset T$ must
  intersect the meridian disc in a set of points $q_i$ with colors
  whose overall tensor product is in $\cat_{\wb\alpha}$.  Since $\alpha$ is
  generic, this cobordism is then skein equivalent to a linear
  combinations of cobordisms whose graph intersects the meridian disc
  in exactly one point whose color is 
  congruent to $\alpha+\epsilon$ modulo $2$ and then,
  by simplicity of $V_\alpha$, the set $\{T_{\alpha+\epsilon+k},k\in 2\Z\}$ is
  generating.  Moreover $[T_{\alpha+k+2r'n}]$ and $[T_{\alpha+k}]$ are
  $\sigma$-equivalent because $V_{\alpha+k+2r'n}=\sigma^n\otimes
  V_{\alpha+k}$.

  We are then left to prove that the vectors $[T_{\alpha+k}],\ k\in
  \Hr$ (respectively $k\in\Hr^+$) are independent.  To this purpose we
  pair them with the solid tori $[T'_{-\alpha+h}]\in\V'(\Su)$ obtained
  by reversing the orientation of $T_{\alpha-h}$ for
 $h\in \Hr$, and we will show that the pairing
  is non-degenerate, thus concluding.  Observe that there exists a
  simple oriented closed curve $\lambda\subset \Su$ intersecting the
  meridian $[S^1\times \{1\}]$ exactly once and such that
  $\coh([\lambda])=\beta$ is non-integral: indeed the values of $\coh$
  on the classes $[\{1\}\times S^1]$ and $[\{e^{it}\times e^{it}, t\in
  [0,2\pi]\}]$ cannot be both integral as $\coh([S^1\times
  \{1\}]=\overline{\alpha}\in (\C\setminus \Z)/\Z$.  Then the
  invariant $\Zr(T'_{-\alpha+h}\circ T_{\alpha+k})$ can be computed by
  a $0$-framed unknot in $S^3$ colored by a Kirby color of degree
  $\beta\in (\C\setminus \Z)/2\Z$ and two parallel unknots linked once
  to it with colors $\alpha+k$ and $-\alpha+h$ respectively.  By Lemma
  \ref{lem:old71} the value of the invariant 
  is non zero if and only if $k+h=2r'n$ for some integer $n$. But
  since $k,h\in \Hr$ this is equivalent to $h=-k$ if $r$ is odd and
  $h=r-k$ if $r$ is even.
  
  This shows that the pairing is non degenerate as the map $k\to
  -k$ (respectively $k\to r-k$) is a bijection of $\Hr$ (respectively
  of $\Hr^+$).
  
  Finally the vectors we obtained form a basis of $\V(\Su)$ whose
  cardinality equals the dimension of $\VV(\Su)$ provided by the
  Verlinde formula 
  (see Example \ref{Ex:dimTore}),  
  so $\V(\Su)=\VV(\Su)$.
\end{proof}

The above calculations were supposing the existence of a non-integral period of $\coh$. 
 In \cite{CGP3}, using algebraic arguments (i.e. computing the Hochschild homology of the algebra of endomorphisms of the projective modules in each category $\cat_\alpha,\alpha\in \C/2\Z$) we computed the dimensions of  $\Skein(B^2\times S^1)$ for any solid torus bounding $\Su$
thus obtaining an upper estimate for $\dim(\VV(\Su))$, but we actually observed that when $\coh$ is non-integral this estimate coincides with the 
dimension of $\VV(\Su)$, even in the case when $\coh([S^1\times \{1\}])\in \Z$ (to which the above lemma does not apply).
So it is natural to expect that also for non-generic $\coh$ the skein module of the solid torus is isomorphic 
to $\VV(\Su)$ for any solid torus bounding $\Su$ and in particular, using the computations in \cite{CGP3}, we conjecture the following :
\begin{conj}\label{conj:dimintegraltorus} Let $\Su=S^1\times S^1$ and $\coh\in H^1(S^1\times S^1;\Z/2\Z)$ then :
 \begin{enumerate}
  \item If $r\in2\Z+1$ it holds $ \dim_s\left(\VV(S^1\times S^1)\right)=\frac{3r-1}2$;
  \item If $r\in4\Z+2$ and $\coh\neq 0$ then $\dim_s\left(\VV(S^1\times S^1)\right)=\frac{3r-2}{4}$;
  \item If $r\in4\Z+2$ and $\coh=0$ then $\dim_s\left(\VV(\Su)\right)=s^{-1}+\frac{3r+2}4+s$,
  \end{enumerate}
  where $\dim_s$ is the sum for $k\in\Z$ of $s^k$ times the dimension
  of the degree $k$ subspace.
 
\end{conj}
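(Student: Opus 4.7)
The approach I would take is to show that the map $\Skein(B^2 \times S^1, \Su) \to \VV(\Su)$ is actually an isomorphism (Theorem \ref{teo:skeinsurjects} already gives surjectivity), and then to compute the dimension of the skein side of a solid torus bounding $\Su$ directly via the algebraic classification of projective modules of $\Ubar$ outlined in the Appendix and announced in \cite{CGP3}.

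First I would reduce the problem to a skein computation in $B^2 \times S^1$. Following the strategy of Proposition \ref{P:Skein-finite}, $\Skein(B^2 \times S^1, \Su)$ is generated by decorated cobordisms whose graph is the core circle colored by an indecomposable projective module $P$ of $\cat_{\wb\alpha}$ where $\wb\alpha = \coh([S^1\times\{1\}])$, together with a coupon colored by an endomorphism of $P$. Thus there is a surjection
\begin{equation*}
\bigoplus_{P} \End_\cat(P) \big/ \sim \;\longrightarrow\; \Skein(B^2\times S^1, \Su),
\end{equation*}
where $P$ runs over $\sigma$-orbits of indecomposable projectives in $\cat_{\wb\alpha}$ and $\sim$ identifies $f \in \End_\cat(P)$ with $\sigma^*f \otimes \Id_\sigma \in \End_\cat(P\otimes\sigma)$ and further quotients by the commutator relations $[a,b] \sim 0$ coming from isotoping a coupon around the $S^1$-factor. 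This identifies the skein module with (a variant of) the Hochschild zeroth homology of the algebra of endomorphisms of a projective generator of $\cat_{\wb\alpha}$.

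The main work is then the algebraic classification. For $r$ odd, $\cat_{\wb 0}$ has indecomposable projectives $P_0, P_1, \ldots, P_{r-1}$ (up to $\sigma$-action), whose endomorphism algebra has been shown in \cite{CGP3} to give the total dimension $(3r-1)/2$; the other components $\cat_{\wb\alpha}$ with $\wb\alpha$ integral give the same answer by translation, and non-integral $\coh$ is covered by Proposition \ref{P:torusccase}. For $r \in 4\Z + 2$, the category splits into more subtle blocks because $\sigma$ has order $2$ in a different way (here $r' = r/2$ is odd); the case $\coh \neq 0$ integral forces $\wb\alpha \in \Z/2\Z$ to be nonzero, killing some endomorphism contributions and giving $(3r-2)/4$, while $\coh = 0$ retains extra pieces that contribute in nonzero degree — precisely the sphere-families $\hS_{\pm 1}$ whose existence was established in Subsection \ref{SS:SphereS_k}. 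Tracking the $\Z$-grading through the identification $\VV^k(\Su) = \V(\Su \sqcup \hS_{-k})$ then yields the Laurent polynomial in $s$.

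The final step is injectivity, i.e., that every nonzero class in the skein module survives in $\VV(\Su)$. For this I would construct, for each proposed basis vector $[T] \in \Skein(B^2\times S^1,\Su)$, a dual element in $\V'(\Su)$ realized by another solid torus with a linked Kirby-colored component, as in the proof of Proposition \ref{P:torusccase}; the pairing is then a character computation in the twisted Kauffman bracket and can be shown non-degenerate using the modified trace nondegeneracy of Proposition \ref{P:qtnondegen}. The hardest step will be case (3): one must carefully identify the contribution of the two nontrivially graded pieces $\VV^{\pm 1}$ with the extension classes $\Ext^1$ between the simple modules appearing in the nonsemisimple blocks of $\cat_{\wb 0}$, and verify that the pants cobordisms $\Pa^{k,\ell}_{k+\ell}$ realize the corresponding skein-module identifications; this is where a direct attack requires the full structure theory of $\cat_{\wb 0}$ deferred to \cite{CGP3}.
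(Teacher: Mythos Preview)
The statement you are attempting to prove is stated in the paper as a \emph{conjecture}, not a theorem; the paper contains no proof of it. The text immediately preceding Conjecture~\ref{conj:dimintegraltorus} explains that in \cite{CGP3} the authors computed $\dim\Skein(B^2\times S^1,\Su)$ via the Hochschild homology of the endomorphism algebra of the projective modules in $\cat_{\wb\alpha}$, obtaining the numbers displayed, and that these agree with $\dim\VV(\Su)$ in all the non-integral cases they could check. They then \emph{conjecture} that the skein module is isomorphic to $\VV(\Su)$ also when $\coh$ is integral, which would yield the stated dimensions. So there is nothing in the paper to compare your argument against.

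Your outline is in fact exactly the strategy the paper articulates as the expected one: identify $\Skein(B^2\times S^1,\Su)$ with $HH_0$ of the endomorphism algebra of a projective generator (this is the content announced for \cite{CGP3}), and then establish injectivity of $\Skein\to\VV$. The genuine gap is your ``final step''. Constructing dual vectors in $\V'(\Su)$ by Kirby-colored solid tori, as in Proposition~\ref{P:torusccase}, works precisely because in the generic case the colors $V_{\alpha+k}$ are simple and the pairing matrix is computed by Lemma~\ref{lem:old71} to be diagonal. In the integral case the relevant projectives $P_j$ are not simple, $\End_\cat(P_j)$ has nilpotents (cf.\ Lemma~\ref{lem:twistPj}), and the pairing you would need to show nondegenerate mixes the nilpotent directions in a way that Proposition~\ref{P:qtnondegen} alone does not resolve: that proposition gives nondegeneracy of $\Hom(V,P)\times\Hom(P,V)\to\C$, not of the induced pairing on $HH_0$. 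The case $r=2$, $\coh=0$ is handled by hand in Proposition~\ref{prop:torus0} and already requires ad hoc covectors ($[T'_\emptyset]$ and $[ST'_\emptyset]$) beyond the Kirby-colored tori; for general $r$ and integral $\coh$ no such argument is supplied, which is why the authors leave the statement as a conjecture.
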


\subsection{Basis for generic empty surfaces}\label{sub:genusgcase}
Let $\Su$ be a decorated surface of genus $g$ with $\{p_i\}=\emptyset$.
Let $H$ be an oriented handlebody bounding $\Su$ and let $\Gamma$ be an
oriented trivalent ribbon graph which is a spine of $H$ (i.e. $H$
collapses on $\Gamma$).  Let $E_\Gamma$ be the set of the $n_g$
oriented edges of $\Gamma$, where
$$n_g=\left\{
  \begin{array}[c]{l}
    1\text{ if }g=1,\\
    3g-3\text{ if }g\ge2.
  \end{array}\right.$$
Let $\coh\in H^1(\Su;\C/2\Z)\simeq H^1(H\setminus\Gamma;\C/2\Z)$ be
the cohomology class of $\Su$ and $g_\omega$ the induced $\C/2\Z$-coloring of the edges of $\Gamma$.  We say that $\Gamma$ is
\emph{$\omega$-regular}  if for any $e\in E_\Gamma$,
we have $g_\omega(e)\notin\Z/2\Z$.
\begin{prop}\label{prop:genericspine}
  There exists a handlebody $H$ bounding $\Su$ which contains a
  $\omega$-regular spine $\Gamma$ if and only if $2\coh$ is non-integral.
\end{prop}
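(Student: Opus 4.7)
The statement is an ``iff'', and I would treat the two implications separately.

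For the forward direction (existence of an $\omega$-regular spine $\Rightarrow 2\coh$ non-integral), I argue by the contrapositive. Assume $2\coh$ is integral, so $\coh(\gamma)\in \tfrac12\Z/2\Z=\{0,\tfrac12,1,\tfrac32\}$ for every $\gamma\in H_1(\Su;\Z)$. For any trivalent spine $\Gamma\subset H$ (which requires $g\geq 2$), pick a trivalent vertex $v$; the three adjacent edges have meridians $m_1,m_2,m_3$ bounding the pair-of-pants component of $\Su\setminus\bigcup_e m_e$ dual to the vertex ball of $H$, producing a relation $\epsilon_1m_1+\epsilon_2m_2+\epsilon_3m_3=0$ in $H_1(\Su;\Z)$ for some signs $\epsilon_i\in\{\pm 1\}$. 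Evaluating $\coh$ gives a vanishing signed sum of three elements of $\tfrac12\Z/2\Z$. Since $-\tfrac12=\tfrac32$ and $-\tfrac32=\tfrac12$ in $\C/2\Z$, the subset $\{\tfrac12,\tfrac32\}$ is closed under negation; if all three $\coh(m_i)$ lay in $\{\tfrac12,\tfrac32\}$, the signed sum would be an odd multiple of $\tfrac12$, hence in $\{\tfrac12,\tfrac32\}$ and never zero. So some $\coh(m_i)\in \Z/2\Z$, contradicting $\omega$-regularity.

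For the reverse direction, I argue constructively. Non-integrality of $2\coh$ produces a primitive class $[\gamma_0]\in H_1(\Su;\Z)$ with $\coh(\gamma_0)=\alpha\notin \tfrac12\Z/2\Z$, realized by a non-separating simple closed curve $\gamma_0$. The crucial property of $\alpha$ is that, since $\alpha\notin \Z/2\Z$, the coset $\alpha+\Z/2\Z\subset\C/2\Z$ is disjoint from $\Z/2\Z$; hence band-summing any curve $c$ with $\coh(c)\in\Z/2\Z$ against a parallel copy of $\gamma_0$ yields a curve of homology class $c+a_1$ and $\coh$-value $\coh(c)+\alpha\notin\Z/2\Z$. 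I would complete $\gamma_0=a_1$ to a geometric symplectic basis $\{a_i,b_i\}_{i=1}^g$ with disjoint $a$-curves bounding meridian disks of a handlebody $H$ with $\partial H=\Su$, and then for every $i\geq 2$ with $\coh(a_i)\in \Z/2\Z$ replace $a_i$ by the band sum of $a_i$ with a parallel push-off of $\gamma_0$ (the band chosen in the complement of the other $a_j$'s, which is connected). The new curve bounds a meridian disk of $H$ obtained by tubing the old disk $D_{a_i}$ with a parallel push-off of $D_{a_1}$ along the band, and has $\coh$-value outside $\Z/2\Z$.

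It remains to extend $\{a_1,\ldots,a_g\}$ to a pants decomposition of $H$ by $2g-3$ further ``connecting'' curves $c_j$ with $\coh(c_j)\notin \Z/2\Z$. The base case $g=2$ is immediate since the unique extra curve has homology class of the form $\pm a_1\pm a_2$: if both $\alpha+\coh(a_2)$ and $\alpha-\coh(a_2)$ were in $\Z/2\Z$ then their sum $2\alpha$ would be in $\Z/2\Z$, contradicting $\alpha\notin\tfrac12\Z/2\Z$, so at least one sign choice has non-integral $\coh$-value. For $g\geq 3$ I would proceed inductively, adjoining successive pairs of pants, and whenever a newly introduced connecting curve lands in $\coh^{-1}(\Z/2\Z)$ I band-sum it with a parallel copy of $\gamma_0$ chosen inside the local pair of pants, preserving disjointness and the common-bounding-disk property by the tubing construction above. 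The $g=1$ case (a spine consisting of a single circle) is handled directly: the condition reduces to the existence of a primitive cycle of $\coh$-value outside $\Z/2\Z$. The main technical obstacle is the simultaneous geometric realization of all band-sum corrections while maintaining disjointness and the joint handlebody structure; I would handle this by ordering the moves and confining each band sum to a tubular neighborhood of an arc inside the local pair of pants, using standard transversality to route the band clear of every other curve.
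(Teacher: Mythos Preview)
Your forward implication is correct and is exactly the paper's argument: at any trivalent vertex the three meridian classes satisfy a signed relation summing to $0$, and three elements of $\tfrac12+\Z/2\Z$ cannot sum to $0$.

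For the reverse implication your strategy diverges from the paper's, and the $g\ge3$ step has a genuine gap. The problematic line is ``band-sum it with a parallel copy of $\gamma_0$ chosen inside the local pair of pants''. A parallel copy of $\gamma_0=a_1$ lives only in the (at most two) pairs of pants adjacent to $a_1$; for a connecting curve far from $a_1$ no such parallel is available locally. Moreover, once the other curves of the decomposition are fixed, replacing a single connecting curve $c_k$ is constrained to the $4$--holed sphere obtained by merging the two pants adjacent to $c_k$; a band sum of $c_k$ with a distant copy of $a_1$ does not produce a curve in that $4$--holed sphere, so it cannot serve as a replacement without destroying disjointness from the other curves. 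Your own acknowledged ``main technical obstacle'' is therefore not resolved by the argument you give.

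The paper avoids this entirely by working algebraically rather than geometrically. It fixes once and for all a ``necklace'' spine whose $3g-3$ meridians have homology classes $x_0$ (repeated $g-1$ times), $x_1,\dots,x_{g-1}$, and $x_0-x_1,\dots,x_0-x_{g-1}$, where $(x_0,\dots,x_{g-1})$ is a $\Z$--basis of the Lagrangian $\La_H$ and $\coh(x_0)=\alpha\notin\tfrac12\Z/2\Z$. Regularity then reduces to the purely algebraic condition $\coh(x_i)\notin\Z/2\Z$ \emph{and} $\coh(x_0-x_i)\notin\Z/2\Z$ for each $i$. One then replaces each bad $x_i$ by $y_i=x_i\pm x_0$: if $\coh(x_i)\in\Z/2\Z$ take $y_i=x_i-x_0$, if $\coh(x_0-x_i)\in\Z/2\Z$ take $y_i=x_i+x_0$; the hypothesis $2\alpha\notin\Z/2\Z$ guarantees both new values are good and that the two bad cases are mutually exclusive. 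Finally, since $GL_g(\Z)\subset Sp_{2g}(\Z)$ acts transitively on $\Z$--bases of a Lagrangian, a single mapping class realizes the change $(x_i)\mapsto(y_i)$; reparametrizing $\partial H$ by this diffeomorphism makes the original spine $\coh$--regular.

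The point you are missing is that the connecting curves should not be corrected \emph{after} the $a_j$'s; rather, with the right spine their homology classes are $a_1-a_j$, so one should adjust the $a_j$'s from the start to make \emph{both} $\coh(a_j)$ and $\coh(a_1-a_j)$ non-integral. Your first-step band sums only enforce the first condition. If you incorporate the second condition into your handle slides on the meridian system (which is legitimate and keeps everything disjoint), your geometric approach becomes essentially equivalent to the paper's algebraic one.
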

\begin{proof}
  We will prove the statement for $g>1$ (the case of the torus is
  easier and left to the reader). 
  First, assume $2\coh$ is non-integral.   Let $m_1$ be a simple curve in
  $\Su$ such that $\coh(m_1)\not\in \frac1{2}{\Z}$. Let $H$ be any
  handlebody bounding $\Su$ (i.e. we have a diffeomorphism
  $f^+:\Su\stackrel\sim\to\partial_+H=\partial H$) such that $m_1$
  bounds a disc in $H$ and consider a spine of $H$ which is
  combinatorially equivalent to the oriented trivalent graph $\Gamma$
  obtained by gluing $g-1$ copies of the following graph
  $$\epsh{fig54}{10ex}\put(-10,6){$e_i$}\put(-85,6){$e_{i+1}$}
  \put(-50,12){$e'_i$}\put(-50,-12){$e''_i$} $$
  and finally closing up by identifying $e_g$ with $e_1$. (It is easy
  to check that it exists as soon as $g>1$.)  Up to
  self-diffeomorphisms of $H$ we may suppose that the meridian of $e_1$
  is $m_1$.  The meridians $m_1,\ldots,m_{g-1}$ of
  $e_1,\ldots,e_{g-1}$ represent the same element $x_0\in
  H_1(\Su;\Z)$.  Let $x_1,\ldots,x_{g-1}$ be the homology classes in
  $H_1(\Su;\Z)$ of the meridians $m'_1,\ldots ,m'_{g-1}$ of
  $e'_1,\ldots e'_{g-1}$, respectively.  Then $(x_0,\ldots,x_{g-1})$ is an integral
  basis of the Lagrangian $\La_H$ which is the kernel of the map
  $H_1(\Su;\Z)\to H_1(H;\Z)$ induced by $f^+$.  The values of $g_\coh$
  on $E_\Gamma$ are given by $g_\coh(e_i)=\coh(x_0)$,
  $g_\coh(e'_i)=\coh(x_i)$ and $g_\coh(e''_i)=\coh(x_0-x_{i})$.  Thus
  $\Gamma$ is $\coh$-regular  if and only if
  \begin{equation}\label{eq:reg}
    \coh(x_0),\ \coh(x_i),\ \coh(x_0-x_{i})\notin\Z/2\Z 
    \text{ for }i=1\cdots g-1.  
  \end{equation}
  By construction we have that $\coh(x_0)\notin (\frac12\Z)/2\Z$.  We now construct a new basis $x_0=y_0,y_1,\ldots y_{g-1}$ of $\La_H$ such that $\coh(y_i)\notin \Z/2\Z$ and $\coh(y_i-y_0)\notin\Z/2\Z$ arguing for each $i=1,\ldots g-1$ as follows:
  \begin{itemize}
  \item If $\coh(x_i)\in\Z/2\Z$, then for $y_i=x_i-x_0$ we have
    $\coh(y_i)\notin\Z/2\Z$ and
    $\coh(x_0-y_i)=2\coh(x_0)+\coh(x_i)\notin\Z/2\Z$, so replace $x_i$ with $y_i$.
  \item If $\coh(x_0-x_i)\in\Z/2\Z$, then for $y_i=x_i+x_0$ we have
    $\coh(y_i)=2\coh(x_0)-\coh(x_0-x_i)\notin\Z/2\Z$ and
    $\coh(x_0-y_i)=-\coh(x_0)+\coh(x_0-x_i)\notin\Z/2\Z$, so replace $x_i$ with $y_i$.
  \item Else, we just take $y_i=x_i$ (observe that since $\coh(x_0)\notin \Z/2\Z$ the preceding two cases cannot happen simultaneously).
  \end{itemize}
  Hence one can construct another $\Z$-basis $(y_i)_{i=0\cdots
    g-1}$ of $\La_H$ for which \eqref{eq:reg} holds.  Now the mapping
  class group of $\Su$ surjects on the symplectic group
  $SP(H_1(\Su;\Z))$ which acts transitively on the $\Z$-basis of any
  Lagrangian (because $GL_g(\Z)\subset SP_{2g}(\Z)$).  Hence there
  exists a diffeomorphism $h$ of $\Su$ which sends in homology the
  basis $(y_i)_{i=0\cdots g-1}$ on $(x_i)_{i=0\cdots g-1}$.  We
  reparametrize the boundary of $H$ using $f^+\circ
  h:\Su\stackrel\sim\to\partial H$, then the value of ${(f^+\circ
    h)^{-1}}^*(\coh)$ on the meridian of the edges in $E_\gamma$ are
  given by $\coh(y_i),\coh(y_0-y_i)$ which are not integral and
  $\Gamma$ is $\coh$-regular.

  Conversely, if $\coh$ has only half integral values, then
  for any trivalent graph $\Gamma$, let $v$ be any vertex of $\Gamma$
  ($g\ge2$ so $\Gamma$ has at least $2$ vertices).  Then the sum of
  the values of $g_\coh$ on the $3$ edges adjacent to $v$ is
  $0\in\C/2\Z$ so the three values can't be half integers and
  $g_\coh$ has at least one value in $\Z/2\Z$.  Hence $\Gamma$ is
  not $\coh$-regular.
\end{proof}
Let $H$ be as above and $\Gamma\subset H$ be an oriented spine
which is 
 $\coh$-regular.
Fix a lift $\wt\coh\in
H^1(\Su;\C)$ of $\coh\in H^1(\Su;\C/2\Z)$ and a maximal sub-tree
$T\subset \Gamma$
(for the graph $\Gamma$ in the above proof, one can take
$T=\{e''_i\}_{i=1\cdots g-1}\cup\{e_i\}_{i=2\cdots g-1}$).  
We now describe a basis of $\VV(\Su)$ associated to $(T,\wt\coh)$ by
means of colorings of a modified version of $\Gamma$.

Let $e_1,\ldots e _g$ be the edges of $\Gamma\setminus T$ and
$e_{g+1},\ldots ,e_{n_g}$ those of $T$.  Let $\Gamma'$ be the graph
obtained by adding three edges $e^{\pm 1,0}_{-1}$ to $\Gamma$ whose
endpoints are points $p^{\pm 1,0}_+\in H\setminus \Gamma$ and points
$p^{\pm 1,0}_-\in e_1$, so that $e_1$ is split by $p^{\pm 1,0}_-$ into
four sub-edges which we will call $e_0,e'_0,e'_1, e_1$:
$$\epsh{fig58}{10ex}\put(-270,13){$\Gamma$}\put(-30,13){$\Gamma'$}\put(-95,-05){$e'_{0}$}\put(-70,-05){$e'_{1}$}\put(-110,-05){$e_{0}$}\put(-50,-05){$e_{1}$}\put(-95,10){$e^{0}_{-1}$}\put(-73,10){$e^{1}_{-1}$}\put(-118,10){$e^{-1}_{-1}$}
$$
Here the orientation of the edges $e_0,e'_0,e'_1, e_1$ is induced by the orientation of the original edge $e_1$.  
Note the endpoints $p^{\pm 1,0}_+$ in $\Gamma'$ are univalent vertices.  
\begin{defi}[Degree $d$ colorings on $\Gamma'$]  Here we use the notation of the previous paragraph.  
  Let $\epsilon=0$ if $r$ is odd and $1$ otherwise.  A degree $d\in \Z$
  coloring on $\Gamma'$ is a map:
  $$\col:E_{\Gamma'}\to \cat$$
  assigning each edge $e_i$, for $i=1,...,n_g$, 
  the module $V_{\alpha_i}$ where
$\alpha_i\in\Cp$ is of degree $\coh(m_i)$ and
   $$\col(e_{-1}^{1})=V_0=\col(e_{-1}^{-1}),\ \
  \col(e_{-1}^0)=\sigma^{d}$$
   such
  that 
 $$\col(e_0)=\col(e_1)+2r'd,\ \ \col(e'_0)=\col(e_0)+\epsilon,\ \ \col(e'_1)=\col(e_1)+\epsilon,$$
  and
  at each trivalent
  vertex of $\Gamma'$ 
  we require that the algebraic sum of the colors
  of the three adjacent edges is in $H_r$.  Here and in what
  follows we write $\col(e)=\alpha\in\Cp$ to mean  $\col(e)=V_\alpha$.
\end{defi}
Recall that $\Hr^+=\Hr\cap\R^+$ if $r$ even and let $\Hr^+=\Hr$ if $r$
odd.  For each $d\in \Z$ let ${\mathcal B}:=\bigcup_{d\in \Z}
{\mathcal B}_d$ where
$${\mathcal B}_d=\{{\rm degree}\ d\ {\rm colorings}|\ 
\col(e_i)\in \wt\coh(m_i)+\Hr^+,\  \forall i=1,\ldots , g\}.$$
Observe that to each element $b\in \mathcal B_d$ we may associate a
vector of $\V(\Su\sqcup \hS_d)$ as follows: let $B^3\subset
\mathring{H}$ be a small ball disjoint from $\Gamma$.  Embed $\Gamma'$
in $H\setminus \mathring{B}^3$ by fixing $p^{\pm 1,0}_+$ to belong to
$\partial B^3$ and color its edges by $b$. Equip $H\setminus (B^3\cup
\Gamma')$ with the cohomology class induced by $\coh$ on the boundary
(it exists and it is unique).
Then let $v_b:=[H\setminus \mathring{B}^3,\Gamma'_b]\in \V(\Su\sqcup \hS_d)=\VV_{-d}(\Su)$.

\begin{prop}\label{prop:genericbasis}
  Let $\Su$ be a connected decorated surface of genus $g$ with
  $\{p_i\}=\emptyset$ and suppose that the class $\coh\in H^1(\Su;\C/2\Z)$ is
  such that $2\coh$ is non-integral.  
  Then for each $d\in \Z$, the set $\{v_b:b\in \mathcal B_d\}$ described above
  forms a basis of $\VV_{-d}(\Su)$.  
  In particular, the set $\{v_b:b\in \mathcal B\}$ is a basis for $\VV(\Su)$.
\end{prop}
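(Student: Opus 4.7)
The plan is to establish the spanning property and the linear independence separately, then verify the count matches the Verlinde dimension.

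For spanning, I would apply Theorem \ref{teo:skeinsurjects} to the connected bounding manifold $M = H \setminus \mathring{B}^3$, whose boundary is identified with $\Su \sqcup \hS_d$ by placing $\hS_d$ on $\partial B^3$ with the three dots connected by the arcs $e_{-1}^{\pm 1, 0}$. Running through the argument of Proposition \ref{P:Skein-finite} with $\Gamma'$ playing the role of the dual graph, any element of $\Skein(M, \Su \sqcup \hS_d)$ is a linear combination of colorings of $\Gamma'$. Since $\Gamma$ is $\coh$-regular, each meridian class $m_i$ satisfies $\coh(m_i) \in (\C/2\Z) \setminus (\Z/2\Z)$, so Proposition \ref{P:trick} and Lemma \ref{lem:addingtypicals} guarantee that the color on each edge $e_i$ can be taken to be a simple projective $V_{\alpha_i}$ with $\alpha_i \in \Cp$ of the correct degree. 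The $\sigma$-equivalence then lets me shift each color by powers of $\sigma$ (absorbing the shift into adjacent edges through the coupons) to bring the non-tree labels into $\wt\coh(m_i) + \Hr^+$. Finally, the coupon at each trivalent vertex lives in $\Hom_\cat(\unit, V_\alpha \otimes V_\beta \otimes V_\gamma)$, which by the fusion rule $V_\alpha \otimes V_\beta \simeq \oplus_{k \in \Hr} V_{\alpha + \beta + k}$ is one-dimensional precisely when $\alpha + \beta + \gamma \in \Hr$ and zero otherwise; this rigidly determines the colors of the tree edges once the non-tree edges are fixed and forces the vertex condition in the definition of $\mathcal{B}_d$.

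For linear independence, the strategy is to build a dual family $\{v^{b'}\}_{b' \in \mathcal{B}_d} \subset \V'(\Su \sqcup \hS_d)$ in the natural way: reverse the handlebody and spine, color the edges by the dual modules, and cap off $\hS_d$ using a composition of pants $\Pa^{0,d}_d$ with $u_0^*$. Pairing $v^{b'}$ with $v_b$ yields a closed cobordism whose underlying manifold is a double that admits a surgery presentation in $S^3$ consisting of $g$ unknotted meridian components (arising from the handles), each encircling a strand colored by $V_{\alpha_i}$ on one side and $V_{-\alpha'_i}^{\,}$ on the other, together with the trivalent graph coming from the tree. Successive applications of Lemma \ref{lem:old71} (one for each handle) reduce the computation to a product of elementary pairings; each such pairing vanishes unless the colors agree modulo $2r'\Z$, forcing $\alpha_i + \alpha'_i \in 2r'\Z$ component by component. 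Since $\alpha_i$ and $\alpha'_i$ both live in $\wt\coh(m_i) + \Hr^+$ and $\Hr^+$ is a set of representatives mod $2r'$, this forces $b = b'$ on non-tree edges, and then by the rigidity at the trivalent vertices $b = b'$ on all of $\Gamma'$; the resulting evaluation is a non-zero scalar by the computation of $F'$ on the remaining theta/tetrahedral subgraphs, which have non-vanishing quantum $6j$-type evaluations in the generic regime provided by $2\coh$ non-integral.

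To conclude, the cardinality of $\mathcal{B}_d$ can be evaluated: each of $e_1,\ldots,e_g$ contributes $|\Hr^+| = r'$ choices, the tree edges are uniquely determined, and summing over $d \in \Z$ only finitely many $d$ contribute by Lemma \ref{lem:finitedimensionalityeasycase}. The total count matches the Verlinde formula of Theorem \ref{P:verlinde}, whose generic limit gives $\dim \VV(\Su) = (r')^{3g-3} \cdot r'$ (torus-type factor for the genus one case handled separately), so the $v_b$ must span all of $\VV(\Su)$. The main obstacle will be the second step: controlling the signs, factors of $\qd$, and framing anomalies when carrying out the handle-by-handle reduction via Lemma \ref{lem:old71}, and verifying that the $6j$-type evaluation at the final stage is non-zero. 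A subsidiary technical issue is the compatibility of the orientations of the dual spine and of the auxiliary arcs $e_{-1}^{\pm 1, 0}$ with Definition \ref{D:Pac} of the pants morphism, but this is bookkeeping rather than substance.
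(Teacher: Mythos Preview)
Your linear-independence step is essentially the paper's argument: pair with the mirror handlebody carrying the opposite colorings, present the double as a connected sum of $S^2\times S^1$'s, and apply Lemma~\ref{lem:old71} along each handle to reduce to a product of $\theta$-graph evaluations. That part is fine in outline.

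The spanning step, however, contains a real error. You write that the one-dimensionality of $\Hom_\cat(\unit, V_\alpha\otimes V_\beta\otimes V_\gamma)$ ``rigidly determines the colors of the tree edges once the non-tree edges are fixed,'' and later that ``the tree edges are uniquely determined.'' This is false: the fusion rule $V_\alpha\otimes V_\beta\simeq\bigoplus_{k\in\Hr}V_{\alpha+\beta+k}$ means that at a trivalent vertex with two incoming colors fixed, the third edge has $r$ admissible colors, not one. What is one-dimensional is the space of coupon morphisms once all three colors are chosen. Consequently your cardinality count ``$(r')^{3g-3}\cdot r'$'' is wrong; the correct count, carried out in the paper, is $(r')^g\cdot r^{2g-3}\cdot(r/r')=r^{2g-2}(r')^{g-1}$, the factors of $r$ coming precisely from the $2g-3$ tree edges of $T\cup e_0$ and the final factor $r/r'$ from the constraint $\col(e_0)-\col(e_1)\in 2r'\Z$.

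The paper sidesteps a direct spanning argument entirely: it computes $|\mathcal B|$ as above, observes this equals $\dim\VV(\Su)$ by the Verlinde formula (Remark~\ref{r:dim}), proves linear independence via the pairing, and concludes. This is shorter and avoids the question of whether $\sigma$-equivalence plus skein relations really reduces every coloring of $\Gamma'$ in $\Skein(H\setminus\mathring B^3,\Su\sqcup\hS_d)$ to one in $\mathcal B_d$---a question your argument no longer settles once the ``tree edges uniquely determined'' claim is removed.
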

\begin{proof}
 Since $2\coh$ is non-integral then Proposition \ref{prop:genericspine}
 implies that there exists a handlebody $H$ containing an oriented spine $\Gamma$
which is 
 $\coh$-regular.  Then let $\Gamma'$ and  $\col$ be as above.  

    First, we show that $\card(\B)=r^{2g-2}{r'}^{g-1}=\dim\VV(\Su)$ where
  the last equality comes from Remark \ref{r:dim}.  Indeed, to
  construct an element of $\B$, we first fix the colors of
  $(\alpha_i)_{i=1\cdots g}$ of $(e_i)_{i=1\cdots g}$.  This choice is
  given by an element of $(\Hr^+)^g$.  Then we see $T\cup
  e_0\subset\Gamma'$ as a rooted tree with root $p_-^{-1}$ and we
  color its edges recursively (starting from the leafs).  Each edge of
  $T\cup e_0$ can be colored by the $r$ colors that appear in the
  decomposition when tensoring the two already colored adjacent edges.
  Finally, let's call $\col(e_0)=\alpha_0+h$ $(h\in\Hr)$ the $r$
  possible choices for $\col(e_0)$.  The compatibility with $\coh$
  ensures that all $\alpha_0+h$ are congruent to $\col(e_1)$ modulo
  $2$. 
  So we must have $\col(e_0)=\col(e_1)+2r'd$ for some $d\in \Z$.  
  In other words, we want $\alpha_0+h-\col(e_1)\in 2r'\Z$. 
  There is exactly one $h$ satisfying this if $r$ is odd and two if $r$ is even.
   Only these $\frac{r}{r'}$ ($=1$
  or $2$) colors for $e_0$ lead to coloring in $\B$.  
      As $T$ contains $2g-3$ edges, the cardinal of $\B$ is
  ${r'}^gr^{2g-3}\times \frac{r}{r'}$ as announced.

We now need to prove that the above vectors are linearly independent. To this purpose, we pair them with the elements $[H,\Gamma'_{b}]$ of $\V'(\Su\sqcup \hS_{d})$ obtained by reversing the orientation of $H$ and taking the colorings opposite to those of $\mathcal{B}_d$.   
We have:
$$\langle [H,\Gamma'_{b_1}],[H,\Gamma'_{b_2}]\rangle=\Zr(M)$$
where $M$ is a connected sum of copies of $S^2\times S^1$ (one per edge of $\Gamma\setminus T$ and one more for the handle whose essential sphere is $\hS_d$) and contains the ribbon graph $D\Gamma'$ obtained by gluing the two copies of $\Gamma'$ along $p^{\pm1,0}_+$.
 A surgery presentation for $M$ is obtained by considering two parallel copies of $\Gamma$ in $S^3$, encircling all the corresponding edges out of $T$ by $0$-framed meridians, joining the two copies of $e_1$ by three segments (representing the doubles of $e^{\pm1,0}_{-1}$) and finally by encircling these three segments (colored by $V_0,V_0$ and $\sigma^d$) by a $0$-framed meridian (see the left part of the Figure \ref{fig:pairing}).
 \begin{figure}
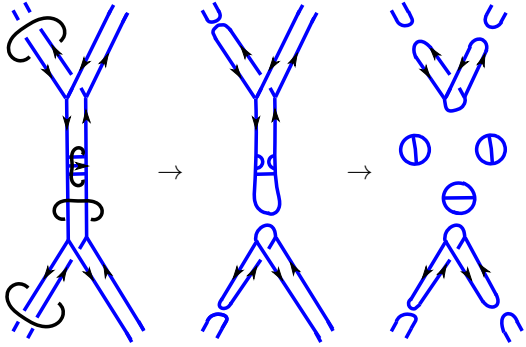

$ \epsh{fig55}{30ex}\to \epsh{fig56}{30ex}\to \epsh{fig57}{30ex}$
\caption{Pairing $\Gamma'_{b_1}$ with $\Gamma'_{b_2}$. In the drawing the $0$-framed meridians on which surgery is to be performed are in black: they encircle the edges of $T$. The three horizontal segments in the middle of the drawing are colored by $V_0,V_0$ and $\sigma^{d}$. To pass from the leftmost to the central drawing we applied Lemma \ref{lem:old71} to all the zero framed meridians, thus reducing to a framed trivalent graph in $S^3$ without surgered meridians. In the last drawing we show how to split this graph as a connected sum of $\theta$-shaped graphs. }\label{fig:pairing}
\end{figure}
 Applying Lemma \ref{lem:old71} to all the pair of edges encircled by a $0$-framed meridian as exemplified in Figure \ref{fig:pairing}, and using the fact that $b_1\in \mathcal{B}_d$ and $-b_2\in \mathcal{B}_{d}$,  one sees that if $b_2\neq -b_1$ then $\Zr(M)=0$ and else it is non-zero because, besides some powers of $r$, 
 $\Zr(M)$ is equal to the invariant of a graph in $S^3$ which is a connected sum of various $\theta$-shaped graphs (one per each vertex of $\Gamma$ and two more for the graph resulting from the part of $\Gamma'$ surrounding $e_0$ and $e_1$). The invariant of such graph is easily checked to be non-zero.
 This proves that the vectors $v_b$ are independent as they may be distinguished by the co-vectors $v_{-b}\in \V'(\Su\sqcup \hS_d),\ b\in \mathcal B_d$.
\end{proof}
\begin{example}\label{ex:torusgeneric}
  Suppose $g=1$ and $m\subset \Su$ is an oriented simple closed curve 
  such that $\coh(m)=\wb\coh_0$ for some $\coh_0\in \C\setminus\Z$;
  let $\Gamma$ be an oriented circle representing the spine of a
  handlebody whose meridian is $m$. Then $T$ is the empty graph and
  the basis $\{v_b\}$ described in Proposition \ref{prop:genericbasis}
  is given by all the colorings of $\Gamma$ by complex numbers
  $\alpha\in\coh_0+\Hr^+$.
  In particular in this case all
  the colorings have degree $0$ and $\dim(\VV(\Su))=r'$.
\end{example}

\subsection{Finite dimensional representations of the Torelli groups}\label{sub:torelli}
Let $\Su$ be a surface of genus $g>1$.  An element $\coh\in
H^1(\Su;\C/2\Z)$ is called an {\em irrational class} if $\coh(x)\notin
\Z/2\Z,\,\forall x\in H_1(\Su;\Z)\setminus\{0\}$.  Elements of
$H^1(\Su;\C/2\Z)$ are generically irrational.  If $\coh$ is an
irrational class, then one easily check that the values of $\coh$ on
$H_1(\Su;\Z)\setminus\{0\}$ are contained in $\C/2\Z\setminus\Q/2\Z$.
Let us equip $\Su$ with an irrational cohomology class.

Let $\T_g$ be the Torelli group of $\Su$.
In this subsection we show that the action of $\T_g$ on $\VV(\Su)$ is
potentially much more interesting than the corresponding action in the
standard TQFT associated to Reshetikhin-Turaev invariants: indeed the
generators of $\T_g$ given by Dehn twists along separating curves and
along pairs of homologous simple closed curves act with infinite order
on $\VV(\Su)$ while those of the standard Reshetikhin-Turaev TQFTs have order $r$.

If $c\subset \Su$ is a simple closed curve, let $\DT_c$ be the
right-handed Dehn twist along $c$.  We recall that as proved by Birman
and Powell $\T_g$ is generated by the following infinite set of
diffeomorphisms of $\Su$:
\begin{itemize}
\item $\{\DT_c| c \ {\rm separating,\ non-trivial,\  simple\  closed\ curve}\}$;
\item $\{\DT_{c}^{-1}\circ \DT_{c'}|c,c' \ {\rm disjoint,\ non-separating,\ homologous\ simple\ oriented\ curves}\}$.
\end{itemize}
Furthermore, the above result was improved for $g\geq 3$ by Johnson
who proved \cite{Jo2} that actually a finite set of diffeomorphisms
of the second type suffice to generate the whole group. 
On contrast,
as proved by Mess \cite{Me}, $\T_2$ is an infinite rank free
group. Finally let us recall that Johnson \cite{Jo} also proved that there exists a finite subset of the
set: $$\{\DT_{c}^{-1}\circ \DT_{c'}|c,c' \ {\rm disjoint,\ non-separating,\
  -c\sqcup c' \ bound \ a\ twice-punctured\ torus\ in\ \Su}\}$$
which generates the whole $\T_g$ if $g\geq 3$ .  The main proposition of
this subsection is the following:
\begin{theo}\label{T:torelli} 
  Let $r>2$ and let $\Su$ be a genus $g\ge3$ surface equipped with an
  irrational cohomology class $\coh$.  Then for for every pair of
  disjoint, non-trivial,
  non-separating, 
  homologous simple closed oriented curves $c,c'$ the
  order of the action of $\DT_{c}^{-1}\circ \DT_{c'}$ on $\VV(\Su)$ 
  is infinite.
\end{theo}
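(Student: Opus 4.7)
The plan is to exhibit an eigenvector $v_b \in \VV(\Su)$ of the operator $\VV(\DT_c^{-1}\circ \DT_{c'})$ whose eigenvalue is not a root of unity. First, since $c$ and $c'$ are disjoint, non-separating, and homologous, I extend $[c]=[c']\in H_1(\Su;\Z)$ to a Lagrangian subspace and let $H$ be the handlebody with $\partial H=\Su$ that it determines. Then both $c$ and $c'$ are null-homologous in $H$, hence bound disjoint compressing discs (disjointness follows from standard innermost-disc arguments). Because $\coh$ is irrational, its doubled class $2\coh$ is non-integral, so Proposition \ref{prop:genericspine} yields a $\coh$-regular trivalent spine $\Gamma \subset H$; using $g\ge 3$, we can arrange $\Gamma$ and a maximal subtree $T\subset \Gamma$ so that $c$ and $c'$ are meridians of two distinct non-tree edges $e_c,e_{c'}\in \Gamma\setminus T$.

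Next, fix a lift $\alpha\in\C$ of $\coh(c)=\coh(c')\in\C/2\Z$; by irrationality of $\coh$, we have $\alpha\notin\Q$. In the basis of $\V(\Su)=\VV_0(\Su)$ given by Proposition \ref{prop:genericbasis}, the edges $e_c$ and $e_{c'}$ are colored respectively by $V_{\alpha+k}$ and $V_{\alpha+k'}$ for some $k,k'\in\Hr^+$; because both edges lie in $\Gamma\setminus T$, the two indices may be chosen independently. The hypothesis $r>2$ gives $|\Hr^+|\ge 2$, so I may fix $k\ne k'$ and extend to a full coloring $b\in\B_0$, producing a nonzero basis vector $v_b\in\V(\Su)\subset\VV(\Su)$.

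The third step is the computation of the eigenvalue. Representing $\DT_c$ by its Lickorish surgery presentation --- the cobordism $\Su\times [0,1]$ containing a $(\pm 1)$-framed parallel pushoff $c^+$ of $c$ --- and attaching it to $(H,\Gamma'_b)$, one sees that $c^+$ is an unknot in $H$ linking $e_c$ once (because $c$ bounds a disc in $H$, its pushoff bounds a parallel disc). A Kirby blowdown then removes $c^+$ at the cost of shifting the framing of $e_c$ by $\mp 1$, which, by the ribbon structure of $\cat$, multiplies the invariant by $\theta_{V_{\alpha+k}}^{\pm 1}$. The same analysis for $\DT_{c'}^{-1}$ contributes a power of $\theta_{V_{\alpha+k'}}$ of opposite sign. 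Hence $v_b$ is an eigenvector of $\VV(\DT_c^{-1}\circ \DT_{c'})$ with eigenvalue
\[
\mu \;=\; \theta_{V_{\alpha+k'}}\,\theta_{V_{\alpha+k}}^{-1}.
\]

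Finally, in $\cat$ the twist on $V_\gamma$ is of the form $\theta_{V_\gamma}=q^{Q(\gamma)}$ for a quadratic polynomial $Q$, so $\mu=q^{(k'-k)\alpha+R(k,k')}$ for some $R(k,k')\in\Q$. Since $\alpha\notin\Q$ and $k\ne k'$, the exponent is irrational, so $\mu$ is not a root of unity and $\VV(\DT_c^{-1}\circ \DT_{c'})$ has infinite order. The main technical obstacle lies in the third step: the identification of each Dehn twist with a pure framing change requires careful tracking of the normalization factors $\eta$, $\lambda$, and $\dep$ appearing in $\Zr$; however, the underlying topology and signature of the surgery diagrams representing $v_b$ and $\VV(\DT_c^{-1}\circ \DT_{c'})(v_b)$ are the same, so these normalizations cancel in the ratio $\mu$ and only the twist scalars remain.
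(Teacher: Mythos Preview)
Your overall strategy coincides with the paper's: build a handlebody in which $c,c'$ bound discs, take a $\coh$-regular spine with $c,c'$ as meridians of edges $e_c,e_{c'}$, observe that the bounding-pair map acts on a basis vector $v_b$ by the ratio of twists on those two edges, and use irrationality of $\alpha=\coh(c)$ to conclude. The paper's Step~3 is phrased more directly (the mapping cylinder of a Dehn twist along the meridian of an edge simply shifts that edge's framing by $\pm1$), but your surgery description is equivalent.

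There is, however, a genuine error in your Step~1. You assert that, using $g\ge3$, one can arrange for \emph{both} $e_c$ and $e_{c'}$ to lie outside a maximal subtree $T$. This is impossible. Since $c$ and $c'$ are disjoint, non-separating and homologous, the union $c\cup c'$ separates $\Su$ (its $\Z/2$-homology class is $2[c]=0$). Hence cutting $H$ along the discs $D_c,D_{c'}$ disconnects it, and dually $\Gamma\setminus\{e_c,e_{c'}\}$ is disconnected: if it were connected, a path in $\Gamma\setminus\{e_c,e_{c'}\}$ joining the endpoints of $e_c$ would close up to a cycle meeting $e_c$ once and $e_{c'}$ not at all, contradicting $[c]=[c']$ via the intersection pairing on $\Su$. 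Thus $\{e_c,e_{c'}\}$ is an edge cut of $\Gamma$, and every spanning tree must contain at least one of them.

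The repair is easy and is exactly what the paper does implicitly. In the basis $\B$ of Proposition~\ref{prop:genericbasis}, the colour of a tree edge is also a free parameter: at each step of the recursive colouring, a tree edge receives any of the $r$ colours appearing in the tensor decomposition at its vertex. So even if $e_{c'}\in T$, its colour ranges over $r\ge3$ values independently of the colour of $e_c$, and one can certainly pick a basis vector $v_b$ with $\col(e_c)\neq\col(e_{c'})$. The paper records this simply as ``we assume that $v_b$ is such that $h\neq k$ (this is possible as $r>2$)'', without ever claiming that both edges are non-tree. Once you drop that claim, your argument goes through.
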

\begin{proof}
  Let $\Gamma$ be a 
  $\coh$-regular spine for an oriented handlebody $H$
  bounding $\Su$ such that $c$ and $c'$ are meridians of edges $e$ and
  $e'$ of $\Gamma$ and $T\subset \Gamma$ be a maximal sub-tree. Let
  also 
  $\coh([c])=\coh([c'])=\overline{\alpha} \in (\C\setminus \Q)/2\Z$ 
  (the curve $c$ is not separating so $[c]\neq0$).
  The effect of the combination of Dehn-twists $\DT_{c'}^{-1}\circ \DT_c$ is to
  change $\Gamma\subset H$ into a new graph $\Gamma'$ which is
  identical to $\Gamma$ except for its framing which is changed by
  $+1$ on $e$ and by $-1$ on $e'$.  Let us color the edges of
  $\Gamma'$ and $\Gamma$ by a coloring corresponding to a vector $v_b$
  of the basis $\{v_b\}$ of $\VV(\Su)$ constructed in Proposition
  \ref{prop:genericbasis}, and let $\alpha+h$ and $-\alpha+k$ be the
  colors respectively of $e$ and $e'$. 
  We assume that $v_b$ is such that $h\neq k$ (this is possible as $r>2$). 
  Restoring the framing of
  $\Gamma'$ to that of $\Gamma$ changes the vector $[H,\Gamma']\in
  \V(\Su)$ by multiplication by the twists
  $q^{\frac{(\alpha+h)^2-(r-1)^2}{2}}$ (for $e$) and
  $q^{-\frac{(\alpha+k)^2-(r-1)^2}{2}}$ (for $e'$).  Then it
  holds: $$\DT_{c'}^{-1}\circ \DT_c
  (v_b)=q^{\frac{(\alpha+h)^2-(\alpha+k)^2}{2}}v_b=q^{(h-k)(\alpha+\frac{(h+k)}{2})}v_b.$$
  By hypothesis on $\coh$, we have $\alpha$ is irrational so the order
  of the action of $\DT_{c'}^{-1}\circ \DT_c$ on $v_b$ is
  infinite. (Remark also that the action of $\T_g$ is non-projective
  as the framing anomaly intervenes only through the Maslov index
  which is always $0$ for Torelli elements.)
\end{proof}

\subsection{Representation of the Kauffman skein algebra}\label{sub:kauffman}
\newcommand{\KS}{\mathcal{SK}} 
The Kauffman skein module $\KS^A(M)$ of an oriented manifold $M$ (see
\cite{Tu1,Pr}) is the quotient of the free $\Q[A^{\pm1}]$-module
generated by isotopy classes of unoriented framed links in $M$ modulo
the Kauffman skein relations~:
$$\epsh{fig26k}{6ex}=A\epsh{fig28k}{6ex}+A^{-1}\epsh{fig29k}{6ex}\et 
L\sqcup\epsh{fig30k}{4ex}=-(A^2+A^{-2})L.$$ Here $A$ is either a
formal variable or a non zero complex number.  If $\Su$ is an oriented
surface and $M=\Su\times[0,1]$, $\KS^A(\Su)=\KS^A(M)$ is an algebra
with product induced by the composition of the cobordism
$M=\Su\times[0,1]:\Su\to\Su$ with itself ($M\circ M=M$).

Consider the free group $$G=H^1(\Su;\Z/2\Z)$$ and let
$\pi:\Su\times[0,1]\to\Su$ be the projection.  Using the intersection
form $\iota:H_1(\Su;\Z/2\Z)\otimes H_1(\Su;\Z/2\Z)\to\Z/2\Z$, $G$ can
be identified with $H_1(\Su;\Z/2\Z)$.  Then there is a natural
$G$-grading on $\KS^A(\Su)=\oplus_\coh\KS^A_\coh(\Su)$: a framed link
$L\subset\Su\times[0,1]$ represent a homogeneous element of degree
$\coh_L=\iota(\pi_*([L]),.)\in G$ where $[L]$ is the class of $L$ in
$H_1(\Su\times[0,1],\Z/2\Z)$.  One easily check that the skein
relations are homogeneous and that
$\KS^A_\coh(\Su).\KS^A_{\coh'}(\Su)\subset\KS^A_{\coh+\coh'}(\Su)$.

Let $\ve=q^{r^2/2}$ which is a fourth root of unity ($\ve=-1$ if $r$
is even).  Then $\KS^{\ve}(\Su)$ is well-known, is related to the
character variety of $\Su$ and embeds naturally into the center of
$\KS^{q^{1/2}}(\Su)$ (see \cite{BW,Le}).
We now show that $\KS^{q^{1/2}}(\Su)$ has a natural
representation in some TQFT vector spaces associated to $\Su$.

We first fix a base point, a Lagrangian $\La$ and a cohomology class
$\coh_0$ for $\Su$ and consider
$$\VVV(\Su)=\bigoplus_{\coh\in G}\VV(\Su+\coh)\text{ where }\Su+\coh
=(\Su,\emptyset,\coh_0+\coh,\La)$$
which only depends of the class of $\coh_0$ modulo $\Z$.

Let $M=(\Su\times[0,1],T,\Id\times0\sqcup\Id\times1,\coh,0):
(\Su,\emptyset,\coh_0,\La) \to(\Su,\emptyset,\coh_1,\La)$ be a decorated
manifold where $T$ is an oriented framed link colored by the simple module
$S_1$.  Let $L_T$ be the underlying framed unoriented link in
$\Su\times[0,1]$.  Then we shall say that $M$ is ``a lift of $L_T$''.
\begin{prop}
  Let $\Su=(\Su,\emptyset,\coh_0,\La)$ be a decorated surface and $L$
  be an unoriented framed link in $\Su\times[0,1]$.  Then 
  \begin{enumerate}
  \item $L$ has a lift
    $M_L:(\Su,\emptyset,\coh_0,\La)\to(\Su,\emptyset,\coh_1,\La)$.
  \item for a lift $M_L$ of $L$ as above, $\coh_1=\coh_0+\coh_L$. 
  \end{enumerate}
\end{prop}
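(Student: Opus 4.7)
The plan is to reduce Part (1) to a cohomological lifting problem via the long exact sequence of the pair $(\Su\times[0,1],\Su\times[0,1]\setminus T)$, and then to deduce Part (2) from a mod-$2$ intersection count. I would first pick any orientation of $L$ to obtain an oriented framed link $T$: this is legitimate because $S_1\in\cat_{\bar 1}$ requires meridian value $\bar 1\in\C/2\Z$, and $-\bar 1 = \bar 1$ in $\C/2\Z$, so the compatibility condition is insensitive to the choice of orientation. Writing $\pi:\Su\times[0,1]\to\Su$ for the projection, I would then seek $\coh$ in the split form $\coh = \pi^*\coh_0 + \coh_T$, where $\coh_T\in H^1(\Su\times[0,1]\setminus T;\Z/2\Z)$ is a class with trivial restriction to $\Su\times\{0\}$ and meridian values $\bar 1$ on each component of $T$. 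Note that $\pi^*\coh_0$ already restricts correctly to $\Su\times\{0\}$ and vanishes on meridians (which bound disks in $\Su\times[0,1]$).

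The existence of $\coh_T$ I would extract from the long exact sequence
\[
H^1(\Su\times[0,1]\setminus T;\Z/2\Z)\xrightarrow{\delta} H^2(\Su\times[0,1],\Su\times[0,1]\setminus T;\Z/2\Z)\to H^2(\Su\times[0,1];\Z/2\Z).
\]
By excision and the Thom isomorphism the middle group is $\bigoplus_i H^0(T_i;\Z/2\Z)$ and $\delta$ reads off meridian values, so the obstruction to lifting $(\bar 1,\ldots,\bar 1)$ is its image in $H^2(\Su\times[0,1];\Z/2\Z)$. By Poincaré--Lefschetz duality this image corresponds to the class of $[T]$ in $H_1(\Su\times[0,1],\partial;\Z/2\Z)$. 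Since $\partial(\Su\times[0,1])=\Su\sqcup\Su$ with each copy mapping isomorphically onto $H_1(\Su)$, the map $H_1(\partial)\to H_1(\Su\times[0,1])$ is surjective, forcing $H_1(\Su\times[0,1])\to H_1(\Su\times[0,1],\partial)$ to vanish. Hence the obstruction is zero, a lift $\coh_T'$ exists, and setting $\coh_T := \coh_T' - \pi^*(\coh_T'|_{\Su\times\{0\}})$ (which does not alter the meridian values) yields the required class. The hard part here is recognizing the obstruction as a Poincaré--Lefschetz dual and checking it vanishes.

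For Part (2), I would compute $\coh_1 = \coh_0 + \coh_T|_{\Su\times\{1\}}$ and identify the second summand with $\coh_L$ by a mod-$2$ intersection argument. Given a loop $\gamma\subset\Su\times\{1\}$, after perturbing $\gamma$ so that it meets $\pi(T)$ transversally in $\Su$, the cylinder $C=\gamma\times[0,1]$ meets $T$ transversally in finitely many interior points; drilling these points out replaces $C$ by a relative $2$-chain in $\Su\times[0,1]\setminus T$ whose boundary is $\gamma\times\{1\}-\gamma\times\{0\}+\sum_j m_j$ for certain meridians $m_j$ of $T$. Pairing with the cocycle $\coh_T$ and using $\coh_T|_{\Su\times\{0\}}=0$ gives
\[
\coh_T|_{\Su\times\{1\}}(\gamma) = \#(C\cap T)\cdot\bar 1 \equiv \#(\gamma\cap\pi(T)) \pmod{2},
\]
the last equality by projecting intersections down to $\Su$. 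By the definition $\coh_L=\iota(\pi_*[L],\cdot)$ this is exactly $\coh_L(\gamma)$, so $\coh_1 = \coh_0+\coh_L$.
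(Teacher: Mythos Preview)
Your proof is correct and reaches the same conclusion as the paper, but Part~(1) takes a genuinely different route. The paper argues directly on homology: it observes that the inclusion $s_0:\Su\hookrightarrow\Su\times\{0\}\subset X\setminus L$ gives a section of $H_1(X\setminus L)\to H_1(X)\cong H_1(\Su)$, so the kernel (freely generated by the meridians $m_1,\dots,m_n$) splits off and one can simply \emph{define} $\coh_L^X$ on this direct sum by $m_j\mapsto\bar 1$, $(s_0)_*H_1(\Su)\mapsto 0$. Your approach instead phrases the existence of $\coh_T$ as an obstruction problem in the long exact sequence of the pair $(X,X\setminus T)$, identifies the obstruction via Thom and Poincar\'e--Lefschetz duality with the image of $[T]$ in $H_1(X,\partial X;\Z/2\Z)$, and kills it by the surjectivity of $H_1(\partial X)\to H_1(X)$. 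Both are valid; the paper's is more elementary and constructive (you see the splitting explicitly), while yours is more general in spirit and would adapt with no change to, say, a cobordism that does not retract onto one of its boundary components. For Part~(2) the two arguments are essentially identical: both use the cylinder $\gamma\times[0,1]$, drill out the intersections with $T$, and read off $\coh_L(\gamma)$ as a mod-$2$ intersection count.
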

\begin{proof}
  Let denote by $m_1,\ldots,m_n$ the meridians of the $n$ components link $L$.
  Recall that $S_1\in\cat_{\wb1}$ thus a cohomology class for a lift of $L$ is
  compatible if and only if its values on each $m_j$ is $\wb1$.  Let
  $X=\Su\times[0,1]$.  It is an exercise to show that $H_1(X\setminus
  L)\stackrel{g_*}\to H_1(X)$ is surjective with two preferred sections
  ${s_i}_*\circ\pi_*$ where $s_i:\Su\to\Su\times\{i\}\subset X\setminus L$ and
  $\pi:X\to\Su$.  The long exact sequence can then be used to show that $\ker
  g_*\simeq\Z^n$ is generated by the meridians $[m_j]$.  Hence there is a
  unique cohomology class in $\coh_L^X\in H^1(X\setminus L;\Z/2\Z)$ which
  vanish on $(s_0)_*(H_1(\Su))$ and whose value is $1$ on each meridian
  $[m_j]$.  Let $\coh'_L=s_1^*(\coh_L^X)$, then the unique cohomology class in
  $H^1(X\setminus L;\Z/2\Z)$ which restricts to $\coh_0$ on $\Su\times\{0\}$ and
  which is $1$ on each $m_j$ is $\pi^*(\coh_0)+\coh_L^X$.  Its restriction on
  $\Su\times1$ is $s_1^*(\pi^*(\coh_0)+\coh_L^X)=\coh_0+\coh'_L$.  To see that
  $\coh'_L=\coh_L$, let $\gamma$ be a simple closed curve in $\Su$.  Consider
  the surface $C=\gamma\times I\setminus v(L)\subset X\setminus L$ where
  $v(L)$ is an open tubular neighborhood of $L$.  Then modulo $2$, the
  boundary of $C$ is given by $[\gamma\times0]+[\gamma\times1]+$some
  meridians.  The number of these meridians is given by the number of
  intersection points of $\gamma\times[0,1]\cap L$ which is also the number of
  intersection points of $\gamma\cap \pi(L)$ and is given modulo 2 by
  $\iota(\pi_*([L]),[\gamma])=\coh_L([\gamma])$.  Hence
  $\coh'_L([\gamma])=\coh_L^X([\gamma\times1])=\coh_L([\gamma])$.
\end{proof}
\begin{theo}
  Let $\Su=(\Su,\emptyset,\coh_0,\La)$ be a decorated surface. There
  is a representation of $\KS(\Su)$ in $\VVV(\Su)$ given by
  blocks
  $$[L]\mapsto K(L)\text{ where }
  K(L)_{|\V(\Su+\coh)}:\VV(\Su+\coh)\to\VV(\Su+\coh+\coh_L)$$ is given
  by sending $[L]$ to $\VV(M_L)$ for any lift
  $M_L:\Su+\coh\to\Su+\coh+\coh_L$ of $L$.
\end{theo}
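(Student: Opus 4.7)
The plan has three steps: (i) show that $K(L)$ is a well-defined linear endomorphism of $\VVV(\Su)$; (ii) verify that $K$ descends through the Kauffman skein relations at $A=q^{1/2}$; (iii) check that $K$ respects the algebra structure.

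For (i), start by observing that the preceding proposition produces, for every unoriented framed link $L\subset\Su\times[0,1]$, at least one decorated cobordism $M_L:\Su+\coh\to\Su+\coh+\coh_L$ with signature-weight $0$, whose cohomology class is uniquely determined by $\coh_0+\coh$ and the requirement $\coh(m_j)=\bar1$ on every meridian. Two different lifts of the same unoriented $L$ differ only by a choice of orientation on each component, so I would check that $\VV(M_L)$ is insensitive to those orientations: in $\cat$ the module $S_1$ is self-dual, and the canonical isomorphism $S_1\cong S_1^*$ is compatible with the pivotal structure in such a way that reversing the orientation of a component leaves the Reshetikhin-Turaev functor invariant. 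Hence $\VV(M_L)$ depends only on $L$, giving a well-defined block $K(L)_{|\VV(\Su+\coh)}:\VV(\Su+\coh)\to\VV(\Su+\coh+\coh_L)$. Extending by linearity in $L$ yields a linear map on the free $\Q[A^{\pm1}]$-module generated by isotopy classes of framed links, after specialising $A=q^{1/2}$.

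For (ii), to show that the Kauffman relations lie in the kernel, pick a vector $[N]\in\VV(\Su+\coh)$, represented by an admissible cobordism $N\in\cV(\Su+\coh\sqcup\hS_{-d})$ by Theorem \ref{teo:skeinsurjects}. Composing $N$ below and $M_L$ above produces a cobordism whose underlying graph sits inside a little ball of the $M_L$-layer, and whose complement in that layer is admissible because the $N$-part is. This is precisely the setting of Remark \ref{rem:skeinlocal}, so we may apply any Reshetikhin-Turaev skein relation on $S_1$-coloured strands inside that ball and conclude in $\V$ via Proposition \ref{P:RR'kernel}. The crossing identity $F(\epsh{fig26k}{4ex})=A\,F(\epsh{fig28k}{4ex})+A^{-1}F(\epsh{fig29k}{4ex})$ with $A=q^{1/2}$ is the usual identity satisfied by the $R$-matrix on $S_1\otimes S_1$, while the $S_1$-coloured unknot evaluates to the categorical dimension of $S_1\in\cat_{\bar1}$, which equals $-(q+q^{-1})=-(A^2+A^{-2})$ because $S_1$ has odd $\C/2\Z$-degree (so its ribbon twist picks up a sign compared with the classical $sl_2$ case). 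Both Kauffman relations are thus absorbed by $\VV$, and $K$ descends to a linear map $\KS^{q^{1/2}}(\Su)\to\End(\VVV(\Su))$ preserving the $G$-grading since $\coh_L=\coh_{L'}$ when $L,L'$ are skein-related in a ball. The main obstacle in the whole argument is this local step: one must check that the admissibility hypothesis needed to apply Proposition \ref{P:RR'kernel} is always available, which is where the reduction to admissible representatives $[N]$ via Theorem \ref{teo:skeinsurjects} is crucial.

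For (iii), the product $L_1\cdot L_2$ in $\KS^{q^{1/2}}(\Su)$ is obtained by placing $L_1$ above $L_2$ in $\Su\times[0,1]$; a lift of the product is then the decorated composition $M_{L_1}\circ M_{L_2}$, whose cohomology class is produced by Lemma \ref{L:McohomEx} and agrees on meridians with the one prescribed in the preceding proposition. Functoriality of $\VV$ gives
\[
K(L_1\cdot L_2)=\VV(M_{L_1}\circ M_{L_2})=\VV(M_{L_1})\circ\VV(M_{L_2})=K(L_1)\circ K(L_2),
\]
and $K(\emptyset)=\VV(\Id_\Su^0)=\Id_{\VVV(\Su)}$ by Subsection \ref{R:CylDef}. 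Since the cohomology shifts $\coh_{L_i}$ are additive in $G=H^1(\Su;\Z/2\Z)$, the block decomposition is compatible with composition, and we obtain the desired representation of $\KS^{q^{1/2}}(\Su)$ on $\VVV(\Su)$.
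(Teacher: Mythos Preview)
Your outline follows the paper's argument closely: independence of orientation via the self-duality $w:S_1\to S_1^*$, then verification of the Kauffman relations at the level of $F$, with functoriality of $\VV$ giving the algebra map. One point, however, is genuinely underspecified.

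In step (ii) you assert that the crossing relation is ``the usual identity satisfied by the $R$-matrix on $S_1\otimes S_1$''. This hides the real issue: the two Kauffman smoothings are \emph{unoriented}, and once you fix an orientation on the crossing $L_X$, only one of $L_0,L_\infty$ inherits a compatible orientation as a $\cat$-colored ribbon graph; the other does not. The paper resolves this by inserting coupons colored by $w$ and $w^{-1}$ into the incompatible smoothing so that all three terms become bona fide $\cat$-colored graphs that $F$ can compare, and then checks the four cases arising from the possible orientations of $L_X$ by direct computation. You have the right tool in hand from step (i), but you need to say explicitly that it is reused here, and that the identity to be verified is not literally the $R$-matrix relation but its translation through the coupons.

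Your treatment of admissibility in (ii) --- composing with an admissible representative $N$ before invoking Proposition~\ref{P:RR'kernel} --- is actually more careful than the paper, which simply declares $M_X-q^{1/2}M_0-q^{-1/2}M_\infty$ ``skein equivalent to zero'' without discussing why the ball-complement is admissible. Likewise your step (iii) makes explicit a point the paper leaves to the reader; note that the Maslov correction in $M_{L_1}\circ M_{L_2}$ vanishes because both mapping cylinders act trivially on the Lagrangian, so the signature-defects stay zero and the composition is indeed a lift of $L_1\cdot L_2$.
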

\begin{proof}
  Let $M_L$ be a lift of $L$.\\
  There is a isomorphism in $\cat$, $w:S_1\to S_1^*$ sending
  $v_1\mapsto v_0^*$ and $v_0\mapsto-qv_1^*$.  It is compatible with
  the pivotal structure i.e. $(w^*)^{-1}\circ w:S_1\to S_1^{**}$ is
  given by the multiplication by the pivotal element $K^{1-r}$.
  Graphically, we have the following skein relations where all strands
  are colored by $S_1$ and all coupons by $w$ or $w^{-1}$~:
  $$F\bp{\epsh{fig63}{8ex}}=F\bp{\epsh{fig2-appendice}{6ex}}\et 
  F\bp{\epsh{fig62}{8ex}}=F\bp{\epsh{fig2b-appendice}{6ex}}$$
  Using these two relations one easily shows that reversing the
  orientation of a component of $M_L$ leads to a skein equivalent
  cobordism.  Next, suppose that $L_X,L_0,L_\oo\subset\Su\times[0,1]$
  are related by a Kauffman skein relation. Let $M_X,M_0,M_\oo$ be
  lifts of these links and assume that only the components of their
  link involved in the skein relation differ.  We claim that
  $M_X-q^{\frac12}M_0-q^{-\frac12}M_\oo$ is skein equivalent to zero.
  To see this, we modify the components of $M_0$ and $M_\oo$, by
  inserting $w^{\pm1}$-colored coupons as above if necessary so that
  they are diffeomorphic to $M_X$ outside the ball where they differ
  as in the Kauffman skein relation.  According to the orientation of
  the link in $M_X$, we are left to check four skein relations as for
  example
  $$F\bp{\epsh{fig26}{6ex}}-q^{\frac12} F\bp{\epsh{fig32}{6ex}} 
  -q^{-\frac12}F\bp{\epsh{fig64}{7ex}}$$
  which is a direct computation.

  Hence the map $L\mapsto \VV(M_L)$ satisfies the Kauffman skein
  relation and induces a map on $\KS(\Su)$.
\end{proof}
\begin{rem}
  Similarly, one can define $\VVV'(\Su)$ and
  $$K':\KS(\Su)\to\End_\C(\VVV'(\Su)).$$
\end{rem}
\begin{rem}
  Following the ideas of \cite{FWW}, one should be able to prove the
  asymptotic faithfulness modulo center of the mapping class group
  representation into $\VV(\Su;\Z/2\Z)$ where
  $\Su=(\Su,\emptyset,0,\La)$.  Here the asymptotic is for the order
  $2r$ of $q$ going to $\oo$.  More precisely, the following should
  hold: for any non central $f$ in the mapping class group of $\Su$,
  and any simple curve $L\subset\Su$ with $f(L)$ non isotopic to $L$,
  there exists $r_0\in\N$ such that $$r>r_0\implies
  \VVV_r(M_L)\neq\VVV_r(M_f)\VVV_r(M_L)\VV_r(M_f)^{-1}=\VVV_r(M_{f(L)})$$
  where $M_L$ is a lift of
  $L\subset\Su\times\frac12\subset\Su\times[0,1]$ and $M_f$ is the
  mapping cylinder of $f$ (cf Section \ref{S:cobcat}).  One difference
  with \cite{FWW} would be that it is not clear for this
  representation that such a $r_0$ should be dependent of $f$; indeed
  there are no evident elements in the kernel of these representations
  as the powers of the Dehn twists are not necessarily
  trivial.
\end{rem}

\subsection{A TQFT for the Volume Conjecture.}\label{sub:kashaevTQFT}
In this subsection we restrict our analysis to the case when $\coh=0$ and
explain how this case is related to the volume conjecture. We compute
explicitly some of the graded dimensions of the vector spaces $\VV(\Su)$ and,
in the end of the section, we show that this case produces a new set of
representations of mapping class groups which could possibly be injective (we
could find no element in the kernel of these representations).

Let $\Cob_{\wb 0}$ be the subcategory of $\Cob$ formed by all the objects and
cobordisms whose cohomology classes are $0$.  If $\Su\in \Cob_{\wb 0}$ then
$\mcg(\Su)$ acts projectively on $\V(\Su)$ for every $r$. If $r$ is odd then
$V_{\wb 0}$ (also known as the Kashaev module) belongs to the subcategory $\cat_{\wb 0}$
of $\cat$ and thus the graphs decorating the morphisms of $\Cob_{\wb 0}$ may be
colored by it.  In particular if one considers a closed cobordism formed by a
closed $M$ containing a framed oriented link $T$ whose edges are colored by
$V_0$ the invariant one gets is the generalized Kashaev invariant defined in
\cite{CGP} (so called because if $M=S^3$ it coincides with the Kashaev
invariant of $T$ (see \cite{MM})).
Hence the restriction of the functor $\VV$ to $\Cob_{\wb 0}$ yields a
TQFT in particular for the Kashaev invariant.  In view of the Volume
Conjecture it is natural to wonder whether the action of $\mcg(\Su)$ on
$\VV(\Su)$ (for $\Su\in \Cob_{\wb 0}$) is asymptotically related to a
geometric action like, for instance, the action on the space of $L^2$
functions on the character variety of $\Su$: 
$$\chi_{\SL(2;\C)}(\Su)=\{\rho:\pi_1(\Su)\to
\SL(2;\C)\}//\SL(2;\C)$$ where the quotient is meant in the sense of
geometric invariant theory, and the measure is induced by Goldman's
symplectic form.  So given a diffeomorphism $\phi\in \mcg(\Su)$,
letting $N_\phi\in\End(\VV(\Su))$ be the morphism induced by the
mapping cylinder of $\phi$ (as defined in Subsection
\ref{rem:mcgrepres}), we ask the following:
\begin{question} 
If $\phi$ is pseudo-Anosov, is there a relation between asymptotic behavior (for large $r$) of the
  spectrum of $\VV(N_\phi)\in\End(\VV(\Su))$ and the dilatation factor of $\phi$? Can one recover the hyperbolic volume of the mapping torus
  associated to $\phi$?
\end{question}

Currently the above questions are only motivated by the generalization of
volume conjecture proposed in \cite{CGP}: indeed if one considers for instance
$\Su$ being a closed surface containing some $V_0$-colored points, then the
mapping cylinder of a pseudo Anosov diffeomorphism permuting these points is
hyperbolic and the image of the points gives a $V_0$-colored link in the
cylinder.

Another aspect of our theory is that it seems to be related to the logarithmic
conformal field theories and in particular to the representation theory for
the triplet algebras $W(r)$ (see \cite{FGST1} and \cite{FGST}). Indeed as
shown in \cite{FGST1} there is a $SL(2;\Z)$ equivariant correspondence between
the space of conformal blocks associated to the triplet algebra $W(r)$ and the
center of the small quantum group $\overline{\Uqg}_q(\mathfrak{sl}_2)$ (seen
as a representation s of $SL(2;\Z)$ as shown in \cite{LM}).  But we expect
that for $\Su=S^1\times S^1$ and $\coh=0$, there exists a $SL(2,\Z)$-covariant
map from the center $Z$ of the small quantum group
$\overline{\Uqg}_q(\mathfrak{sl}_2)$ to $\V(\Su)$.  The map can be described
as follows : 
\newcommand{\sP}{{\mathsf P}} 
For $r$ odd, let $\sP=\bigoplus_{i=0}^{r-1}P_i\otimes
\C^H_{\epsilon(i)r}$ where $\epsilon(i)=1$ if $i$ is odd and $0$ else.
If instead $r$ is even, let $\sP=\bigoplus_{i=0}^{\frac
  r2-1}P_{2i}$. For any $f\in\End_\cat(\sP)$, let
$v(f):\emptyset\to(S^1\times S^1,\emptyset,0,\La)$ be the cobordism
$(S^1\times B^2,S^1\times \{0\})$ where the color of $S^1\times \{0\}$
is $\sP$ and there is a coupon on it containing the endomorphism $f$.
There is a natural map from $Z$ to the center of $\UsltH_{/{K^r-1}}$
(indeed every element of $Z$ has weight zero and thus commutes also
with the action of $H$) which acts faithfully on $\PP$.  Using this
action $\rho$, we get the map $Z\to\V(S^1\times S^1)$ given by
$z\mapsto v(\rho(z))$.  As any indecomposable module of $\cat_{\wb 0}$
is the tensor product of a direct summand of $\sP$ with a power of
$\sigma$, one can show that vectors $v(f)$ generate the skein module
$\Skein(S^1\times B^2,S^1\times S^1)$ and so also $\V(S^1\times
S^1,\emptyset,0,\La)$.  Because of the skein relations in the torus,
this map factors through an isomorphism
$H\!H_0(\End_\cat(\sP))\to\Skein(S^1\times B^2,S^1\times S^1)$ from
the Hochschild homology of $\End_\cat(\sP)$.  
\begin{question}[Relations between $\V$ and the logarithmic CFTs]
  Is the above map equivariant with respect to the actions of $SL(2;\Z)$ on
  $Z$ described in \cite{LM} and the action of the mapping class group of
  $S^1\times S^1$?  What are its kernel and cokernel?  Can one extend these
  correspondence to more general spaces of conformal blocks?
\end{question}

Even if Theorem \ref{P:verlinde} does not apply to the case of a closed
surface containing no points and whose cohomology class is $0$, it does apply
to a fairly similar case: let $\Su^g_{2j}$ be a connected decorated surface of
genus $g\geq 1$ and containing exactly one point $p$ whose color is $P_{2j}$
for some $j\in \N$, $j<\frac{r-1}{2}$ or possibly $V_0$ in case $r$ is
odd.  The following is a direct corollary of Theorem \ref{P:verlinde} and Lemma \ref{lem:generalhopflinks}:
\begin{prop}[Graded dimensions of $0$-cohomology vector spaces]
  If the color of $p$ is $P_{2j}$ with $j<\frac{r-1}{2}$ then
  $$\dim_t \VV(\Su^g_{2j})=(-1)^{r-1}\frac{(r')^g}{r}\sum_{k\in \Hr} 
  \left(\frac{\{r\beta\}}{\{\beta+k\}} \right)^{2g-1}
  \left(q^{(r-1-2j)(\beta+k)}+q^{-(r-1-2j)(\beta+k)}\right).$$ If the color of
  $p$ is $V_0$ (and hence $r$ is odd) then :
$$\dim_t \VV(\Su^g_{{r-1}})=r^{g-1}\sum_{k\in \Hr} 
  \left(\frac{\{r\beta\}}{\{\beta+k\}} \right)^{2g-1}.$$ 
\end{prop}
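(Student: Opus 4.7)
The plan is to apply Theorem~\ref{P:verlinde} directly, once we have verified admissibility and computed the relevant open Hopf link scalars $\Phi_{W,\beta+k}$ via Lemma~\ref{lem:generalhopflinks}. First, observe that $\Su^g_{2j}$ is admissible in the sense of Definition~\ref{D:admS}: in the $P_{2j}$ case because $P_{2j}$ is a projective indecomposable module, and in the $V_0$ case (with $r$ odd) because $V_0=S_{r-1}$ is the standard simple module of dimension $r$, which is projective. Hence Theorem~\ref{P:verlinde} applies with $W=P_{2j}$ or $W=V_0$, and moreover the identification $\dim_{q^{-2r'\beta}}(\VV(\Su))=\Zr(\Su\times S^1_{\wb\beta})$ (holomorphic in $\beta$) allows the right-hand side to be interpreted as the (super)-graded dimension once the variable change $t=q^{-2r'\beta}$ is made.

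Next I would compute the two Hopf link scalars. For $V_0$: its weights are $\{-(r-1),-(r-3),\ldots,r-1\}$, and a direct summation of the open Hopf link gives
\[
\Phi_{V_0,\beta+k}=\sum_{i=0}^{r-1} q^{(r-1-2i)(\beta+k)}=\frac{\{r(\beta+k)\}}{\{\beta+k\}}.
\]
When $r$ is odd and $k\in \Hr$, $k$ is even, so $q^{rk}=1$ and the numerator equals $\{r\beta\}$. Inserting this into the Verlinde formula (with $r'=r$ and $W=V_0$ giving no extra sign) and collecting one factor of $\{r\beta\}/\{\beta+k\}$ into the exponent gives exactly
\[
r^{g-1}\sum_{k\in\Hr}\left(\frac{\{r\beta\}}{\{\beta+k\}}\right)^{2g-1}.
\]

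For $W=P_{2j}$ I would use the description of the weights of $P_{2j}$ given in the appendix (whose composition factors are $S_{2j}$ with multiplicity two together with two copies of the shift $S_{r-2-2j}\otimes\sigma^{\pm1}$), which via Lemma~\ref{lem:generalhopflinks} yields
\[
\Phi_{P_{2j},\beta+k}=(-1)^{r-1}\,\frac{\{r\beta\}}{\{\beta+k\}}\,\bigl(q^{(r-1-2j)(\beta+k)}+q^{-(r-1-2j)(\beta+k)}\bigr),
\]
the sign $(-1)^{r-1}$ coming from the modified trace normalisation on the projective module and the ``bracket'' $q^{\pm(r-1-2j)(\beta+k)}$ reflecting the two ``external'' layers of $P_{2j}$ after the inner simple contributions cancel in pairs against the shifts by $\sigma^{\pm1}$. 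Plugging this into the Verlinde formula of Theorem~\ref{P:verlinde} and absorbing $\{r\beta\}/\{\beta+k\}$ into the power $2g-2$ produces the stated identity for $\dim_t\VV(\Su^g_{2j})$.

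The main obstacle is the $P_{2j}$ Hopf link computation: one must carefully use the Loewy structure of $P_{2j}$ (together with the action of $\sigma^{\pm 1}$ on weights) to arrange the cancellations that reduce the naive sum over weights to the symmetric bracket $q^{(r-1-2j)\cdot}+q^{-(r-1-2j)\cdot}$ times $\{r\beta\}/\{\beta+k\}$, and to track the overall sign $(-1)^{r-1}$ coming from the modified trace. Once Lemma~\ref{lem:generalhopflinks} supplies this explicit formula, the rest of the argument is a purely mechanical substitution into the Verlinde formula and a bookkeeping of the exponents of $\{r\beta\}/\{\beta+k\}$.
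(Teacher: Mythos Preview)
Your proposal is correct and follows exactly the paper's approach: the paper states this proposition as a direct corollary of Theorem~\ref{P:verlinde} and Lemma~\ref{lem:generalhopflinks}, and you carry out precisely that substitution. One small simplification: you do not need the Loewy/composition-factor discussion for $P_{2j}$, since Equation~\eqref{eq:HopfSimple2} of Lemma~\ref{lem:generalhopflinks} already gives $\Phi_{P_i,V_\alpha}$ in closed form, and combining it with $\qd(\beta+k)^{-1}=\dfrac{\{r\beta\}}{r\{\beta+k\}}$ (valid for $k\in\Hr$) immediately yields your displayed formula for $\Phi_{P_{2j},\beta+k}$.
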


If $\coh=0$ then $\mcg(\Su)$ acts on $\VV(\Su)$. The following proposition
shows that the order of the action of each Dehn twist along a non-separating
curve is infinite (while, this is not the case for the standard WRT theory):
\begin{theo}\label{T:Dehnnon-separating}
  Let $\Su$ be a closed decorated surface whose cohomology class is $0$
  (possibly containing some points), let $c\subset \Su$ a non-separating
  curve and $\DT_c$ the Dehn-twist along $c$. Then the order of the action of
  $\DT_c$ on $\VV(\Su)$ is infinite.
\end{theo}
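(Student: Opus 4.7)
The plan is to exhibit a witness vector $v\in\VV(\Su)$ on which $\DT_c$ acts through the ribbon twist of a projective indecomposable module of $\cat_{\wb 0}$, and to exploit the non-semi-simplicity of this twist to rule out $\DT_c^n=\Id$ for any $n\ge 1$.

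Since $c$ is non-separating in $\Su$, one may choose an oriented handlebody $H$ with $\partial H=\Su$ together with a trivalent spine $\Gamma\subset H$ so that $c$ is the meridian of a distinguished edge $e$ of $\Gamma$. Because $\coh=0$, every edge of $\Gamma$ must be colored by a module of $\cat_{\wb 0}$, but within this constraint the color of $e$ can be chosen to be any projective indecomposable. The mapping cylinder description of $\DT_c$ combined with the ribbon structure of $\cat$ shows that if $v=[H,\Gamma_{\col}]\in\VV(\Su)$ carries the color $V\in\cat_{\wb 0}$ on $e$, then
\[
\VV(\DT_c)^n(v)=[H,\Gamma^{\theta_V^n}_{\col}]\qquad\text{for every }n\ge 1,
\]
where $\Gamma^{\theta_V^n}_{\col}$ denotes the same colored spine with an additional coupon $\theta_V^n\in\End_\cat(V)$ inserted on $e$.

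The essential algebraic input is that there exists a projective indecomposable $P\in\cat_{\wb 0}$ whose ribbon twist $\theta_P$ is \emph{not} a scalar: since $\End_\cat(P)$ is a finite dimensional local algebra, one may write $\theta_P=\lambda_P\Id_P+N_P$ with $\lambda_P\in\mu_{2r}$ and $N_P\in\End_\cat(P)$ a \emph{non-zero} nilpotent endomorphism. Expanding,
\[
\theta_P^n=\lambda_P^n\Id_P+n\lambda_P^{n-1}N_P+\sum_{k\ge 2}\binom{n}{k}\lambda_P^{n-k}N_P^k,
\]
and the coefficient of $N_P$ never vanishes. Color $e$ by such a $P$ and complete $\Gamma$ by coloring the other edges with projective modules and its coupons with non-zero morphisms, chosen so that the two classes $v:=[H,\Gamma_{\col}]$ and $v':=[H,\Gamma^{N_P}_{\col}]$ are non-zero and linearly independent in $\VV(\Su)$. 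The construction of such colorings follows the scheme of Proposition~\ref{prop:genericbasis} adapted to the integral regime, and the independence is verified by pairing $v$ and $v'$ against dual classes in $\VV'(\Su)$: the resulting closed invariants are modified quantum traces which, by Proposition~\ref{P:qtnondegen} together with the algebraic description of $\End_\cat(P)$ in the Appendix, separate $\Id_P$ from $N_P$.

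Putting everything together, for each $n\ge 1$,
\[
\VV(\DT_c)^n(v)-\lambda_P^n v=n\lambda_P^{n-1}v'+w_n,
\]
where $w_n$ is a combination of classes whose coupon on $e$ is $N_P^k$ with $k\ge 2$. Linear independence of $v$ and $v'$ forces $\VV(\DT_c)^n(v)\ne\lambda_P^n v$, hence in particular $\VV(\DT_c)^n\ne\Id$, so the action of $\DT_c$ on $\VV(\Su)$ has infinite order. The main obstacle is the pair of algebraic facts invoked above: (i) the existence of a projective indecomposable $P\in\cat_{\wb 0}$ whose twist is genuinely non-scalar, which is a direct computation using the explicit description of the projective indecomposables of $\Ubar$ in \cite{CGP3}, and (ii) the realization of linearly independent classes $v$ and $v'$ in $\VV(\Su)$, which requires sufficient control on the admissible skein modules of handlebodies in the non-generic cohomology regime.
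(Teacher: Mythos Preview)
Your core idea is the same as the paper's: detect infinite order via the non-scalar ribbon twist on a projective indecomposable in $\cat_{\wb 0}$. However, your execution is unnecessarily elaborate and contains a genuine gap, while the paper's argument is much shorter.

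The gap is your claim that one can color a full spine $\Gamma$ so that $v=[H,\Gamma_{\col}]$ and $v'=[H,\Gamma^{N_P}_{\col}]$ are linearly independent in $\VV(\Su)$. You appeal to ``the scheme of Proposition~\ref{prop:genericbasis} adapted to the integral regime'', but that proposition relies essentially on the $\coh$-regularity hypothesis (generic cohomology), and the adaptation to $\coh=0$ is precisely the hard case the paper defers to \cite{CGP3}. Your invocation of Proposition~\ref{P:qtnondegen} does not by itself produce dual test vectors in $\VV'(\Su)$ for a complicated spine: you would need to control the pairing of two full handlebodies with colored spines in the integral setting, which is not available.

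The paper avoids this entirely. Instead of a spine, it takes a single knot $c'\subset H$ (the core of the handle whose meridian is $c$) colored by $P_0$, so that $\DT_c$ acts by the twist $\theta_{P_0}=\Id_{P_0}-(r-1)\{1\}x_0$ (Lemma~\ref{lem:twistPj}). Since $x_0^2=0$ there are no higher terms, and one only needs $[H,xT]\neq 0$. The key simplification is on the dual side: because $\cV'(\Su)$ carries \emph{no} admissibility requirement, one may pair against the empty handlebody $[H',\emptyset]$. The resulting closed manifold is $S^3$ containing a single unknot colored by $(P_0,x_0)$, whose invariant is $\eta\,\qt_{P_0}(x_0)\neq 0$ by Lemma~\ref{lem:modifiedtraces}. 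This one-line computation replaces all of your spine bookkeeping and makes the independence question disappear.
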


\begin{proof}
  Let $c'$ be a curve in $\Su$ intersecting $c$ exactly once and let $H$ be a
  handlebody bounding $\Su$, and push $c'$ into $H$. Equip $c'$ with the
  framing parallel to $\Su$ and let $T$ and $xT$ be the $\cat$-ribbon graphs
  represented by $c'$ equipped respectively with the color $P_{0}$ and with
  the color $P_0$ together with a coupon containing a nilpotent endomorphism
  $x_0$.  Then 
$\DT_c$ acts on $[H,T]$ as the twist on $P_0$ and 
it holds $$\DT_c([H,T])=[H,T]-(r-1)\{1\}[H,xT], \ {\rm and}\
  \DT_c([H,xT])=[H,xT].$$ To prove our claim it is then sufficient to show that
  the vector $[H,xT]$ is non-zero in $\VV(\Su)$ (this automatically proves
  also that $[H,T]\neq 0$).  To this purpose let $H'$ be a handlebody bounding
  $\overline{\Su}$ so that $H'\circ H=S^3$ and $[H',\emptyset]\in \VV'(\Su)$
  the vector it represents.  Then it holds $\langle
  [H',\emptyset],[H,xT]\rangle=\qt(xP_0)\neq 0$.
\end{proof}

\subsection{$r=2$ and the Torsion}\label{sub:r=2torsion}
For $r=2$, the colored invariant of links is related to the
multivariable Alexander polynomial.  This fact first observed by Jun
Murakami 
\cite{jM2}
is exposed with many details in Viro's paper \cite{Vi}.
 Complete formulas for computing Reidemeister torsion from multivariable
Alexander polynomials on a surgery presentation are given by Turaev in
\cite{Tu2}. The Reidemeister torsion was defined in 1935 \cite{Re},
and shown to classify lens spaces.  We show that our invariant for
$r=2$, denoted by $Z_2$, is a canonical normalization of Reidemeister
torsion.  We give the exact relation with Reidemeister-Turaev torsion
involving an additional complex spin structure (Theorem \ref{teo:reidemeister}).  In particular we show directly that this
quantum invariant classifies lens spaces (Proposition \ref{prop:distinguishlens}). The construction in this
paper, when specialized at $r=2$, gives a TQFT extension of abelian
Reidemeister torsion. We could speculate that this picture extend to
the general case for a non commutative torsion to be understood.

If $r=2$ then $q=\e^{\frac{i\pi}2}=i$ and we have
$\qd(\alpha)=\dfrac{-2}{i^\alpha+i^{-\alpha}}=\dfrac{2i}{\qn{\alpha-1}}$.  
\begin{prop}\label{prop:FvsDelta2} For $r=2$, the  invariant $F'$ of $\C\setminus
  (2\Z+1)$-colored trivalent graphs in $S^3$ 
   is related to
  the invariant $\underline \Delta_2$ defined by Viro in \cite{Vi} by
  $$\forall T,\ \ \ 
  F'(T)=(-2i)^{1-v/2}\underline \Delta_2(\overleftarrow T)$$ where $v$ is
  the number of vertices of $T$ and $\overleftarrow T$ is the graph with reversed orientation.
\end{prop}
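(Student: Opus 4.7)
The plan is to show that both $F'$ and $\underline{\Delta}_2$ are multiplicative with respect to a decomposition of any $\cat$-colored trivalent graph into ``elementary pieces,'' and then to match the two invariants on the elementary pieces, absorbing the discrepancy into the prefactor $(-2i)^{1-v/2}$. Since for a trivalent graph one has $3v = 2e$, the exponent $1-v/2$ is an integer, and the natural guess is that each vertex contributes a normalization factor $(-2i)^{-1/2}$ while one global factor of $-2i$ arises from closing off the last edge (i.e.\ from passing from the $(1,1)$-graph invariant $\langle T_V\rangle$ to $F'$ via multiplication by $\qd(\alpha)=\frac{2i}{\{\alpha-1\}}$ at $r=2$).

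First I would recall Viro's definition of $\underline{\Delta}_2$. In \cite{Vi}, $\underline{\Delta}_2$ is constructed as a Reshetikhin--Turaev-type invariant using the $2$-dimensional representations of the unrolled $U_q\slt$ at $q=i$, with normalization chosen so that $\underline{\Delta}_2$ of a knot coincides, up to a standard factor, with the multivariable Alexander polynomial of Murakami. In particular, Viro writes $\underline{\Delta}_2(T)$ as a contraction (in the sense of the usual graphical calculus) of elementary tensors $R$, $R^{-1}$, and intertwiners at trivalent vertices chosen in a specific basis. The orientation reversal $T \mapsto \overleftarrow{T}$ in the statement is accounted for by the fact that Viro's convention for the action of $E, F$ on $V_\alpha$ is obtained from ours by swapping $E\leftrightarrow F$ and $\alpha \leftrightarrow -\alpha$, equivalently by using the dual module $V_\alpha^* \cong V_{-\alpha}$, equivalently by reversing arrows on edges.

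Next I would show the two invariants agree on a generating system. Recall that the category $\cat$ at $r=2$ is semi-simple on $\cat_{\bar\alpha}$ for $\bar\alpha \in \C/2\Z \setminus \Z/2\Z$, and that every $\cat$-colored trivalent graph is isotopic (after cabling edges and inserting coupons for basis intertwiners) to a composition of: (i) cups and caps $\cup_\alpha, \cap_\alpha$, (ii) crossings $c^{\pm 1}_{\alpha,\beta}$, and (iii) trivalent vertices $\iota_{\alpha,\beta}^{\alpha+\beta+k}$, $k\in\{-1,+1\}$. Both $F'$ and $\underline{\Delta}_2$ are invariants of isotopy classes of such graphs in $S^3$, and both can be reduced to a scalar by ``opening'' a generic edge colored by some $V_\alpha$. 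Writing the ratio $F'(T)/\underline{\Delta}_2(\overleftarrow T)$ as a product over the pieces, I would check that (a) crossings contribute trivially to the ratio (since they are built from the same $R$-matrix up to duality), (b) cup/cap contribute trivially after accounting for the orientation reversal, and (c) each trivalent vertex contributes a factor of $(-2i)^{-1/2}$ because the two conventions differ in the normalization of the Clebsch--Gordan-like morphisms $V_\alpha \otimes V_\beta \to V_{\alpha+\beta \pm 1}$ at $r=2$.

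Finally I would pin down the overall constant by evaluating both sides on a single example with $v\neq 0$, most conveniently a theta graph $\Theta_{\alpha,\beta,\gamma}$ with $\alpha+\beta+\gamma \in \{-1,+1\}$: here $v=2$, so the proposed factor is $(-2i)^{0}=1$, and the equality $F'(\Theta_{\alpha,\beta,\gamma}) = \underline{\Delta}_2(\overleftarrow{\Theta_{\alpha,\beta,\gamma}})$ is a direct computation on both sides using the explicit intertwiners and the modified trace formula $F'(\cdot) = \qd(\alpha)\langle T_{V_\alpha}\rangle$ with $\qd(\alpha)= \frac{2i}{\{\alpha-1\}}$. Once the theta case is matched, together with the per-vertex factor derived in the previous step, the formula follows for all trivalent graphs. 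The main obstacle I anticipate is bookkeeping: carefully aligning Viro's sign and normalization conventions (which are fixed so as to recover the classical Alexander polynomial) with ours (fixed by the general construction of $F'$ in \cite{CGP}), and verifying that the orientation reversal $T \mapsto \overleftarrow T$ precisely compensates the difference between $V_\alpha$ and $V_\alpha^* \cong V_{-\alpha}$ at every edge, vertex, and crossing.
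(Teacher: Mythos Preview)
Your approach differs from the paper's and has a gap at step~(c). You assert that each trivalent vertex contributes a universal factor $(-2i)^{-1/2}$ to the ratio $F'(T)/\underline{\Delta}_2(\overleftarrow T)$. But the trivalent intertwiners in both constructions are particular basis vectors of one-dimensional $\Hom$ spaces, fixed once and for all in \cite{CGP} and in \cite{Vi}; the ratio of these two choices is \emph{a priori} a function of the color triple $(\alpha,\beta,\gamma)$, not a constant. A single theta-graph check only determines the product of two such ratios for one fixed triple, and says nothing about other triples. Without that triple-independence your multiplicativity argument does not close: already for a tetrahedron, whose four vertices carry four distinct triples, you would need the product of the four ratios to equal $(-2i)^{-2}$, and nothing in your sketch establishes this. (The square root itself is a warning sign: which branch, and why the same one at every vertex?)

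The paper sidesteps this by never isolating a per-vertex factor. It checks the formula directly on the unknot ($v=0$), the theta ($v=2$), and the tetrahedron ($v=4$), using Viro's explicit values (\cite{Vi},~\S8.1--8.2) and the explicit $6j$-symbol from \cite{CGP},~\S1.2. It then invokes the structural fact, recorded in \cite{Vi}~\S9 and \cite{CGP}~\S1.2, that both invariants obey the same local reduction relations, which express the evaluation of any colored trivalent graph as sums and products of unknots, thetas, and tetrahedra; since these relations are compatible with the prefactor $(-2i)^{1-v/2}$, agreement on the three elementary graphs propagates to all graphs. If you want to salvage your route, the missing ingredient is an explicit comparison of the Clebsch--Gordan bases in \cite{Vi} and \cite{CGP} showing their ratio is genuinely independent of the colors --- but at that point the three closed-graph checks are less work.
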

\begin{proof}
The equality is easily seen to hold for the unknot and for the theta graph (see \cite{Vi} Section 8.1 for the values of $\Delta_2$). For what concerns the tetrahedron, let us fix the notation of a coloring on $T$ as follows: $\epsh{fig5}{12ex}
  \put(-36,10){\ms{j_1}}\put(-28,2){\ms{j_2}}\put(-28,-15){\ms{j_3}}
  \put(-12,-7){\ms{j_4}}\put(-18,14){\ms{j_6}}\put(-8,3){\ms{j_5}}$.
  
In \cite{Vi} only colorings such that the algebraic sum of the colors around each vertex is in $\{\pm 1\}$ were considered. For instance with our notation these conditions read as $j_6+j_1-j_5=\pm1, j_1+j_2-j_3=\pm 1$ and so on. Then there are exactly two vertices in the tetrahedron where the sum is $-1$ and exactly one  edge connecting them which we will call the ``special edge''. According to \cite{Vi}, Section 8.2, the value of $\Delta_2(T)$ is $i^{1+A}-i^{-1-A}$ if $A$ is the color of this edge.  
In our case, the value of the invariant $F'(T)$ is :
$\sjtop{j_1}{j_2}{j_3}{j_4}{j_5}{j_6}$ (an explicit formula was provided in Section 1.2 of \cite{CGP}). In particular it holds: 
$$\sjtop\alpha{\beta-\alpha+1}\beta
{\gamma-\beta+1}\gamma{\gamma-\alpha+1}=\dfrac{\qn{\gamma-\alpha}}{\qn1}=\frac{i^{\alpha-\gamma}-i^{\gamma-\alpha} }{-2i}.$$
If we reverse the orientations of all the edges, with our choice of coloring the special edge is $j_6$ and so its color is $A=-(\gamma-\alpha+1)$ (the minus coming from the arrow reversal). This proves the claim also for the tetrahedral graphs.

To conclude it is sufficient to prove that the invariants have the same characterizing properties, which allow to reduce their computation to products and sums of theta graphs, tetrahedra and unknots. These properties are those specified in Section 9 of \cite{Vi} for $\Delta_2$ and in Section 1.2 of \cite{CGP} for $F'$.  
\end{proof}

\begin{cor}\label{cor:conway} Let $L$ be a framed oriented link in
  $S^3$ with $n$ ordered components colored
  $\alpha_1,\ldots,\alpha_m\in\C\setminus(2\Z+1)$, then the 
  invariant $F'(L)$ for $r=2$ is related to
  the Alexander-Conway function $\nabla(L)$ by
  $$F'(L)=-2i\nabla(L)(i^{1-\alpha_1},\ldots,i^{1-\alpha_m})
  i^{\sum_{j,k=1}^m\frac{\alpha_j\alpha_k-1}{2}\lk_{jk}}$$
  Where $\lk$ is the linking matrix of $L$.
\end{cor}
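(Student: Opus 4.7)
The plan is to reduce to Proposition \ref{prop:FvsDelta2} and then invoke the known relation between Viro's normalized invariant $\underline{\Delta}_2$ and the Conway potential function. Since an oriented framed link $L$ has no coupons, we can regard it as a trivalent graph with $v=0$ vertices; thus Proposition \ref{prop:FvsDelta2} gives
\[
F'(L) \;=\; (-2i)^{1-0/2}\,\underline{\Delta}_2(\overleftarrow{L}) \;=\; -2i\,\underline{\Delta}_2(\overleftarrow{L}),
\]
where the coloring of $\overleftarrow L$ is obtained from that of $L$ by the reversal of orientation of each component. The task is therefore to identify $\underline{\Delta}_2(\overleftarrow{L})$, evaluated at the $\alpha_i$-coloring, with the right-hand side of the claimed formula.

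I would then carry out the following steps. First, recall from \cite{Vi} that $\underline{\Delta}_2$ coincides (up to a universal sign) with a framed, oriented, multivariable version of the Alexander polynomial, in which each component carries a variable and the effect of changing the framing of the $j$\textsuperscript{th} component by $+1$ is multiplication by a monomial determined by the color $\alpha_j$. Second, reversing the orientation of the $j$\textsuperscript{th} component in Viro's setup has the effect of replacing the associated variable $t_j$ by $t_j^{-1}$, while simultaneously the framing correction contributions pick up the corresponding sign change in the linking matrix. Third, the Alexander--Conway function $\nabla(L)$ is the zero-framed, unnormalized multivariable Alexander polynomial satisfying the Conway skein relation; comparing the multiplicative normalizations (the $\qd$-factors in the definition of $F'$ match the Torres-type normalization of $\underline{\Delta}_2$) yields the substitution $t_j = i^{1-\alpha_j}$.

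The framing anomaly is the key computation: a positive Reidemeister-I twist on the $j$\textsuperscript{th} component multiplies $F'$ by the ribbon twist eigenvalue $q^{\alpha_j(\alpha_j-2)/2}|_{r=2} = i^{(\alpha_j^2-1)/2 - 1/2}$, and a change of linking between the $j$\textsuperscript{th} and $k$\textsuperscript{th} components by $1$ multiplies $F'$ by the double-braiding scalar $q^{(\alpha_j-1)(\alpha_k-1) - (r-1)^2}|_{r=2}$, which after collecting terms contributes the off-diagonal factor $i^{(\alpha_j\alpha_k-1)/2}$. Combining the diagonal twist contribution (the $j=k$ terms of the sum, using that $\lk_{jj}$ is the framing) with the off-diagonal double-braiding contribution produces exactly the global exponent $\sum_{j,k}\frac{\alpha_j\alpha_k-1}{2}\lk_{jk}$.

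The main obstacle will be bookkeeping of normalization conventions. Viro's $\underline{\Delta}_2$, the Alexander--Conway function, and the invariant $F'$ all differ by universal factors (signs, powers of $i$, and a shift $t_j \leftrightarrow t_j^{-1}$ due to orientation reversal), and to verify the equality as stated one must chase these conventions through the definitions. The cleanest way is to check the formula on the Hopf link with arbitrary colors and framings (computable directly via $F'$ using Lemma \ref{lem:old71} and via $\nabla$ by the standard formula $\nabla(\text{Hopf}) = \pm(t_1^{1/2}t_2^{1/2}-t_1^{-1/2}t_2^{-1/2})$), and then to propagate to arbitrary links using that both sides transform identically under Kirby/framing moves and Reidemeister-I twists, which is exactly what the exponent $\sum \frac{\alpha_j\alpha_k-1}{2}\lk_{jk}$ encodes.
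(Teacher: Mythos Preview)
Your first step is exactly the paper's: apply Proposition~\ref{prop:FvsDelta2} with $v=0$ to get $F'(L)=-2i\,\underline{\Delta}_2(\overleftarrow{L})$. From there, however, the paper's proof is a single citation: Proposition~8.6.A of Viro~\cite{Vi} already gives the precise relation between $\underline{\Delta}_2$ of a framed colored link and the Alexander--Conway function $\nabla$, with the monomial correction in the linking matrix built in. So the entire corollary is ``Proposition~\ref{prop:FvsDelta2} $+$ \cite[Prop.~8.6.A]{Vi}'' and nothing more.

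Your proposed second half---rederiving Viro's formula by computing twist and double-braiding scalars and then bootstrapping from the Hopf link---is in principle a valid strategy, but as written it has errors. The ribbon twist on $V_{\alpha}$ is $q^{(\alpha^2-(r-1)^2)/2}$ (see the proof of Theorem~\ref{T:torelli}), which for $r=2$ gives $i^{(\alpha_j^2-1)/2}$; your expression $q^{\alpha_j(\alpha_j-2)/2}=i^{(\alpha_j^2-1)/2-1/2}$ does not match this. The off-diagonal contribution is similarly miscomputed. Since the whole point of your approach is that the framing/linking exponents assemble exactly into $\sum_{j,k}\frac{\alpha_j\alpha_k-1}{2}\lk_{jk}$, getting these scalars wrong undermines the argument. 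If you want to avoid citing Viro, you would need to redo these computations carefully; but the paper's route---simply invoking the known result---is both shorter and safer.
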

\begin{proof}
This is a direct consequence of Proposition \ref{prop:FvsDelta2} and of Proposition 8.6.A in \cite{Vi}.
\end{proof}

Let us recall for $r$ even the set $\Hr=\{1-r,\ldots,-1,1,\ldots,r-3,r-1\}$ and
denote by $\Hr^+=\Hr\cap\N$.  Then we have:
\begin{prop}\label{evencase}
  Let $\Omega^+_\alpha=\sum_{k\in\Hr^+}\qd(\alpha+k)V_{\alpha+k}$ then
  $\Omega_\alpha=(\unit\oplus-\sigma^{-1})\otimes\Omega^+_\alpha$
  where the minus sign comes from $\qd(\beta-r)=-\qd(\beta)$.  Hence,
  if $L\cup T$ is a computable presentation of a compatible triple,
  $$F'((L,\lambda\Omega)\cup T)=F'((L,2\lambda\Omega^+)\cup T).$$ 
\end{prop}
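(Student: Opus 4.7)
The plan is to split Proposition~\ref{evencase} into the module-theoretic decomposition and the resulting invariant identity.

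\textbf{Step 1: The decomposition $\Omega_\alpha = (\unit \oplus -\sigma^{-1}) \otimes \Omega^+_\alpha$.} Split the index set $\Hr = \Hr^+ \sqcup (\Hr^+ - r)$, where $\Hr^+ - r = \{k-r : k \in \Hr^+\} = \Hr \cap (-\N)$. The contribution of $k \in \Hr^+$ to $\Omega_\alpha$ is precisely $\Omega^+_\alpha$. For $k \in \Hr^+ - r$, writing $k = k'-r$ with $k'\in\Hr^+$, the identity $2r' = r$ (valid since $r$ is even) combined with the tensor rule $V_\beta \otimes \sigma^\ell = V_{\beta + 2\ell r'}$ yields $V_{\alpha+k'-r} = V_{\alpha+k'} \otimes \sigma^{-1}$. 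It remains to check $\qd(\beta - r) = -\qd(\beta)$, which follows from \eqref{E:Def_qd}: since $q^r = -1$ one has $\qn{\beta-r} = -\qn{\beta}$, and since $q^{r^2} = (q^r)^r = 1$ for $r$ even one has $\qn{r\beta - r^2} = \qn{r\beta}$, so $\qd(\beta-r) = (-1)^{r-1} r(-\qn{\beta})/\qn{r\beta} = -\qd(\beta)$. Summing the two halves gives $\Omega_\alpha = \Omega^+_\alpha - \sigma^{-1}\otimes\Omega^+_\alpha$.

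\textbf{Step 2: The invariant identity.} Substituting this decomposition on each of the $m$ components of $L$ and using multilinearity of $F'$, the left-hand side expands as a sum over subsets $S \subseteq \{1,\ldots,m\}$ with sign $\lambda^m(-1)^{|S|}$, where for $i \in S$ the component $L_i$ carries, via a coupon splitting $V_{\alpha+k}\otimes\sigma^{-1}$ into two parallel strands, an extra $\sigma^{-1}$-colored loop $K_i$ pushed off $L_i$ with the surgery framing. To evaluate each such term I would adapt the argument of Proposition~\ref{P:RR'kernel} from $\sigma$ to $\sigma^{-1}$: the relations $F(\sigma^{-1}\text{-unknot}) = (-1)^{r-1}$ (the quantum dimension of $\C^H_{-r}$ equals $q^{-r(r-1)} = (-1)^{r-1}$) and $F(\sigma^{-1}\text{ over }V_{\overline\beta}) = q^{-2r'\overline\beta} F(\text{uncrossed})$, analogous to \eqref{eq:sigmabraid}, show that unlinking and deleting $K_i$ multiplies $F'$ by $(-1)^{r-1} q^{2r'\coh(K_{i,P})}$ for $K_{i,P}$ a parallel of $K_i$.

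\textbf{Key point.} The heart of the proof is that $\coh(K_{i,P}) = 0 \in \C/2\Z$. Since $K_i$ has the surgery framing of $L_i$, the class of $K_{i,P}$ in $H_1(S^3 \setminus L)$ agrees with the framed longitude $\ell_{L_i}$; and by hypothesis $\coh$ is a cohomology class on the surgered manifold $M\setminus T$, so it must vanish on the meridian of each solid torus attached by the surgery, namely on $\ell_{L_i}$. Hence the removal factor reduces to $(-1)^{r-1} = -1$ (as $r$ is even). The sign $(-1)^{|S|}$ of the expansion therefore cancels against $((-1)^{r-1})^{|S|} = (-1)^{|S|}$ coming from the $|S|$ removals, so every term equals $\lambda^m F'((L,\Omega^+)\cup T)$; summing the $2^m$ subsets gives $F'((L,\lambda\Omega)\cup T) = (2\lambda)^m F'((L,\Omega^+)\cup T) = F'((L,2\lambda\Omega^+)\cup T)$.

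\textbf{Main obstacle.} The one delicate point is verifying $\coh(K_{i,P}) = 0$; this requires tracking framings consistently when one pushes the $\sigma^{-1}$-strand off of $L_i$ and then pushes $K_{i,P}$ off of $K_i$, and invoking the defining property of $\coh$ as a class on $M\setminus T$ rather than merely on $S^3 \setminus (L\cup T)$. Once this cohomological vanishing is in hand, the rest is routine bookkeeping with signs.
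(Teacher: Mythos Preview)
Your proof is correct. The paper does not give a detailed proof of this proposition; the justification is essentially embedded in the statement itself (the remark ``where the minus sign comes from $\qd(\beta-r)=-\qd(\beta)$'' is all the paper offers for Step~1, and the ``Hence'' indicates Step~2 is meant to follow from $\sigma$-equivalence). You have spelled out exactly the argument the authors have in mind.

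Two small remarks that confirm your bookkeeping is sound. First, your concern about the framing of $K_i$ is harmless: for $r$ even one computes directly that $\theta_{\sigma^{\pm1}}=1$, so the writhe of the $\sigma^{-1}$-parallel contributes nothing and only the linking with colored strands matters. Second, when several $K_j$ are present simultaneously (for $|S|>1$), you also need that unlinking $K_i$ from $K_j$ is free; this holds because $\sigma^{-1}\in\cat_{\bar0}$ for $r$ even, so the crossing factor is $q^{-2r'\cdot 0}=1$. With these two checks in place, your identification $\coh(K_{i,P})=\coh(\ell_{L_i})=0$ (using that $\ell_{L_i}$ bounds the meridian disc of the attached surgery solid torus in $M\setminus T$, which is exactly the ``compatible triple'' hypothesis) gives the removal factor $(-1)^{r-1}=-1$ per component in $S$, and the $2^m$ terms collapse as you describe.
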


In the case $r=2$, it is convenient to use 
$$\tilde\Omega_\alpha=\frac{1}{2}\Omega^+_\alpha=\frac{\qd(\alpha+1)}{2}V_{\alpha+1}=\frac{i}{i^\alpha-i^{-\alpha}}V_{\alpha+1}\ .$$
\begin{rem}
  An advantage  in the case $r=2$ is that $\cat_{\wb 1}$ is
  semi-simple with simple objects $\sigma^k\otimes V_0$.  Thus there
  exists a reduced Kirby color $\tilde\Omega_{\wb 1}$ and this allows  to extend the
  class of triples where $Z$ is defined: any triple
  $(M,T,\omega)$ with $\omega\neq0$ admits a computable presentation.
\end{rem}

 In the general formula for the normalized invariant $Z$ we have to
substitute $\lambda=\frac{1}{4}$, $\eta=\frac{1}{2}$, $\dep=i$.
 The normalized invariant of a connected $M=S^3(L)$
containing a colored graph $T$ and cohomology class $\omega$ represented by $\alpha$ and with canonical extended structure is then given by formula \eqref{eq:Zrconnected}:
$$Z_2(M,T,\omega)=\frac{1}{2}i^{-\sigma}F'((L_j,\tilde\Omega_{\alpha_j}),T) \ .
$$

In particular, if $T=\emptyset$ and $L$ has $m$ components, we can use Corollary \ref{cor:conway} and get:
$$ F'\left((L_j,\tilde\Omega_{\alpha_j})\right)=\left(\prod_{j=1}^m\frac{i}{i^{\alpha_j}-i^{-\alpha_j}}\right)
(-2i)\nabla(L)(i^{-\alpha_1},\dots,i^{-\alpha_m})i^{\sum_{j,k=1}^m\frac{(\alpha_j+1)(\alpha_k+1)-1}2\lk_{jk}} \ .
$$
Finally recalling that the Alexander-Conway function satisfies 
$$\nabla(L)(t_1^{-1},\ldots,t_m^{-1})=(-1)^m\nabla(L)(t_1,\ldots,t_m)$$ and observing that $\sum_{j,k} \frac{(\alpha_j+1)(\alpha_k+1)-1}{2}\lk_{jk}=\sum_{j,k} \frac{\alpha_j(\alpha_k+2)}{2}\lk_{jk}$ we get:

\begin{prop}
If $M$ is $3$-manifold with nonzero cohomology class $\omega\in H^1(M,\C/2\Z)$,
given by a computable surgery presentation $(S^3(L),\alpha)$ (each $\alpha_j\in\C/2\Z$ is nonzero)
  then 
$$ Z_2(M,\emptyset,\omega)= i^{-\sigma-m-1}
  \left(\prod_{j=1}^m\frac{1}{i^{\alpha_j}-i^{-\alpha_j}}\right)\nabla(L)(i^{\alpha_1},\dots,i^{\alpha_m})i^{\sum_{j,k=1}^m\frac{\alpha_j(\alpha_k+2)}2\lk_{jk}}\ .$$
\end{prop}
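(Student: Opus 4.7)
The plan is to directly combine the formula $Z_2(M,\emptyset,\omega) = \tfrac{1}{2} i^{-\sigma} F'((L_j,\tilde\Omega_{\alpha_j}))$ from the connected-case expression \eqref{eq:Zrconnected} with the explicit evaluation of $F'$ on a link from Corollary~\ref{cor:conway}. Concretely, I would substitute $\tilde\Omega_{\alpha_j}=\tfrac{i}{i^{\alpha_j}-i^{-\alpha_j}}V_{\alpha_j+1}$ into $F'$, pull out the scalar factor $\prod_j\tfrac{i}{i^{\alpha_j}-i^{-\alpha_j}}$ by multilinearity, and apply Corollary~\ref{cor:conway} to the link $L$ whose components are now colored by $\beta_j=\alpha_j+1$. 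This produces the raw expression
\[
F'\bigl((L_j,\tilde\Omega_{\alpha_j})\bigr)=\Bigl(\prod_{j=1}^m\tfrac{i}{i^{\alpha_j}-i^{-\alpha_j}}\Bigr)(-2i)\,\nabla(L)(i^{-\alpha_1},\dots,i^{-\alpha_m})\,i^{\sum_{j,k}\frac{(\alpha_j+1)(\alpha_k+1)-1}{2}\lk_{jk}},
\]
as already recorded in the paragraph immediately preceding the statement.

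Next I would apply the symmetry $\nabla(L)(t_1^{-1},\dots,t_m^{-1})=(-1)^m\nabla(L)(t_1,\dots,t_m)$ to flip the arguments of $\nabla$ at the cost of a factor $(-1)^m$, and use the noted identity $\sum_{j,k}\tfrac{(\alpha_j+1)(\alpha_k+1)-1}{2}\lk_{jk}=\sum_{j,k}\tfrac{\alpha_j(\alpha_k+2)}{2}\lk_{jk}$ in the exponent. The latter is the only small computation required: expanding gives $(\alpha_j+1)(\alpha_k+1)-1-\alpha_j(\alpha_k+2)=\alpha_k-\alpha_j$, which vanishes after summation because $\lk_{jk}$ is symmetric.

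Finally I would assemble all powers of $i$ and $-1$. The overall scalar coefficient is
\[
\tfrac{1}{2}\cdot i^{-\sigma}\cdot i^m\cdot(-2i)\cdot(-1)^m=-i\cdot(-i)^m\cdot i^{-\sigma},
\]
and using $-i=i^{-1}$ gives $(-i)^m=i^{-m}$, so the prefactor becomes $-i^{1-m}\cdot i^{-\sigma}=i^{3-m-\sigma}=i^{-\sigma-m-1}$ since $i^4=1$. Combining these three ingredients yields exactly the claimed formula for $Z_2(M,\emptyset,\omega)$. There is no conceptual obstacle in this proof; it is purely bookkeeping built on the already-established Corollary~\ref{cor:conway}, the explicit form of $\tilde\Omega_\alpha$, and the two elementary symmetry identities above.
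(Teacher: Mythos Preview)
Your proposal is correct and follows exactly the paper's approach, which is laid out in the paragraph immediately preceding the proposition: compute $F'$ via Corollary~\ref{cor:conway} with colors $\alpha_j+1$, flip the arguments of $\nabla$ using its $(-1)^m$ symmetry, simplify the exponent via the symmetry of $\lk_{jk}$, and collect the powers of $i$. Your bookkeeping of the scalar prefactor is correct and slightly more explicit than the paper's, which simply states the result.
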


The Turaev refined abelian Reidemeister torsion  is defined for a closed $3$-manifold
$M$ endowed with an homomorphism $\phi: H_1(M)\rightarrow \C^*$ and a complex
spin structure \mbox{$\sigma\in Spin^c(M)$} (or Euler structure). To a cohomology class $\omega\in H^1(M,\C/2\Z)$
we associate the homomorphism  $\phi_\omega: H_1(M)\rightarrow \C^*$ defined by the formula
$$\phi_\omega(h)=e^{i\pi<\omega,h>}= i^{2<\omega,h>}\ .$$
 Turaev uses the spin$^c$ structure and the canonical homology orientation
to fix the indeterminacy in Reidemeister torsion. 
Our invariant $Z_2(M,\emptyset,\omega,0)$ is a version of the Reidemeister torsion for the homomorphism given by $\phi_\omega$
which fixes the indeterminacy without any extra choice.
 The theorem below clarifies the relationship between $Z_2(M,\emptyset,\omega,0)$ and the Turaev refined torsion $\tau^{\phi_\omega}(M,\sigma)$, 
$\sigma \in Spin^c(M)$. Here the canonical homology orientation is used.

\begin{theo}\label{teo:reidemeister}
  If $M$ is a closed $3$-manifold equipped with a spin$^{c}$-structure
  $\sigma$ and non trivial cohomology class $\omega\in H^1(M,\C/2\Z)$,
  and $\phi_\omega \in \mathrm{Hom}(H_1(M),\C^*)$ is the corresponding
  homomorphism, then
  $$\tau^{\phi_\omega}(M,\sigma)=
  Z_2(M,\emptyset,\coh)\times i^{1+b_1(M)+4\psi_\sigma (\omega)}\ .$$
\end{theo}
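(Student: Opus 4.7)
The proof plan is to use an explicit surgery presentation to write both sides of the identity as evaluations of the multivariable Alexander--Conway function $\nabla(L)$, and then match the normalizing factors. Concretely, pick a surgery link $L\subset S^3$ with $m$ components representing $M$, let $\lk$ be its linking matrix of signature $\sigma$, and let $\alpha_j\in\C/2\Z\setminus\Z/2\Z$ be the value of $\coh$ on the meridian of $L_j$. (If $\coh$ is not in this ``computable'' form we may first perturb the surgery presentation, using that the equality is holomorphic in the $\alpha_j$ by the arguments of \cite{CGP}, or use the skein modifications introduced in Section~\ref{sec:invariants} together with the fact that $\nabla$ extends multiplicatively under such modifications.) On the quantum side, the formula derived just before the statement expresses $Z_2(M,\emptyset,\coh)$ as the product of $\nabla(L)(i^{\alpha_1},\dots,i^{\alpha_m})$, a denominator $\prod_j(i^{\alpha_j}-i^{-\alpha_j})^{-1}$, and a unit of the form $i^{N(L,\alpha,\sigma)}$ with $N(L,\alpha,\sigma)=-\sigma-m-1+\sum_{j,k}\tfrac{\alpha_j(\alpha_k+2)}{2}\lk_{jk}$.

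On the torsion side, Turaev's surgery formula for the refined abelian Reidemeister torsion (see \cite{Tu2}) expresses $\tau^{\phi_\omega}(M,\sigma)$ as $\nabla(L)(\phi_1,\dots,\phi_m)$ divided by $\prod_j(\phi_j-1)$, multiplied by a sign/unit term depending on the chosen $\mathrm{Spin}^c$-structure $\sigma$ and on the canonical homology orientation, where $\phi_j=\phi_\omega(\mu_j)=i^{2\alpha_j}$. The factorization $\phi_j-1=i^{\alpha_j}(i^{\alpha_j}-i^{-\alpha_j})$ shows that the denominators of the two formulas agree up to an overall factor $i^{\sum_j\alpha_j}$, and the Alexander--Conway factors match identically. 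Thus the theorem reduces to checking an equality of units in $i^{\Z}$ (once both are viewed in $\C^*/\pm1$, and then lifted via canonical homology orientations and $\mathrm{Spin}^c$-structures).

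The remaining task is therefore purely combinatorial: show that
\[
i^{\,1+b_1(M)+4\psi_\sigma(\coh)}\;=\;\frac{\tau^{\phi_\coh}(M,\sigma)}{Z_2(M,\emptyset,\coh)}
\]
is a Gauss-sum-type correction, equal to the quotient between Turaev's $\mathrm{Spin}^c$-normalization on the surgery presentation and our signature-defect normalization $i^{-\sigma}$ together with the quadratic form $\sum_{j,k}\tfrac{\alpha_j(\alpha_k+2)}{2}\lk_{jk}$. Using the standard description of $\mathrm{Spin}^c$-structures on $S^3(L)$ by characteristic sublinks (or equivalently by integer liftings $c_j$ of the framings), the $\mathrm{Spin}^c$-correction in Turaev's formula is a power of $i$ whose exponent is an explicit quadratic expression in the $\alpha_j$, the $c_j$, and the entries of $\lk$. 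Expanding both sides and using the congruence $\sigma\equiv\mathrm{tr}(\lk)\pmod 2$ together with the signature formula $\sigma=\mathrm{sign}(\lk)$, the exponents agree modulo $4$; the residual additive constant is pinned down on a single model example (for instance $M=L(p,1)$ with $\coh$ of order $p$, where both sides can be computed directly from the definition of $\psi_\sigma$ and of the torsion of a lens space).

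The main obstacle I expect is this last bookkeeping: Turaev's normalization mixes the $\mathrm{Spin}^c$-structure with the canonical homology orientation through a Gauss sum whose precise shape must be compared to our purely signature-theoretic correction $\dep^n=i^n$ (Equation~\eqref{eq:delta}) inherited from the framing anomaly. Identifying the $\mathrm{Spin}^c$-contribution as exactly $4\psi_\sigma(\coh)$ requires interpreting $\psi_\sigma$ as the natural quadratic refinement associated to $\sigma$ of the linking form on torsion classes of $H^2(M;\Z)$ and checking equivariance under changes of surgery presentation (Kirby moves), where the additivity of the signature under blow-ups and Wall's non-additivity term must be shown to cancel the corresponding change in $\psi_\sigma$.
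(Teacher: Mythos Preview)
Your overall strategy matches the paper's: compare both sides on a computable surgery presentation via the Alexander--Conway function $\nabla(L)$, and reduce to matching the unit normalizations. The paper indeed uses the proposition immediately preceding the theorem for $Z_2$ and Turaev's surgery formula for $\tau^{\phi_\omega}(M,\sigma)$, and the $\nabla$-factors and the denominators $\prod_j(i^{\alpha_j}-i^{-\alpha_j})^{-1}$ line up as you say. (A small point: Turaev's formula already has $t_j^{1/2}-t_j^{-1/2}=i^{\alpha_j}-i^{-\alpha_j}$ in the denominator, so no factorization of $\phi_j-1$ is needed.)

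Where your plan drifts from the paper is the last step, the identification of the leftover unit with $i^{1+b_1(M)+4\psi_\sigma(\omega)}$. You propose to expand both exponents, match them ``modulo $4$'', pin down a residual constant on a lens space, and then verify stability under Kirby moves. This detour is unnecessary and somewhat misdirected: both $\tau^{\phi_\omega}(M,\sigma)$ and $Z_2$ are already invariants, so once surgery formulas are written for each side (and for $\psi_\sigma$), comparing them is pure algebra on a fixed presentation --- no example and no Kirby argument is needed. The paper does exactly this, and the ingredient you are missing is twofold:
\begin{itemize}
\item Turaev's formula involves the \emph{charges} $k_j$ parametrizing $\sigma$; the Deloup--Massuyeau correspondence $k_j-1=-s_j+\sum_k\lk_{jk}$ converts these into the Chern vector $(s_j)$.
\item After this substitution the leftover exponent of $i$ becomes $1+(m-b_+-b_-)-\tfrac12\sum_{j,k}\alpha_j\alpha_k\lk_{jk}-\sum_j\alpha_j s_j$. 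The parenthesis is $b_1(M)$, and the remaining quadratic-plus-linear form in the $\alpha_j$ is exactly $4\psi_\sigma(\omega)$ by the surgery formula for $\psi_\sigma$ stated just after the theorem, namely $\psi_\sigma(\omega)=-\tfrac18\,\tilde w\!\cdot\!\tilde w-\tfrac14\,\tilde w\!\cdot\! c(\tilde\sigma)$, read off on the linking form of $L$.
\end{itemize}
So the whole ``bookkeeping'' you anticipate as the main obstacle is resolved by quoting this Deloup--Massuyeau surgery description of $\psi_\sigma$; with it in hand, the proof is a two-line computation rather than an inductive verification.
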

Here $b_1(M)$ is the first Betti number and
the map $\psi_\sigma: H^1(M,\C/2\Z)\rightarrow \C/\Z$ is a version of the  quadratic linking function $\phi_\sigma :H_2(M,\Q/\Z)\rightarrow \Q/\Z$
defined by Deloup-Massuyeau in \cite{DM2} (Definition 2.2). The definition extends to a function $\phi_\sigma :H_2(M,\C/\Z)\rightarrow \C/\Z$, and we define
$\psi_\sigma$ by the formula 
$$\psi_\sigma(\omega)=\phi_{\overline{\sigma}}\left(\frac{1}{2}D\omega\right)\ .$$
Here $\sigma\mapsto \overline \sigma$ is the natural involution on complex spin structures, $D:H^1(M;\C/2\Z)\to H_2(M;\C/2\Z)$ is Poincar\'e duality and $\times \frac{1}{2}$ is the map induced by the natural ``division by two'' isomorphism from $\C/2\Z$ to $\C/\Z$. 

 If we are given a surgery presentation of $M$ with corresponding $4$-manifold $W$, a class
$\tilde w\in  H^2(W,M;\C)$ which is a lift to $\C$-coefficients of $w=\delta \omega \in H^2(W,M;\C/2\Z)$ (where $\delta:H^1(M;\C/2\Z)\to H^2(W,M;\C/2\Z)$ is part of the exact sequence of the pair $(W,M)$), and we let
$c(\tilde \sigma)\in H^2(W,\Z)$ be the Chern class of a complex spin structure 
$\tilde \sigma $ on $W$ whose restriction to $M$ is $\sigma$,
 then 
$\psi_\sigma(\omega)$ is given by the following formula
\begin{equation}
\label{quadratic}
\psi_\sigma(\omega)=-\frac{1}{8}\tilde w\cdot \tilde w -\frac{1}{4}\tilde w\cdot  c(\tilde \sigma)\in \C/\Z
\end{equation}
Invariance of this formula follows from \cite[Lemma 2.6]{DM2}.

\begin{proof}
  We consider a computable presentation with $m$ components:
  $M=S^3(L)$, $\omega=\omega_\alpha$ with $\alpha_j\in\C/2\Z$ non zero
  for $1\leq j\leq m$. The Turaev surgery formula for the torsion is
  given by the formula \cite[VIII.2,2b]{Tu2}
  $$\tau^{\phi_\omega}(M,\sigma)=(-1)^{b_1(M)+m}\mathrm{det}_0(B_L)\prod_{j=1}^m\frac{t_j^{\frac{k_j-1}{2}} }
{t_j^{\frac{1}{2}}-t_j^{-\frac{1}{2}}}
\nabla(L)(t_1^{\frac{1}{2}},\dots,t_m^{\frac{1}{2}})\ .$$ Here
$B_L=\mathrm{lk}$ is the linking matrix, and letting $b_+$ ($b_-$) the
number of its positive (resp. negative) eigenvalues we have
$\mathrm{det}_0(B_L)=(-1)^{b_-}$; moreover we have
$t_j^{\frac{1}{2}}={i^{\alpha_j}}$, and the $k_j$, $1\leq j\leq m$,
are the charges (combinatorial data described in \cite{Tu2}, VI 2.2,
to identify $\sigma$). Complex spin structures are alternatively
described by Chern vectors $s_j$ \cite{DM}, and the correspondence is
given by the formula \cite[3.1]{DM} $$k_j-1=-s_j+\sum_k
\mathrm{lk}_{jk}\ , \ 1\leq j\leq m\ .$$
$$\tau^{\phi_\omega}(M,\sigma)=(-1)^{b_+}\prod_{j=1}^m
\frac{1 }{i^{\alpha_j}-i^{-\alpha_j}}
 \nabla(L)(i^{\alpha_1},\dots,i^{\alpha_m})\times i^{-\sum_j\alpha_js_j+\sum_{j,k}\alpha_j\mathrm{lk}_{jk}}\ .$$
$$\tau^{\phi_\omega}(M,\sigma)=Z_2(M,\emptyset,\alpha)
\times i^{1+m-b_+-b_--\sum_{j,k=1}^m\frac{\alpha_j\alpha_k}2\lk_{jk} -\sum_j\alpha_js_j}
\ .$$
We have $b_1(M)=m-b_+-b_-$. 
Identification of the last factor is obtained from formula \ref{quadratic}.
\end{proof}

\begin{prop}\label{prop:distinguishlens} Let $p>q>0$ be two coprime integers and consider the lens
  space $L(p,q)$ equipped with a non zero cohomology class $\coh$.  Then
  $$Z_2(L(p,q),\coh)=\frac{(-1)^{k}\e^{\frac{i\pi k^2q}{p}}}
  {4i\sin\frac{\pi kq}p\sin\frac{\pi k}p}$$ for some $k\in\Z\setminus 
  p\Z$ depending on $\coh$.  Hence the invariants $Z_2$ distinguish
  the lens spaces.
\end{prop}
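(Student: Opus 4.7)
The plan is to compute $Z_2(L(p,q),\omega)$ directly from a surgery presentation of $L(p,q)$ and then use the explicit formula to verify distinguishability. First I would write $L(p,q)$ as integer surgery on a chain of $n$ Hopf-linked unknots $U_1,\ldots,U_n$ with framings $a_1,\ldots,a_n$ obtained from the continued fraction expansion $p/q=[a_1,\ldots,a_n]$; this converts $p/q$-surgery on the unknot into a link $L\subset S^3$ whose linking matrix has determinant $\pm p$, so $H_1(L(p,q))\cong\Z/p\Z$ and a nonzero cohomology class corresponds to some integer $k\in\{1,\ldots,p-1\}$.

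Next, for a given $k$, I would identify the induced $\C/2\Z$-coloring $\alpha_i=\omega(m_i)\in\C/2\Z$ of the meridians $m_i$ of $U_i$: these are determined by solving the linear system encoded by the linking matrix, yielding $\alpha_i$ expressible as explicit rational multiples of $2k/p$ involving convergents of the continued fraction. Since we are in the $r=2$ case and each $\alpha_i$ is nonzero in $\C/2\Z$, I may use the reduced Kirby color $\tilde\Omega_{\alpha_i}=\frac{i}{i^{\alpha_i}-i^{-\alpha_i}}V_{\alpha_i+1}$ from the remark following Proposition \ref{evencase}, so that the surgery presentation is computable. Then by formula \eqref{eq:Zrconnected},
$$Z_2(L(p,q),\omega)=\tfrac{1}{2}\,i^{-\sigma}\,F'\bigl((U_i,\tilde\Omega_{\alpha_i})_{i=1}^{n}\bigr),$$
where $\sigma$ is the signature of the linking matrix of the chain.

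The computational heart is evaluating $F'$ on the chain. I would proceed inductively from one end: each adjacent pair of $V_{\alpha_i+1}$-colored components forms a Hopf sublink to which Lemma \ref{lem:old71} applies, allowing the absorption of one component into its neighbor at the cost of an explicit scalar depending on $\alpha_i$, $\alpha_{i+1}$, and the framings. Combining these scalars with the twist factors $\theta_{\alpha_i+1}^{a_i}=i^{(\alpha_i+1)^2 a_i}$ and with the normalizing factors $\frac{i}{i^{\alpha_i}-i^{-\alpha_i}}$, one obtains a product which (using continued fraction identities relating the $a_i$ and the convergents to the pair $(p,q)$) telescopes to the claimed closed form $\frac{(-1)^{k}e^{i\pi k^2 q/p}}{4i\sin(\pi kq/p)\sin(\pi k/p)}$. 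The power of $i^{-\sigma}$ combines with Turaev's signature-weight convention to fix the phase. A cleaner alternative, which I would use to cross-check, is to invoke Theorem \ref{teo:reidemeister}: the classical closed form of the Reidemeister--Turaev torsion of $L(p,q)$ is a product of two cyclotomic factors $(i^{\alpha}-i^{-\alpha})^{-1}(i^{\alpha q'}-i^{-\alpha q'})^{-1}$ with $q'q\equiv1\pmod p$, and the discrepancy is exactly the phase $i^{1+b_1+4\psi_\sigma(\omega)}=i^{1+4\psi_\sigma(\omega)}$ which one computes for lens spaces from \eqref{quadratic}.

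For the distinguishing statement, given the explicit formula, the multiset $\{|Z_2(L(p,q),\omega_k)|:k=1,\ldots,p-1\}$ determines $p$ (as its cardinality) and the unordered multiset $\{\sin(\pi k/p)\sin(\pi kq/p):k\}$; this multiset in turn pins down $q\bmod p$ up to the involutions $q\mapsto q^{-1}$ (reindexing by $k\mapsto kq^{-1}$) and $q\mapsto -q$ (change of orientation, visible also in the phase), which is precisely the Reidemeister classification of lens spaces. I expect the main technical obstacle to be the telescoping of the $F'$-product along the chain into the single-factor expression; for this reason the torsion-based route is attractive, reducing the remaining task to the computation of $\psi_\sigma(\omega)$ from a $4$-manifold $W$ bounding $L(p,q)$ built from the same chain, which is a finite combinatorial calculation on the intersection form.
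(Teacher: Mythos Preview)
Your computational setup is the same as the paper's: present $L(p,q)$ as surgery on the chain link $C(a_1,\ldots,a_n)$ coming from a continued fraction expansion, solve $\vec\omega=L^{-1}(2k,0,\ldots,0)^T$ for the induced coloring, and evaluate $F'$ using that $C$ is a connected sum of Hopf links. One small correction: Lemma~\ref{lem:old71} is about a $0$-framed Kirby-colored meridian encircling two strands, so it does not directly apply to the framed $V_{\alpha_i+1}$-colored Hopf sublinks of the chain. The paper instead uses the explicit Hopf-link value (from \cite{CGP}) together with the twist to write $F'$ as $\qd(\omega_1+1)\qd(\omega_n+1)\prod_j i^{a_j((\omega_j+1)^2-1)/2}\prod_j(-2)i^{(\omega_j+1)(\omega_{j+1}+1)}$, which is exactly $\exp\bigl(\tfrac{i\pi}{4}\,{}^t(\vec\omega+\vec u)L(\vec\omega+\vec u)\bigr)$ up to elementary factors; the telescoping you anticipate then reduces to the single quadratic form identity ${}^t\vec\omega L(\vec\omega+2\vec u)=2k(2kq/p+2)$, using only $\omega_1=2kq/p$ and $L\vec\omega=(2k,0,\ldots,0)^T$. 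Your torsion-based cross-check via Theorem~\ref{teo:reidemeister} is legitimate and independent.

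The genuine gap is in the distinguishing step. Your assertion that the multiset $\{\sin(\pi k/p)\sin(\pi kq/p):k\}$ ``pins down $q\bmod p$ up to the involutions $q\mapsto q^{-1}$ and $q\mapsto -q$'' is exactly the nontrivial number-theoretic content of the argument, and it is not obvious. The paper supplies this via Franz's independence lemma: if $|Z_2(L(p,q),\omega_k)|=|Z_2(L(p,q'),\omega_{ck})|$ for all $k$ (some fixed $c\in(\Z/p\Z)^\times$), one obtains $\prod_{j\in(\Z/p\Z)^\times}(\zeta^j-1)^{a(j)}=1$ for all $p$th roots $\zeta\neq1$, with $a(j)$ the difference of multiplicities of $j$ in $(1,-1,q,-q)$ and $(c,-c,cq',-cq')$; Franz forces $a\equiv0$, so these two quadruples agree as multisets. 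Only then does the phase $e^{i\pi k^2q/p}$ (i.e.\ $q\equiv c^2q'\bmod p$) cut the remaining ambiguity down to $q'=q$ or $qq'\equiv1\bmod p$, both realized by orientation-preserving diffeomorphisms. Without invoking Franz (or an equivalent), your multiset claim is unjustified; citing ``the Reidemeister classification'' here is circular, since that classification is itself proved through Franz's lemma.
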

\begin{proof}
  There exists integers $a_i\ge2$ such that $\displaystyle{\frac
    pq=\brk{a_1,\ldots,a_n}= a_1-\frac1{a_2-\cdots
      \frac1{a_{n-1}-\frac1{a_n}}}}$ ($a_i$ is the smallest integer
  greater that $\brk{a_i,\ldots,a_n}$).  Let $L= \ms{\begin{pmatrix}
      a_1&1&&0\\
      1&a_2&\ddots\\
      &\ddots&\ddots&1\\
      0&&1&a_n
    \end{pmatrix}}$ then $L(p,q)$ is presented by surgery on the chain
  link $C=C(a_1,\ldots,a_n)$ whose linking matrix is given by $L$ ($C$
  is a connected sum of framed Hopf links see \cite{PS}).

  For $k\in \Z\setminus p\Z$, let $\vec\coh=L^{-1} \ms{\begin{pmatrix}
      2k\\0\\\vdots\\0
    \end{pmatrix}}$ and $\vec u=\ms{\begin{pmatrix} 1\\\vdots\\1
    \end{pmatrix}}$.  Then the coordinates of $\coh$ satisfy
  $\coh_1=2k\frac qp$
  and $\coh_n=2k\frac{(-1)^{n-1}}p$ (the vector
  $\coh$ represents all cohomology class for the different values of
  $k$).  Indeed if one defines $c_{n+1}=0$, $c_n=1$ and recursively
  $c_i=-a_ic_{i+1}-c_{i+2}$ for $i=0\cdots n-1$, then $(-1)^ic_{n-i}$
  is an increasing sequence of integers, $\gcd(c_i,c_{i+1})=1$ and
  $\frac{c_{i-1}}{c_i}=-\brk{a_i,\ldots,a_n}$.  Thus $c_0=(-1)^np$,
  $c_1=(-1)^{n-1}q$ and finally $\coh_i=(-1)^{n-1}\frac{2k}p c_i$.
  $Z_2(L(p,q),\coh)=\frac{1}{2}i^{-\sigma}F'((C,\tilde\Omega_{\coh_j})).$
  $L$ is positive definite and so $\sigma=n$.  Now the link being a
  connected sum of Hopf links, its invariant is easy to compute (see
  \cite{CGP}) and thus
  \begin{align*}
    Z_2(L(p,q),\coh)&=\frac{1}{2^{n+1}}i^{-n}\qd(\coh_1+1)\qd(\coh_n+1)
  \prod_{j=1}^ni^{\frac{a_j}2((\coh_j+1)^2-1)}
  \prod_{j=1}^{n-1}-2i^{(\coh_j+1)(\coh_{j+1}+1)}\\
  &=\frac{-4i^{-n}}{2^{n+1}\qn{\coh_1}\qn{\coh_n}}(-2)^{n-1}
  \exp\bp{\frac {i\pi}4\bp{^t(\vec\coh+\vec u)L(\vec\coh+\vec u)-\sum_ia_i}}\\
  &=\frac{i^{n}}{\qn{\coh_1}\qn{\coh_n}}
  \exp\bp{\frac {i\pi}4\bp{^t(\vec\coh+\vec u)L(\vec\coh+\vec u)
    -{^t\vec u}L\vec u+2(n-1)}}\\
  &=\frac{i^{2n-1}}{\qn{\coh_1}\qn{\coh_n}}
  \exp\bp{\frac {i\pi}4\bp{^t\vec\coh L(\vec\coh+2\vec u)}}\\
  &=\frac{(-1)^{n-1}i}{\qn{2k\frac qp}\qn{2k\frac{(-1)^{n-1}}p}}
  \exp\bp{\frac {i\pi}4\bp{{2k}(2k\frac qp+2)}}\\
  &=\frac{(-1)^{k}i}{\qn{\frac {2kq}p}\qn{\frac{2k}p}}
  \exp\bp{\frac {i\pi k^2q}p}\\
  \end{align*}
  The Reidemeister-Franz proof of the classification of lens spaces using the
  torsion (see also \cite{Ma}) can then
  be applied.  Recall Franz's lemma:   
  \begin{lemma}[Franz, \cite{Franz}]\label{lem:franz}
    Let $p\geq 1$ be an integer and let $\Z^\times_p$ be the multiplicative
    group of invertible elements of $\Z/p\Z$. Let $a : \Z^\times_p \to \Z$ be
    a map such that
    \begin{enumerate}
    \item $\sum_{ j \in \Z^\times_p} a ( j ) = 0$,
    \item $\forall j \in \Z^\times_p , a(j) = a(-j)$,
    \item $\prod_{j\in \Z_p^\times}(\zeta^j -1)^{a(j)} = 1$ for any $p^{th}$
      root of unity $\zeta\neq 1$.
    \end{enumerate}
    Then, $a(j) = 0$ for all $j \in \Z_p^{^\times}$.
  \end{lemma}
    Now, suppose that for some $q'$ coprime with $p$ there exists 
    an invertible
    $c\in \Z/p\Z$ such that 
    \begin{equation}\label{eq:franzreidemeiseter}
    \frac{(-1)^{k}\e^{\frac{i\pi k^2q}{p}}}
  {4i\sin\frac{\pi kq}p\sin\frac{\pi k}p}=\frac{(-1)^{ck}\e^{\frac{i\pi c^2k^2q'}{p}}}
  {4i\sin\frac{\pi c kq'}p\sin\frac{\pi ck}p},\ \forall k\in \Z\setminus p\Z.
    \end{equation}
    Taking the absolute values and setting $\zeta=\exp(\frac{2i\pi k}{p})$ we get 
   $$(1-\zeta^q)(1-\zeta)(1-\zeta^{-q})(1-\zeta^{-1})=(1-\zeta^{cq'})(1-\zeta^{c})(1-\zeta^{-cq'})(1-\zeta^{-c})
   $$ 
   which holds for all $p^{th}$-roots of unity. 
   This, 
   implies that the sets $\{1,-1,q,-q\}$ and $\{c,-c,cq',-cq'\}$ must coincide, thus leaving $8$ possible cases. Indeed letting for each $j\in \Z_p^{\times}$ $m(j)$ and $m'(j)$ be the number of times $j$ appears in the sequence $(1,-1,q,-q)$ and $(c,-c,cq',-cq')$ respectively, it can be easily verified that $a(j)=m(j)-m'(j)$ satisfies the hypotheses of Lemma \ref{lem:franz}. Thus $m(j)=m'(j)$ and the two sequences coincide up to permutation.
   Now, analyzing the argument mod $\pi$ of Equation
   \eqref{eq:franzreidemeiseter}, we have $q=c^2q'$ mod $p$.   Thus modulo
   $p$, either
   $c^2=1$ and $q=q'$ or $c^2=q^2$ and $qq'=1$.
   Finally, the isotopy between the chain links $C(a_1,\ldots,a_n)$
   and $C(a_n,\ldots,a_1)$ implies that $L(p,q)\simeq L(p,q')$ through
   an orientation preserving diffeomorphism when $qq'=1$ mod $p$.
  \end{proof}
  
\subsection{The space $\V(S^1\times S^1)$ in the case
  $r=2$.}\label{sub:torus2}
In this subsection we give a basis for $\V(S^1\times S^1,\emptyset, \coh, \La)$ when $r=2$ and $\coh$ is any cohomology class and $\La$ any Lagrangian.  We use this basis in the next subsection when considering the action of the mapping class group on $\V(S^1\times S^1,)$.

As explained in the preceding subsection, the case $r=2$ is specific
for many reasons. One is that the projective module $P_{r-1}=P_1$
coincides with both $V_0$ and $S_1$; also the projective module  $P_0$ is isomorphic to $V_0\otimes V_0$.  The basis of
$P_0$ is given by vectors $v_0^L,v_2^U, v_0^R,v_{-2}^D$ where 
$Hv_i^X=iv_i^X$ for $X\in \{U,L,R,D\}$,  $E(v_0^L)=v_2^U, F(v_0^L)=v_{-2}^D, E(v_{-2}^D)=v_0^R$,
$F(v_2^U)=v^R_0$ and the action of $E$ and $F$ on all other vectors is zero (see the Appendix).  We leave the following as
an exercise: $$P_0\otimes V_0=(\sigma\otimes V_0)\oplus V_0\oplus
(\sigma^{-1}\otimes V_0),\ {\rm and}\ P_0\otimes P_0=(\sigma\otimes
P_0)\oplus (\sigma^{-1}\otimes P_0)\oplus P_0\oplus P_0.$$ Observe
that $P_0$ has a nilpotent endomorphism $x$ of order $2$ uniquely
defined by $$x(v_0^L)=v^R_0.$$ Finally, $P_0$ generates the category of
projective modules in $\cat_{\wb 0}$.  
For any module $M\in
\cat$ let $T_M$ be the oriented solid torus whose core is a framed
oriented circle colored by $M$; furthermore we shall denote $T_{xP_0}$
the solid torus whose core is colored by $P_0$ and contains a coupon
colored by the nilpotent endomorphism $x:P_0\to P_0$.
\begin{prop}\label{prop:torus0}
Let $r=2$.   
Let $\Su=(S^1\times S^1, \emptyset, \coh, \La)$ be a decorated surface where 
$\coh\in H^1(\Su;\C/2\Z)$ is any
  cohomology class.  If there exists a simple closed curve $m\subset
  \Su$ such that $\alpha:=\coh(m)\notin\Z/2\Z$ then $\V(\Su)$ is
  spanned by $[T_{V_\alpha}]$ where the meridian of $T$ is $m$.  If
  instead $\coh=0$ and $m\subset \Su$ is an oriented non-trivial
  simple closed curve then $\V(\Su)$ is spanned by $[T_{P_0}],
  [T_{xP_0}]$ where $T$ is the solid torus whose meridian is $m$.
  Moreover, $\V'(\Su)$ is spanned by $[T'_{\emptyset}],
  [ST_{\emptyset}]$ where $T'_{\emptyset}$ is an empty, negatively
  oriented copy of $T$ and $ST'_{\emptyset}$ is obtained from
  $T'_{\emptyset}$ by changing the identification of its boundary with
  $S^1\times S^1$ via the map $S(\theta,\theta')=(-\theta',\theta)$.
\end{prop}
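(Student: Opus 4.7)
For the non-integral case, the result is a direct specialization of Proposition~\ref{P:torusccase}(2) to $r=2$: one has $\Hr^+=\{1\}$, so $\V(\Su)$ is one-dimensional and spanned by the solid torus with meridian $m$ whose core is coloured by a suitable lift of $V_\alpha$.

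For $\coh=0$, my plan is to combine Theorem~\ref{teo:skeinsurjects} (which gives a surjection $\Skein(T,\Su)\twoheadrightarrow\V(\Su)$ for any solid torus $T$ bounding $\Su$) with Proposition~\ref{P:Skein-finite} specialized to genus one, whose proof shows that $\Skein(T,\Su)$ is spanned by isotopy classes of the core circle of $T$ coloured by an indecomposable projective of $\cat_{\wb 0}$ and carrying a coupon filled by an endomorphism. At $r=2$ the Appendix identifies these projectives as $\{P_0\otimes\sigma^n:n\in\Z\}$, with $\End_\cat(P_0)=\C\,\Id\oplus\C\,x$ for the prescribed nilpotent $x$. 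I would then absorb the $\sigma$-powers by the local skein identification of a strand coloured by $P_0\otimes\sigma$ with two parallel strands coloured by $P_0$ and $\sigma$, followed by the $\sigma$-equivalence of Proposition~\ref{P:RR'kernel}: since $r=2$ (so $(-1)^{r-1}=-1$) and $\coh=0$ (so $q^{-2r'\coh(K_P)}=1$), this yields $[T_{P_0\otimes\sigma^n}]=(-1)^n[T_{P_0}]$ and similarly with the $x$-coupon. This reduces the generating set of $\V(\Su)$ to $\{[T_{P_0}],[T_{xP_0}]\}$.

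For $\V'(\Su)$, the admissibility clause of Definition~\ref{D:Cobordisms} is vacuous because it restricts only the components of a cobordism disjoint from the \emph{source} surface, of which there are none in a cobordism $\Su\to\emptyset$. Hence the empty solid torus is a legitimate representative, and a parallel (and simpler, since no projective colour need be inserted) generation argument via $2$-surgeries and isotopy shows $\V'(\Su)$ is spanned by empty solid tori with varying boundary parametrizations. Since $\V'(\Su)$ is dual to $\V(\Su)$ (hence of dimension at most two by the previous paragraph), it suffices to exhibit two linearly independent such vectors, for which I take $[T'_\emptyset]$ (meridian $=m$) and $[ST'_\emptyset]$ (meridian $=S(m)$).

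The main obstacle is then to verify non-degeneracy of the $2\times 2$ pairing matrix
\[
\bigl(\langle[N],[M]\rangle_\Su\bigr)_{N\in\{T'_\emptyset,ST'_\emptyset\},\,M\in\{T_{P_0},T_{xP_0}\}},
\]
which simultaneously establishes linear independence and spanning of both sets. Each entry is $Z_2$ of either $S^2\times S^1$ or $S^3$ (depending on whether the meridians of the two solid tori match under the gluing), equipped with a $P_0$-coloured longitudinal knot possibly carrying the nilpotent coupon and with trivial cohomology class. Evaluation via Equation~\eqref{eq:Zrconnected} and the extension of $F'$ to projective colours reduces each entry to a multiple of the modified trace: crucially, $\qt(P_0,\Id_{P_0})=0$ (since $P_0$ has ordinary quantum dimension zero at $r=2$) while $\qt(P_0,x)\neq 0$, as recorded in the Appendix. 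Careful bookkeeping of framings, signature-defects, and the surgery cohomology class should then produce an anti-diagonal (hence non-degenerate) matrix, completing the proof.
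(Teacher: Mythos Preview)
Your spanning arguments for $\V(\Su)$ are essentially those of the paper and are fine (the paper glosses over the $\sigma$-equivalence step you spell out). The genuine gap is in the pairing computation.

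You assert that all four entries of the $2\times 2$ matrix reduce to multiples of the modified trace $\qt_{P_0}$. This is correct only for the two $S^3$ entries (gluing $ST'_\emptyset$ with $T_{P_0}$ or $T_{xP_0}$), where indeed $\langle ST'_\emptyset,T_{P_0}\rangle=\eta\,\qt_{P_0}(\Id)=0$ and $\langle ST'_\emptyset,T_{xP_0}\rangle=\eta\,\qt_{P_0}(x)=-\tfrac12$. For the two $S^2\times S^1$ entries (gluing $T'_\emptyset$), the surgery presentation is a Hopf link whose $0$-framed component has $g_\coh=0$; this is \emph{not} computable in the sense of Definition~\ref{def:adm}, so Equation~\eqref{eq:Zrconnected} does not apply directly and the value is not simply $\qt_{P_0}$ of the coupon. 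Indeed, were your claim literally true, the entire first column of the matrix (both entries proportional to $\qt_{P_0}(\Id)=0$) would vanish, making the matrix singular---so your own reasoning would collapse rather than yield an anti-diagonal matrix.

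The paper handles the $S^2\times S^1$ entries by a mechanism special to $r=2$: the isomorphism $P_0\cong V_0\otimes V_0^*$ replaces the $P_0$-coloured longitude by two anti-parallel $V_0$-strands, and then the $1$-surgery formula (Proposition~\ref{P:1-surg}) cuts $S^2\times S^1$ open to $S^3$ with a $V_0$-unknot, giving $\langle T'_\emptyset,T_{P_0}\rangle=1$. For the $x$-coupon entry one first invokes Lemma~\ref{lem:nilpotentendo} to trade the nilpotent coupon for an encircling meridian, and the same $1$-surgery reduction then yields $\langle T'_\emptyset,T_{xP_0}\rangle=0$. The resulting matrix is therefore \emph{diagonal} with entries $1,-\tfrac12$, not anti-diagonal. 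This $1$-surgery step---converting the non-computable $S^2\times S^1$ invariant into an $S^3$ invariant via $P_0\cong V_0\otimes V_0^*$---is the idea your sketch lacks.
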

\begin{proof}
  The first case is a special case of Proposition \ref{P:torusccase}.  
 In the
  second case 
  we claim that the
  vectors $[T_{P_0}]$ and $[T_{xP_0}]$ are linearly independent in
  $\V(\Su)$.  Indeed, it is sufficient to pair them with co-vectors in
  $\V'(\Su)$ which distinguish them.  If we pair $[T_{P_0}]$ with the
  empty torus $[ST'_\emptyset]$ whose meridian intersects $m$ once, we
  get an $S^3$ containing the core of $T$ as an unknot 
 colored by $P_0$ 
  so $$\Zr([ST'_{\emptyset}]\circ [T_{P_0}])=\eta\,\qt_{P_0}(\Id_{P_0})=0,\
  \Zr([ST'_{\emptyset}]\circ[T_{xP_0}])=\eta \, \qt_{P_0}(x) =-\eta=-\frac{1}{2}$$ 
  where $\qt_{P_0}$ is the modified trace, see Lemma \ref{lem:modifiedtraces}.  
   This shows that
  $[T_{xP_0}]\neq 0$ and that its dual vector in $\V'(\Su)$ is
  $-2[ST'_{\emptyset}]$.

  To show that $[T_{P_0}]\neq 0$ we couple it with the empty torus
  $T'_\emptyset$ whose meridian coincides with that of $T$, thus
  getting a $S^2\times S^1$ containing a knot of the form $\{p\}\times
  S^1$ whose component is colored by $P_0$.
  As $P_0\simeq V_0\otimes V_0^*$, the decorated manifold
  $T'_\emptyset\circ T_{P_0}$ is skein equivalent to two parallel
  knots with opposite orientation in $S^2\times S^1$ colored by $V_0$.
  Then we can apply 1-surgery (Proposition \ref{P:1-surg}) and we
  obtain 
   $\Zr([T'_\emptyset]\circ[T_{P_0}])= (\eta\qd(0))^{-1}\Zr(S^3,o)=1$
   where $o$ denotes the unknot
  colored by $V_0$.

  Similarly it holds $\Zr([T'_{\emptyset}]\circ [T_{xP_0}])=0$. 
  Indeed by Lemma \ref{lem:nilpotentendo} the decorated manifold $[T'_{\emptyset}]\circ [T_{xP_0}]$
  is skein equivalent to two parallel
  knots with opposite orientation in $S^2\times S^1$ colored by $V_0$ and encircled by another $V_0$-colored one.
  Applying as before a $1$ surgery we obtain that $\Zr([T'_{\emptyset}]\circ [T_{xP_0}])=(\eta \qd(0))^{-1}\Zr(S^3,o\sqcup o)=0$.
  Hence $[T_{P_0}]$ and $[T_{xP_0}]$
  are linearly independent in $\V(\Su)$ and their dual basis vectors are 
  $[T'_\emptyset]$ and $ -2[ST'_\emptyset]$, respectively.

  We finally claim that $[T_{P_0}]$ and $[T_{xP_0}]$ span the whole
  $\V(\Su)$. Indeed, by Theorem \ref{teo:skeinsurjects} $\V(\Su)$ is
  spanned by $\Skein(T,\Su)$ where $T$ is a solid torus bounding $\Su$.  
  Every element of $\Skein(T,\Su)$ is equivalent to an element whose graph  intersects  the meridian disc of $T$ transversally.  
  Furthermore, by Proposition
  \ref{P:trick} we can suppose that such elements contains an edge
  colored by a projective module.  Thus, up to skein
  equivalence, we may 
  consider elements which intersect 
   the meridian disc
  in a single edge colored by a projective module, hence by a direct sum of copies of $P_0$ as $P_0$ generates  additively the projective modules of  $\cat_{\wb 0}$. 
  Finally the
  endomorphisms $\Id$ and $x$ generate $\Hom(P_0,P_0)$ thus every skein
  is equivalent to a linear combination of $[T_{P_0}]$ and $[T_{xP_0}]$ in
  $\V(\Su)$.

\end{proof}
\begin{rem}
  The case $\coh=0$ is singular and is not dealt with 
  the Verlinde formula of Theorem \ref{P:verlinde}.
  In particular in the above case the dimension in this case is $2$
  instead that $1$ as in the generic case.
\end{rem}

\subsection{The mapping class group representation for $r=2$ and $\coh=0$.}\label{sub:mcgrep}
Recall the cylinders $L_{\La,\La'}$ and $M_f$ of Remark
\ref{rem:mcgrepres}.  Let $\Su$ be a decorated surface with Lagrangian
$\La$ and the $0$ cohomology class.  
The mapping class group ${\rm Mod}(\Su)$ is the isotopy classes of
orientation preserving diffeomorphisms of $\Su$.   
For $f$ an element of ${\rm Mod}(\Su)$, we can associate the endomorphism
$N_f:=\V(L_{f_*(\La),\La}\circ M_f)$ of $\V(\Su)$ where $f_*:H_1(\Su;\Z)\to H_1(\Su;\Z)$ is the push-forward of $f$.  The composition law
in $\Cob$ and the contribution of the signature-weight of extended cobordism to
$\V$ imply that for $f,g$ in the mapping class group, one has
$$N_f\circ N_g=\dep^{-\mu(f_*(\La),\La,g_*^{-1}(\La))}N_{fg}.$$
\begin{theo}\label{T:mcgr=2g=1}
 Let $\Su=S^1\times S^1$ and $r=2$. 
 Then the map
${\rm Mod}(\Su) \to \End_\C(\V(\Su))$ given by 
 $f\to N_f$ yields a projective representation 
  which is injective modulo the
  center.
\end{theo}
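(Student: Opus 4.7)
The plan is to compute explicitly the action of the generators $S=\begin{pmatrix}0&-1\\1&0\end{pmatrix}$ and $T=\begin{pmatrix}1&1\\0&1\end{pmatrix}$ of $\mathrm{Mod}(\Su)\cong SL(2,\Z)$ on the two-dimensional space $\V(\Su)$ in the basis $\{[T_{P_0}],[T_{xP_0}]\}$ provided by Proposition~\ref{prop:torus0}, and then to show that the resulting projective representation $\bar N\colon SL(2,\Z)\to PGL(\V(\Su))$ has kernel equal to the center $\{\pm I\}$.

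First I would compute the matrix of $N_{\DT_m}$, where $\DT_m$ is the Dehn twist along the meridian $m$ of the reference solid torus $T$: exactly as in the proof of Theorem~\ref{T:Dehnnon-separating}, this twist acts on the $P_0$-colored core by the ribbon twist $\theta_{P_0}=\Id-(r-1)\qn{1}\,x_0$. For $r=2$, $q=i$ and $\qn{1}=2i$, so in our basis $N_{\DT_m}=\begin{pmatrix}1&0\\-2i&1\end{pmatrix}$. Next I would compute $N_S$. The element $S$ swaps meridian and longitude, so up to the Maslov correction built into $N_f$ it sends $[T_{P_0}]\mapsto[T^{*}_{P_0}]$ and $[T_{xP_0}]\mapsto[T^{*}_{xP_0}]$, where $T^{*}$ is the ``orthogonal'' solid torus. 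To expand these images in the basis $\{[T_{P_0}],[T_{xP_0}]\}$ I would pair them with the dual basis $\{[T'_\emptyset],-2[ST'_\emptyset]\}$ of $\V'(\Su)$ produced in Proposition~\ref{prop:torus0}; each such pairing gives a closed decorated $3$-manifold ($S^3$ or $S^2\times S^1$) containing a $P_0$-colored core, whose invariant can be evaluated by combining the skein decomposition $P_0\simeq V_0\otimes V_0^{*}$, the $1$-surgery formula (Proposition~\ref{P:1-surg}), the $\sigma$-equivalence, and the modified trace computations already used in the proof of Proposition~\ref{prop:torus0} and in Lemma~\ref{lem:nilpotentendo}. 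The outcome is an explicit invertible matrix $N_S$ whose square is a scalar multiple of the identity, so that $\bar N_{S^{2}}=\mathrm{Id}$ in $PGL(\V(\Su))$.

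Once this is verified, $\bar N$ factors through $PSL(2,\Z)\cong\Z/2\ast\Z/3$, and one is reduced to showing that the induced homomorphism $PSL(2,\Z)\to PGL(\V(\Su))$ is injective. This in turn amounts to checking that the images of $S$ and of $ST$ have exact projective orders $2$ and $3$ respectively, and that they generate a subgroup of $PGL(\V(\Su))$ isomorphic to their free product. The orders are an immediate check on the matrices. For the absence of any further relation I would run a ping-pong argument on $\mathbb P^{1}(\C)$: exhibit two disjoint open disks $D_2,D_3\subset\mathbb P^{1}(\C)$ such that $\bar N_S$ sends $\mathbb P^{1}\setminus D_2$ into $D_2$ and each nontrivial power of $\bar N_{ST}$ sends $\mathbb P^{1}\setminus D_3$ into $D_3$; the classical ping-pong criterion then forces $\langle\bar N_S,\bar N_{ST}\rangle\simeq\Z/2\ast\Z/3$. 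A useful sanity check is that $\bar N_{\DT_m}=\bar N_S\bar N_{ST}$ projectively, so $\bar N_{\DT_m}^{k}=\begin{pmatrix}1&0\\-2ik&1\end{pmatrix}$ is never scalar for $k\neq0$, matching the fact that $\DT_m$ has infinite order in $PSL(2,\Z)$.

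The main obstacle lies in the explicit computation of $N_S$: it requires carefully tracking the framing anomaly through the Maslov cocycle appearing in the cylinder $L_{S_{*}\La,\La}$, keeping track of the signs arising from the modified trace on $P_0$ (which vanishes on $\Id_{P_0}$ but not on the nilpotent $x_0$), and performing the $\sigma$-equivalence manipulations needed to reduce each surgery diagram coming from $T'_{\emptyset}\circ T^{*}_{P_0}$, $ST'_{\emptyset}\circ T^{*}_{P_0}$, and their $x_0$-variants to a computable presentation.
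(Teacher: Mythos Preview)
Your setup and computation of $N_T$ agree with the paper (modulo basis ordering), and your strategy for computing $N_S$ by pairing with the dual basis $\{[T'_\emptyset],-2[ST'_\emptyset]\}$ is exactly the paper's. Where you diverge is in the endgame, and there you work much harder than necessary.

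The paper carries the computation of $N_S$ all the way through, obtaining (in the ordered basis $[T_{xP_0}],[T_{P_0}]$)
\[
N_T=\begin{pmatrix}1&-2i\\0&1\end{pmatrix},\qquad
N_S=\begin{pmatrix}0&-2\\-\tfrac12&0\end{pmatrix}.
\]
One checks $(N_TN_S)^3=i\,N_S^2$, confirming the representation is only projective. The paper then observes that replacing $N_S$ by $-iN_S$ turns this into a \emph{genuine} linear representation of $SL(2,\Z)$, and conjugating by the diagonal matrix $\mathrm{diag}(-2i,1)$ sends the pair $(N_T,-iN_S)$ to the standard matrices $\bigl(\begin{smallmatrix}1&1\\0&1\end{smallmatrix}\bigr),\bigl(\begin{smallmatrix}0&-1\\1&0\end{smallmatrix}\bigr)$. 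Faithfulness of the tautological representation is immediate, so the projective representation is injective on $SL(2,\Z)/\{\pm I\}$.

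Your ping-pong argument on $\mathbb{P}^1(\C)$ is a legitimate route and would succeed once you had $N_S$ in hand, but it requires actually producing the invariant disks and checking the dynamics, none of which you carry out. The paper's observation that the matrices are (after a scalar and a conjugation) literally the defining matrices of $SL(2,\Z)$ bypasses all of that in one line. The ``main obstacle'' you flag at the end is therefore somewhat misplaced: the Maslov corrections for both $S$ and $T$ vanish here (both indices are $\mu([l],[m],[m])=0$ and $\mu([m],[m],[m])=0$), and the four pairings needed for $N_S$ are exactly the four numbers already computed in the proof of Proposition~\ref{prop:torus0}, so nothing new is required beyond assembling them.
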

\begin{proof} 
Choose an oriented meridian $m$ and an oriented
  longitude $l$ of the oriented torus so that $m\cdot l=1$.
  Let  
 $T$ and $S$ 
  be the elements of ${\rm Mod}(\Su)$ given, respectively, by the Dehn twist along $m$ and the map sending $m$ to $l$ and $l$ to $-m$.   It is well known that 
 ${\rm Mod}(\Su)=\langle S,T| S^4=1, (TS)^3=S^2\rangle$ 
  which is isomorphic as a group to $SL(2;\Z)$.
 
  Observe that acting by  $S$ on 
  $\V(\Su)$ we need to take into account the cohomology class $\coh=0$ and the Lagrangian subspace
  $\La$ of the decorated surface $\Su$.  Indeed, if we consider the generators
  $[T_{P_0}],[T_{xP_0}]$ described in Proposition \ref{prop:torus0}
  for $\V(\Su)$ where we fix the lagrangian $\La$ to be the homology
  class $[m]$, then $S_*(\La)=\R.[l]\subset H_1(\Su;\R)$.   To correct
  this, in the definition of $N_S$ we post-compose the morphism $M_S$
  with  $L_{S_*(\La),\La}$  to get
  an endomorphisms of $\Su$. Thus, when  $S$ acts on either of the generators
  $[T_{P_0}]$ or $[T_{xP_0}]$ the overall Maslov
  correction factor is
$$-\mu(S_*(\La),\La,\Id_*^{-1}(\La))=-\mu([l],[m],[m])=0.$$
Similarly,  the correction factor for the action of $T$ is
$$ -\mu(T_*(\La),\La,\Id_*^{-1}(\La))=-\mu([m],[m],[m])=0.$$
However, in general, for the composition of combinations of $[T_{P_0}]$ and $[T_{xP_0}]$ the Maslov
  correction
  is non-zero. Thus, the
  representation of the mapping class group of the torus is
  only defined up to powers of $\dep$, and so it is projective.

Next we show that the image of $[T_{xP_0}]$ via the mapping class $S$
  is $-\frac{1}{2}[T_{P_0}]$. Indeed, as in Proposition
  \ref{prop:torus0}, let $T'_{\emptyset}$ be the empty solid torus
  negatively oriented whose meridian coincides with
  $m$. Pairing
  $N_S([T_{xP_0}])$ 
  with $[T'_{\emptyset}]\in
  \V'(\Su)$ 
   we
  get an $S^3$ containing the core of $T$ as an unknot 
 colored by $xP_0$.  As computed in the proof of Proposition \ref{prop:torus0} the  value of this manifold is $-\frac{1}{2}$.  It also follows as in the  proof of Proposition \ref{prop:torus0} that 
$\Zr([T'_{\emptyset}]\circ N_S([T_{xP_0}])=0$.  Finally, recall that the dual vector of  $ [T_{P_0}]$ is $[T'_\emptyset]$. 
Combining these results we have $N_S([T_{xP_0}])=-\frac{1}{2}[T_{P_0}]$.  Similarly, it can be shown that $ N_S([T_{P_0}])=-2[T_{xP_0}].$

  By Lemma \ref{lem:twistPj} we have the twist $\theta_{P_0}$ is equal to $\Id-2i
  x$, 
  and so $(\theta_{P_0})^n=\Id-2i nx$.  Thus, the Dehn
  twist $T$ has infinite order as 
  $$N_T^n([T_{P_0}])=[T_{(\theta_{P_0})^n}]=[T_{P_0}]-2i n[T_{xP_0}],\ {\rm  and}\ N_T^n([T_{xP_0}])=[T_{xP_0}].$$
where $T_{(\theta_{P_0})^n}$ is 
the solid torus whose core is colored by $P_0$ and contains a coupon
colored by $(\theta_{P_0})^n:P_0\to P_0$.

Hence we the obtained a projective representation of the mapping class group of the torus which in the basis $[T_{xP_0}], [T_{P_0}]$ of $\V(\Su)$ reads
$$T=\left(\begin{array}{cc}
1 & -2i\\
0 & 1
\end{array}\right), \ S=\left(\begin{array}{cc}
0 & -2\\
-\frac{1}{2} & 0
\end{array}\right).$$
This representation is only projective (indeed $(TS)^3=\sqrt{-1} S^2$) but may actually be corrected by multiplying $S$ by $-\sqrt{-1}$ to a true representation of $SL(2;\Z)$ which is conjugated (via the diagonal matrix with eigenvalues $-2\sqrt{-1},1$) to the natural representation of $SL(2;\Z)$ and is thus faithful.
Hence the projective representation of $SL(2;\Z)$ on $\V(\Su)$ is injective on $SL(2;\Z)/{\pm Id}$.
\end{proof}

\appendix

\section{Projective modules and algebraic facts}
Here we recall some properties of $\Ubar$-modules and there traces.  Except for  Lemma \ref{lem:old71} all of these results are proved in \cite{CGP3}.

\begin{prop}\label{P:proj-mod_New}
For $i=0,...,r-1$, there exists indecomposable projective modules $P_i$ of highest weight $2r-i-2$ such that any projective indecomposable
  weight module $P\in\cat_{\wb0}\cup\cat_{\wb1}$ is isomorphic to the unique
  module $\C^H_{kr}\otimes P_i$ that has the same highest weight $(k+2)r-i-2$ as $P$.  Moreover, the highest and lowest weights of $P_i$ satisfy ${\hw}(P_i)\le {\lw}(P_i)+4(r-1)$.
\end{prop}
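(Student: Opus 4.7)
The plan is to apply the standard theory of projective covers in a finite-length abelian category, combined with the explicit classification of simple weight modules of $\Ubar$ recalled in Subsection~\ref{SS:Quantsl2}. Every simple module in $\cat_{\wb 0}\cup\cat_{\wb 1}$ is isomorphic to exactly one $S_i\otimes\C^H_{kr}$ for $i\in\{0,\ldots,r-1\}$ and $k\in\Z$. Since tensoring by the invertible $1$-dimensional module $\C^H_{kr}$ is an exact autoequivalence of $\cat$, it sends projective indecomposables to projective indecomposables. Hence it suffices to construct, for each $i\in\{0,\ldots,r-1\}$, an indecomposable projective module $P_i$ whose unique simple top is $S_i$; any other projective indecomposable $P$ in $\cat_{\wb 0}\cup\cat_{\wb 1}$ will, by uniqueness of the projective cover of a simple module, be isomorphic to $\C^H_{kr}\otimes P_i$, where $\mathrm{top}(P)\cong S_i\otimes\C^H_{kr}$ determines $k$ and $i$.

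For $i=r-1$, the module $S_{r-1}=V_0$ is already simple and projective (being the Verma module $V_\alpha$ with $\alpha=0\in r\Z\subset\Cp$), so I set $P_{r-1}:=V_0$, whose highest weight equals $r-1=2r-(r-1)-2$. For $i\in\{0,\ldots,r-2\}$, I would construct $P_i$ as an explicit summand of a tensor product $V_\alpha\otimes V_\beta$ for suitably chosen $\alpha,\beta\in\Cp$; since each $V_\alpha$ is projective and the tensor product of a projective module with any module is projective, the indecomposable summands of $V_\alpha\otimes V_\beta$ are themselves projective indecomposables, and by choosing $\alpha,\beta$ one can arrange for a summand whose simple top is $S_i$. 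Equivalently, one presents $P_i$ directly as the $2r$-dimensional module with Loewy series having top and socle both isomorphic to $S_i$ and middle layer $(S_{r-2-i}\otimes\C^H_r)\oplus(S_{r-2-i}\otimes\C^H_{-r})$.

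Indecomposability of $P_i$ then follows from computing $\End_\cat(P_i)\cong\C[x]/(x^2)$, which is local, where $x$ is the unique (up to scalar) nilpotent endomorphism of $P_i$ factoring through its socle; uniqueness as a projective cover of $S_i$ forces any other candidate to be isomorphic. The highest weight of $P_i$ is that of the summand $S_{r-2-i}\otimes\C^H_r$ in its middle layer, namely $(r-2-i)+r=2r-i-2$, which matches the claim; the lowest weight is $-(r-2-i)-r=i-2r+2$, coming from $S_{r-2-i}\otimes\C^H_{-r}$. Consequently $\hw(P_i)-\lw(P_i)=4r-4-2i\le 4(r-1)$, with equality precisely when $i=0$.

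The main technical obstacle is the explicit construction of the $P_i$ for $0\le i\le r-2$ and the verification of their precise Loewy structure. In particular, one must establish that $\dim\operatorname{Ext}^1_\cat(S_i,S_{r-2-i}\otimes\C^H_{\pm r})=1$ to ensure that the two short exact sequences building the Loewy filtration of $P_i$ are non-split and assemble into the indecomposable module described above. This amounts to a careful analysis of the action of $E,F,H,K$ on an explicit basis of a suitable induced or tensor-product module, and on identifying $\Ubar$ with a relative version of the small quantum group at a $2r$-th root of unity; the full algebraic details are carried out in the companion paper~\cite{CGP3}.
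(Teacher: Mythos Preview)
Your sketch is correct and in fact provides more than the paper does: in the paper this proposition is stated in the Appendix without proof, with the remark that ``except for Lemma~\ref{lem:old71} all of these results are proved in~\cite{CGP3}.'' So there is no proof in the paper to compare against, only a citation.

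Your outline follows the standard route (projective covers of the simples $S_i\otimes\C^H_{kr}$, reduction to $k=0$ via the autoequivalence $\C^H_{kr}\otimes(-)$, and the well-known Loewy structure of the $2r$-dimensional $P_i$ for $i\le r-2$), and your weight computations are accurate: $\hw(P_i)=2r-i-2$, $\lw(P_i)=i-2r+2$, so $\hw(P_i)-\lw(P_i)=4(r-1)-2i\le 4(r-1)$. Since you too defer the explicit construction and verification of the Loewy filtration to~\cite{CGP3}, your proposal is consistent with the paper's treatment and is an appropriate expansion of what the paper merely cites.
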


\begin{cor} \label{C:factor V0^2_New}
  Let $\alpha\in\C\setminus\Z$.  Let $P\in\cat_{\wb 0}$ be a
  projective module, then there exist maps $f_i:P\to
  V_0\otimes\sigma^{n_i}\otimes V_0^*$,
  $g_i:V_0\otimes\sigma^{n_i}\otimes V_0^*\to P$, $f'_j:P\to
  V_{\alpha+2k_j}\otimes V_{-\alpha}$ and $g'_j:V_{\alpha+2k_j}\otimes
  V_{-\alpha}\to P$ such that
  $$\Id_P=\sum_ig_if_i+\sum_jg'_jf'_j$$
  where $n_i\in\Z$ and $k_j\in\Z\setminus r'\Z$.
\end{cor}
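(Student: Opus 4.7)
The plan is to reduce the statement to the indecomposable case and then exhibit each indecomposable projective of $\cat_{\wb 0}$ as a direct summand of one of the two prescribed tensor products, using the projectivity of $P$ to split the inclusion.

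First, I would observe that the statement is additive in $P$: if $P \cong P' \oplus P''$, then $\Id_P = \Id_{P'} \oplus \Id_{P''}$ and any decomposition for $P'$ and $P''$ yields one for $P$. Hence we may assume $P$ is indecomposable. By Proposition \ref{P:proj-mod_New}, every indecomposable projective in $\cat_{\wb 0}$ is isomorphic to $\sigma^n \otimes P_i$ for some $n \in \Z$ and some $i \in \{0, \dots, r-1\}$ whose parity is forced by the condition $P \in \cat_{\wb 0}$. Next, the statement is stable under tensoring with $\sigma^n$, because $\sigma^n \otimes (V_0 \otimes \sigma^m \otimes V_0^*) \cong V_0 \otimes \sigma^{m+n} \otimes V_0^*$ and $\sigma^n \otimes (V_{\alpha + 2k} \otimes V_{-\alpha}) \cong V_{\alpha' + 2k} \otimes V_{-\alpha'}$ with $\alpha' = \alpha + 2r'n \in \C \setminus \Z$ (the index $k$ is unchanged, so the condition $k \in \Z \setminus r'\Z$ is preserved). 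Consequently, it is enough to produce the required decomposition when $P = P_i$ is one of the finitely many $\sigma$-orbit representatives of indecomposable projectives in $\cat_{\wb 0}$.

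Second, since $P_i$ is projective, any epimorphism $M \twoheadrightarrow P_i$ splits; equivalently, any module $M$ having $P_i$ as a direct summand gives rise to a pair $f \colon P_i \to M$, $g \colon M \to P_i$ with $g f = \Id_{P_i}$. The plan therefore reduces to: show that every indecomposable $P_i$ in $\cat_{\wb 0}$ occurs as a direct summand either of some $V_0 \otimes \sigma^{n} \otimes V_0^* \cong \sigma^n \otimes V_0 \otimes V_0^*$, or of some $V_{\alpha + 2k} \otimes V_{-\alpha}$ with $k \in \Z \setminus r'\Z$. For the ``extremal'' projectives $P_i$ of large highest weight, one computes $V_0 \otimes V_0^*$ directly (it is a weight module of weights in $[-2(r-1), 2(r-1)] \cap 2\Z$ and, by dimension count and the structure of indecomposable projectives described in the appendix, it splits off the relevant $P_i$). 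For the remaining indecomposables one uses the non-semisimple decomposition of the tensor product of two typical modules $V_{\alpha+2k} \otimes V_{-\alpha}$: since $\alpha + (- \alpha) = 0 \in \Z$, this tensor product lies entirely in $\cat_{\wb 0}$ and decomposes as a direct sum of indecomposable projectives of that category, parametrized by a window around the ``highest weight sum'' $2k + 2(r-1)$. As $k$ varies in $\Z \setminus r'\Z$, the $\sigma$-orbits of all the projectives $P_i$ appearing in $\cat_{\wb 0}$ are hit at least once.

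The main obstacle is the explicit decomposition of $V_{\alpha+2k} \otimes V_{-\alpha}$ into indecomposable projectives when $\alpha + (-\alpha) = 0 \in \Z$, which is exactly the non-generic (non-semisimple) case of the tensor rule and does not follow from the semisimple formula $V_\alpha \otimes V_\beta \cong \bigoplus_{k \in \Hr} V_{\alpha+\beta+k}$ (which requires $\alpha + \beta \notin \Z$). This computation, together with the corresponding decomposition of $V_0 \otimes V_0^*$, is precisely the algebraic content postponed to \cite{CGP3}. Once these decompositions are in hand, the argument is completed by picking, for each $P_i$, an explicit embedding/projection pair $(f_i, g_i)$ or $(f'_j, g'_j)$ and assembling the expression $\Id_P = \sum_i g_i f_i + \sum_j g'_j f'_j$ from the direct sum decomposition of $P$ into indecomposables.
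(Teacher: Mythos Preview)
The paper does not actually prove this corollary: the appendix states that, except for Lemma~\ref{lem:old71}, all results there are proved in \cite{CGP3}. So there is no in-paper argument to compare against; your outline is essentially the natural route (reduce to indecomposables, then exhibit each as a direct summand of one of the two tensor products), and it correctly isolates the non-trivial input---the explicit decomposition of $V_{\alpha+2k}\otimes V_{-\alpha}$ and of $V_0\otimes V_0^*$ into indecomposable projectives---as the content deferred to \cite{CGP3}.

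One computational slip to fix: in your $\sigma$-stability step you write $\sigma^n\otimes(V_{\alpha+2k}\otimes V_{-\alpha})\cong V_{\alpha'+2k}\otimes V_{-\alpha'}$ with $\alpha'=\alpha+2r'n$. This is false as stated (the second factors $V_{-\alpha}$ and $V_{-\alpha'}$ differ), and in any case the statement fixes $\alpha$, so you are not allowed to change it. The correct absorption is into the first factor only: $\sigma^n\otimes V_{\alpha+2k}\otimes V_{-\alpha}\cong V_{\alpha+2(k+r'n)}\otimes V_{-\alpha}$, and $k+r'n\in\Z\setminus r'\Z$ iff $k\in\Z\setminus r'\Z$. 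With this correction your reduction to $\sigma$-orbit representatives goes through, and the rest of the plan stands.
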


In \cite{GKP2}, a modified trace $\qt_P:\End_\cat(P)\to\C$ is defined for any projective module $P$.    
The invariant $F'$ of subsection \ref{ss:F'} is defined on a larger
class of graphs in $S^3$.  Namely, if a $\cat$-colored ribbon graph $T$
has at least one edge colored by a projective object $P$, then cutting
this edge yields a (1-1)-tangle whose image by the Reshetikhin-Turaev
functor $F$ is an endomorphism $f\in\End_\cat(P)$.  Then $F'$ is
defined by $F'(T)=\qt_P(f)$ where $\qt_P$ is the modified trace on
$P$.   The modified trace is non degenerate:
\begin{prop}\label{P:qtnondegen}
  Let $P\in\cat$ be a projective module and $V\in\cat$.  Then
  the pairing  
  $$
  \begin{array}{ccl}
    \Hom_\cat(V,P)\times\Hom_\cat(P,V)&\to&\C\\
    (h_1,h_2)&\mapsto&\qt_P(h_1h_2)
  \end{array}
  $$
  is non degenerate.
\end{prop}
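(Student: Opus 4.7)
The strategy is to reduce to the case of $P$ indecomposable projective and then to exploit the Frobenius structure on $\End_\cat(P)$ induced by $\qt_P$. First, decompose $P=\bigoplus_\alpha P_\alpha$ as a finite direct sum of indecomposable projectives; the spaces $\Hom_\cat(V,P)$ and $\Hom_\cat(P,V)$ split accordingly, and by cyclicity of $\qt$ together with orthogonality of the idempotent projections onto the $P_\alpha$, the pairing is block-diagonal with respect to these decompositions. By Proposition~\ref{P:proj-mod_New}, up to tensoring with an invertible one-dimensional $\C^H_{kr}$ (which preserves $\qt$), the indecomposable projectives in $\cat$ are either the simple projectives $V_\alpha$ with $\alpha\in\Cp$, or the non-simple projective covers $P_i$ with $i\in\{0,\ldots,r-2\}$.

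In the simple projective case $P=V_\alpha$, the module is simple and both projective and injective (the latter because typical simple modules in $\cat$ are injective). Any nonzero $h_1\in\Hom_\cat(V,V_\alpha)$ is surjective and admits a section $h_2\in\Hom_\cat(V_\alpha,V)$ by projectivity, whence $h_1 h_2=\Id_{V_\alpha}$ and $\qt_{V_\alpha}(h_1 h_2)=\qd(\alpha)\neq 0$ for $\alpha\in\Cp$. Dually, any nonzero $h_2$ is injective and admits a retraction by injectivity of $V_\alpha$, yielding the same non-zero trace.

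In the non-simple case $P=P_i$, the endomorphism algebra $\End_\cat(P_i)=\C\,\Id_{P_i}\oplus\C\,x_i$ is two-dimensional, with $x_i$ nilpotent of image equal to the simple socle $S_i\subset P_i$; the modified trace satisfies $\qt_{P_i}(\Id_{P_i})=0$ and $\qt_{P_i}(x_i)\neq 0$, so that $(\End_\cat(P_i),\qt_{P_i}(\cdot\,\cdot))$ is a two-dimensional symmetric Frobenius algebra. To prove non-degeneracy, given a nonzero $h_1\in\Hom_\cat(V,P_i)$, one must construct $h_2\in\Hom_\cat(P_i,V)$ such that $h_1h_2$ has nonzero nilpotent component in $\End_\cat(P_i)$. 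If $h_1$ is surjective, projectivity of $P_i$ yields a section $s$ of $h_1$ and one takes $h_2:=s\circ x_i$, giving $h_1h_2=x_i$. Otherwise $h_1(V)\subseteq\operatorname{rad}(P_i)$, and one uses the Loewy structure of $P_i$ (whose socle is $\operatorname{Im}(x_i)$) to construct $h_2$ whose composition $h_1h_2$ is a nonzero multiple of $x_i$; in both cases $\qt_{P_i}(h_1h_2)\neq 0$, and the argument is symmetric in $h_1\leftrightarrow h_2$.

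The principal technical hurdle is the non-surjective sub-case above: verifying that the required $h_2$ always exists requires the complete description of the indecomposable projectives $P_i$, their endomorphism algebras, and the explicit evaluation of $\qt_{P_i}$ on nilpotent morphisms, as developed in the companion paper \cite{CGP3}. A cleaner conceptual route would be to observe that $\qt_P$ endows $\End_\cat(P)$ with the structure of a symmetric Frobenius algebra, and that the self-injectivity of the category of finite-dimensional weight $\Ubar$-modules (a consequence of the Frobenius-Hopf structure on $\Ubar$) gives $\dim\Hom_\cat(V,P)=\dim\Hom_\cat(P,V)$; the asserted non-degeneracy then follows from general Frobenius-algebra duality applied to the bimodule $\Hom_\cat(V,P)\otimes\Hom_\cat(P,V)$ over $\End_\cat(P)$.
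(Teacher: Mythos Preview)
The paper gives no proof here: the appendix explicitly defers this proposition (along with the other algebraic facts, except Lemma~\ref{lem:old71}) to the companion paper~\cite{CGP3}. So there is nothing in the present paper to compare against, and I assess your sketch on its own terms.

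Your reduction to indecomposable $P$ and the treatment of the simple projective case are correct. One factual slip: by Lemma~\ref{lem:modifiedtraces}, $\qt_{P_i}(\Id_{P_i})=(-1)^{i+1}(q^{i+1}+q^{-i-1})$, which is nonzero except when $r$ is even and $i=r/2-1$; your blanket claim that it vanishes is wrong, though harmless since you arrange $h_1h_2=x_i$ regardless. The substantive gap is the non-surjective sub-case, which you hand-wave. It can be filled cleanly once one knows that $P_i$ is the injective hull of its simple socle $S_i=\operatorname{Im}(x_i)$: then every nonzero submodule of $P_i$ contains $S_i$, so $W:=h_1^{-1}(S_i)\subseteq V$ surjects onto $S_i$, and projectivity of $P_i$ lifts the head quotient $P_i\twoheadrightarrow S_i$ through $W\twoheadrightarrow S_i$ to give $h_2:P_i\to W\subseteq V$ with $h_1h_2$ equal to the composite $P_i\twoheadrightarrow S_i\hookrightarrow P_i$, a nonzero multiple of $x_i$. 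The dual direction (nonzero $h_2$) uses injectivity of $P_i$ symmetrically. Thus the input you actually need from~\cite{CGP3} is that each $P_i$ is both projective cover and injective hull of $S_i$, together with $\qt_{P_i}(x_i)\neq0$. As for your closing ``conceptual route'': asserting that $(\End_\cat(P),\qt_P)$ is Frobenius is the case $V=P$ of the very statement being proved, so it must be checked directly (easy from Lemma~\ref{lem:modifiedtraces}) before it can be invoked; and the appeal to ``general Frobenius-algebra duality'' does not by itself yield non-degeneracy of a pairing between the two distinct $\End(P)$-modules $\Hom(V,P)$ and $\Hom(P,V)$ without an additional identification of one with the $\End(P)$-dual of the other, which you do not supply.
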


This lemma is key to prove the $1$-surgery relation (Proposition \ref{P:1-surg}); because of its topological nature we give a proof directly here :
\begin{lemma}\label{lem:old71}
  Let $k\in\Z$ and $\alpha\in \Cp,\beta \in \C\setminus \Z$.  
  If $\alpha\in\Z$, we assume $k\in r'\Z$.  Let $\Ma$ be the
  endomorphism of $V_{\alpha+2k}\otimes V_{-\alpha}$ given by
  encircling two parallel lines with a component labeled by
  $\Omega_{\beta}=\sum_{i\in\Hr}\qd(\beta+i)V_{\beta+i}$.
  If $k=r'n\in r'\Z$ then
  $$\Ma=r^3 q^{2r'n \beta}\qd(\alpha)^{-1}\Id_{\sigma^{n}}
  \otimes(\ev_{V_\alpha} \circ\tcoev_{V_\alpha}),\text{ that is graphically}$$ 
  \[\epsh{fig16}{10ex}\put(-6,18){\ms{\alpha}}
  \put(-46,-18){\ms{\alpha\!+\!2r'n}}\put(-35,2){\ms{\Omega_{\beta}}}
  \quad=\quad r^3 q^{2r'n \beta}\qd(\alpha)^{-1}\quad
  \epsh{fig17}{10ex}\put(-13,2){\ms{\sigma^n}}\put(1,18){\ms{\alpha}}
  \put(-39,-18){\ms{\alpha\!+\!2r'n}}\quad\text{ with coupons colored
    by the identity,}
  \]
  and $\Ma=0$ otherwise. 
\end{lemma}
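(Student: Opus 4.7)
\bigskip

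\noindent\textbf{Proof plan for Lemma \ref{lem:old71}.} The strategy is to expand the Kirby color in the encircling loop, identify the resulting endomorphism as a central element in $\End_\cat(V_{\alpha+2k}\otimes V_{-\alpha})$, and reduce to a geometric sum that vanishes for the wrong parity of $2k$ modulo $2r'$.

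First, I would observe that $F$ applied to the diagram depends meromorphically on $q^\alpha$ through matrix coefficients of the $R$-matrix on $V_{\alpha+2k}\otimes V_{-\alpha}$, and that the hypothesis $\alpha\in\Cp$ with the condition $k\in r'\Z$ when $\alpha\in\Z$ is precisely what ensures that both sides of the claimed identity extend continuously in $\alpha$. So by perturbation $\alpha\leadsto\alpha+\varepsilon$ with $\varepsilon\notin\Z$ small, I may assume $\alpha,\alpha+2k\notin\Z$. Replacing further $\alpha+2k$ by $\alpha+2k+\varepsilon'$ with $\varepsilon'\notin\Z$ and small, the decomposition rule of Subsection~\ref{SS:Quantsl2} gives
\[
V_{\alpha+2k+\varepsilon'}\otimes V_{-\alpha}\ \simeq\ \bigoplus_{i\in\Hr} V_{2k+\varepsilon'+i}\, ,
\]
with each summand simple and typical.

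Second, since $\Ma$ is obtained by encircling a pair of parallel strands with a closed loop, it commutes with every endomorphism acting only between those strands and is therefore central in $\End_\cat(V_{\alpha+2k+\varepsilon'}\otimes V_{-\alpha})\simeq\bigoplus_{i\in\Hr}\C$. Hence $\Ma=\sum_i \lambda_i \pi_i$, where $\pi_i$ is the idempotent projecting to $V_{2k+\varepsilon'+i}$. By fusing the two parallel strands into the simple summand $V_{2k+\varepsilon'+i}$ and then linearly expanding $\Omega_\beta=\sum_{j\in\Hr}\qd(\beta+j)V_{\beta+j}$, the coefficient $\lambda_i$ equals
\[
\lambda_i\ =\ \sum_{j\in\Hr}\qd(\beta+j)\,\mathsf H\bigl(V_{2k+\varepsilon'+i},V_{\beta+j}\bigr),
\]
where $\mathsf H(V_a,V_b)$ denotes the Hopf link scalar (the value of a $V_b$-colored closed loop encircling a long $V_a$-strand). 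Using the same calculation as in the proof of Theorem~\ref{P:verlinde} (and as already exploited in \cite{CGP}), this Hopf scalar has the form $\mathsf H(V_a,V_b)=\qd(a)^{-1}(-1)^{r-1}r\,q^{ab}$, so that
\[
\lambda_i\ =\ \frac{(-1)^{r-1}r}{\qd(2k+\varepsilon'+i)}\sum_{j\in\Hr}\qd(\beta+j)\,q^{(2k+\varepsilon'+i)(\beta+j)}\, .
\]

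Third, after pulling $q^{(2k+\varepsilon'+i)\beta}$ and the constant prefactor of $\qd(\beta+j)$ out of the sum, and using the identity $\{r(\beta+j)\}=(-1)^j\{r\beta\}$, the remaining sum reduces to
\[
\sum_{j\in\Hr}(-1)^j \{\beta+j\}\,q^{(2k+\varepsilon'+i)j}\, ,
\]
which for $\varepsilon'\to 0$ is a geometric sum over $\Hr$ in the variable $q^{2k+i}$ (the linear factor $\{\beta+j\}$ just splits it into two such sums). The key arithmetic identity, verified by direct summation over the $r$ consecutive values of $\Hr$,
\[
\sum_{j\in\Hr}q^{cj}\ =\ \begin{cases}(-1)^{(r-1)c/r}\,r & c\in r\Z,\\ 0 & c\in\Z\setminus r\Z,\end{cases}
\]
applied to $c=2k+i\in\Z$, forces $\lambda_i=0$ unless $2k+i\in r\Z=2r'\Z$. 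Since $i\in\Hr$ has $|i|\le r-1$, this can happen only when $k\in r'\Z$, say $k=r'n$, and then $i=0$.

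Fourth, in the non-vanishing case $k=r'n$, the single surviving summand is $V_{2r'n}\simeq\sigma^n$, and a straightforward substitution collects $\lambda_0=r^3 q^{2r'n\beta}\qd(\alpha)^{-1}$. The remaining step is to identify the idempotent $\pi_0\in\End_\cat(V_{\alpha+2r'n}\otimes V_{-\alpha})$ projecting to the $\sigma^n$-isotypic part with the explicit morphism $\Id_{\sigma^n}\otimes(\ev_{V_\alpha}\circ\tcoev_{V_\alpha})$, using the isomorphism $V_{\alpha+2r'n}\simeq V_\alpha\otimes\sigma^n$. Finally, I would remove the regularization $\varepsilon'\to 0$ and then $\varepsilon\to 0$, noting that both sides extend continuously in $\alpha$ on $\Cp$ under the stated hypothesis on $k$.

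The main technical obstacle is the limiting procedure from generic $\alpha$ back to $\alpha\in r\Z$: the simples $V_{2k+\varepsilon'+i}$ coalesce into non-semisimple projective modules and the idempotents $\pi_i$ degenerate. This is precisely why the hypothesis $k\in r'\Z$ is imposed when $\alpha\in\Z$, and the explicit RHS $\Id_{\sigma^n}\otimes(\ev\circ\tcoev)$ is the limit of the projection onto the $V_{2r'n}$-summand surviving the scalar computation.
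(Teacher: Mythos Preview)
Your approach has a genuine gap that cannot be patched without essentially redoing the argument. The core problem is the limit $\varepsilon'\to 0$.

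First, note that the perturbation $\varepsilon'$ is needed for \emph{every} $\alpha$, not only for $\alpha\in r\Z$: the degree of $V_{\alpha+2k}\otimes V_{-\alpha}$ is $\overline{2k}\in\Z/2\Z$, so this tensor product is \emph{never} semisimple (it lies in $\cat_{\bar 0}$ or $\cat_{\bar 1}$), and the decomposition into typical simples $V_{2k+i}$ never holds without $\varepsilon'$. So the limiting step is the whole proof, not a boundary case.

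Second, and fatally, in that limit \emph{every} eigenvalue $\lambda_i(\varepsilon')$ tends to $0$. Using $\{r(\gamma\pm1)\}=-\{r\gamma\}$ one finds, with $\gamma=2k+i+\varepsilon'$,
\[
\lambda_i(\varepsilon')\;=\;\frac{-r\,\{r\gamma\}^{2}}{\{\gamma\}\{r\beta\}}
\left(\frac{q^{(\gamma+1)\beta}}{\{\gamma+1\}}-\frac{q^{(\gamma-1)\beta}}{\{\gamma-1\}}\right),
\]
and since $\{r\gamma\}^2$ vanishes to second order at $\varepsilon'=0$ while the other factors have at most a simple pole, $\lambda_i(\varepsilon')\to 0$ for all $i$. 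Yet $\Ma\neq0$ when $k=r'n$ (already for $k=0$). The resolution is that the idempotents $\pi_i(\varepsilon')$ blow up like $1/\varepsilon'$, and the products $\lambda_i\pi_i$ recombine into the nilpotent part of $\Ma$. So knowing $\lim\lambda_i=0$ tells you nothing about $\Ma$; the spectral decomposition does not survive the limit. (There are also smaller slips: after expanding $\{\beta+j\}$ the exponents are $2k+i\pm1$, not $2k+i$; and $r\Z=2r'\Z$ only when $r$ is even.)

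The paper avoids the degeneration entirely. It uses a handle-slide (transparency) argument to show that for any $V\in\cat_{\bar 0}$ one has $c_{W,V}\circ(\Ma\otimes\Id_V)=c_{V,W}^{-1}\circ(\Ma\otimes\Id_V)$, and by expanding the two braidings on weight vectors this forces $Ey=Fy=0$ for every $y$ in the image of $\Ma$. Hence the image lies in copies of $\sigma^n$, which pins down $k\in r'\Z$ via $\Hom(\unit,V_{\alpha+2k-2r'n}\otimes V_{-\alpha})$. The proportionality constant is then read off by taking modified traces (closing up both sides), a computation over Hopf links that does not require any limit. If you want to salvage your idea, you would need to control $\sum_i\lambda_i(\varepsilon')\pi_i(\varepsilon')$ directly as $\varepsilon'\to0$, which amounts to analyzing the indecomposable projectives $P_j$ --- essentially a different proof.
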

\begin{proof}
  Let $W=V_{\alpha+2k}\otimes V_{-\alpha}$.  Then $W\in
  \cat_{\underline{0}}$ and so all the weights of $W$ are in $2\Z$.
  First we show that $\Ma$ is zero unless $k=r'n$ for some $n\in \Z$.
  Let $V$ be a weight module in $\cat_{\underline 0}$.  By Lemma 4.9
  of \cite{CGP} we can do handle-slide moves on the circle component
  of the graph representing $\Ma \otimes \Id_{V}$ to obtain via
  Proposition 4.4 of \cite{CGP} the equality of morphisms:
  \begin{equation}
    \label{eq:transparent}
    c_{W,V}\circ (\Ma\otimes Id_{V})=c_{V,W}^{-1}\circ (\Ma\otimes Id_{V}).
  \end{equation}
  Note here that the Kirby color did not change after the handle
  slides because $V\in\cat_{\underline 0}$.  The braidings $c_{W,V},
  c_{V,W}^{-1}: W\otimes V\to V\otimes W$ are given by
  $$c_{W,V}=\tau\circ R =  \tau\circ q^{H\otimes H/2}(\Id \otimes \Id + 
  (q-q^{-1})E\otimes F + \cdots )$$ and
  $$c_{V,W}^{-1}=R^{-1}\circ \tau = (\Id \otimes \Id + (-q+q^{-1})E\otimes F 
  + \cdots )q^{-H\otimes H/2}\circ\tau. $$ 
  Here the dots ``$\cdots$'' are linear combination of power
  $(E\otimes F)^i$ with $i\ge2$.
  
  Let $x\in W$ be a weight vector of weight $\mu$ and set $y=\Ma(x)$.
  Let $v\in V$ be a weight vector of weight $\nu$.  Let $V'$ be the
  vector space span by $\{F^iv\}_{i \ge2}$ and $V''$ be the vector
  space span by $\{E^iv\}_{i \ge2}$.  Applying \eqref{eq:transparent}
  to $x\otimes v$ we have
  $$c_{W,V}(y\otimes v)=c_{V,W}^{-1}(y\otimes v).$$
  Now
  \begin{align}
    c_{W,V}(y\otimes v) & =  \tau\circ q^{H\otimes H/2}(y\otimes v 
    + (q-q^{-1})Ey\otimes Fv +...)\\
    & =q^{\mu\nu/2} v\otimes y + (q-q^{-1})q^{(\mu+2)(\nu-2)/2}Fv\otimes Ey+Y'
  \end{align}
  where $Y'\in V'\otimes W$. The last
  Similarly,
  \begin{align}
    c_{V,W}^{-1}(y\otimes v)&=q^{-\mu\nu/2}( v\otimes y-(q-q^{-1})Ev\otimes Fy + Y'')
  \end{align}
  where $Y''\in V''\otimes W$.  Setting the above equations equal we
  have $Fv\otimes Ey=Ev\otimes Fy=0$.  Since $V$ and $v$ are arbitrary
  we have that $Ey=Fy=0$.  So $(q-q^{-1})(EF-FE)y=(K-K^{-1})y=0$ and
  $K^2y=y$.  Thus, $\mu\in r\Z$.  
  But since we already knew $\mu\in 2\Z$ we have $\mu=2r'n$ for some
  $n\in \Z$.  Let $w$ be the generator of $\sigma^{-n}$ then $E,F,H$
  all act by zero on the vector $w\otimes y \in \sigma^{-n}\otimes W$.
  Therefore, $w\otimes y$ generates a trivial module in
  $\sigma^{-n}\otimes W$, but $\Hom(\unit, \sigma^{-n}\otimes W)\cong
  \Hom(\unit, V_{\alpha+ 2k-2r'n}\otimes V_{-\alpha})=0$ unless
  $2k-2r'n=0$.  Thus, $y=\Ma(x)=0$ unless $k=r'n$.

  As $\Hom_{U_q(sl_2)}(\sigma^{n},\sigma^{n}\otimes V_\alpha\otimes
  V_{-\alpha})\simeq\Hom_{U_q(sl_2)}(\sigma^{n}\otimes V_\alpha\otimes
  V_{-\alpha},\sigma^{n})\simeq\C$, then $f_{r'n}$ and
  $\Id_{\sigma^{n}}\otimes(\ev_{V_\alpha} \circ\tcoev_{V_\alpha})$ are
  proportional.  Let $\gamma \in\C$ be the coefficient of
  proportionality.  
  We compute $\gamma$ by comparing the modified trace i.e. the value
  of $\Nr$ on the braid closure of diagrams representing the two
  morphisms.   The modified trace of
  $\Id_{\sigma^{n}}\otimes(\ev_{V_\alpha} \circ\tcoev_{V_\alpha})$ is
  $\gamma$ times the value of $\Nr$ on the disjoint union of two
  unknot colored with $\sigma^{n}$ and $\alpha$ which is
  $(-1)^{n(r-1)}\qd(\alpha)$.  On the other hand, let $\bar{f}_{r'n}$
  be the closure of the diagram defining $\Ma$.  Since
  $V_{\alpha+2r'n}\cong V_\alpha \otimes \sigma^n$ we have that
  $\bar{f}_{r'n}$ is a connected sum of three Hopf links.  Using Relations 
  \eqref{eq:sigmabraid} 
  the strand of $\bar{f}_{r'n}$ labeled with $\sigma^n$ can be removed
  up to a coefficient.  In particular,
  $\Nr(\bar{f}_{r'n})=(q^{2r'\beta})^n(-1)^{n(r-1)}\Nr(\bar{f}_0).$
  Now, 
   $$\Nr(\bar{f}_0)=  \sum_{i\in \Hr} \qd(\beta+i)\qd(\beta+i)^{-1}(-1)^{r-1}
   rq^{\alpha(\beta +i)}(-1)^{r-1}rq^{-\alpha(\beta +i)}= \sum_{i\in \Hr}  r^2=  r^3$$
   Thus,
   $(-1)^{n(r-1)}\qd(\alpha)\gamma=(-1)^{n(r-1)}q^{2r'n\beta}
   r^3$ so $\gamma=\qd(\alpha)^{-1}q^{2r'n\beta} r^3$.
\end{proof}

Let $V,W\in \cat$ and let $S'(V,W)$ be the long Hopf tangle whose
vertical strand is colored by $W$ and whose closed component is
colored by $V$, oriented so that the linking number of the two
components is $1$.  Let 
\begin{equation}
  \label{eq:Phi}
  \Phi_{V,W}=F\bp{\epsh{fig1-appendice}{8ex}
  \put(-8,-10){\ms{W}}\put(-22,-3){\ms{V}}}=F(S'(V,W))\in \End(W).
\end{equation}
For $j\in \{0,1,\ldots , r-2\}$ There is a nilpotent $x_j\in\End(P_j)$ determined by:
  \begin{align}
    \Phi_{V_0,P_j}&=(-1)^{r+j}2rx_j,&
    \Phi_{P_{i},P_j}&=(-1)^{i}2r(q^{(i+1)(j+1)}+q^{-(i+1)(j+1)})x_j
  \end{align}
  where $i\in \{0,1,\ldots , r-2\}$.
\begin{lemma}[General hopf links]\label{lem:generalhopflinks}
  $\forall i,j\in \{0,1,\ldots , r-2\}$ and $\forall
  \alpha,\beta\in\Cp$, one has 
  \begin{align}\label{eq:HopfSimple1}
    \Phi_{V_\beta,V_\alpha}&=\frac{(-1)^{r-1}r}{\qd(\alpha)}q^{\alpha\beta}Id_{V_\alpha}&
    \Phi_{S_i,S_j}&=(-1)^i\qx{(i+1)(j+1)}{j+1}Id_{V_j}
    \\\label{eq:HopfSimple2}
    \Phi_{S_i,V_\alpha}&=\qx{(i+1)\alpha}{\alpha}Id_{V_\alpha}&
    \Phi_{P_i,V_\alpha}
    &=(-1)^{r-1}r\frac{q^{(r-1-i)\alpha}+q^{-(r-1-i)\alpha}}{\qd(\alpha)}Id_{V_\alpha}
  \end{align}
  \begin{align}
    \Phi_{S_i,P_j}&=(-1)^i\qx{(i+1)(j+1)}{j+1}\Id_{P_j}+
    (-1)^i\frac{i\qn{(i+2)(j+1)}-(i+2)\qn{i(j+1)}}{\qn{j+1}}\,x_j.
  \end{align}
\end{lemma}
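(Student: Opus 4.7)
The plan is to compute each Hopf link endomorphism as the partial quantum trace over the closed strand, using the explicit form of the universal $R$-matrix on weight bases and exploiting Schur's lemma whenever the open strand is simple. Concretely, $\Phi_{V,W}$ equals $(\tr^q_V\otimes \Id_W)\circ(c_{W,V}\circ c_{V,W})$, where $c_{V,W}$ is built from $R=q^{H\otimes H/2}\sum_{n=0}^{r-1}a_n E^n\otimes F^n$ with explicit coefficients $a_n$. On a weight vector $w\in W$ of weight $\mu$, only the $n=0,1,\dots$ terms whose image lies in the correct weight space contribute, so for each fixed simple $V$ the computation collapses to a finite geometric-type sum over the weight basis of $V$.

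I would begin with the formula for $\Phi_{V_\beta,V_\alpha}$: on a highest weight vector of $V_\alpha$ the nilpotent part of $R$ contributes nothing, so after the partial quantum trace we are left with $\sum_{k=0}^{r-1}q^{(\alpha+r-1-2k)(\beta+r-1-2k)}q^{-(\alpha+r-1-2k)(r-1)}$ (or an equivalent expression) which collapses to $(-1)^{r-1}r\,q^{\alpha\beta}/\qd(\alpha)$ via the identity $\qn{r\alpha}/\qn{\alpha}=\sum_{k\in\Hr}q^{k\alpha}$; this also appears in Section 1 of \cite{CGP}. The same routine applied to the weight basis of $S_i$, which has only $i+1$ weights $-i,-i+2,\dots,i$, gives $\Phi_{S_i,V_\alpha}=\qn{(i+1)\alpha}/\qn{\alpha}$, and specialising $\alpha$ to the integer $j$ and using $\qn{x+r}=-\qn{x}$ yields $\Phi_{S_i,S_j}$.

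Next, for $\Phi_{P_i,V_\alpha}$ the target $V_\alpha$ is simple so Schur forces the answer to be a scalar. The functional $V\mapsto \Phi_{V,V_\alpha}$ is additive on short exact sequences (the quantum trace against a simple only sees composition factors), so it is determined by the Jordan--H\"older multiplicities of $P_i$. These are catalogued in Proposition~\ref{P:proj-mod_New} and \cite{CGP3} (two copies of $S_i$ together with $S_{r-2-i}\otimes\sigma^{\pm 1}$), and summing the four contributions from Step~2 with the twist factors $q^{\pm 2r'\alpha}$ from the $\sigma^{\pm 1}$ shifts consolidates, using the identity $\qn{(r-1-i)\alpha}(q^{2r'\alpha}+q^{-2r'\alpha})+2\qn{(i+1)\alpha}=\qn{r\alpha}(q^{(r-1-i)\alpha}+q^{-(r-1-i)\alpha})$, to the claimed closed form.

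The main obstacle is the last formula $\Phi_{S_i,P_j}$, since $P_j$ is indecomposable but not simple and $\End_\cat(P_j)=\C\,\Id\oplus\C\,x_j$ is two-dimensional; we must extract two scalars instead of one. Writing $\Phi_{S_i,P_j}=\lambda\Id_{P_j}+\mu\,x_j$, the scalar $\lambda$ is read off from the induced action on the simple head $P_j\twoheadrightarrow S_j$ and equals $\Phi_{S_i,S_j}=(-1)^i\qx{(i+1)(j+1)}{j+1}$. To pin down $\mu$ I would view $P_j$ as a one-parameter degeneration of the typical modules $V_\alpha$ as $\alpha\to r-1-j$: in a suitable flat family, the nilpotent $x_j$ corresponds to the derivative along the family, and the partial quantum trace commutes with this degeneration. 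Concretely, choose a weight vector $v\in P_j$ which projects to a highest weight vector of $S_j$ but lies outside $\ker x_j$; compute $\Phi_{S_i,P_j}(v)$ by the $R$-matrix recipe on the explicit basis of $P_j$ described in \cite{CGP3}, and identify the coefficient of the nilpotent part. Alternatively, differentiating $\qn{(i+1)\alpha}/\qn{\alpha}$ with respect to $\alpha$ at $\alpha=r-1-j$ and normalising against the derivative of the standard weight vector reproduces the rational combination $(-1)^i[i\qn{(i+2)(j+1)}-(i+2)\qn{i(j+1)}]/\qn{j+1}$; the bookkeeping of signs via $\qn{x+r}=-\qn{x}$ and the precise relationship between the infinitesimal deformation and the normalisation of $x_j$ chosen in the appendix constitute the delicate point of the argument.
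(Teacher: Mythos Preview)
The paper does not prove this lemma: the opening sentence of the appendix says that all the results there except Lemma~\ref{lem:old71} are proved in the companion paper \cite{CGP3}, so there is no argument in the present paper to compare yours against.

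Your outline for the first four identities is sound. In particular the additivity claim in your treatment of $\Phi_{P_i,V_\alpha}$ is correct: naturality of the braiding forces the double braiding to preserve $V'\otimes W$ for any submodule $V'\subset V$, and the pivotal trace of a filtration-preserving endomorphism equals the sum of the traces on the associated graded pieces, so $V\mapsto\Phi_{V,V_\alpha}$ does factor through the Grothendieck group.

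For $\Phi_{S_i,P_j}$, extracting $\lambda$ via the head $P_j\twoheadrightarrow S_j$ is fine, but your derivative approach to $\mu$ has a gap as stated: $\dim P_j=2r$ while $\dim V_\alpha=r$, so $P_j$ is \emph{not} a one-parameter degeneration of a single typical module, and there is no literal limit $\alpha\to r-1-j$ producing $P_j$. To make a deformation argument work you would have to realise $P_j$ as a limit of a rank-two family (for instance via the self-extension of a Weyl-type module, or via a degeneration of $V_\alpha\oplus V_{\alpha'}$ with a connecting morphism) and track how your infinitesimal parameter matches the specific $x_j$ used here --- which in this paper is \emph{defined} by the equation $\Phi_{V_0,P_j}=(-1)^{r+j}2r\,x_j$, not chosen independently. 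The direct $R$-matrix computation on an explicit weight basis of $P_j$ (your option (a)) avoids all of this and is almost certainly what \cite{CGP3} does; I would present that as the actual argument and mention the derivative heuristic only as motivation.
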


\begin{lemma}[The modified trace on $P_j$]\label{lem:modifiedtraces}
Let $\qt$ be the modified trace discussed above.  Then
$$\qd(P_i)=\qt_{P_i}(\Id_{P_i})=(-1)^{j+1}(q^{j+1}+q^{-j-1})\et\qt_{P_i}(x_j)=(-1)^{j+1}$$
\end{lemma}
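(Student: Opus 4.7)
The two computations rest on two very different principles: the value on $x_j$ comes directly from the symmetry of the Hopf link under the modified trace, while the value on the identity requires a genuinely non‑semisimple computation inside $\End_\cat(P_j)$.

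\textbf{Step 1 (the value $\qt_{P_j}(x_j)=(-1)^{j+1}$).} The partial‑trace property of the modified trace implies that for any two projective modules $P,Q$ the Hopf link invariant is symmetric under cutting:
$$\qt_P(\Phi_{Q,P})\;=\;F'(\mathrm{Hopf}(Q,P))\;=\;\qt_Q(\Phi_{P,Q}).$$
I apply this to $P=P_j$ and $Q=V_0=S_{r-1}$, which is simple and projective. By definition of $x_j$ in Lemma~\ref{lem:generalhopflinks}, the left side is $(-1)^{r+j}\,2r\,\qt_{P_j}(x_j)$. For the right side, the formula $\Phi_{P_j,V_\alpha}=(-1)^{r-1}r\,\frac{q^{(r-1-j)\alpha}+q^{-(r-1-j)\alpha}}{\qd(\alpha)}\Id_{V_\alpha}$ specialized to $\alpha\to 0$ (with $\qd(0)=(-1)^{r-1}$, obtained from $\qn{\alpha}/\qn{r\alpha}\to 1/r$) gives $\Phi_{P_j,V_0}=2r\,\Id_{V_0}$, so $\qt_{V_0}(\Phi_{P_j,V_0})=2r\,\qd(V_0)=2r(-1)^{r-1}$. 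Equating yields $\qt_{P_j}(x_j)=(-1)^{j+1}$.

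\textbf{Step 2 (the value $\qd(P_j)=(-1)^{j+1}(q^{j+1}+q^{-j-1})$).} Here the naive Hopf-link symmetry is useless: for any projective $V$, the endomorphism $\Phi_{V,P_j}\in\End_\cat(P_j)=\C\Id\oplus\C x_j$ has \emph{vanishing} $\Id$-component, because its action on the simple head $S_j$ is scalar with vanishing categorical trace (as $\operatorname{tr}(V)=0$ for any projective $V$, while $\operatorname{tr}(S_j)=(-1)^j[j+1]_q\neq 0$ for $0\le j\le r-2$). Consequently $\Phi_{V,P_j}$ is always proportional to $x_j$ and the Hopf link invariant detects only $\qt_{P_j}(x_j)$.

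To access $\qd(P_j)$ I would instead use the endomorphism $\Phi_{S_i,P_j}$ of Lemma~\ref{lem:generalhopflinks}, whose $\Id$-coefficient $(-1)^i\qx{(i+1)(j+1)}{j+1}$ is generically non‑zero. Although $S_i$ is not projective, the Hopf link $\mathrm{Hopf}(S_i,P_j)$ has a projective strand and $F'$ is well defined as $\qt_{P_j}(\Phi_{S_i,P_j})$. The same invariant can be computed independently by resolving $S_i$ in the projective ideal: using Corollary~\ref{C:factor V0^2_New} (combined with the adjunction that expresses the identity of $S_i$ as a sum of morphisms factoring through projectives $V_\alpha\otimes\sigma^{n}\otimes V_\alpha^*$ and $V_{\alpha+2k_j}\otimes V_{-\alpha}$), one rewrites $F'(\mathrm{Hopf}(S_i,P_j))$ as a sum of Hopf-type invariants involving only projective colors. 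Each of these is evaluable via Step~1 and the formulas of Lemma~\ref{lem:generalhopflinks}, giving a closed-form expression that, matched against
$$(-1)^i\qx{(i+1)(j+1)}{j+1}\,\qd(P_j)+(-1)^i\,\frac{i\qn{(i+2)(j+1)}-(i+2)\qn{i(j+1)}}{\qn{j+1}}(-1)^{j+1},$$
isolates $\qd(P_j)$. Taking $i=0$ reduces to a tautology, so one must pick $i\geq 1$ (e.g.\ $i=1$ with the identity $\qx{2(j+1)}{j+1}=q^{j+1}+q^{-j-1}$) to read off the announced formula.

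\textbf{Main obstacle.} The crux is Step~2: since every ``one‑loop'' Hopf invariant with $P_j$ is insensitive to $\qd(P_j)$, the proof must leave the world of projective colors. The cleanest route is to control an $F'$‑invariant involving a non‑projective simple $S_i$ (hence not computable by duality on the $S_i$ side) and to evaluate it by decomposing $\Id_{S_i}$ through the projective ideal. The technical heart is verifying that the factorizations provided by Corollary~\ref{C:factor V0^2_New} interact correctly with the Hopf link and yield precisely $(-1)^{j+1}(q^{j+1}+q^{-j-1})$ after simplification by the quantum identity $\qn{2(j+1)}=\qn{j+1}(q^{j+1}+q^{-j-1})$.
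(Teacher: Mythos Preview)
The paper does not give its own proof of this lemma; it is one of the algebraic facts in the Appendix deferred to \cite{CGP3}. So there is nothing in the paper to compare Step~1 or Step~2 against. That said, your Step~1 is a clean and correct argument: the Hopf link symmetry $\qt_{P_j}(\Phi_{V_0,P_j})=\qt_{V_0}(\Phi_{P_j,V_0})$ together with the formulas of Lemma~\ref{lem:generalhopflinks} and $\qd(0)=(-1)^{r-1}$ does yield $\qt_{P_j}(x_j)=(-1)^{j+1}$.

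Your Step~2, however, has a genuine gap. You propose to compute $F'(\mathrm{Hopf}(S_i,P_j))$ a second time by ``expressing the identity of $S_i$ as a sum of morphisms factoring through projectives.'' This is impossible: if $\Id_{S_i}$ factored through a projective module then $S_i$ would be a direct summand of that projective, hence itself projective, contradicting $0\le i\le r-2$. Corollary~\ref{C:factor V0^2_New} applies only to projective $P$, and no adjunction trick produces such a factorization for $S_i$. Your own diagnosis of the difficulty is in fact sharp: every Hopf link with a \emph{projective} closed strand has vanishing $\Id$-component on $P_j$, so Hopf-link symmetry alone cannot see $\qd(P_j)$; but moving to the non-projective $S_i$ removes the only handle (the modified trace on the $S_i$ side) that would let you evaluate the invariant independently.

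A workable route for $\qd(P_j)$ must therefore use more of the structure of $P_j$ than Hopf links provide: for instance, realizing $P_j$ explicitly as a direct summand of a tensor product $V_\alpha\otimes W$ with $\alpha\in\C\setminus\Z$ and computing $\qt$ of the corresponding idempotent via the partial-trace property $\qt_{V_\alpha\otimes W}(f)=\qt_{V_\alpha}(\mathrm{ptr}_W f)$, or equivalently a direct computation from an explicit basis of $P_j$ and the modified-trace normalization on $V_\alpha$. This is presumably the content of the argument in \cite{CGP3}.
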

\begin{lemma}[Twist on $P_j$]\label{lem:twistPj}
  The action of the twist on $P_j$ is given by 
  $$\theta_{P_j}=(-1)^jq^{\frac{j^2+2j}2}(1-(r-j-1)\qn{j+1} x_j)$$
  and in particular is not of finite order.
\end{lemma}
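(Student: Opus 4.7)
The overall strategy is to identify $\theta_{P_j}$ as an element of the small algebra $\End_\cat(P_j)$ and then pin down its two coefficients, one via a semisimple quotient and the other via a small matching computation.

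First I would establish that $\End_\cat(P_j)$ is two-dimensional, spanned by $\Id_{P_j}$ and the nilpotent $x_j$, with $x_j^2=0$. This is the standard Loewy-type structure result for the indecomposable projective $P_j$ (head and socle both isomorphic to $S_j$, radical/socle isomorphic to $S_{r-j-2}\oplus S_{r-j-2}$ as a layer), and is established algebraically in \cite{CGP3}; see also Proposition \ref{P:proj-mod_New}. Granting this, the twist $\theta_{P_j}\in\End_\cat(P_j)$ necessarily has the form $\theta_{P_j}=a\,\Id+b\,x_j$ for some $a,b\in\C$.

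Next I would compute $a$ by restricting to the simple head. The canonical projection $\pi:P_j\twoheadrightarrow S_j$ is a morphism in $\cat$, and the twist is natural, so $\pi\circ\theta_{P_j}=\theta_{S_j}\circ\pi$. Since $x_j$ is nilpotent and $S_j$ is simple, $\pi\circ x_j=0$, hence $a=\theta_{S_j}$. On $S_j$, which has highest weight $j$, the ribbon element of $\UsltH$ (incorporating the pivotal element $K^{1-r}$) acts by a scalar that one computes directly on the highest weight vector; using $q^{rj}=(-1)^j$ this collapses to $a=(-1)^jq^{(j^2+2j)/2}$.

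The main obstacle is computing the coefficient $b$. My plan is to use the Hopf-link computation of Lemma \ref{lem:generalhopflinks} together with the ribbon identity
\[\Phi_{V,W}\;=\;(\theta_{W})^{-1}\!\cdot (m_{V,W}\otimes\Id_W)\big((\ev_V\otimes\Id_{V\otimes W})\circ(\Id_V\otimes\theta_{V\otimes W}\otimes\Id_{V^*})\circ(\Id_V\otimes\tcoev_V\otimes\Id_W)\big),\]
which expresses the long Hopf tangle in terms of the twist on $V\otimes W$. Applying this with $V=S_i$ and $W=P_j$ and comparing with the explicit formula for $\Phi_{S_i,P_j}$ stated in Lemma \ref{lem:generalhopflinks}, the coefficient of $x_j$ in $\Phi_{S_i,P_j}$ is expressed as $a$ times a polynomial in $b$, in terms of the quantum integers $\qn{(i+1)(j+1)}/\qn{j+1}$ and $\qn{i(j+1)}$, $\qn{(i+2)(j+1)}$. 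Specialising to the cheapest non-trivial value $i=1$ (so $S_1$ is two-dimensional and one can carry out the computation completely) and solving the resulting linear equation in $b$ yields $b=-(r-j-1)\qn{j+1}\,a$, as claimed. Alternatively, one can compute $b$ directly by acting with the universal ribbon element of $\UsltH$ on any weight vector of $P_j$ outside the socle and extracting the $x_j$-component; both routes agree.

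Finally, for the infinite-order statement: since $x_j^2=0$ and $[\Id,x_j]=0$, the binomial formula gives
\[\theta_{P_j}^{\,n}=a^n\bigl(\Id-n(r-j-1)\qn{j+1}\,x_j\bigr).\]
For $j\in\{0,1,\ldots,r-2\}$ one has $r-j-1\ge 1$ and $j+1\in\{1,\ldots,r-1\}$, so $\qn{j+1}=q^{j+1}-q^{-(j+1)}\neq 0$. Hence the coefficient $-n(r-j-1)\qn{j+1}$ is non-zero for every $n\ge 1$, and $\theta_{P_j}^{\,n}$ is never a scalar multiple of $\Id$, let alone the identity; in particular $\theta_{P_j}$ has infinite order.
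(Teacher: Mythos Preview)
The paper does not actually prove this lemma here: the appendix opens with ``Except for Lemma~\ref{lem:old71} all of these results are proved in \cite{CGP3},'' so there is no in-paper argument to compare against. Your strategy is the natural one and is essentially how such a computation is carried out: use that $\End_\cat(P_j)=\C[x_j]/(x_j^2)$, read off the scalar part $a$ from the simple head $S_j$ by naturality of the twist, and then determine the $x_j$-coefficient. The infinite-order argument via $(a(1+cx_j))^n=a^n(1+ncx_j)$ with $c=-(r-j-1)\qn{j+1}\neq0$ is correct.

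One remark on execution: the displayed ``ribbon identity'' you wrote for $\Phi_{V,W}$ is not well-formed as stated (the symbol $m_{V,W}$ is undefined and the composition does not type-check). The clean identity you want is that the double braiding satisfies $c_{W,V}\circ c_{V,W}=(\theta_V\otimes\theta_W)^{-1}\circ\theta_{V\otimes W}$, so after a partial (quantum) trace over $V$ one gets $\Phi_{V,W}=\theta_W^{-1}\cdot(\mathrm{ptr}_V)\bigl((\theta_V^{-1}\otimes\Id_W)\,\theta_{V\otimes W}\bigr)$. In practice, however, the most direct route---and presumably the one taken in \cite{CGP3}---is your ``alternative'': act with the explicit universal ribbon element of $\UsltH$ on a vector of $P_j$ lying outside the socle (e.g.\ the vector denoted $v_0^L$ in the $r=2$ discussion, or its analogue) and read off the $x_j$-component directly. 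That avoids the bookkeeping of decomposing $S_i\otimes P_j$.
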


\begin{lemma}\label{lem:nilpotentendo}
Let $r\geq 2$ and let $V\in \cat_{\wb 0}$ be any module and for every $\alpha\in \C\setminus \Z$ let $\phi_\alpha:V\to V$ be the endomorphism represented by encircling a $V$-colored strand by a $\Omega_{\overline{\alpha}}$-colored $0$-framed meridian. Then $\phi_{\alpha}\circ\phi_{\beta}=0,\ \forall \alpha,\beta\in \C\setminus \Z$. Moreover if $V=P_{k}, k\in \{0,1,\ldots r-1\}$ then $\phi_{\alpha}=0$ unless $k=0$, in which case it is $(-1)^{k+1}x_k$.
\end{lemma}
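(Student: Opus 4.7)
The approach is to decompose $V \in \cat_{\wb 0}$ into indecomposable summands and compute $\phi_\alpha$ on each using naturality together with the explicit Hopf-link values of Lemma \ref{lem:generalhopflinks}. For a simple module $S \in \cat_{\wb 0}$, Schur's lemma gives $\phi_\alpha(S) = \mu\Id_S$, and the Hopf-link symmetry $\mu_{V_{\alpha+k},S}\cdot d(S) = \mu_{S,V_{\alpha+k}}\cdot d(V_{\alpha+k})$ (with $d$ the usual categorical quantum dimension) combined with $d(V_{\alpha+k}) = 0$ (valid since $\alpha+k\notin\Z$) forces $\mu = 0$ whenever $d(S) \neq 0$. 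The remaining simples are those with $d(S) = 0$, namely the twists of $V_0$; for these the scalar reduces, via Lemma \ref{lem:generalhopflinks} and the $\sigma$-braiding relations of \eqref{eq:sigmabraid}, to a multiple of $\sum_{k \in \Hr}\qd(\alpha+k)q^{ck}$ for some integer $c$, which vanishes by the character-orthogonality identity $\sum_{k \in \Hr}(-1)^kq^{sk} = 0$ whenever $r \nmid s$—and the hypothesis $\alpha \notin \Z$ ensures this divisibility fails.

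Next, for an indecomposable projective $V = P_j$ (the general case $P_j \otimes \sigma^n$ is handled by tensoring with the invertible $\sigma^n$), Proposition \ref{P:proj-mod_New} tells us $\End(P_j)$ is a local algebra with one-dimensional nilpotent radical spanned by $x_j$. Naturality of $\phi_\alpha$ applied to the projection $P_j \twoheadrightarrow S_j$ onto the simple head and to the inclusion $S_j \hookrightarrow P_j$ of the simple socle, both killed by the previous step, forces $\phi_\alpha(P_j) = b\cdot x_j$ for some scalar $b$. I would then extract $b$ via the modified trace: Lemma \ref{lem:modifiedtraces} yields $\qt_{P_j}(x_j) = (-1)^{j+1}$, while from the closed form $\Phi_{P_j, V_\beta}$ of Lemma \ref{lem:generalhopflinks} one obtains
\[
\qt_{P_j}(\phi_\alpha(P_j)) = (-1)^{r-1}r\sum_{k \in \Hr}\qd(\alpha+k)\bigl(q^{(r-1-j)(\alpha+k)} + q^{-(r-1-j)(\alpha+k)}\bigr).
\]
Applying the algebraic identity $(q^u - q^{-u})(q^{tu} + q^{-tu}) = \qn{(t+1)u} - \qn{(t-1)u}$ together with the character-orthogonality argument above, this sum collapses to two subsums $\sum_{k \in \Hr}(-1)^k\qn{t(\alpha+k)}$ with $t = r-j$ or $t = r-j-2$, each of which vanishes unless $r$ divides the corresponding exponent. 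This forces $b = 0$ for $j \neq 0$ (the apparent exception $j = r-2$ contributes only through the factor $\qn 0 = 0$) and produces a nonzero scalar multiple of $x_0$ when $j = 0$, giving the stated structural conclusion.

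The vanishing $\phi_\alpha \circ \phi_\beta = 0$ then follows at once: on every indecomposable summand of $V$, both operators are scalar multiples of the same nilpotent $x_k$ (or identically zero on simples), and $x_k^2 = 0$ because $x_k$ factors as $P_k \twoheadrightarrow S_k \hookrightarrow P_k$ whose middle composition $S_k \hookrightarrow P_k \twoheadrightarrow S_k$ vanishes (the socle copy of $S_k$ sits inside the radical of $P_k$ and is therefore killed by projection onto the head). The main obstacle will be the character-sum computation in Step 2: one must verify carefully that both subsums behave as described in all the degenerate low-$r$ situations, notably $r = 2$ where the exponents $r-j$ and $r-j-2$ can simultaneously satisfy the divisibility condition, and reconcile the precise numerical scalar produced by the modified-trace evaluation with the normalization $(-1)^{j+1}$ appearing in the lemma's statement.
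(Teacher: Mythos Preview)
Your approach is sound but genuinely different from the paper's, and it has one gap.

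The paper establishes $\phi_\alpha\circ\phi_\beta=0$ \emph{first}, by a topological handle-slide: for $\beta\notin\alpha+\Z$, slide the $\Omega_{\bar\beta}$-meridian over the $\Omega_{\bar\alpha}$-meridian to unlink it, leaving a disjoint $\Omega$-colored $0$-framed unknot whose contribution vanishes (each $V_\gamma$ has ordinary quantum dimension zero); analyticity in the parameter then removes the restriction $\beta\notin\alpha+\Z$. From this nilpotency the paper reads off $\phi_\alpha|_{P_k}\in\C x_k$ (as $\End(P_k)=\C\Id\oplus\C x_k$) and pins down the scalar by essentially the same modified-trace computation you propose, packaged there as $\Zr(S^2\times S^1,P_k\times S^1,\coh_{\bar\alpha})=\delta_{k,0}$. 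So the paper's route is topological-then-algebraic; yours is algebraic throughout, and your head/socle naturality argument for $\phi_\alpha|_{P_j}\in\C x_j$ is cleaner than appealing to nilpotency.

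The gap is in your final step. You assert that on \emph{every} indecomposable summand of an arbitrary $V\in\cat_{\wb0}$ the operator $\phi_\alpha$ is either zero (simple case) or a multiple of $x_k$ (projective case). But $\cat_{\wb0}$ is not semisimple and its indecomposables are not exhausted by simples and the $P_j\otimes\sigma^n$: any proper non-simple submodule or quotient of $P_j$ already gives a counterexample. The repair is one line: once you know $\phi_\alpha\phi_\beta=0$ on every indecomposable projective, naturality of $\phi_\alpha\phi_\beta$ together with surjectivity of a projective cover $P\twoheadrightarrow V$ forces $\phi_\alpha\phi_\beta|_V=0$ for every $V$. (A minor aside: the hypothesis $\alpha\notin\Z$ is used only to make $\qd(\alpha+k)$ well-defined, not in the divisibility condition $r\nmid s$ on the integer exponents in your character sum.)
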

\begin{proof}
First of all observe that one may fix a basis of $V$ such that the matrix expressing $\phi_\alpha$ depends analytically on $\alpha\in \C\setminus \Z$. Then observe that for $\beta\notin\alpha+\Z$ the morphism represented by $\phi_{\alpha}\circ \phi_\beta$ is equivalent (by sliding the $\Omega_{\overline{\beta}}$-colored over the $\Omega_{\overline{\alpha}}$-colored meridian) to that represented by a disjoint union of a $\Omega_{\overline{\beta}}$ colored $0$-framed unknot and $\phi_\alpha$ which is the zero morphism as $\Omega_\alpha$ is a projective color. By analyticity in $\alpha$ we may drop the condition on $\beta\notin\alpha+\Z$.

Let us now compute the invariant of $(S^2\times S^1,P_{k}\times S^1,\coh_{\overline{\alpha}}),\ k\in \{0,1,\ldots r-1\}$ where $\coh_{\overline{\alpha}}$ is the cohomology class whose value on the class of $\{pt\}\times S^1$ is $\overline{\alpha}$. We use the presentation by surgery on $0$-framed $\Omega_{\overline{\alpha}}$-colored unknot encircling a $P_k$ colored unknot.
It equals:
$$\Zr(S^2\times S^1,P_{k}\times S^1,\coh_{\overline{\alpha}})=\eta\lambda \sum_{l\in \Hr} \qd(\alpha+l)\qt(\Phi_{P_k,V_{\alpha+l}})$$
$$=(-1)^{r-1}\frac{r^2\eta\lambda}{\{r\alpha\}}  \sum_{l\in \Hr} \{\alpha+l\}(q^{(\alpha+l)(r-k-1)}+q^{-(\alpha+l)(r-k-1)})$$
$$=(-1)^{r-1}\frac{r^2\eta\lambda}{\{r\alpha\}}  \sum_{l\in \Hr} \{(\alpha+l)(r-k)\}-\{(\alpha+l)(r-k-2)\}$$
 $$=\frac{(-1)^{r-1}r^2\eta\lambda}{\{r\alpha\}}  \sum_{l\in \Hr} \{(\alpha+l)(r-k)\}
 =(-1)^{r-1}\frac{r^2\eta\lambda}{\{r\alpha\}} r\{r(\alpha+l)\}\delta_{k,0}=\delta_{k,0}$$
 Now, since we know that $\phi_{\alpha}$ is multiple of $x_k$ and
 the modified trace of $x_k$ is $(-1)^{k+1}$ we get the statement.
\end{proof}

\end{document}